\definecolor{darkblue}{rgb}{0.0,0,0.7} 
\newcommand{\darkblue}{\color{darkblue}} 
\definecolor{darkred}{rgb}{0.7,0,0} 
\definecolor{lightgrey}{rgb}{0.7,0.7,0.7} 
\definecolor{meet}{RGB}{255,205,111}
\definecolor{join}{RGB}{0,77,178}
\newtheorem{theorem}{Theorem}[section]
\newtheorem{proposition}[theorem]{Proposition}
\newtheorem{corollary}[theorem]{Corollary}
\newtheorem{lemma}[theorem]{Lemma}
\theoremstyle{definition}
\newtheorem{definition}[theorem]{Definition}
\newtheorem{example}[theorem]{Example}
\newcommand{\defn}[1]{\emph{\darkblue #1}}
\newcommand{\LL}{\mathcal{L}}
\newcommand{\JJ}{\mathcal{J}}
\newcommand{\MM}{\mathcal{M}}
\newcommand{\x}{x}
\newcommand{\al}{\mathcal{D}}
\newcommand{\br}{\mathcal{U}}
\newcommand{\y}{y}
\newcommand{\tog}{\mathrm{flip}}
\newcommand{\row}{\mathrm{row}}
\newcommand{\down}{\mathrm{D}}
\newcommand{\up}{\mathrm{U}}
\newcommand{\trip}{\mathrm{top}}
\newcommand{\mut}{\mathrm{tog}}
\renewcommand{\mod}{\operatorname{mod}}
\newcommand{\Hom}{\operatorname{Hom}}
\newcommand{\End}{\operatorname{End}}
\newcommand{\Ext}{\operatorname{Ext}}
\newcommand{\tors}{\operatorname{tors}}
\newcommand{\convexpath}[2]{
[
    create hullnodes/.code={
        \global\edef\namelist{#1}
        \foreach [count=\counter] \nodename in \namelist {
            \global\edef\numberofnodes{\counter}
            \node at (\nodename) [draw=none,name=hullnode\counter] {};
        }
        \node at (hullnode\numberofnodes) [name=hullnode0,draw=none] {};
        \pgfmathtruncatemacro\lastnumber{\numberofnodes+1}
        \node at (hullnode1) [name=hullnode\lastnumber,draw=none] {};
    },
    create hullnodes
]
($(hullnode1)!#2!-90:(hullnode0)$)
\foreach [
    evaluate=\currentnode as \previousnode using \currentnode-1,
    evaluate=\currentnode as \nextnode using \currentnode+1
    ] \currentnode in {1,...,\numberofnodes} {
  let
    \p1 = ($(hullnode\currentnode)!#2!-90:(hullnode\previousnode)$),
    \p2 = ($(hullnode\currentnode)!#2!90:(hullnode\nextnode)$),
    \p3 = ($(\p1) - (hullnode\currentnode)$),
    \n1 = {atan2(\y3,\x3)},
    \p4 = ($(\p2) - (hullnode\currentnode)$),
    \n2 = {atan2(\y4,\x4)},
    \n{delta} = {-Mod(\n1-\n2,360)}
  in
    {-- (\p1) arc[start angle=\n1, delta angle=\n{delta}, radius=#2] -- (\p2)}
}
-- cycle
}
\title{Independence Posets}
\author[H.~Thomas]{Hugh Thomas}
\address[H.~Thomas]{LaCIM, Universit\'e du Qu\'ebec \`a Montr\'eal}
\email{hugh.ross.thomas@gmail.com}
\author[N.~Williams]{Nathan Williams}
\address[N.~Williams]{University of Texas at Dallas}
\email{nathan.f.williams@gmail.com}
\date{\today}
\keywords{}
\subjclass[2000]{Primary 05E45; Secondary 20F55, 13F60}
\begin{document}

\maketitle

\begin{abstract}
Let $G$ be an acyclic directed graph. For each vertex $g \in G$, we define an involution on the independent sets of $G$.  We call these involutions flips, and use them to define a new partial order on independent sets of $G$.

Trim lattices generalize distributive lattices by removing the graded hypothesis: a graded trim lattice is a distributive lattice, and every distributive lattice is trim.  Our independence posets are a further generalization of distributive lattices, eliminating also the lattice requirement: an independence poset that is a lattice is always a trim lattice, and every trim lattice is the independence poset for a unique (up to isomorphism) acyclic directed graph $G$. We characterize when an independence poset is a lattice with a graph-theoretic condition on $G$.

We generalize the definition of rowmotion from distributive lattices to independence posets, and we show it can be computed in three different ways.  We also relate our constructions to torsion classes, semibricks, and 2-simpleminded collections arising in the representation theory of certain finite-dimensional directed algebras.
\end{abstract}

\section{Introduction}

In this paper, we always take $G$ to be a finite acylic directed graph. The transitive closure of $G$ defines a poset, which we refer to as \defn{$G$-order}.  Our convention is that $g_1\geq g_2$ in $G$-order if and only if there is a directed path in $G$ from $g_1$ to $g_2$; when we compare vertices of $G$, we will always mean a comparison in $G$-order.  We write $\simeq$ for an isomorphism of posets.

\subsection{Independent sets and tight orthogonal pairs}
Recall that an \defn{independent set} $\mathcal{A} \subseteq G$ is a set of pairwise non-adjacent vertices of $G$.  As we now explain, the orientation provided by $G$ allows us to complete an independent set to a pair of independent sets, either of which determines the other.

\begin{definition}
A pair $(\al,\br)$ of independent sets of $G$ is called \defn{orthogonal} if there is no edge in $G$ from an element of $\al$ to an element of $\br$.  An orthogonal pair of independent sets $(\al,\br)$ is called \defn{tight} if whenever any element of $\al$ is increased (removed and replaced by a larger element with respect to $G$-order) or any element of $\br$ is decreased, or a new element is added to either $\al$ or $\br$, then the result is no longer an orthogonal pair of independent sets.  We abbreviate {\bf t}ight {\bf o}rthogonal {\bf p}air by \defn{top}, and we write \defn{$\trip(G)$} for the set of all tops of $G$.
\label{def:top}
\end{definition}

Some examples are given in~\Cref{fig:first_examples}.

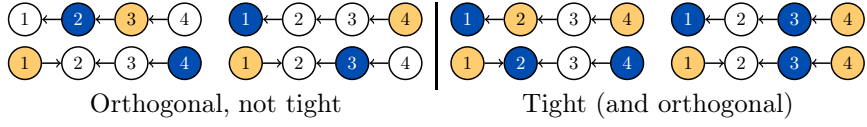
\begin{figure}[htbp]
\[\begin{array}{cc|cc}
\scalebox{.7}{\begin{tikzpicture}
\node (1) [circle,thick,draw,fill=white] at (0,0) {1};
\node (2) [circle,thick,draw,fill=join,text=white] at (1,0) {2};
\node (3) [circle,thick,draw,fill=meet] at (2,0) {3};
\node (4) [circle,thick,draw,fill=white] at (3,0) {4};
\draw[->,thick] (2) to (1);
\draw[->,thick] (3) to (2);
\draw[->,thick] (4) to (3);
\end{tikzpicture}}&
\scalebox{.7}{\begin{tikzpicture}
\node (1) [circle,thick,draw,fill=join,text=white] at (0,0) {1};
\node (2) [circle,thick,draw,fill=white] at (1,0) {2};
\node (3) [circle,thick,draw,fill=white] at (2,0) {3};
\node (4) [circle,thick,draw,fill=meet] at (3,0) {4};
\draw[->,thick] (2) to (1);
\draw[->,thick] (3) to (2);
\draw[->,thick] (4) to (3);
\end{tikzpicture}}&
\scalebox{.7}{\begin{tikzpicture}
\node (1) [circle,thick,draw,fill=join,text=white] at (0,0) {1};
\node (2) [circle,thick,draw,fill=meet] at (1,0) {2};
\node (3) [circle,thick,draw,fill=white] at (2,0) {3};
\node (4) [circle,thick,draw,fill=meet] at (3,0) {4};
\draw[->,thick] (2) to (1);
\draw[->,thick] (3) to (2);
\draw[->,thick] (4) to (3);
\end{tikzpicture}}&
\scalebox{.7}{\begin{tikzpicture}
\node (1) [circle,thick,draw,fill=join,text=white] at (0,0) {1};
\node (2) [circle,thick,draw,fill=white] at (1,0) {2};
\node (3) [circle,thick,draw,fill=join,text=white] at (2,0) {3};
\node (4) [circle,thick,draw,fill=meet] at (3,0) {4};
\draw[->,thick] (2) to (1);
\draw[->,thick] (3) to (2);
\draw[->,thick] (4) to (3);
\end{tikzpicture}}\\
\scalebox{0.7}{\begin{tikzpicture}
\node (1) [circle,thick,draw,fill=meet] at (0,0) {1};
\node (2) [circle,thick,draw,fill=white] at (1,0) {2};
\node (3) [circle,thick,draw,fill=white] at (2,0) {3};
\node (4) [circle,thick,draw,fill=join,text=white] at (3,0) {4};
\draw[->,thick] (1) to (2);
\draw[->,thick] (3) to (2);
\draw[->,thick] (4) to (3);
\end{tikzpicture}}&
\scalebox{0.7}{\begin{tikzpicture}
\node (1) [circle,thick,draw,fill=meet] at (0,0) {1};
\node (2) [circle,thick,draw,fill=white] at (1,0) {2};
\node (3) [circle,thick,draw,fill=join,text=white] at (2,0) {3};
\node (4) [circle,thick,draw,fill=white] at (3,0) {4};
\draw[->,thick] (1) to (2);
\draw[->,thick] (3) to (2);
\draw[->,thick] (4) to (3);
\end{tikzpicture}}&
\scalebox{0.7}{\begin{tikzpicture}
\node (1) [circle,thick,draw,fill=meet] at (0,0) {1};
\node (2) [circle,thick,draw,fill=join,text=white] at (1,0) {2};
\node (3) [circle,thick,draw,fill=white] at (2,0) {3};
\node (4) [circle,thick,draw,fill=join,text=white] at (3,0) {4};
\draw[->,thick] (1) to (2);
\draw[->,thick] (3) to (2);
\draw[->,thick] (4) to (3);
\end{tikzpicture}}&
\scalebox{0.7}{\begin{tikzpicture}
\node (1) [circle,thick,draw,fill=meet] at (0,0) {1};
\node (2) [circle,thick,draw,fill=white] at (1,0) {2};
\node (3) [circle,thick,draw,fill=join,text=white] at (2,0) {3};
\node (4) [circle,thick,draw,fill=meet] at (3,0) {4};
\draw[->,thick] (1) to (2);
\draw[->,thick] (3) to (2);
\draw[->,thick] (4) to (3);
\end{tikzpicture}}\\
\multicolumn{2}{c}{\text{Orthogonal, not tight}} & \multicolumn{2}{c}{\text{Tight (and orthogonal)}}
\end{array}\]
\caption{Eight pairs of independent sets $(\al,\br)$ for two different orientations of a path graph.  The blue vertices correspond to the elements of $\al$, while the orange vertices correspond to $\br$. }
\label{fig:first_examples}
\end{figure}

An independent set can be completed to a tight orthogonal pair in exactly two ways (see~\Cref{map:oneG,map:zeroG}, and~\Cref{fig:alg_ex23}).

\begin{theorem}
\label{thm:trip_ind}
Let $\mathcal{I}$ be an independent set of a directed acyclic graph $G$.  Then there exists a unique $(\mathcal{I},\br) \in \trip(G)$ and a unique $(\al,\mathcal{I})\in \trip(G)$.
\end{theorem}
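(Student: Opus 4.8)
The plan is to prove the first half of the statement---that each independent set $\mathcal{I}$ is the first coordinate of exactly one top---and to obtain the second half from it by symmetry. Reversing all the arrows of $G$ gives an acyclic digraph $G^{\mathrm{op}}$ whose order is the reverse of $G$-order; the reversal carries the ``no arrow from $\al$ to $\br$'' condition to the same condition with the two sets swapped, and it interchanges the notions of increasing and decreasing an element, so $(\al,\br)\mapsto(\br,\al)$ is a bijection from $\trip(G)$ to $\trip(G^{\mathrm{op}})$. Hence the assertion ``$\mathcal{I}$ occurs as a unique first coordinate'' (for all acyclic digraphs) is equivalent to ``$\mathcal{I}$ occurs as a unique second coordinate,'' and it suffices to treat the first.

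For existence I would produce the partner of $\mathcal{I}$ by a greedy sweep. Fix a linear order $g_1,\dots,g_n$ on the vertices refining $G$-order, and scan the vertices in this order while growing a set $\br$: adjoin $g_k$ to $\br$ exactly when $g_k\notin\mathcal{I}$, when no vertex already placed in $\br$ is adjacent to $g_k$, and when no arrow of $G$ runs from a vertex of $\mathcal{I}$ to $g_k$. Directly from the sweep, $\br$ is independent, there is no arrow from $\mathcal{I}$ to $\br$, and $\mathcal{I}\cap\br=\emptyset$, so $(\mathcal{I},\br)$ is an orthogonal pair; the remaining point is tightness, which is a finite verification over the moves forbidden in \Cref{def:top}. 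For each way of increasing an element of $\mathcal{I}$, decreasing an element of $\br$, or enlarging either set, the very clause of the sweep that kept some pertinent vertex out of $\br$---because it lay in $\mathcal{I}$, or was reached by an arrow from $\mathcal{I}$, or was adjacent to an earlier element of $\br$---supplies the arrow or coincidence that stops the modified pair from being an orthogonal pair.

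Uniqueness is the crux, and I would prove it by induction on $|V(G)|$, with the empty graph trivial, by deleting a $G$-minimal vertex $t$ (a vertex with no out-arrows). The preliminary observation is that the status of $t$ in the partner of $\mathcal{I}$ is forced: if $t\in\mathcal{I}$, or if some in-neighbour of $t$ lies in $\mathcal{I}$, then $t$ cannot lie in the partner (by disjointness, respectively by orthogonality); and otherwise $t$ must lie in the partner, for if it did not then $\mathcal{I}\cup\{t\}$ would be an independent set still forming an orthogonal pair with the partner, contradicting tightness. Granting this, the goal is to see that passing from $G$ to $G-t$ sets up a bijection between the tops with first coordinate $\mathcal{I}$ and the tops of $G-t$ with the appropriately adjusted first coordinate, after which the inductive hypothesis finishes. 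The hard part---the main obstacle---is the case in which $t$ belongs to the partner: deleting $t$ can unblock moves that tightness had obstructed only via an arrow terminating at $t$, and can free in-neighbours of $t$ that tightness had excluded from the partner precisely because they were adjacent to $t$. Managing these ``promotions'' so that the correspondence is genuinely a bijection (rather than merely injective or surjective) is where the real work lies, and I expect it to hinge on an exact local description of which vertices near $t$ every tight partner must contain or omit.
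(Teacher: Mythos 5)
Your duality reduction and your greedy existence argument are both sound and line up with the paper's: the sweep is, up to notation, \Cref{map:zeroG}, and the tightness check is the paper's ``take a maximal witness among the violations'' argument. The gap is in the uniqueness induction, and it sits exactly where you locate it. In the case $t\in\br$, passing to $G-t$ genuinely does not give a bijection, because the restriction of a top need not be a top. Concretely, if $s\to t$ and $s$ was kept out of $\br$ only by its adjacency to $t\in\br$, then in $G-t$ adding $s$ to $\br\setminus\{t\}$ is a newly available move whose failure tightness of $(\mathcal{I},\br)$ in $G$ does \emph{not} certify: in $G$ it amounts to replacing $t$ by $s>t$ inside $\br$, which is an \emph{increase}, and \Cref{def:top} only forbids \emph{decreases} of elements of $\br$ (and additions). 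A minimal example: on the path $3\to 2\to 1$ with $\mathcal{I}=\emptyset$ the top is $(\emptyset,\{1,3\})$; deleting $t=1$ leaves $(\emptyset,\{3\})$ on $3\to 2$, which is not tight, since $3$ can be decreased to $2$. So the restriction is not a top of $G-t$, and the ``promotions'' you flag cannot be patched up after the fact.

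The fix is to delete more rather than less. When $t\in\br$, pass to $G_t^\circ$, with $t$ \emph{and all of its neighbours} deleted. This is lossless in that case: no neighbour of $t$ lies in $\br$ (independence) nor in $\mathcal{I}$ (an in-neighbour in $\mathcal{I}$ would give an arrow from $\mathcal{I}$ to $t\in\br$, contradicting orthogonality, and $t$, being minimal, has no out-neighbours at all). With nothing lost, one checks directly that $(\mathcal{I},\br\setminus\{t\})$ is a top of $G_t^\circ$ and that $\br'\mapsto\br'\cup\{t\}$ is its inverse, so the induction closes with no promotions to manage. In your remaining cases ($t\in\mathcal{I}$, or $t$ excluded from $\br$ by an in-neighbour in $\mathcal{I}$), $G-t$ really does work, precisely because there the newly available move is an increase of $t$ in the \emph{first} coordinate $\mathcal{I}$, which \Cref{def:top} \emph{does} forbid. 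This two-case structure---single-vertex deletion $G_g$ in one regime, closed-neighbourhood deletion $G_g^\circ$ in the other---is exactly the mechanism of the paper's own proof; the paper also remarks that choosing a \emph{maximal} vertex when pinning down the first coordinate swaps which case is the delicate one.
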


\subsection{Flips and the independence poset}

Fix $\ell$ a linear extension of $G$-order and $\ell'$ a reverse linear extension of $G$-order.  Note that by our conventions, a linear extension is a linear order such that if there is an edge $g_1\to g_2$, then $g_2$ precedes $g_1$ in the linear extension.

\begin{definition}
The \defn{flip} of $(\al,\br) \in \trip(G)$ at an element $g \in G$ is the tight orthogonal pair $\tog_g(\al,\br)$ defined as follows (see~\Cref{fig:a_toggle} for an example): if $g \not \in \al$ and $g \not \in \br$, the flip does nothing.  Otherwise, preserve all elements of $\al$ that are not less than $g$ and all elements of $\br$ that are not greater than $g$ (and delete all other elements); after switching the set to which $g$ belongs, then greedily complete $\al$ and $\br$ to a tight orthogonal pair in the orders $\ell'$ and $\ell$, respectively.
\label{def:flip}
\end{definition}

Pseudocode for \Cref{def:flip} is given in~\Cref{map:flip}.   \Cref{prop:flips_work} proves that the algorithm produces a tight orthogonal pair, while \Cref{lem:flips_invs} proves that flips are involutions.

\begin{figure}[htbp]
\raisebox{-0.5\height}{\begin{tikzpicture}[scale=.7]
	\tikzmath{\n=6;\m=6;};
    \node at (3.3,3.2) {$g$};
    \node[anchor=west] at (0,6.6) {$\textcolor{join}{\bullet}$ stays};
    \node[anchor=east] at (6,-.6) {$\textcolor{meet}{\bullet}$ stays};
    \node[anchor=east] at (6,6.6) {$\textcolor{meet}{\bullet},\textcolor{join}{\bullet}$ stay};
     \node[anchor=west] at (0,-.6) {$\textcolor{meet}{\bullet},\textcolor{join}{\bullet}$ stay};
 	\foreach \i in {0,...,\n}{
	  \foreach \j in {0,...,\m}{
          \draw[shape=circle,black,fill=white] (\i,\j) circle (.5ex);
      };
	};
    \foreach \i in {0,...,5}{
	  \foreach \j in {0,...,\m}{
          \draw[->] (\i+.1,\j) to (\i+.9,\j);
      };
	};
    \foreach \i in {0,...,6}{
	  \foreach \j in {1,...,\m}{
           \draw[->] (\i,\j-.1) to (\i,\j-.9);
      };
	};
    \foreach \i in {(0,6),(0,4),(0,1),(1,2),(1,0),(2,4),(2,1),(3,5),(3,3),(3,0),(5,4),(5,2),(5,0),(6,6),(6,1)}{
          \draw[shape=circle,fill=meet] \i circle (.8ex);
	};
     \foreach \i in {(0,3),(1,6),(3,2),(4,6),(4,3),(4,1),(6,5),(6,3),(6,0)}{
          \draw[shape=circle,fill=join] \i circle (.8ex);
	};
    \draw[dotted] (-1,2.5) to (3.5,2.5) to (3.5,7);
    \draw[dotted] (7,3.5) to (2.5,3.5) to (2.5,-1);
  \end{tikzpicture}}
$\xleftrightarrow{\tog_g}$
\raisebox{-0.5\height}{
\begin{tikzpicture}[scale=.7]
	\tikzmath{\n=6;\m=6;};
    \node at (3.3,3.2) {$g$};
    \node[anchor=west] at (0,6.6) {$\textcolor{join}{\bullet}$ stays};
    \node[anchor=east] at (6,-.6) {$\textcolor{meet}{\bullet}$ stays};
    \node[anchor=east] at (6,6.6) {$\textcolor{meet}{\bullet},\textcolor{join}{\bullet}$ stay};
     \node[anchor=west] at (0,-.6) {$\textcolor{meet}{\bullet},\textcolor{join}{\bullet}$ stay};
 	\foreach \i in {0,...,\n}{
	  \foreach \j in {0,...,\m}{
          \draw[shape=circle,black,fill=white] (\i,\j) circle (.5ex);
      };
	};
    \foreach \i in {0,...,5}{
	  \foreach \j in {0,...,\m}{
          \draw[->] (\i+.1,\j) to (\i+.9,\j);
      };
	};
    \foreach \i in {0,...,6}{
	  \foreach \j in {1,...,\m}{
           \draw[->] (\i,\j-.1) to (\i,\j-.9);
      };
	};
    \foreach \i in {(0,5),(0,1),(1,4),(1,2),(1,0),(2,5),(2,3),(2,1),(3,6),(3,4),(3,0),(5,4),(5,2),(5,0),(6,6),(6,1)}{
          \draw[shape=circle,fill=meet] \i circle (.8ex);
	};
     \foreach \i in {(0,3),(1,6),(3,3),(4,6),(4,1),(6,5),(6,3),(6,0)}{
          \draw[shape=circle,fill=join] \i circle (.8ex);
	};
    \draw[dotted] (-1,2.5) to (3.5,2.5) to (3.5,7);
    \draw[dotted] (7,3.5) to (2.5,3.5) to (2.5,-1);
  \end{tikzpicture}}
\caption{A flip on a top $(\al,\br)$ in the $7\times 7$ grid oriented from top left to bottom right.  As in~\Cref{fig:first_examples}, the blue vertices correspond to the elements of $\al$, while the orange vertices correspond to the elements of $\br$.  Flipping at the vertex $g$ changes its color, and divides the grid into 5 connected regions (delineated by the dotted lines): the blue vertices not less than $g$ (i.e., not in the bottom right) and the orange vertices not greater than $g$ (i.e., not in the top left) are preserved by the flip.  The orange vertices in the top left are filled in greedily from bottom right to top left; the blue vertices in the bottom right are filled in greedily from top left to bottom right.}
\label{fig:a_toggle}
\end{figure}
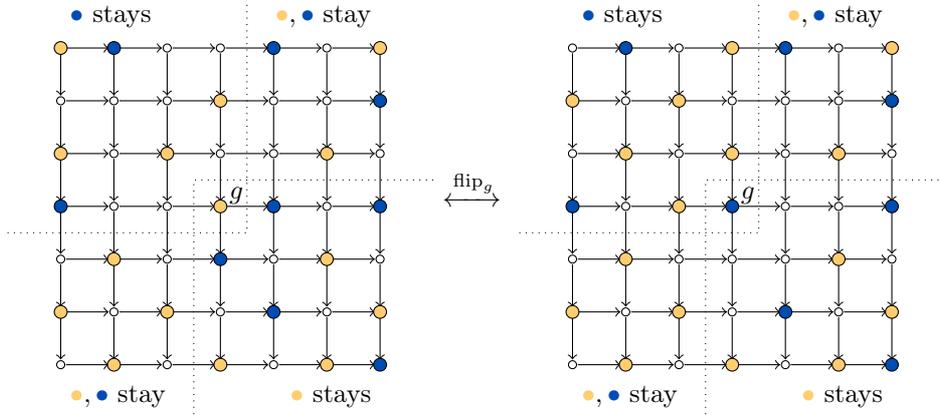

\begin{definition}
\label{def:independence_poset}
We define the \defn{independence poset} on $\trip(G)$ as the reflexive and transitive closure of the relations $(\al,\br) \lessdot (\al',\br')$ if there is some $g \in \br$ such that $\tog_g(\al,\br)=(\al',\br')$.  (\Cref{lem:flip_eq_cov} proves that this really does define a poset, and that these relations are exactly its covers.)
\end{definition}

We denote this poset by $\trip(G)$.  By construction, $\trip(G)$ is connected and has a minimum and a maximum element.  \Cref{fig:examples} gives some examples of independence posets on various orientations of a path of length four, while~\Cref{fig:tamari} realizes the Tamari lattice on 14 elements as an independence poset.  As we summarize below, trim lattices are special cases of independence posets, and so the class of independence posets includes all distributive lattices, Tamari lattices, Cambrian lattices, Fuss-Cambrian lattices, and torsion pairs of tilted finite type hereditary Artin algebras.

\begin{theorem}
\label{thm:tree_structure}
Fix a linear extension $\ell$ of $G$-order.  Flipping only in increasing order of $\ell$ gives a tree structure on the independent sets of $G$.
\end{theorem}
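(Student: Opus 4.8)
The statement asserts that there is a distinguished base top from which every top is reached by a unique strictly $\ell$-increasing sequence of nontrivial flips (a flip being nontrivial precisely when it acts at a vertex of $\al\cup\br$, equivalently when it moves), and that these sequences, ordered by truncation, form a tree. The plan is to (i) identify the base top, (ii) attach to each top a canonical $\ell$-increasing flip word from the base top, (iii) prove uniqueness of this word, and (iv) read off the tree. For (i), let $T_0$ denote the unique top with $\al=\emptyset$ (\Cref{thm:trip_ind}); this is the minimum of $\trip(G)$, since by \Cref{lem:flip_eq_cov} a top can be lowered only by a flip at an element of its $\al$, which $T_0$ lacks.

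For (ii) I would build the canonical word backwards: given $T=(\al,\br)$ with $\al\neq\emptyset$, let $g$ be the element of $\al$ appearing last in $\ell$, apply $\tog_g$, and repeat; each step is a flip at an element of $\al$, hence (\Cref{def:flip}) removes that element from $\al$, and the integer whose binary digits record membership in $\al$ with vertices later in $\ell$ weighted more strictly decreases, so the process halts at the unique top with $\al=\emptyset$, namely $T_0$; reversing the list of flipped vertices gives a flip word from $T_0$ to $T$. Everything hinges on this word being strictly $\ell$-increasing, i.e.\ on the recorded vertices being strictly $\ell$-decreasing. Here I would invoke the locality of flips exhibited in \Cref{fig:a_toggle}: flipping at $g$ alters $\al$-membership only for vertices that are $\le g$ in $G$-order, and every vertex that newly enters $\al$ is $<g$ in $G$-order. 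Because $\ell$ is a linear extension, $x<g$ in $G$-order forces $x$ to precede $g$ in $\ell$; so if $g$ is the $\ell$-last element of $\al(T)$, then every element of $\al(\tog_g(T))$ — whether inherited from $\al(T)\setminus\{g\}$ or freshly added — precedes $g$ in $\ell$, whence the next recorded vertex is strictly earlier.

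For (iii), let $(h_1,\dots,h_k)$ be any strictly $\ell$-increasing sequence of nontrivial flips with $T_0=T^{(0)},T^{(1)},\dots,T^{(k)}=T$. If $x$ is a vertex strictly after $h_k$ in $\ell$, then $x\not\le h_i$ in $G$-order for every $i$ (otherwise $x$ would precede $h_i$, hence $h_k$, in $\ell$), so by locality none of the flips touches the $\al$-status of $x$; since $\al(T_0)=\emptyset$, no vertex after $h_k$ in $\ell$ lies in $\al(T)$. The same reasoning shows $h_k\notin\al(T^{(k-1)})$, so, the flip at $h_k$ being nontrivial, $h_k\in\br(T^{(k-1)})$ and $\tog_{h_k}$ moves $h_k$ into $\al(T)$. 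Hence $h_k$ is exactly the $\ell$-last element of $\al(T)$ — the first vertex of the backward construction — and peeling off $\tog_{h_k}$ (an involution, \Cref{lem:flips_invs}) and inducting on $k$ identifies $(h_1,\dots,h_k)$ with the canonical word.

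For (iv): the assignment of its canonical word to a top is injective (replay the flips from $T_0$), its image is closed under deleting the last letter (by (iii) the truncation is the canonical word of the intermediate top it reaches), and $T_0$ is the only top with empty word; so the image is a set of strictly $\ell$-increasing words closed under truncation, which under ``append or remove the last letter'' is a rooted tree. Transporting this structure back through \Cref{thm:trip_ind} yields the asserted tree on the independent sets of $G$, rooted at $T_0$, with the edge at $T\neq T_0$ being the flip at the $\ell$-last element of $\al(T)$. I expect the $\ell$-monotonicity in step (ii) to be the main obstacle: it is the only place where the orientation of $G$, the linear extension $\ell$, and the greedy completion of \Cref{def:flip} genuinely interact, and the delicate point is to confirm from \Cref{def:flip} and \Cref{prop:flips_work} that the greedy re-completion after a flip at $g$ never puts into $\al$ a vertex that fails to be $<g$ in $G$-order.
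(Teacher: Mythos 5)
Your argument follows the same route as the paper's own proof (which runs through \Cref{lem:tree_structure}): identify $\hat 0$ as the unique top with $\al=\emptyset$, build the $\ell$-increasing word backwards by repeatedly flipping out the $\ell$-last element of $\al$, and use the locality of flips to show this element strictly decreases in $\ell$, giving termination and uniqueness. Your write-up is more careful than the paper's at exactly the point you flagged as delicate (that the greedy re-completion after flipping at $g$ never inserts into $\al$ a vertex failing to be $<g$; the potential worry about elements incomparable to $g$ is ruled out by the tightness of the starting pair, since any blocker of such an element is preserved by the flip), and the uniqueness argument in your step~(iii) spells out what the paper states in one sentence.
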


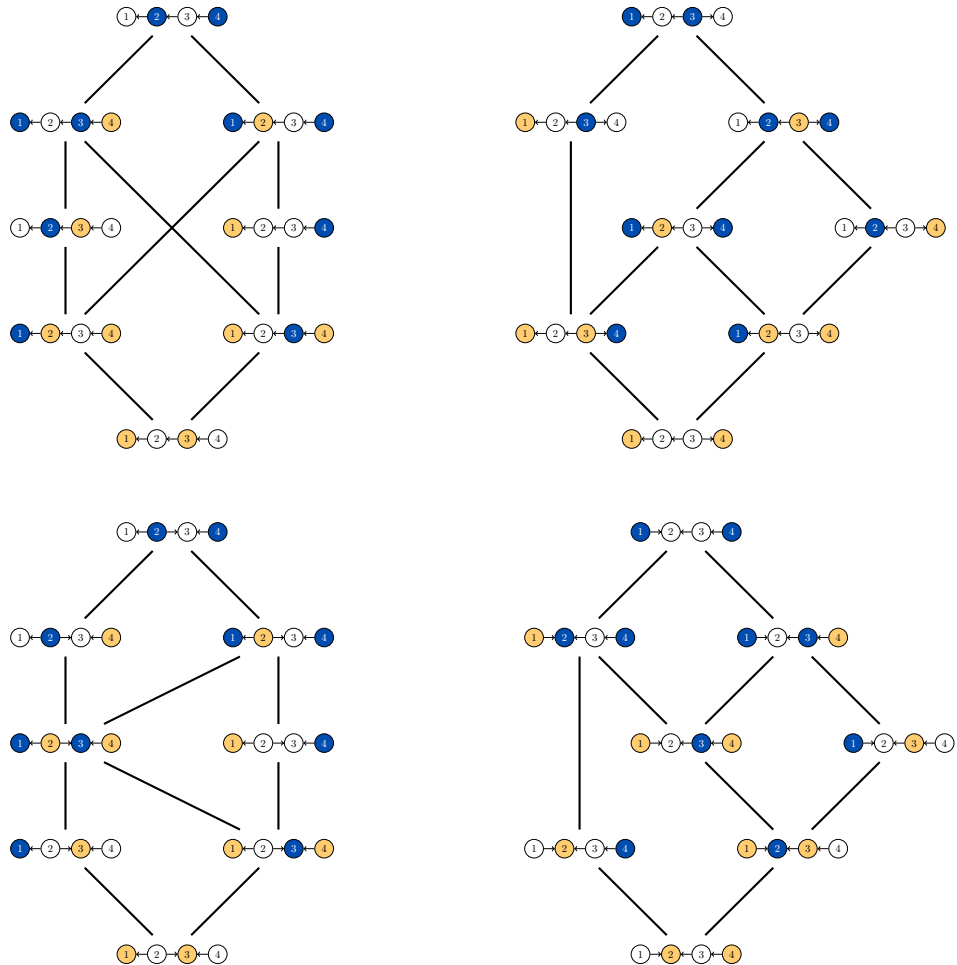
\begin{figure}[htbp]
\raisebox{-0.5\height}{\begin{tikzpicture}[scale=1.4]
\node (a) [align=center] at (0,0) {\scalebox{0.4}{\begin{tikzpicture}
\node (1) [circle,thick,draw,fill=meet] at (0,0) {1};
\node (2) [circle,thick,draw,fill=white] at (1,0) {2};
\node (3) [circle,thick,draw,fill=meet] at (2,0) {3};
\node (4) [circle,thick,draw,fill=white] at (3,0) {4};
\draw[->,thick] (2) to (1);
\draw[->,thick] (3) to (2);
\draw[->,thick] (4) to (3);
\end{tikzpicture}}};
\node (b) [align=center] at (-1,1) {\scalebox{0.4}{\begin{tikzpicture}
\node (1) [circle,thick,draw,fill=join,text=white] at (0,0) {1};
\node (2) [circle,thick,draw,fill=meet] at (1,0) {2};
\node (3) [circle,thick,draw,fill=white] at (2,0) {3};
\node (4) [circle,thick,draw,fill=meet] at (3,0) {4};
\draw[->,thick] (2) to (1);
\draw[->,thick] (3) to (2);
\draw[->,thick] (4) to (3);
\end{tikzpicture}}};
\node (c) [align=center] at (1,2) {\scalebox{0.4}{\begin{tikzpicture}
\node (1) [circle,thick,draw,fill=meet] at (0,0) {1};
\node (2) [circle,thick,draw,fill=white] at (1,0) {2};
\node (3) [circle,thick,draw,fill=white] at (2,0) {3};
\node (4) [circle,thick,draw,fill=join,text=white] at (3,0) {4};
\draw[->,thick] (2) to (1);
\draw[->,thick] (3) to (2);
\draw[->,thick] (4) to (3);
\end{tikzpicture}}};
\node (f) [align=center] at (-1,2) {\scalebox{0.4}{\begin{tikzpicture}
\node (1) [circle,thick,draw,fill=white] at (0,0) {1};
\node (2) [circle,thick,draw,fill=join,text=white] at (1,0) {2};
\node (3) [circle,thick,draw,fill=meet] at (2,0) {3};
\node (4) [circle,thick,draw,fill=white] at (3,0) {4};
\draw[->,thick] (2) to (1);
\draw[->,thick] (3) to (2);
\draw[->,thick] (4) to (3);
\end{tikzpicture}}};
\node (d) [align=center] at (1,1) {\scalebox{0.4}{\begin{tikzpicture}
\node (1) [circle,thick,draw,fill=meet] at (0,0) {1};
\node (2) [circle,thick,draw,fill=white] at (1,0) {2};
\node (3) [circle,thick,draw,fill=join,text=white] at (2,0) {3};
\node (4) [circle,thick,draw,fill=meet] at (3,0) {4};
\draw[->,thick] (2) to (1);
\draw[->,thick] (3) to (2);
\draw[->,thick] (4) to (3);
\end{tikzpicture}}};
\node (e) [align=center] at (1,3) {\scalebox{0.4}{\begin{tikzpicture}
\node (1) [circle,thick,draw,fill=join,text=white] at (0,0) {1};
\node (2) [circle,thick,draw,fill=meet] at (1,0) {2};
\node (3) [circle,thick,draw,fill=white] at (2,0) {3};
\node (4) [circle,thick,draw,fill=join,text=white] at (3,0) {4};
\draw[->,thick] (2) to (1);
\draw[->,thick] (3) to (2);
\draw[->,thick] (4) to (3);
\end{tikzpicture}}};
\node (g) [align=center] at (-1,3) {\scalebox{0.4}{\begin{tikzpicture}
\node (1) [circle,thick,draw,fill=join,text=white] at (0,0) {1};
\node (2) [circle,thick,draw,fill=white] at (1,0) {2};
\node (3) [circle,thick,draw,fill=join,text=white] at (2,0) {3};
\node (4) [circle,thick,draw,fill=meet] at (3,0) {4};
\draw[->,thick] (2) to (1);
\draw[->,thick] (3) to (2);
\draw[->,thick] (4) to (3);
\end{tikzpicture}}};
\node (i) [align=center] at (0,4) {\scalebox{0.4}{\begin{tikzpicture}
\node (1) [circle,thick,draw,fill=white] at (0,0) {1};
\node (2) [circle,thick,draw,fill=join,text=white] at (1,0) {2};
\node (3) [circle,thick,draw,fill=white] at (2,0) {3};
\node (4) [circle,thick,draw,fill=join,text=white] at (3,0) {4};
\draw[->,thick] (2) to (1);
\draw[->,thick] (3) to (2);
\draw[->,thick] (4) to (3);
\end{tikzpicture}}};
\draw[-,thick] (a) to  (b) to  (e) to  (i);
\draw[-,thick] (b) to (f) to (g);
\draw[-,thick] (d) to (c) to (e);
\draw[-,thick] (g) to (i);
\draw[-,thick] (a) to (d);
\draw[-,thick] (d) to  (g);
\end{tikzpicture}} \hfill
\raisebox{-0.5\height}{\begin{tikzpicture}[scale=1.4]
\node (a) [align=center] at (0,0) {\scalebox{0.4}{\begin{tikzpicture}
\node (1) [circle,thick,draw,fill=meet] at (0,0) {1};
\node (2) [circle,thick,draw,fill=white] at (1,0) {2};
\node (3) [circle,thick,draw,fill=white] at (2,0) {3};
\node (4) [circle,thick,draw,fill=meet] at (3,0) {4};
\draw[->,thick]  (2) to (1);
\draw[->,thick] (3) to (2);
\draw[->,thick] (3) to (4);
\end{tikzpicture}}};
\node (b) [align=center] at (-1,1) {\scalebox{0.4}{\begin{tikzpicture}
\node (1) [circle,thick,draw,fill=meet] at (0,0) {1};
\node (2) [circle,thick,draw,fill=white] at (1,0) {2};
\node (3) [circle,thick,draw,fill=meet] at (2,0) {3};
\node (4) [circle,thick,draw,fill=join,text=white] at (3,0) {4};
\draw[->,thick]  (2) to (1);
\draw[->,thick] (3) to (2);
\draw[->,thick] (3) to (4);
\end{tikzpicture}}};
\node (c) [align=center] at (2,2) {\scalebox{0.4}{\begin{tikzpicture}
\node (1) [circle,thick,draw,fill=white] at (0,0) {1};
\node (2) [circle,thick,draw,fill=join,text=white] at (1,0) {2};
\node (3) [circle,thick,draw,fill=white] at (2,0) {3};
\node (4) [circle,thick,draw,fill=meet] at (3,0) {4};
\draw[->,thick]  (2) to (1);
\draw[->,thick] (3) to (2);
\draw[->,thick] (3) to (4);
\end{tikzpicture}}};
\node (f) [align=center] at (0,2) {\scalebox{0.4}{\begin{tikzpicture}
\node (1) [circle,thick,draw,fill=join,text=white] at (0,0) {1};
\node (2) [circle,thick,draw,fill=meet] at (1,0) {2};
\node (3) [circle,thick,draw,fill=white] at (2,0) {3};
\node (4) [circle,thick,draw,fill=join,text=white] at (3,0) {4};
\draw[->,thick]  (2) to (1);
\draw[->,thick] (3) to (2);
\draw[->,thick] (3) to (4);
\end{tikzpicture}}};
\node (d) [align=center] at (1,1) {\scalebox{0.4}{\begin{tikzpicture}
\node (1) [circle,thick,draw,fill=join,text=white] at (0,0) {1};
\node (2) [circle,thick,draw,fill=meet] at (1,0) {2};
\node (3) [circle,thick,draw,fill=white] at (2,0) {3};
\node (4) [circle,thick,draw,fill=meet] at (3,0) {4};
\draw[->,thick]  (2) to (1);
\draw[->,thick] (3) to (2);
\draw[->,thick] (3) to (4);
\end{tikzpicture}}};
\node (e) [align=center] at (1,3) {\scalebox{0.4}{\begin{tikzpicture}
\node (1) [circle,thick,draw,fill=white] at (0,0) {1};
\node (2) [circle,thick,draw,fill=join,text=white] at (1,0) {2};
\node (3) [circle,thick,draw,fill=meet] at (2,0) {3};
\node (4) [circle,thick,draw,fill=join,text=white] at (3,0) {4};
\draw[->,thick]  (2) to (1);
\draw[->,thick] (3) to (2);
\draw[->,thick] (3) to (4);
\end{tikzpicture}}};
\node (g) [align=center] at (-1,3) {\scalebox{0.4}{\begin{tikzpicture}
\node (1) [circle,thick,draw,fill=meet] at (0,0) {1};
\node (2) [circle,thick,draw,fill=white] at (1,0) {2};
\node (3) [circle,thick,draw,fill=join,text=white] at (2,0) {3};
\node (4) [circle,thick,draw,fill=white] at (3,0) {4};
\draw[->,thick]  (2) to (1);
\draw[->,thick] (3) to (2);
\draw[->,thick] (3) to (4);
\end{tikzpicture}}};
\node (i) [align=center] at (0,4) {\scalebox{0.4}{\begin{tikzpicture}
\node (1) [circle,thick,draw,fill=join,text=white] at (0,0) {1};
\node (2) [circle,thick,draw,fill=white] at (1,0) {2};
\node (3) [circle,thick,draw,fill=join,text=white] at (2,0) {3};
\node (4) [circle,thick,draw,fill=white] at (3,0) {4};
\draw[->,thick] (2) to (1);
\draw[->,thick] (3) to (2);
\draw[->,thick] (3) to (4);
\end{tikzpicture}}};
\draw[-,thick] (a) to  (b);
\draw[-,thick] (f) to (e) to  (i);
\draw[-,thick] (b) to (g);
\draw[-,thick] (b) to (f);
\draw[-,thick] (d) to (f);
\draw[-,thick] (d) to (c) to (e);
\draw[-,thick] (g) to (i);
\draw[-,thick] (a) to (d);
\end{tikzpicture}}
\vspace{2em}

\raisebox{-0.5\height}{\begin{tikzpicture}[scale=1.4]
\node (a) [align=center] at (0,0) {\scalebox{0.4}{\begin{tikzpicture}
\node (1) [circle,thick,draw,fill=meet] at (0,0) {1};
\node (2) [circle,thick,draw,fill=white] at (1,0) {2};
\node (3) [circle,thick,draw,fill=meet] at (2,0) {3};
\node (4) [circle,thick,draw,fill=white] at (3,0) {4};
\draw[->,thick] (2) to (1);
\draw[->,thick] (2) to (3);
\draw[->,thick] (4) to (3);
\end{tikzpicture}}};
\node (b) [align=center] at (-1,1) {\scalebox{0.4}{\begin{tikzpicture}
\node (1) [circle,thick,draw,fill=join,text=white] at (0,0) {1};
\node (2) [circle,thick,draw,fill=white] at (1,0) {2};
\node (3) [circle,thick,draw,fill=meet] at (2,0) {3};
\node (4) [circle,thick,draw,fill=white] at (3,0) {4};
\draw[->,thick] (2) to (1);
\draw[->,thick] (2) to (3);
\draw[->,thick] (4) to (3);
\end{tikzpicture}}};
\node (c) [align=center] at (1,2) {\scalebox{0.4}{\begin{tikzpicture}
\node (1) [circle,thick,draw,fill=meet] at (0,0) {1};
\node (2) [circle,thick,draw,fill=white] at (1,0) {2};
\node (3) [circle,thick,draw,fill=white] at (2,0) {3};
\node (4) [circle,thick,draw,fill=join,text=white] at (3,0) {4};
\draw[->,thick] (2) to (1);
\draw[->,thick](2) to (3);
\draw[->,thick] (4) to (3);
\end{tikzpicture}}};
\node (f) [align=center] at (-1,2) {\scalebox{0.4}{\begin{tikzpicture}
\node (1) [circle,thick,draw,fill=join,text=white] at (0,0) {1};
\node (2) [circle,thick,draw,fill=meet] at (1,0) {2};
\node (3) [circle,thick,draw,fill=join,text=white] at (2,0) {3};
\node (4) [circle,thick,draw,fill=meet] at (3,0) {4};
\draw[->,thick] (2) to (1);
\draw[->,thick] (2) to (3);
\draw[->,thick] (4) to (3);
\end{tikzpicture}}};
\node (d) [align=center] at (1,1) {\scalebox{0.4}{\begin{tikzpicture}
\node (1) [circle,thick,draw,fill=meet] at (0,0) {1};
\node (2) [circle,thick,draw,fill=white] at (1,0) {2};
\node (3) [circle,thick,draw,fill=join,text=white] at (2,0) {3};
\node (4) [circle,thick,draw,fill=meet] at (3,0) {4};
\draw[->,thick] (2) to (1);
\draw[->,thick](2) to (3);
\draw[->,thick] (4) to (3);
\end{tikzpicture}}};
\node (e) [align=center] at (1,3) {\scalebox{0.4}{\begin{tikzpicture}
\node (1) [circle,thick,draw,fill=join,text=white] at (0,0) {1};
\node (2) [circle,thick,draw,fill=meet] at (1,0) {2};
\node (3) [circle,thick,draw,fill=white] at (2,0) {3};
\node (4) [circle,thick,draw,fill=join,text=white] at (3,0) {4};
\draw[->,thick] (2) to (1);
\draw[->,thick] (2) to (3);
\draw[->,thick] (4) to (3);
\end{tikzpicture}}};
\node (g) [align=center] at (-1,3) {\scalebox{0.4}{\begin{tikzpicture}
\node (1) [circle,thick,draw,fill=white] at (0,0) {1};
\node (2) [circle,thick,draw,fill=join,text=white] at (1,0) {2};
\node (3) [circle,thick,draw,fill=white] at (2,0) {3};
\node (4) [circle,thick,draw,fill=meet] at (3,0) {4};
\draw[->,thick] (2) to (1);
\draw[->,thick] (2) to (3);
\draw[->,thick] (4) to (3);
\end{tikzpicture}}};
\node (i) [align=center] at (0,4) {\scalebox{0.4}{\begin{tikzpicture}
\node (1) [circle,thick,draw,fill=white] at (0,0) {1};
\node (2) [circle,thick,draw,fill=join,text=white] at (1,0) {2};
\node (3) [circle,thick,draw,fill=white] at (2,0) {3};
\node (4) [circle,thick,draw,fill=join,text=white] at (3,0) {4};
\draw[->,thick] (2) to (1);
\draw[->,thick] (2) to (3);
\draw[->,thick] (4) to (3);
\end{tikzpicture}}};
\draw[-,thick] (a) to (b) to (f) to (g) to (i);
\draw[-,thick] (a) to (d) to (c) to (e) to (i);
\draw[-,thick] (d) to (f) to (e);
\end{tikzpicture}} \hfill
\raisebox{-0.5\height}{\begin{tikzpicture}[scale=1.4]
\node (a) [align=center] at (0,0) {\scalebox{0.4}{\begin{tikzpicture}
\node (1) [circle,thick,draw,fill=white] at (0,0) {1};
\node (2) [circle,thick,draw,fill=meet] at (1,0) {2};
\node (3) [circle,thick,draw,fill=white] at (2,0) {3};
\node (4) [circle,thick,draw,fill=meet] at (3,0) {4};
\draw[->,thick] (1) to (2);
\draw[->,thick] (3) to (2);
\draw[->,thick] (4) to (3);
\end{tikzpicture}}};
\node (b) [align=center] at (-1,1) {\scalebox{0.4}{\begin{tikzpicture}
\node (1) [circle,thick,draw,fill=white] at (0,0) {1};
\node (2) [circle,thick,draw,fill=meet] at (1,0) {2};
\node (3) [circle,thick,draw,fill=white] at (2,0) {3};
\node (4) [circle,thick,draw,fill=join,text=white] at (3,0) {4};
\draw[->,thick] (1) to (2);
\draw[->,thick] (3) to (2);
\draw[->,thick] (4) to (3);
\end{tikzpicture}}};
\node (c) [align=center] at (2,2) {\scalebox{0.4}{\begin{tikzpicture}
\node (1) [circle,thick,draw,fill=join,text=white] at (0,0) {1};
\node (2) [circle,thick,draw,fill=white] at (1,0) {2};
\node (3) [circle,thick,draw,fill=meet] at (2,0) {3};
\node (4) [circle,thick,draw,fill=white] at (3,0) {4};
\draw[->,thick] (1) to (2);
\draw[->,thick] (3) to (2);
\draw[->,thick] (4) to (3);
\end{tikzpicture}}};
\node (f) [align=center] at (0,2) {\scalebox{0.4}{\begin{tikzpicture}
\node (1) [circle,thick,draw,fill=meet] at (0,0) {1};
\node (2) [circle,thick,draw,fill=white] at (1,0) {2};
\node (3) [circle,thick,draw,fill=join,text=white] at (2,0) {3};
\node (4) [circle,thick,draw,fill=meet] at (3,0) {4};
\draw[->,thick] (1) to (2);
\draw[->,thick] (3) to (2);
\draw[->,thick] (4) to (3);
\end{tikzpicture}}};
\node (d) [align=center] at (1,1) {\scalebox{0.4}{\begin{tikzpicture}
\node (1) [circle,thick,draw,fill=meet] at (0,0) {1};
\node (2) [circle,thick,draw,fill=join,text=white] at (1,0) {2};
\node (3) [circle,thick,draw,fill=meet] at (2,0) {3};
\node (4) [circle,thick,draw,fill=white] at (3,0) {4};
\draw[->,thick] (1) to (2);
\draw[->,thick] (3) to (2);
\draw[->,thick] (4) to (3);
\end{tikzpicture}}};
\node (e) [align=center] at (1,3) {\scalebox{0.4}{\begin{tikzpicture}
\node (1) [circle,thick,draw,fill=join,text=white] at (0,0) {1};
\node (2) [circle,thick,draw,fill=white] at (1,0) {2};
\node (3) [circle,thick,draw,fill=join,text=white] at (2,0) {3};
\node (4) [circle,thick,draw,fill=meet] at (3,0) {4};
\draw[->,thick] (1) to (2);
\draw[->,thick] (3) to (2);
\draw[->,thick] (4) to (3);
\end{tikzpicture}}};
\node (g) [align=center] at (-1,3) {\scalebox{0.4}{\begin{tikzpicture}
\node (1) [circle,thick,draw,fill=meet] at (0,0) {1};
\node (2) [circle,thick,draw,fill=join,text=white] at (1,0) {2};
\node (3) [circle,thick,draw,fill=white] at (2,0) {3};
\node (4) [circle,thick,draw,fill=join,text=white] at (3,0) {4};
\draw[->,thick] (1) to (2);
\draw[->,thick] (3) to (2);
\draw[->,thick] (4) to (3);
\end{tikzpicture}}};
\node (i) [align=center] at (0,4) {\scalebox{0.4}{\begin{tikzpicture}
\node (1) [circle,thick,draw,fill=join,text=white] at (0,0) {1};
\node (2) [circle,thick,draw,fill=white] at (1,0) {2};
\node (3) [circle,thick,draw,fill=white] at (2,0) {3};
\node (4) [circle,thick,draw,fill=join,text=white] at (3,0) {4};
\draw[->,thick] (1) to (2);
\draw[->,thick] (3) to (2);
\draw[->,thick] (4) to (3);
\end{tikzpicture}}};
\draw[-,thick] (a) to  (b);
\draw[-,thick] (f) to (e) to  (i);
\draw[-,thick] (b) to (g);
\draw[-,thick] (d) to (f) to (g);
\draw[-,thick] (d) to (c) to (e);
\draw[-,thick] (g) to (i);
\draw[-,thick] (a) to (d);
\end{tikzpicture}}
\caption{Independence posets for four orientations of a path of length $4$.  Each poset has eight elements (the tops drawn in blue and orange as in~\Cref{fig:first_examples,fig:a_toggle}), corresponding to the eight independent sets in the underlying undirected graph.  The top left poset is not a lattice, the bottom left poset is a distributive lattice, and both posets on the right are trim lattices.}
\label{fig:examples}
\end{figure}

\subsection{Independence lattices are trim}

A \defn{trim lattice} is an extremal left-modular lattice~\cite{thomas2006analogue}.  Trim lattices were introduced to serve as analogues of distributive lattices without the graded hypothesis: a graded trim lattice is a distributive lattice, and every distributive lattice is trim.  

Our independence posets further generalize distributive lattices by removing the lattice requirement: an independence poset that is a lattice is always a trim lattice, and every trim lattice can be realized as an independence poset for a unique (up to isomorphism) acyclic directed graph $G$.  In other words, the common intersection of lattices and independence posets are exactly the trim lattices.

\begin{theorem}
\label{thm:lattice_trim}
If $\trip(G)$ is a lattice, then it is a trim lattice.
\end{theorem}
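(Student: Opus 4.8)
The plan is to verify the two conditions defining a trim lattice: that $\trip(G)$ is \emph{extremal} (its length equals its number of join-irreducibles, which in turn equals its number of meet-irreducibles) and that it is \emph{left-modular} (it admits a maximal chain of left-modular elements). Throughout write $n=|G|$ and keep the linear extension $\ell=(g_1,\dots,g_n)$ fixed.

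For extremality I would first pin down the irreducibles. By \Cref{def:independence_poset} and \Cref{lem:flip_eq_cov}, the elements covering $(\al,\br)$ are exactly the $\tog_g(\al,\br)$ with $g\in\br$; since $\tog_g$ is an involution (\Cref{lem:flips_invs}) that moves $g$ out of $\br$ and into the $\al$-part, the elements covered by $(\al,\br)$ are exactly the $\tog_g(\al,\br)$ with $g\in\al$, and a short argument using that flips are involutions altering only coordinates $G$-comparable to $g$ shows these are pairwise distinct. Hence $(\al,\br)$ is join-irreducible iff $|\al|=1$, and by \Cref{thm:trip_ind} the tops with $|\al|=1$ are in bijection with the $n$ singleton independent sets, hence with the vertices of $G$; dually there are exactly $n$ meet-irreducibles. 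Next, starting from $\hat 0$ (whose $\al$-part is empty) and flipping upward at successive vertices, I would exhibit a maximal chain $\hat 0=z_0\lessdot z_1\lessdot\cdots\lessdot z_n=\hat 1$ of length $n$ in which $g_i$ is the vertex flipped at the $i$-th step and $\al(z_i)$ picks up the join-irreducible indexed by $g_i$; \Cref{thm:tree_structure} is the natural tool for producing this chain and controlling which vertex is flipped at each step. Finally, the standard fact that in any finite lattice the length is at most the number of join-irreducibles (each cover of a maximal chain is witnessed by a distinct join-irreducible lying below its top but not its bottom) forces the length to be exactly $n$, so $\trip(G)$ is extremal.

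For left-modularity I would show the chain $z_0\lessdot\cdots\lessdot z_n$ just built consists of left-modular elements. Because we are only assuming $\trip(G)$ is a lattice, the first task is to extract a usable description of $\vee$ and $\wedge$ on tops from the flip/orthogonality structure; the feature to isolate is that the flip $\tog_{g_i}$ separates a top into its part strictly below $g_i$ and its part not below $g_i$ in $G$-order, so that ``join with $z_i$'' and ``meet with $z_i$'' each only disturb a controlled window of a top. Granting such a description, checking, for $y\le z$, the single nontrivial inclusion $(y\vee z_i)\wedge z\le y\vee(z_i\wedge z)$ becomes a coordinatewise comparison. An alternative route is to label the cover $(\al,\br)\lessdot\tog_g(\al,\br)$ by the vertex $g$, totally ordered via $\ell$: using \Cref{thm:tree_structure} one shows every interval has a unique label-increasing maximal chain (and that it is lexicographically least), i.e.\ that this is an EL-labeling, and then one invokes the standard correspondence between such labelings and left-modular maximal chains, matching the $\gamma$-labeling induced by the canonical chain $z_\bullet$ with it.

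I expect the left-modularity step to be the main obstacle: the count of irreducibles and the length computation are routine once \Cref{thm:trip_ind} and \Cref{thm:tree_structure} are in hand, whereas left-modularity forces one either to squeeze an explicit formula for $\wedge$ and $\vee$ out of the bare lattice hypothesis and then control how the $z_i$ behave under flips, or to set up the EL-labeling framework carefully. In both approaches the real content is understanding how meets and joins in $\trip(G)$ interact with the $G$-order localization performed by a single flip.
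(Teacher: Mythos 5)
Your extremality argument is essentially the paper's own (its \Cref{lem:lattice_implies_extremal} is exactly the ``chain of length $|G|$ by \Cref{lem:max_chain}, at most $|G|$ join-irreducibles since only singleton $\al$'s give join-irreducibles by \Cref{thm:trip_ind}'' reasoning you sketch), so that half is fine and matches. The gap is in the left-modularity half, and it is a real one: you correctly identify it as the hard step but then do not close it, and neither of your two proposed routes works as stated. Route (a) asks you to ``squeeze an explicit formula for $\wedge$ and $\vee$ out of the bare lattice hypothesis'' --- but the whole difficulty is that the top description gives you cover relations, not a usable description of arbitrary meets and joins; without additional structure there is no coordinatewise formula to verify. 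Route (b) invokes a ``standard correspondence between [EL-]labelings and left-modular maximal chains,'' but no such converse exists: EL-shellability does not imply left-modularity, and the genuine correspondence (McNamara--Thomas) is with \emph{interpolating} labelings, a condition that is at least as hard to verify directly as left-modularity itself and which you have not checked.

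The paper avoids both problems by not verifying left-modularity from its definition at all. Once it knows $\trip(G)$ is extremal, it passes to Markowsky's representation of extremal lattices by maximal orthogonal pairs (mops), and then applies \Cref{thm:overlapping} (quoting [TW]): an extremal lattice is trim if and only if every cover relation $x\lessdot y$ is \emph{overlapping}, i.e.\ $x_\MM\cap y_\JJ\ne\emptyset$. This is precisely the ``usable description of $\vee$ and $\wedge$'' you wanted, supplied by a prior structure theorem rather than extracted from the bare lattice axioms. It then uses \Cref{lem:extremal_needs} --- which says that if $g\in\al$ then $(\al,\br)\in\trip^g(G)$, i.e.\ $(\al,\br)\ge j_g$, so $g\in X$; and dually $\br\subseteq Y$ --- to conclude that for a cover $(\al,\br)\lessdot\tog_g(\al,\br)$ one has $g\in\al'\cap\br\subseteq X'\cap Y$, so every cover is overlapping. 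To complete your proof along your own lines you would need to independently rediscover this overlapping criterion (or prove a left-modularity-implying labeling result in the ungraded setting), which is substantial extra work you have not done; as written, the proposal establishes extremality but not trimness.
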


Following Markowsky~\cite{markowsky1975factorization,markowsky1992primes,TW}, a \defn{{\bf m}aximal {\bf o}rthogonal {\bf p}air} (or mop) in an acyclic directed graph $G$ is a pair of sets $(X,Y)$ such that no edges run from $X$ to $Y$, and such that $X$ and $Y$ are both maximal with respect to this condition.  Markowsky's generalization of Birkhoff's fundamental theorem of finite distributive lattices states that any extremal lattice has a representation $\LL(G)$ as the lattice of maximal orthogonal pairs of a unique (up to isomorphism) acyclic directed graph $G$.

\begin{theorem}
If $\LL(G)$ is trim, then $\LL(G) \simeq \trip(G)$.
\label{thm:trim_eq}
\end{theorem}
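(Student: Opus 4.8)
The plan is to build an explicit order isomorphism $\LL(G)\simeq\trip(G)$ by matching the two $G$-edge-labellings that both sides carry. By Markowsky's theorem the elements of the trim lattice $\LL(G)$ are the maximal orthogonal pairs $(X,Y)$ of $G$, and since $\LL(G)$ is trim it is in particular extremal of length $n=|G|$: a left-modular maximal chain $\hat 0=\mathbf{x}_0\lessdot\cdots\lessdot\mathbf{x}_n=\hat 1$ identifies the vertices of $G$ with $\{1,\dots,n\}$ and attaches to each vertex $g$ a join-irreducible $j_g$ and a meet-irreducible $m_g$, so that the cover $z\lessdot z'$ of $\LL(G)$ gets the unique label $g$ with $j_g\le z'$ but $j_g\not\le z$ (equivalently $z\le m_g$ but $z'\not\le m_g$). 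On the $\trip(G)$ side, each cover is a flip $\tog_g$ for a unique $g\in G$ by \Cref{def:independence_poset} and \Cref{lem:flip_eq_cov}, so $\trip(G)$ is a $G$-edge-labelled poset with a least and a greatest element, and by \Cref{thm:tree_structure} the labels along a saturated chain from $\hat 0$ to $\hat 1$ form a permutation of $G$. The theorem is the assertion that these two labelled posets coincide.

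First I would set up the correspondence on objects. From a mop $(X,Y)$ I would extract a ``boundary'' pair $(\al,\br)$ of independent sets of $G$ — determined by how $X$ and $Y$ meet the vertex-labelled chain $\mathbf{x}_\bullet$ — and conversely, starting from an independent set, \Cref{thm:trip_ind} completes it uniquely to a top in either coordinate, and the greedy completions in $\ell'$ and $\ell$ re-inflate $(\al,\br)$ to a mop. Then I would verify: (i) $(\al,\br)$ consists of independent sets with no edge from $\al$ to $\br$, which translates directly from the no-edges-from-$X$-to-$Y$ condition; (ii) $(\al,\br)$ is \emph{tight}, i.e.\ an element of $\trip(G)$ — increasing an element of $\al$, decreasing an element of $\br$, or adjoining a vertex to either would enlarge $X$ or $Y$ in a manner forbidden by maximality of the mop, once left-modularity of $\mathbf{x}_\bullet$ has been used to control the greedy completions; and (iii) the two assignments are mutually inverse, so $(X,Y)\mapsto(\al,\br)$ is a bijection $\LL(G)\leftrightarrow\trip(G)$. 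Statements (ii)--(iii) amount to saying that the boundary antichains lose no information, and this is exactly the point at which trimness — not merely extremality — is needed: for a general extremal $\LL(G)$ the poset $\trip(G)$ need not be a lattice at all.

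Next I would promote the bijection to an order isomorphism by matching covers and labels. A cover $z\lessdot z'$ of $\LL(G)$ with label $g$ moves $g$ from the $Y$-side to the $X$-side at one place along $\mathbf{x}_\bullet$ while leaving frozen everything incomparable to that place; under the boundary correspondence this is precisely the recipe of \Cref{def:flip} for $\tog_g$ applied to the corresponding top — keep the elements of $\al$ not below $g$ and of $\br$ not above $g$, switch $g$, and greedily recomplete in $\ell'$ and $\ell$. Since the bijection sends covers to covers with matching labels in both directions, and each poset is connected with a minimum and a maximum, it is an order isomorphism, which is \Cref{thm:trim_eq}. (Equivalently one could first deduce from this cover-matching that $\trip(G)$ is a lattice, then apply \Cref{thm:lattice_trim} and Markowsky's uniqueness to conclude its graph is $G$; the direct route subsumes this.)

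The real obstacle I anticipate is step (ii) together with the cover-compatibility check: maximality of a mop is a condition on arbitrary subsets of $G$, whereas tightness and flips concern moving single vertices of independent sets along $G$-order, and bridging the two forces an essential use of the left-modular chain — in effect one must show that the greedy completion procedures behind \Cref{thm:trip_ind} and \Cref{def:flip} reproduce Markowsky's closure operators. I would organize this as an induction along a maximal chain, using \Cref{thm:tree_structure} to sequence the flips, and I would expect the non-graded parts of $\LL(G)$ (where maximal chains have unequal lengths) to be the delicate case.
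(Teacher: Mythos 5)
Your plan is a genuinely different route from the paper's own proof of~\Cref{thm:trim_eq}, but it is a viable one, and in fact it is essentially the route the paper takes later on in~\Cref{sec:bijections}. The paper's proof of~\Cref{thm:trim_eq} itself avoids constructing a global map: it inducts on $|G|$ by peeling off a minimal vertex $g$, invoking the parallel decompositions $\LL(G)=\LL_g(G)\sqcup\LL^g(G)$ (\Cref{thm:trim_recurrence}) and $\trip(G)=\trip_g(G)\sqcup\trip^g(G)$ (\Cref{thm:decomposition}), applying the inductive hypothesis to $G_g$ and $G_g^\circ$, and then matching the cover relations between the two halves using the pairing of \cite[Lemma 3.15]{TW} on one side and~\Cref{lem:down} plus the definition of flip on the other. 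This offloads almost all the work to the recursive structure lemmas. What you propose instead is to build the explicit bijection $(X,Y)\mapsto(\down(X,Y),\up(X,Y))$, verify that the image is a tight orthogonal pair, and match cover labels globally; this is what the paper does for $\phi$ in the unnamed theorem preceding~\Cref{thm:trim_is_ind} and in~\Cref{thm:trim_is_ind} itself. The direct route is more transparent about what the isomorphism \emph{is}, which the recursive proof leaves implicit, but it has to earn tightness: the paper does so via~\Cref{thm:down_ind} and~\Cref{thm:overlapping} (so that any increase of an element of $\down(x)$ produces a strictly larger element of $\LL(G)$, destroying orthogonality), not via the left-modular chain $\mathbf{x}_\bullet$ directly. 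Your description of extracting $(\al,\br)$ ``by how $X$ and $Y$ meet the chain $\mathbf{x}_\bullet$'' is not quite the right mechanism --- the correct extraction is either by edge labels of cover relations at $x$ ($\down$ and $\up$), or equivalently by the greedy antichain-extraction from $X$ in reverse linear extension order and from $Y$ in linear extension order, as described after~\Cref{thm:trim_is_ind}. Finally, the parenthetical alternative you mention (deduce $\trip(G)$ is a lattice, apply~\Cref{thm:lattice_trim}, then invoke Markowsky uniqueness to pin down the graph) would be circular as stated, since the paper's~\Cref{thm:galois_eq} itself relies on~\Cref{thm:trim_eq}; you correctly note that the direct route subsumes it, but it should not be presented as an equivalent fallback.
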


Moreover, if $\trip(G)$ is a lattice, then $\trip(G) \simeq \LL(G)$.  We provide explicit bijections between tops and mops in~\Cref{sec:bijections}.  The common lattice of tops and mops offer different advantages.  Cover relations $x \lessdot y \in \trip(G)$ are easy to compute using tops (by flips), but harder to see using mops.  Similarly, relations $x<y \in \LL(G)$ are easy to compute using mops (by inclusion), but harder to see using tops.

\Cref{thm:five_sets} characterizes when $\trip(G)$ is a lattice---or, equivalently, when $\LL(G)$ is trim---in terms of graph-theoretic properties of $G$.  We discuss some properties of independence posets that are not lattices in~\Cref{sec:nonlattices}.

\subsection{Toggles}
By~\Cref{thm:trip_ind}, the number of elements of $\trip(G)$ is equal to the number of independent sets in the \emph{undirected} graph $G$.  For $g$ a minimal or maximal element of $G$, there is a natural \defn{toggle} operation $\mut_g$ (similar to quiver mutation) that  reverses every edge incident to $g$; this operation induces a bijection between $\trip(G)$ and $\trip(\mut_g(G))$ whose effect essentially interchanges the relative order of a decomposition of $\trip(G)$ into two intervals using the element $g$.

By limiting ourselves to only toggling at maximal elements (or at minimal elements) and only keeping track of the sets $\br$ (or $\al$), this bijection can be computed cleanly at the level of independent sets (see~\Cref{eq:mut2} and ~\Cref{thm:mutation_bijection}).
Some examples of toggles on an orientation of the path of length three are given in~\Cref{fig:toggle}.  See also~\cite{striker2012promotion,striker2016rowmotion}.

\subsection{Rowmotion}

Since both components of a top are independent sets, and each independent set can be completed to a top in two ways, it is natural to define \defn{rowmotion} by sending one completion to the other:
\begin{equation}\row(\al,\br):=\text{the unique } (\al',\br') \in \trip(G) \text{ with } \al=\br'.\label{eq:global_row}\end{equation}

It turns out that there are two other equally natural (but slower) ways to compute rowmotion.   The distinction between these two slower methods was not apparent when rowmotion had been studied at the level of distributive lattices~\cite{fon1993orbits,cameron1995orbits,striker2012promotion}, but our generalized setting of independence posets reveals their differences: one method computes rowmotion as a composition of flips within a \emph{fixed} independence poset (\defn{rowmotion in slow motion}), while the second relies on a sequence of toggles and the corresponding bijections between independence posets for different orientations of the same underlying undirected graph (\defn{rowmotion by deformotion}).

\begin{theorem}
Let $G$ be a directed acyclic graph.  Then rowmotion can be computed in slow motion and by deformotion---that is, \[\row = \prod_{g \in \ell} \tog_{g} = \prod_{g \in \ell'} \mut_{g}\] for any linear extension $\ell$ and reverse linear extension $\ell'$ of $G$-order.
\label{thm:main_thm}
\end{theorem}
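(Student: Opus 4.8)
The plan is to prove the two equalities $\row = \prod_{g\in\ell}\tog_g$ and $\row = \prod_{g\in\ell'}\mut_g$ essentially separately, since they involve quite different machinery. For the \emph{slow motion} identity, fix a top $(\al,\br)$ and let $(\al',\br')=\row(\al,\br)$, so $\al=\br'$. I would run the composite $\prod_{g\in\ell}\tog_g$ (applying flips in increasing $\ell$-order) and track what happens to an individual vertex $g$ as it is reached. The crucial structural input is \Cref{thm:tree_structure}: flipping only in increasing $\ell$-order gives a tree structure on independent sets, which means that at the moment we flip at $g$, the portion of the configuration ``below $g$'' (in $\ell$, hence weakly including everything $<g$ in $G$-order) has already been processed and frozen into a specific form, while everything ``above'' is still in its original state restricted appropriately. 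I would argue by induction on $\ell$ that after processing the first $k$ vertices, the current top agrees with the target $(\al',\br')$ on those $k$ vertices and agrees with the greedy completion of $(\al',\br')$'s data on the remaining ones; the key local claim is that the flip at $g$, given this mixed state, places $g$ in $\al'$ or $\br'$ exactly according to whether $g\in\br'=\al$ or $g\in\al'$. Here I would lean on \Cref{thm:trip_ind} (unique completion of an independent set on each side) to identify the greedily-completed tail with the correct one, and on \Cref{def:flip}'s greedy rule matching the greedy rule implicit in \Cref{thm:trip_ind}'s construction.

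For the \emph{deformotion} identity $\row=\prod_{g\in\ell'}\mut_g$, note first that each $\mut_g$ is only defined when $g$ is maximal (or minimal) in the current orientation, and reverses all edges at $g$; applying them in reverse-linear-extension order $\ell'$ means we always toggle at a vertex that has become maximal, and after the full pass the orientation of $G$ is unchanged (each edge gets reversed an even number of times — in fact exactly twice, once when each endpoint is toggled — wait, once when the head-side endpoint is toggled making it no longer the head, and again... this needs the careful bookkeeping that the net effect is the identity on $G$). I would verify the orientation returns to $G$ by a direct edge-counting argument, then track a fixed independent set $\br$ (recording only the $\br$-components, per the remark preceding \Cref{thm:mutation_bijection}) through the sequence of bijections $\trip(G)\to\trip(\mut_{g_1}G)\to\cdots\to\trip(G)$. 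Using the clean description of $\mut_g$ at the level of independent sets in \Cref{eq:mut2}/\Cref{thm:mutation_bijection} — which I may assume — each toggle interchanges the two sides of an interval decomposition along $g$, and composing all of these should telescope to ``swap $\al$ and $\br$'', i.e.\ exactly \eqref{eq:global_row}. Independence of the composite from the choice of $\ell'$ (among reverse linear extensions) follows because two $\mut_g$'s at incomparable vertices commute, and any two reverse linear extensions are connected by such adjacent transpositions.

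The main obstacle I expect is the slow-motion half, specifically controlling the interaction between the ``frozen'' lower part and the ``still-greedy'' upper part at the instant of each flip. The flip in \Cref{def:flip} retains $\al$-elements not less than $g$ and $\br$-elements not greater than $g$ and then \emph{re-greedifies} both tails; to match this against $\row$ I must show that the retained data is precisely what a partial run toward $(\al',\br')$ would have produced, and that re-greedifying does not disturb already-correct vertices. This is where a clean invariant is needed: probably something like ``after step $k$, the top equals the unique completion (on the appropriate side) of the partial independent set $\{g\le k : g\in\al'\}\cup\{g> k\ \text{in the original }\br\text{-slot, suitably cut}\}$,'' and checking that the flip at step $k+1$ advances this invariant. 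Verifying the invariant is preserved is the real content; once it is in place, taking $k$ to the top of $\ell$ yields $\prod\tog_g(\al,\br)=(\al',\br')=\row(\al,\br)$. The remaining independence-of-$\ell$ statement again reduces to commutation of flips at $\ell$-adjacent incomparable vertices, which should be immediate from the locality built into \Cref{def:flip}.
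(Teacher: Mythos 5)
Your deformotion argument matches the paper's proof almost exactly: you note that every edge is reversed twice in a full pass (so the ambient graph returns to $G$), and then track how each toggle moves all elements of the original $\al$ into $\br$ while any freshly introduced elements of $\al$ involve only already-processed vertices and hence are never toggled back. That part is fine, and your worry about the bookkeeping resolves immediately: if $g_i \to g_j$ is an edge (so $g_i > g_j$ and $i<j$ in $\ell'$), it is reversed at step $i$ and again at step $j$, no subtlety.

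The slow-motion half, however, has a genuine gap, and you correctly identify it yourself: ``verifying the invariant is preserved is the real content,'' but you never do it. Your proposed invariant is also vague (``suitably cut''), and the appeal to \Cref{thm:tree_structure} as crucial structural input is a red herring --- the tree property does not by itself control the interaction between the frozen lower part and the re-greedified upper part at the moment of a flip, which is precisely the delicate point. What is missing is a clean mechanism to propagate correctness across a flip.

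The paper's proof avoids your vertex-by-vertex invariant entirely, arguing instead by induction on $|G|$ via the order decomposition $\trip(G) = \trip_g(G) \sqcup \trip^g(G)$ at a minimal element $g$ (\Cref{lem:decomposition}). It splits into three cases according to which component $(\al,\br)$ lies in and whether $g\in\al$. In each case, the initial flip $\tog_g$ is analyzed exactly via \Cref{lem:down}, and the remaining flips are identified with the full flip-product on the smaller graph $G_g$ or $G_g^\circ$ acting within the correct component (using that flips at $h\neq g$ never leave that component). Induction then supplies the claim that the smaller product equals rowmotion on the smaller poset, and one checks the $g$-coordinate by hand. That recursive structure is exactly the cleanly stated invariant you are reaching for; without it or an equivalent, your slow-motion argument does not close.
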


\subsection{Representation theory}
We conclude with some applications to representation theory.  Let $k$ be a field, and $A$ a finite-dimensional $k$-algebra such that the module category $\mod A$ has no cycles.  Define a directed graph $G$ with vertices indexed by the indecomposable $A$-modules and an arrow from $M$ to $N$ if and only if $\Hom(M,N)\ne 0$.  By our assumption that there are no cycles in
$\mod A$, the graph $G$ is acyclic---but not all acyclic directed graphs $G$ arise in this way.

A \defn{torsion class} in $\mod A$ is a full additive
subcategory of $\mod A$ closed under
extensions and quotients.  A module is called a
\defn{brick} if its endomorphism ring is a division algebra.  A collection of bricks is called a \defn{semibrick} if there are no morphisms between two
non-isomorphic bricks in the collection.  Finally, \defn{2-simple-minded
collections} (defined in~\Cref{sec:2simple}) are certain collections of objects in the derived category of $A$ in bijection with torsion classes~\cite{asai2016semibricks}.

\begin{theorem}
\label{thm:reptheory}
If $A$ is representation finite and $\mod A$ has no cycles, then
maximal orthogonal pairs correspond to torsion pairs, independent sets correspond to semibricks, and tight orthogonal pairs correspond to 2-simple-minded collections.
\end{theorem}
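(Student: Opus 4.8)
The plan is to work with the graph $G$ whose vertices are the indecomposable $A$-modules, with an arrow $M\to N$ exactly when $\Hom(M,N)\neq 0$, to translate each of the three asserted correspondences into a statement internal to $G$ or to $\mod A$, and then to close the loop using \Cref{thm:trip_ind} together with known homological bijections. The first correspondence is essentially free: a nonzero non-isomorphism $M\to M$ would be a cycle of length one, so the no-cycles hypothesis forces $\End(M)$ to be local with vanishing radical, i.e. every indecomposable $M$ is a brick. A set $\mathcal S$ of iso-classes of indecomposables is then an independent set of $G$ precisely when no two of its members are joined by an arrow in either direction, that is $\Hom(M,N)=0=\Hom(N,M)$ for all distinct $M,N\in\mathcal S$, which is exactly the condition that $\mathcal S$ be a semibrick.

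For maximal orthogonal pairs versus torsion pairs, start from a torsion pair $(\mathcal T,\mathcal F)$ and let $X$, $Y$ be its indecomposables lying in $\mathcal T$, $\mathcal F$ respectively; as $A$ is representation finite and $\mathcal T,\mathcal F$ are additive they are recovered from $X,Y$, and the torsion-pair axioms give $\mathcal T={}^{\perp}\mathcal F$, $\mathcal F=\mathcal T^{\perp}$, which at the level of indecomposables reads $X=\{M:\Hom(M,N)=0\ \forall N\in Y\}$, $Y=\{N:\Hom(M,N)=0\ \forall M\in X\}$ --- exactly the statement that $(X,Y)$ is a mop of $G$. Conversely, given a mop $(X,Y)$ I would set $\mathcal F=\operatorname{add}(Y)$ and $\mathcal T={}^{\perp}\mathcal F$ and argue that the maximality of $X$ and $Y$ forces $\mathcal F$ to be closed under submodules and extensions (hence a torsion-free class with torsion-free objects exactly $Y$) and $\operatorname{add}(X)$ to generate $\mathcal T$. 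I expect this implication --- that combinatorial maximality of a mop produces the homological closure properties of a torsion pair --- to be the main obstacle, and to use the no-cycles hypothesis essentially (for a general algebra the lattice of torsion classes need not be extremal, so no such description can exist); an alternative is to cite the fact that for representation-finite $A$ with no cycles the torsion-class lattice is extremal, identify it with Markowsky's $\LL(G)$, and then apply \Cref{thm:trim_eq}.

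Finally, for tops versus 2-simple-minded collections: by \Cref{thm:trip_ind} every independent set $\mathcal S$, equivalently every semibrick, completes uniquely to a top $(\mathcal S,\br)$ and uniquely to a top $(\al,\mathcal S)$, so a top is exactly an ordered pair of semibricks each of which determines the other. On the representation side the 2-simple-minded collection of a torsion pair $(\mathcal T,\mathcal F)$ is the union of the simple objects of $\mathcal T$ with the $[1]$-shifts of the simple objects of $\mathcal F$, and via Asai's bijection \cite{asai2016semibricks} the semibrick of $\mathcal T$-simples recovers $\mathcal T$ (hence the whole torsion pair), so a 2-simple-minded collection is likewise an ordered pair of semibricks each determining the other. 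I would finish by checking that, under the identification of mops with torsion pairs above, the pair $(\al,\br)$ of a top corresponds to the pair (simples of $\mathcal T$, simples of $\mathcal F$) cutting out the same torsion pair, and that the flip of \Cref{def:flip} is compatible with the standard mutation of 2-simple-minded collections; concretely, the tightness of $(\al,\br)$ should translate term by term into the defining vanishing conditions of a 2-simple-minded collection --- orthogonality within the degree-$0$ part $\al$, within the degree-$1$ part $\br[1]$, vanishing of the negative $\Hom$'s between the two parts, and the generation condition --- with the role of the greedy completion in \Cref{def:flip} being to reproduce the simple objects of the relevant torsion class. Pinning down this last dictionary is the remaining technical point.
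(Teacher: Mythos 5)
Your overall architecture matches the paper's: brick observation for independent sets $\leftrightarrow$ semibricks, citation of \cite[Corollary 1.5]{TW} for mops $\leftrightarrow$ torsion pairs, and Asai's bijection plus the cover-label correspondence for tops $\leftrightarrow$ 2-simple-minded collections. But two steps as written have gaps. The claim that a nonzero non-isomorphism $M \to M$ is ``a cycle of length one'' fails under the paper's definition of a cycle in $\mod A$, which requires $r>1$ pairwise non-isomorphic indecomposables; \Cref{rigid-brick} instead factors a non-invertible endomorphism through its image $N$, a proper subquotient of $M$, producing the genuine cycle $M\to N\to M$ with $N\not\simeq M$. Without that detour, directedness of $\mod A$ tells you nothing about endomorphisms.

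The step you defer as ``the remaining technical point'' is in fact the essential content of \Cref{thm:asai}. Observing that a top and a 2-simple-minded collection are each ``an ordered pair of semibricks each determining the other'' does not by itself show the two determinations agree, and the paper closes that gap with the cover-label calculus: Asai's semibrick $s(\mathcal T)$ consists of the bricks labelling cover edges down from $\mathcal T$ in $\tors A$, and $s(\mathcal T^\perp)$ the bricks labelling covers up; \Cref{thm:overlapping} identifies these brick labels with the overlap labels of the trim lattice $\LL(G)\simeq\tors A$; and \Cref{thm:trim_is_ind} says the pair of overlap labels $(\down(\mathcal T),\up(\mathcal T))$ is exactly the top $\phi(\mathcal T)$. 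That chain is the dictionary you would need to pin down. Two smaller remarks: invoking \Cref{thm:trim_eq} in place of \cite[Corollary 1.5]{TW} does not work, since \Cref{thm:trim_eq} only identifies $\LL(G)$ with $\trip(G)$ combinatorially and says nothing about $\tors A$ --- the identification $\tors A\simeq\LL(G)$ is exactly what \cite{TW} supplies; and the compatibility of flips with mutation of 2-simple-minded collections that you mention at the end is established separately in the paper but is not needed for the present statement.
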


\subsection{Organization of the paper}

In~\Cref{sec:top} we complete independent sets to tight orthogonal pairs with~\Cref{map:oneG,map:zeroG}, proving~\Cref{thm:trip_ind}.  In~\Cref{sec:flip}, we define flips with~\Cref{map:flip}, and prove they are well-defined in~\Cref{prop:flips_work}.  We study independence posets in~\Cref{sec:independence_posets}, and present a useful recursion on tops in~\Cref{sec:recursion}.  We relate independence posets to trim lattices in~\Cref{sec:trim_lattices}, relating tight orthogonal pairs to maximal orthogonal pairs and proving~\Cref{thm:lattice_trim,thm:trim_eq}.  In~\Cref{sec:toggles}, we define toggles on independence posets; we then study rowmotion in~\Cref{sec:rowmotion}, proving~\Cref{thm:main_thm}.  We conclude with connections to representation theory in~\Cref{sec:rep_theory}.

\section{Tight Orthogonal Pairs}
\label{sec:top}

\begin{definition}
Let $g \in G$ and define $G_g$ to be the directed graph obtained by deleting the vertex $g$ from $G$ (along with all edges to $g$), and  $G_g^\circ$ the directed graph obtained by deleting all vertices and edges adjacent to $g$ in $G$ (along with $g$ itself).
\end{definition}

{
\renewcommand{\thetheorem}{\ref{thm:trip_ind}}
\begin{theorem} Let $\mathcal{I}$ be an independent set of an acyclic directed graph $G$.  Then there exists a unique $(\mathcal{I},\br) \in \trip(G)$ and a unique $(\al,\mathcal{I})\in \trip(G)$.
\end{theorem}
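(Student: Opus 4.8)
The plan is to prove the first assertion --- existence and uniqueness of $\br$ with $(\mathcal{I},\br)\in\trip(G)$ --- and to deduce the second by an edge-reversal duality. Write $G^{\mathrm{op}}$ for the graph obtained by reversing every edge of $G$. Then $G^{\mathrm{op}}$-order is the opposite of $G$-order; ``no edge from $\al$ to $\br$ in $G$'' means the same as ``no edge from $\br$ to $\al$ in $G^{\mathrm{op}}$''; ``increase an element of $\al$ / decrease an element of $\br$'' in $G$ becomes ``decrease / increase'' in $G^{\mathrm{op}}$; and ``add a new element to either component'' is symmetric in the two components. Hence $(\al,\br)\in\trip(G)$ if and only if $(\br,\al)\in\trip(G^{\mathrm{op}})$, so the unique $\al$ with $(\al,\mathcal{I})\in\trip(G)$ is exactly the unique second component completing $\mathcal{I}$ in $G^{\mathrm{op}}$, and the second assertion follows from the first applied to $G^{\mathrm{op}}$.

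For the first assertion I would induct on the number of vertices of $G$, the empty graph being trivial. For the inductive step, pick $g$ maximal in $G$-order --- equivalently, a vertex with no incoming edge --- and split into two cases governed by the two deletions introduced above. If $g\notin\mathcal{I}$, then $\mathcal{I}$ is an independent set of $G_g$, so by induction there is a unique $\br_0$ with $(\mathcal{I},\br_0)\in\trip(G_g)$; since $g$ has no incoming edge, adjoining $g$ to the second component of an orthogonal pair can never create an edge from an element of the first component to $g$, so the only obstruction to putting $g$ into $\br$ is independence of $\br$, and I claim the unique completion in $G$ is $\br:=\br_0\cup\{g\}$ if $\br_0$ contains no out-neighbor of $g$, and $\br:=\br_0$ otherwise. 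If $g\in\mathcal{I}$, then $\mathcal{I}$ contains no neighbor of $g$, so $\mathcal{I}\setminus\{g\}$ is an independent set of $G_g^{\circ}$; by induction there is a unique $\br_1$ with $(\mathcal{I}\setminus\{g\},\br_1)\in\trip(G_g^{\circ})$, and I claim $(\mathcal{I},\br_1)\in\trip(G)$ is the unique completion.

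In each case the argument has two halves. Existence: check directly that the pair produced is tight in $G$; the only genuinely new moves to rule out are those that increase some element of a component up to $g$ or add $g$ to a component, and here it is essential that $g$ has no incoming edge, so that every interaction with $g$ is controlled by independence of the second component and by the pair's disjointness rather than by orthogonality, which keeps the subcases finite and lets each one close. Uniqueness: show that any top $(\mathcal{I},\br)$ of $G$ restricts to a top of the relevant induced subgraph with first component $\mathcal{I}$ (respectively $\mathcal{I}\setminus\{g\}$) --- when $g\in\mathcal{I}$ this uses that orthogonality forces $\br$ to avoid the out-neighbors of $g$ and disjointness forces $g\notin\br$, so $\br\subseteq V(G_g^{\circ})$ --- and then invoke the inductive uniqueness to pin $\br$ down up to the single bit ``$g\in\br$ or not'', which tightness fixes. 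I expect the restriction-of-a-top-is-again-a-top step to be the main obstacle; a tidy way to handle it is to isolate once and for all a lemma describing restriction along $V(G_g)\subseteq V(G)$ and $V(G_g^{\circ})\subseteq V(G)$ as a bijection on tops with prescribed behavior at $g$ (the recursion on tops used later in the paper) and to run the whole induction through that lemma; the explicit completion algorithms of \Cref{map:oneG} and \Cref{map:zeroG} are then just this recursion unwound along a linear extension.
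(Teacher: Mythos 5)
Your proposal follows the same skeleton as the paper's proof: induct on $|G|$, choose an extremal vertex $g$, split on whether $g\in\mathcal{I}$, and appeal to the inductive hypothesis on $G_g$ or $G_g^\circ$ to pin down the completion up to the single bit of whether $g$ belongs to the other component, which tightness then fixes. The paper treats the $(\al,\mathcal{I})$ assertion with $g$ minimal and dismisses the $(\mathcal{I},\br)$ assertion with ``similar reasoning, instead letting $g$ be maximal''; you instead do $(\mathcal{I},\br)$ directly with $g$ maximal and then derive $(\al,\mathcal{I})$ via the edge-reversal duality $(\al,\br)\in\trip(G)\iff(\br,\al)\in\trip(G^{\mathrm{op}})$. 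That duality is exactly \Cref{prop:dual}, so this is a tidy way of formalizing the ``similar reasoning'' step, and it genuinely halves the case analysis. The remaining difference is in the existence half: the paper runs the explicit greedy completion (\Cref{map:oneG}/\Cref{map:zeroG}) and then verifies tightness by a ``maximal bad vertex'' contradiction, whereas you thread existence through the same vertex-deletion recursion used for uniqueness. These are equivalent in content --- your recursion, unwound along a linear extension, is precisely the greedy algorithm, as you note --- but the paper's version is self-contained, while yours front-loads the recursion as a lemma.

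That brings up the one point worth dwelling on. Both arguments hinge on the claim that a top $(\mathcal{I},\br)$ of $G$ restricts to a top of $G_g$ (when $g\notin\mathcal{I}$) or of $G_g^\circ$ (when $g\in\mathcal{I}$). The paper asserts this in one clause without proof; you are more forthright in calling it ``the main obstacle'' and proposing to isolate it as a standalone lemma (essentially the content of \Cref{thm:decomposition}). Two cautions here. First, \Cref{thm:decomposition} as stated in the paper is proved using flips, whose well-definedness (\Cref{prop:flips_work}) already cites \Cref{thm:trip_ind}, so you cannot invoke it as a black box without introducing a circularity --- your plan to ``run the whole induction through that lemma'' only works if the lemma is reproved from the raw definition of tightness, not borrowed. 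Second, the case split in your $g\notin\mathcal{I}$ branch (``$\br:=\br_0\cup\{g\}$ if $\br_0$ contains no out-neighbor of $g$, else $\br:=\br_0$'') is where the maximality of $g$ genuinely earns its keep, and you should verify both the tightness of the constructed pair in $G$ and, for uniqueness, that the restriction map is well-defined; the paper elides both, and your proposal currently gestures at the verification rather than carrying it out. Neither of these is a flaw in the approach, but they are the places where the remaining work lives.
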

\addtocounter{theorem}{-1}
}

\begin{proof}
Let $\mathcal{I}$ be an independent set.  We show that~\Cref{map:oneG} produces an element $(\al,\mathcal{I}) \in \trip(G)$ (this algorithm is illustrated in~\Cref{fig:alg_ex23}).  By construction, the output of \Cref{map:oneG} is an orthogonal pair of independent sets.  We claim it is tight.

Suppose the output is not tight.  Then at least one of the following holds:
\begin{itemize} \item there is some element $g \in G$ that could be added to $\al$ to still have an orthogonal pair of independent sets,
\item there is some element $g' \in \al$ that could be increased to $g \in G$ with respect to $G$-order,
\item there is some element $g\in G$ that could be added into $\mathcal{I}$, or
\item there is some element $g' \in \mathcal{I}$ that could be decreased to $g\in G$.
\end{itemize}
Take a maximal $g$ among all such elements.  One verifies that~\Cref{map:oneG} would have added it in, which is a contradiction.  Similar reasoning shows that~\Cref{map:zeroG} produces an element $(\mathcal{I},\br) \in \trip(G)$.

We now show by induction on $|G|$ that given $\mathcal{I}$, the tight orthogonal pair $(\al,\mathcal{I})$ is unique.  The base case for $|G|=1$ is trivial.  We now suppose $(\al,\mathcal{I})$ and $(\al',\mathcal{I})$ are two different tight orthogonal pairs. Let $g$ be minimal in $G$.

If $g \not \in \mathcal{I}$, the restriction of $(\al,\mathcal{I})$ and $(\al',\mathcal{I})$ to $G_g$ are  tight orthogonal pairs of $G_g$; by induction, they must coincide except possibly at $g$.  But since $\al \neq \al'$ then either $\al \subset \al'$ or $\al' \subset \al$, which contradicts tightness.

Otherwise, $g \in \mathcal{I}$.  The restriction of $(\al,\mathcal{I})$ and $(\al',\mathcal{I})$ to  $G_g^\circ$ are now  tight orthogonal pairs of $G_g^\circ$ and coincide.  But by definition, there can be no elements of either $\al$ or $\al'$ adjacent to $g$ and so  $(\al,\mathcal{I})$ and $(\al',\mathcal{I})$ coincide.

The argument that the  tight orthogonal pair $(\mathcal{I},\br)$ produced by~\Cref{map:zeroG} is unique is similar, instead letting $g$ be maximal in $G$.
\end{proof}

\begin{algorithm}[htbp]
\DontPrintSemicolon
\KwIn{An acyclic directed graph $G$ and an independent set $\mathcal{I}$.}
\KwOut{An element $(\al,\mathcal{I}) \in \trip(G)$.}
\kwset{$\al=\{\}$}
\For{$k$ in $\ell'$}{
    \lIf{$\left\{\begin{tabular}{l} $k \not \in \mathcal{I}$\\ $i \to k \not \in G$ for $i \in \al$ \\ $k \to i\not \in G$ for $i \in \mathcal{I}$ \end{tabular}\right\}$}{add $k$ to $\al$}
}
\Return $(\al,\mathcal{I})$\;
\caption{The greedy construction of the unique $(\al,\mathcal{I}) \in \trip(G)$ using any reverse linear extension $\ell'$ of $G$-order, given an independent set $\mathcal{I}$. See~\Cref{fig:alg_ex23} for an example.}
\label{map:oneG}
\end{algorithm}

\begin{algorithm}[htbp]
\DontPrintSemicolon
\KwIn{An acyclic directed graph $G$ and an independent set $\mathcal{I}$.}
\KwOut{An element $(\mathcal{I},\br) \in \trip(G)$.}
\kwset{$\br=\{\}$}
\For{$k$ in $\ell$}{
    \lIf{$\left\{\begin{tabular}{l} $k \not \in \mathcal{I}$\\ $i \to k \not \in G$ for $i \in \mathcal{I}$ \\ $k \to i\not \in G$ for $i \in \br$ \end{tabular}\right\}$}{add $k$ to $\br$}
}
\Return $(\mathcal{I},\br)$\;
\caption{The greedy construction of the unique $(\mathcal{I},\br) \in \trip(G)$ using any linear extension $\ell$ of $G$-order, given an independent set $\mathcal{I}$.  See~\Cref{fig:alg_ex23} for an example.}
\label{map:zeroG}
\end{algorithm}

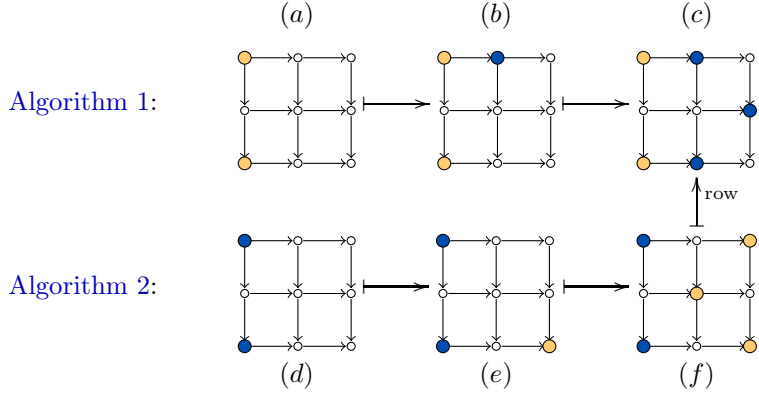
\begin{figure}[htbp]
\xymatrix@R-2pc{
&(a) &(b) & (c)\\
\text{\Cref{map:oneG}:} & \raisebox{-0.5\height}{\begin{tikzpicture}[scale=.7]
 	\foreach \i in {0,...,2}{
	  \foreach \j in {0,...,2}{
          \draw[shape=circle,black,fill=white] (\i,\j) circle (.5ex);
      };
	};
    \foreach \i in {0,...,1}{
	  \foreach \j in {0,...,2}{
          \draw[->] (\i+.1,\j) to (\i+.9,\j);
      };
	};
    \foreach \i in {0,...,2}{
	  \foreach \j in {1,...,2}{
           \draw[->] (\i,\j-.1) to (\i,\j-.9);
      };
	};
    \foreach \i in {(0,0),(0,2)}{
          \draw[shape=circle,fill=meet] \i circle (.8ex);
	};
  \end{tikzpicture}} \ar@{|->}[r] &
  \raisebox{-0.5\height}{\begin{tikzpicture}[scale=.7]
  \foreach \i in {0,...,2}{
	  \foreach \j in {0,...,2}{
          \draw[shape=circle,black,fill=white] (\i,\j) circle (.5ex);
      };
	};
    \foreach \i in {0,...,1}{
	  \foreach \j in {0,...,2}{
          \draw[->] (\i+.1,\j) to (\i+.9,\j);
      };
	};
    \foreach \i in {0,...,2}{
	  \foreach \j in {1,...,2}{
           \draw[->] (\i,\j-.1) to (\i,\j-.9);
      };
	};
    \foreach \i in {(0,0),(0,2)}{
          \draw[shape=circle,fill=meet] \i circle (.8ex);
	};
     \foreach \i in {(1,2)}{
          \draw[shape=circle,fill=join] \i circle (.8ex);
	};
  \end{tikzpicture}}  \ar@{|->}[r] & \raisebox{-0.5\height}{\begin{tikzpicture}[scale=.7]
 	\foreach \i in {0,...,2}{
	  \foreach \j in {0,...,2}{
          \draw[shape=circle,black,fill=white] (\i,\j) circle (.5ex);
      };
	};
    \foreach \i in {0,...,1}{
	  \foreach \j in {0,...,2}{
          \draw[->] (\i+.1,\j) to (\i+.9,\j);
      };
	};
    \foreach \i in {0,...,2}{
	  \foreach \j in {1,...,2}{
           \draw[->] (\i,\j-.1) to (\i,\j-.9);
      };
	};
    \foreach \i in {(0,0),(0,2)}{
          \draw[shape=circle,fill=meet] \i circle (.8ex);
	};
     \foreach \i in {(1,0),(1,2),(2,1)}{
          \draw[shape=circle,fill=join] \i circle (.8ex);
	};
  \end{tikzpicture}}\\ \\ \\ \\
  \text{\Cref{map:zeroG}:} &
  \raisebox{-0.5\height}{\begin{tikzpicture}[scale=.7]
 	\foreach \i in {0,...,2}{
	  \foreach \j in {0,...,2}{
          \draw[shape=circle,black,fill=white] (\i,\j) circle (.5ex);
      };
	};
    \foreach \i in {0,...,1}{
	  \foreach \j in {0,...,2}{
          \draw[->] (\i+.1,\j) to (\i+.9,\j);
      };
	};
    \foreach \i in {0,...,2}{
	  \foreach \j in {1,...,2}{
           \draw[->] (\i,\j-.1) to (\i,\j-.9);
      };
	};
    \foreach \i in {(0,0),(0,2)}{
          \draw[shape=circle,fill=join] \i circle (.8ex);
	};
  \end{tikzpicture}}  \ar@{|->}[r] & \raisebox{-0.5\height}{\begin{tikzpicture}[scale=.7]
 	\foreach \i in {0,...,2}{
	  \foreach \j in {0,...,2}{
          \draw[shape=circle,black,fill=white] (\i,\j) circle (.5ex);
      };
	};
    \foreach \i in {0,...,1}{
	  \foreach \j in {0,...,2}{
          \draw[->] (\i+.1,\j) to (\i+.9,\j);
      };
	};
    \foreach \i in {0,...,2}{
	  \foreach \j in {1,...,2}{
           \draw[->] (\i,\j-.1) to (\i,\j-.9);
      };
	};
    \foreach \i in {(0,0),(0,2)}{
          \draw[shape=circle,fill=join] \i circle (.8ex);
	};
     \foreach \i in {(2,0)}{
          \draw[shape=circle,fill=meet] \i circle (.8ex);
	};
  \end{tikzpicture}}  \ar@{|->}[r] & \raisebox{-0.5\height}{\begin{tikzpicture}[scale=.7]
 	\foreach \i in {0,...,2}{
	  \foreach \j in {0,...,2}{
          \draw[shape=circle,black,fill=white] (\i,\j) circle (.5ex);
      };
	};
    \foreach \i in {0,...,1}{
	  \foreach \j in {0,...,2}{
          \draw[->] (\i+.1,\j) to (\i+.9,\j);
      };
	};
    \foreach \i in {0,...,2}{
	  \foreach \j in {1,...,2}{
           \draw[->] (\i,\j-.1) to (\i,\j-.9);
      };
	};
    \foreach \i in {(0,0),(0,2)}{
          \draw[shape=circle,fill=join] \i circle (.8ex);
	};
     \foreach \i in {(1,1),(2,0),(2,2)}{
          \draw[shape=circle,fill=meet] \i circle (.8ex);
	};
  \end{tikzpicture}} \ar@{|->}[uuuu]_{\row} \\ &(d) &(e) & (f)}
\caption{The progression $(a)\mapsto(b)\mapsto(c)$ is an illustration of~\Cref{map:oneG}, which greedily adds elements to the independent set $(a)$ in a reverse linear extension of $G$-order (from the top left to the bottom right).  The progression $(d)\mapsto(e)\mapsto(f)$ illustrates~\Cref{map:zeroG}, which greedily adds elements to the independent set $(d)$ in a linear extension of $G$-order (from the bottom right to the top left).  Rowmotion, defined by~\Cref{eq:global_row}, sends the tight orthogonal pair $(f)$ on the bottom to the tight orthogonal pair $(c)$.}
\label{fig:alg_ex23}
\end{figure}

\section{Flips and Independence Posets}

\subsection{Flips on tight orthogonal pairs}
\label{sec:flip}
We will define a poset structure on the tight orthogonal pairs of $G$ by specifying the cover relations.   To this end, \Cref{def:flip} and \Cref{map:flip} define a \defn{flip} of a  tight orthogonal pair $(\al,\br)$ at an element $g \in G$, written $\tog_g(\al,\br)$; the flip moves \defn{up} in the poset if $g \in \br$ and moves \defn{down} in the poset if $g \in \al$ (and does nothing otherwise).  \Cref{fig:a_toggle} illustrates a flip on a  tight orthogonal pair in an orientation of $[7]\times [7]$.  We first prove that the image of a flip is again a tight orthogonal pair.

\begin{proposition}
\label{prop:flips_work}
Let $g$ be an element of an acyclic directed graph $G$.  Then $\tog_g(\al,\br)\in \trip(G)$.
\end{proposition}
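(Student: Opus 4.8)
The case $g\notin\al\cup\br$ is immediate, since $\tog_g$ then acts as the identity. Reversing all arrows of $G$ swaps the two coordinates of each tight orthogonal pair and exchanges $\ell$ with $\ell'$, so I will treat only the case of an up-flip, $g\in\br$. Write $D$, $U$, $I$ for the vertices strictly below $g$, strictly above $g$, and incomparable to $g$, so that $G=\{g\}\sqcup D\sqcup U\sqcup I$. The deletion step of \Cref{def:flip} leaves $\al_{\ge}:=\al\cap(U\cup I)$ of $\al$ (the two sides of a tight orthogonal pair are disjoint, so $g\notin\al$, and the elements of $\al\cap D$ are deleted) and $\br_{\le}:=\br\cap(D\cup I)$ of $\br$ (the elements of $\br\cap U$ are deleted, and $g$ is moved to the $\al$-side). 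First I would check that $(\al_{\ge}\cup\{g\},\br_{\le})$ is an orthogonal pair of independent sets; the only point not directly inherited from $(\al,\br)$ is that $g$ is non-adjacent to $\al_{\ge}$, and an edge $a\to g$ with $a\in\al$ would contradict orthogonality of $(\al,\br)$, while an edge $g\to a$ would put $a$ in $D$, hence out of $\al_{\ge}$.

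The heart of the matter is a localization claim for the two greedy completions: the vertices newly inserted into $\al$ all lie in $D$, those newly inserted into $\br$ all lie in $U$, and the restriction of $(\al,\br)$ to $I$, together with the elements of $\al$ that are above or incomparable to $g$, is untouched. This is the one place where tightness of $(\al,\br)$ --- and not merely orthogonality --- is needed: if a vertex $h\in I$ were eligible to enter $\al$ during the completion, I would argue that the obstruction which (by tightness of $(\al,\br)$) prevented $h$ from joining $\al$ in $(\al,\br)$ already survives the flip, using that $\ell'$ and $\ell$ refine $G$-order to control the order in which a potential blocker of $h$ is processed relative to $h$; hence $h$ is never recruited, and symmetrically on the $\br$-side. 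Equivalently, this expresses a compatibility of the flip with restricting a top to the induced subgraph on $\{x:x\not<g\}$, resp.\ on $\{x:x\not>g\}$, which is the kind of recursion developed in \Cref{sec:recursion}.

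Granting localization, the rest is routine. Since $G$ is acyclic there is no edge from $D$ to $U$ (an edge $d\to u$ with $d\in D$, $u\in U$ would close a cycle through $g$), so no newly inserted $\al$-element can point to a newly inserted $\br$-element; an edge from $g$ or from a surviving $\al$-element to a newly inserted $\br$-element of $U$ is ruled out by the $D/U$ dichotomy and, for surviving elements, by orthogonality of $(\al,\br)$; and an edge from a newly inserted $\al$-element to a surviving $\br$-element, or into a newly inserted $\br$-element from a surviving $\al$-element, is forbidden by the admissibility test performed at the moment of insertion. That same test forbids inserting a vertex adjacent to its own side, so $\al'$ and $\br'$ stay independent, and $\tog_g(\al,\br)=(\al',\br')$ is an orthogonal pair of independent sets.

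It remains to see that $(\al',\br')$ is tight, which I would do exactly as in the proof of \Cref{thm:trip_ind}: if not, take a maximal witness $h$ to the failure (a vertex addable to $\al'$, a vertex addable to $\br'$, an element of $\al'$ that can be increased to $h$, or an element of $\br'$ that can be decreased to $h$) and check from \Cref{map:flip} that $h$ would in fact have been inserted --- when $h\in I$ this is inherited from tightness of $(\al,\br)$, and when $h\in D\cup\{g\}\cup U$ it follows from the greedy nature of the completion together with localization. The main obstacle is the localization claim: everything else is bookkeeping with acyclicity and the admissibility test, but ruling out that a vertex incomparable to $g$ is dragged into $\al'$ or $\br'$ is precisely what makes ``greedily complete $\al$ and $\br$'' in \Cref{def:flip} well-posed, and is where the tightness of the input pair is essential.
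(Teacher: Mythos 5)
Your overall strategy---reduce to an up-flip, partition $G$ into $D$, $U$, $I$, and isolate a ``localization claim'' controlling which vertices the greedy completions can recruit---is consistent in spirit with the paper's (extremely terse) proof, which views the two greedy completions as running \Cref{map:oneG} and \Cref{map:zeroG} on the induced subgraphs on $\{k : k\not>g\}$ and $\{k : k\not<g\}$ and then invokes \Cref{thm:trip_ind}. The difficulty is that you never actually prove the localization claim, and you yourself correctly flag it as ``the main obstacle.'' The sentence ``I would argue that the obstruction which (by tightness of $(\al,\br)$) prevented $h$ from joining $\al$ in $(\al,\br)$ already survives the flip'' is a plan, not an argument, and it is not automatic: for $h\in I$ with $h\notin\al\cup\br$, tightness of $(\al,\br)$ supplies some obstruction to putting $h$ in $\al$, but one must verify that the obstructing vertex is preserved by the flip. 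If the obstruction is an arrow $a\to h$ with $a\in\al$, then $a>h$ together with $h\in I$ forces $a\not<g$, so $a$ survives; if it is an arrow $h\to b$ with $b\in\br$, then $b<h$ forces $b\not>g$, so $b$ survives; but if it is an arrow $h\to a$ with $a\in\al$ and $a<g$, then $a$ is deleted and this obstruction does \emph{not} survive. In that third case one must appeal instead to the separate obstruction (also a consequence of tightness) preventing $h$ from joining $\br$, combined with comparability constraints and the order in which $\ell'$ processes potential blockers relative to $h$, to find a preserved blocker. That case analysis---and its dual on the $\br$-side---is the real content of the proposition.

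So the proof has a genuine gap: the central step is announced and deferred rather than carried out, and everything after it (orthogonality via the $D$-to-$U$ acyclicity argument, tightness via the maximal-witness argument) is conditioned on the unproved claim. What you have is a correct and well-organized roadmap, with the key lemma missing.
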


\begin{proof}
The statement follows from the restriction of~\Cref{thm:trip_ind} to the elements of $G$ not less than $g$ and to the elements not greater than $g$.
\end{proof}

\begin{algorithm}[htbp]
\DontPrintSemicolon
\KwIn{$(\al,\br) \in \trip(G)$ and $g \in G$.}
\KwOut{$(\al',\br') \in \trip(G)$.}
\kwset{$\al'=\{k : k \in \al \text{ and } k \not \leq g\}$ and $\br'=\{k : k \in \br \text{ and } k \not \geq g\}$\;}
\lIf{$g \not \in \br$ and $g \not \in \al$}{\Return $(\al,\br)$}
\lElseIf{$g \in \br$}{$\al'=\al'\cup\{g\}$}
\lElseIf{$g \in \al$}{$\br'=\br'\cup\{g\}$}
\For{$k$ in $\ell'$}{
    \lIf{$\left\{\begin{tabular}{l}$k \not \geq g$, $k \not \in \br$ \\$k\to i\not \in G$ for $i \in \br$ \\$i \to k\not \in G$ for $i\in \al$  \end{tabular}\right\}$}{add $k$ to $\al'$}
}
\For{$k$ in $\ell$}{
    \lIf{$\left\{\begin{tabular}{l}$k \not \leq g$, $k \not \in \al$\\ $i\to k \not \in G$ for $i \in \al$\\ $k \to i\not \in G$ for $i\in \br$\end{tabular}\right\}$}{add $k$ to $\br'$}
}
\Return $(\al',\br')$\;
\caption{The definition of a flip of $(\al,\br) \in \trip(G)$ at an element $g \in G$, written $\tog_g(\al,\br)$.  As usual, $\ell$ is a linear extension of $G$-order, while $\ell'$ is a reverse linear extension.}
\label{map:flip}
\end{algorithm}

\begin{lemma}  Let $g$ be an element of an acyclic directed graph $G$.  Then \[\tog_g^2(\al,\br) = (\al,\br).\]  If $h$ is incomparable with $g$ in $G$-order, then $\tog_g \circ \tog_h = \tog_h \circ \tog_g$.
\label{lem:flips_invs}
\end{lemma}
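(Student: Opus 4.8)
First I would make two reductions. If $g\notin\al\cup\br$ then $\tog_g$ fixes $(\al,\br)$, so there is nothing to prove (and for the commutation statement one likewise reduces to the case where $g$ and $h$ are genuinely moved). Moreover $\trip(G)$, the notion of flip, and the two loops of~\Cref{map:flip} are all symmetric under reversing every arrow of $G$ while interchanging the components $\al\leftrightarrow\br$ and the orders $\ell\leftrightarrow\ell'$; this symmetry trades a flip at $g\in\br$ for a flip at $g\in\al$, so it suffices to treat $g\in\br$. Writing $(\al',\br')=\tog_g(\al,\br)$, the key preliminary is the \emph{locality} of the flip: $\{h:h\not<g\}$ is an order filter and $\{h:h\not>g\}$ an order ideal of $G$-order, and
\[\al'\cap\{h:h\not<g\}=\bigl(\al\cap\{h:h\not<g\}\bigr)\cup\{g\},\qquad \br'\cap\{h:h\not>g\}=\br\cap\{h:h\not\ge g\},\]
so that $\al'$ differs from $\al$ only within $\{h:h<g\}$, $\br'$ differs from $\br$ only within $\{h:h>g\}$, and $g$ has passed from $\br$ to $\al$. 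For elements of $\al$ strictly above $g$ and of $\br$ strictly below $g$ this is immediate from the ranges over which the two loops of~\Cref{map:flip} run; for an element $k$ incomparable to $g$ it is the crucial point, proved by observing that any obstruction to adding $k$ to $\al$ witnessing tightness of $(\al,\br)$---namely $k\in\br$, an edge $a\to k$ with $a\in\al$, or an edge $k\to b$ with $b\in\br$---survives the flip, because its witnessing vertex is forced into the frozen region: were it in $\{h>g\}$ or in $\{h<g\}$, its comparability with $g$ would make $k$ comparable with $g$.

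Granting locality, the involution reduces to one claim. Apply $\tog_g$ again to $(\al',\br')$, now with $g\in\al'$: the element $g$ returns to $\br$, and by locality $\tog_g^2(\al,\br)$ agrees with $(\al,\br)$ everywhere except possibly on $\al\cap\{h<g\}$ and $\br\cap\{h>g\}$. The claim I would establish is that a top of $G$ is determined by the pair of data consisting of $\al$ restricted to the filter $\{h:h\not<g\}$ and $\br$ restricted to the ideal $\{h:h\not>g\}$; since $\tog_g^2(\al,\br)$ and $(\al,\br)$ are both tops (\Cref{prop:flips_work}) carrying the same such data, they coincide. To prove the claim one works inside the order ideal $Q=\{h:h\not>g\}$: no edge leaves $Q$, so the $\al$-side tightness of $(\al,\br)$ descends to $Q$, and one recovers $\al\cap Q$ from $\br\cap Q$ together with the frozen value of $\al$ on $\{h:h\|g\}\cup\{g\}$ by running the greedy completion of~\Cref{map:oneG}, seeded with those frozen values and restricted to $\{h:h<g\}$---exactly the loop~\Cref{map:flip} executes---the outcome being forced because $(\al,\br)$, being tight, is reproduced by this completion (an application of the uniqueness argument of~\Cref{thm:trip_ind} inside $G|_Q$). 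Dually $\br$ is recovered on $\{h:h>g\}$ from the filter $\{h:h\not<g\}$. Hence $\tog_g^2=\mathrm{id}$.

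For the commutation, suppose $g\|h$. By the locality above, $\tog_h$ alters only $\al$ on $\{h'<h\}$, $\br$ on $\{h'>h\}$, and the membership of $h$; as $g\|h$, none of these involve $g$, so $\tog_h$ fixes the membership of $g$, and symmetrically $\tog_g$ fixes that of $h$. It follows that $\tog_g\tog_h(\al,\br)$ is a top (\Cref{prop:flips_work}) which agrees with $(\al,\br)$ at every vertex lying outside both the double cone $\{h'<g\}\cup\{g\}\cup\{h'>g\}$ of $g$ and the analogous double cone of $h$, and in which both $g$ and $h$ have switched sides. This description is symmetric under exchanging $g$ and $h$, so $\tog_h\tog_g(\al,\br)$ satisfies it as well; the determinacy statement used in the involution, now applied with this larger frozen region (the complement of the union of the two double cones, together with the values at $g$ and $h$), forces $\tog_g\tog_h=\tog_h\tog_g$.

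The step I expect to be the real work is the determinacy claim---that a top is pinned down by its restriction to the complement of a double cone together with its value at the cone's vertex. The order filter and order ideal that $g$ cuts out share every vertex incomparable to $g$, and they are linked by edges running out of the ideal into that overlap, so the two halves are not literally independent sub-problems; verifying that the greedy recovery is nonetheless forced---and that~\Cref{map:flip} genuinely performs it---is where the care lies. Once that is in hand, the commutation is the same argument carried out for two elements at once.
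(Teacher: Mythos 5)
Your argument is correct and is essentially the paper's: both proofs rest on the locality of a flip (the frozen parts of $\al$ on $\{h:h\not<g\}$ and $\br$ on $\{h:h\not>g\}$) together with the restricted uniqueness statement of~\Cref{thm:trip_ind}, which is exactly what your ``determinacy'' claim packages. Where the paper dispatches commutativity in one sentence, you correctly spell out the same idea via a two-cone version of the determinacy claim---worth noting that its proof needs the observations that the relevant filter $F$ and ideal $I$ satisfy $F\cup I=G$ and $I^{c}\subseteq F$, $F^{c}\subseteq I$ when $g\parallel h$, so the greedy recovery never references an undetermined vertex.
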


\begin{proof}
For the first statement, a flip preserves the elements of $\al$ that are not less than $g$, and the elements of $\br$ that are not greater than $g$.  Since $g$ has now returned to its original set after flipping twice, the restriction of~\Cref{thm:trip_ind} to the elements of $G$ not greater than $g$ show that the preserved elements of $\br$ force the recovery of the elements of $\al$  less than $g$, and similarly that the preserved elements of $\al$ force the recovery of the elements of $\br$ greater than $g$.  (And all other elements of $\al$ and $\br$ weren't affected by the flip).  The second statement is immediate due to the order in which vertices are added to $\al$ and $\br$.
\end{proof}

\subsection{Independence posets}
\label{sec:independence_posets}

For $G$ an acyclic directed graph, the \defn{independence relations} on $\trip(G)$ are the reflexive and transitive closure of the relations $(\al,\br) < (\al',\br')$ if there is some $g \in \br$ such that $\tog_g(\al,\br)=(\al',\br')$.  

\begin{lemma} Independence relations are antisymmetric, and hence define an \defn{independence poset}, denoted $\trip(G)$.  Flips and cover relations of $\trip(G)$ coincide.\label{lem:flip_eq_cov} \end{lemma}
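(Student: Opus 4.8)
The statement has two parts: antisymmetry of the independence relations (so that they do define a poset $\trip(G)$), and the identification of flips with covers. One half of the second part is almost formal once antisymmetry is known: any cover relation $x\lessdot y$ of $\trip(G)$ is, by definition of the transitive closure, realized by a chain of up‑flips $x=x_0\to x_1\to\cdots\to x_k=y$ with $k\ge 1$; since an up‑flip at an element of the $\br$‑component changes the set containing that element it is never trivial, so $x_0\ne x_1$, and by antisymmetry $x_0<x_1\le y$, forcing $x_1=y$ and $k=1$. So the real content is (a) antisymmetry, and (b) every up‑flip $\tog_g$ with $g\in\br$ is a cover.

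For (a) the plan is to exhibit a monovariant: I claim an up‑flip $\tog_g$ at $g\in\br$ strictly increases $|\al|-|\br|$. The key point, which is exactly the content of \Cref{prop:flips_work} (i.e.\ the restriction of \Cref{thm:trip_ind} to the elements not less than $g$ and to the elements not greater than $g$), is that the flip is \emph{local}: writing $P=\{h:h<g\}$ and $F=\{h:h>g\}$, the flip alters $\al$ only within $P\cup\{g\}$ — it deletes $\al\cap P$, inserts $g$, and greedily re‑completes inside $P$ — and alters $\br$ only within $F\cup\{g\}$ — it deletes $g$ and $\br\cap F$ and greedily re‑completes inside $F$ (on the region of elements incomparable to $g$, and on the rest of $F$ and $P$, nothing changes, and $g$ is blocked from re‑entering $\al'$ by ``$k\not\ge g$'' and from $\br'$ by ``$k\not\le g$'' in \Cref{map:flip}). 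Counting, $(|\al'|-|\br'|)-(|\al|-|\br|)=2-|\al\cap P|+|\al'\cap P|+|\br\cap F|-|\br'\cap F|$, so it suffices to prove $|\al\cap P|-|\al'\cap P|+|\br'\cap F|-|\br\cap F|\le 1$; that is, re‑completing $\al$ inside the ideal on $P$ recovers all but at most one of the deleted elements, while re‑completing $\br$ inside the filter on $F$ adds no element of $F$ that was not already there. Antisymmetry is then immediate, since a nontrivial directed cycle of up‑flips would have to return $|\al|-|\br|$ to its starting value.

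For (b), suppose an up‑flip $x\to y:=\tog_g(x)$ at $g\in\br$ is not a cover and choose $z$ with $x\lessdot z<y$; then $z=\tog_h(x)$ for some $h$ in the $\br$‑component of $x$, and $z\neq y$ forces $h\neq g$, so one must rule out $\tog_h(x)<\tog_g(x)$ for distinct $h,g\in\br$. I would argue this by induction on $|G|$ (with the one‑vertex case trivial): peel off a maximal vertex $g_0$ of $G$ and use the decomposition of $\trip(G)$ into two intervals, each order‑isomorphic to the independence poset of a graph with fewer vertices (obtained by deleting $g_0$, respectively $g_0$ together with its neighbours) — the decomposition that also underlies the toggle $\mut_{g_0}$ (cf.\ \Cref{sec:toggles}). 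Flips at vertices other than $g_0$ restrict to flips inside the two sub‑posets, flips at $g_0$ implement the gluing, and \Cref{lem:flips_invs} controls how a flip at $g_0$ commutes past flips at vertices incomparable to it; hence the interval $[x,\tog_g(x)]$ either sits inside one sub‑interval, where it is a cover by induction, or is visibly $\{x,\tog_g(x)\}$ when $g$ is $g_0$ or adjacent to it. The main obstacle in both (a) and (b) is the same bookkeeping: a careful case analysis of which vertices the greedy completions of \Cref{map:flip} insert or drop, organized by their $G$‑order position relative to $g$ (respectively $g_0$) — in (a) to pin down the estimate above, and in (b) to match flips with the interval decomposition.
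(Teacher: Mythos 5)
Your reduction of ``cover $\Rightarrow$ single flip'' to antisymmetry is correct and matches the paper. But the heart of your antisymmetry argument fails: $|\al|-|\br|$ does \emph{not} strictly increase under an up-flip, and your estimate $|\al\cap P|-|\al'\cap P|+|\br'\cap F|-|\br\cap F|\le 1$ is false. Take $G$ with vertices $\{g,a,b\}$ and edges $a\to g$, $b\to g$, so that $a,b>g$ are incomparable to each other. Then $\hat{0}=(\emptyset,\{g\})$ and $\tog_g(\emptyset,\{g\})=(\{g\},\{a,b\})$, so $|\al|-|\br|$ stays at $-1$ across this cover; here $P=\emptyset$, $F=\{a,b\}$, and your left-hand side equals $|\br'\cap F|-|\br\cap F|=2$. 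The mechanism is that removing $g$ from $\br$ can simultaneously unblock several new elements of $F$ for the greedy re-completion of $\br'$ (and symmetrically several elements of $\al\cap P$ can be lost), so no bound by $1$ is available, and the monovariant argument collapses.

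The paper's argument for antisymmetry is structural rather than numerical: an up-flip at $g$ puts $g$ into $\al$, and the only way a subsequent up-flip can remove $g$ from $\al$ is to flip at some $h>g$, which in turn puts $h$ into $\al$; hence no cycle of up-flips can close. The cover statement is a refinement of the same idea, reading off both components of $(\al',\br')=\tog_g(\al,\br)$: any initial up-flip at $h\ne g$ with $h$ not below $g$ inserts $h$ into $\al$ although $h\notin\al'$ (for $h$ incomparable to $g$, $h$ is preserved in $\br'$; for $h>g$, it is excluded from $\al'$), and dually any up-flip at $h$ not above $g$ removes $h$ from $\br$ although $h\in\br'$; in either case the discrepancy can only be pushed further away by later up-flips, so no longer chain interpolates. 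Your inductive plan for the cover statement, decomposing at an extremal vertex in the spirit of \Cref{lem:decomposition} and \Cref{thm:decomposition}, is a genuinely different route and could plausibly be completed, but it is considerably more elaborate than the paper's direct argument, and in any case it rests on the antisymmetry step, which is where the real gap lies.
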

\begin{proof}
Note that the relation is antisymmetric: any upwards flip from $(\al,\br)$ at $g$
introduces the new element $g$ into $\al$, and the only way to remove it by a subsequent upwards flip is to flip at an element higher than $g$, which then introduces a new element not in $\al$.  No such sequence of flips can therefore terminate at $(\al,\br)$, and so the relation is antisymmetric.

By the definition of $\trip(G)$, every cover relation of $G$
is induced by a flip.
Suppose now that there is some $(\al,\br)\in\trip(G)$, and some $g$
in $\br$, so that $(\al',\br')=\tog_g(\al,\br)>(\al,\br)$.
What has to be checked is that there is no longer sequence of upward flips which
also interpolates between $(\al,\br)$ and $(\al',\br')$.  Any upwards flip from $(\al,\br)$ at $h$ not below
$g$ would introduce the new element $h$ into $\al$, which is not in $\al'$; the only way to remove it by a subsequent upwards flip is to flip at an element higher than $h$, which then introduces a new element not in $\al'$ into $\al$.  No such sequence of flips can therefore terminate at $(\al',\br')$.  Dually, any
upwards flip at $h$ not below $g$ removes an element from $\br$ which is contained in $\br'$, and the only way to restore it is to flip at a still lower element, which removes a different element of $\br'$ from $\br$.  Similarly,
therefore, no such sequence of upwards flips can therefore terminate at $(\al',\br')$.
\end{proof}

By~\Cref{def:independence_poset}, the maximum element of $\trip(G)$ is the unique  tight orthogonal pair $\hat{1}$ of the form $(\al,\emptyset)$, and its minimum element $\hat{0}$ is of the form $(\emptyset,\br)$.  In particular, we see that $\trip(G)$ is connected.  \Cref{fig:toggle} gives several examples of independence posets on various orientations of a path of length 3.

A \defn{chain} in a poset is a sequence of elements $\x_0 < \x_1 < \cdots < \x_r,$ of \defn{length} $r$.  The poset $\trip(G)$ has a maximal chain of length $|G|$ obtained by starting at $\hat{0}$ and flipping the elements of $G$ in the order of a linear extension of $G$-order.

\begin{lemma}
\label{lem:max_chain}
For $G$ an acyclic directed graph with $g_1,\ldots,g_{|G|}$ a linear extension of $G$-order, the sequence \[\hat{0} \lessdot \tog_{g_1}(\hat{0}) \lessdot (\tog_{g_2}\circ \tog_{g_1})(\hat{0})\lessdot \ldots \lessdot (\tog_{g_{|G|}}\circ\cdots\circ \tog_{g_1})(\hat{0})=\hat{1}\] is a maximal chain in $\trip(G)$.
\end{lemma}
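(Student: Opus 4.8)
The plan is to induct on $i$, establishing the following strengthening: writing $(\al^{(i)},\br^{(i)}):=(\tog_{g_i}\circ\cdots\circ\tog_{g_1})(\hat0)$ and setting $D_i:=\{g_1,\dots,g_i\}$ and $U_{i+1}:=\{g_{i+1},\dots,g_{|G|}\}$, the restriction of $(\al^{(i)},\br^{(i)})$ to $D_i$ is the maximum element of $\trip(G|_{D_i})$, while its restriction to $U_{i+1}$ is the minimum element of $\trip(G|_{U_{i+1}})$. Because $\ell$ is a linear extension of $G$-order, $D_i$ is an order ideal and $U_{i+1}$ an order filter of $G$, and there is no edge of $G$ from $D_i$ to $U_{i+1}$ (an edge $d\to u$ would force $u$ to precede $d$ in $\ell$). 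In view of \Cref{thm:trip_ind}, these observations make a tight orthogonal pair of $G$ whose $D_i$-restriction is $\hat1$ of $\trip(G|_{D_i})$ and whose $U_{i+1}$-restriction is $\hat0$ of $\trip(G|_{U_{i+1}})$ unique, so it suffices to verify that both restrictions are preserved by each flip.

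The base case $i=0$ is the description of $\hat0$ as $(\emptyset,\br_0)$, with $\br_0$ the greedy completion produced by \Cref{map:zeroG}. For the inductive step, I would first argue that $g_i\in\br^{(i-1)}$: the element $g_i$ is minimal in $G|_{U_i}$ (it precedes every other element of $U_i$ in $\ell$), and every minimal vertex of an acyclic graph lies in the $\br$-component of that graph's minimum top---immediate from \Cref{map:zeroG}, since a vertex with no outgoing edges is never rejected---so the inductive hypothesis places $g_i$ in $\br^{(i-1)}$. Hence the flip at $g_i$ is an upward flip, which by \Cref{lem:flip_eq_cov} is a cover relation.

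It then remains to run \Cref{map:flip} at $g_i$ on $(\al^{(i-1)},\br^{(i-1)})$ and check the two restrictions of the output. The flip keeps the elements of $\al^{(i-1)}$ not below $g_i$ and the elements of $\br^{(i-1)}$ not above $g_i$, moves $g_i$ into the $\al$-component, and greedily re-completes in the orders $\ell'$ and $\ell$. Since $D_{i-1}$ is an ideal not containing $g_i$, no element of $U_{i+1}$ is $\le g_i$; combined with the absence of edges from $D_i$ to $U_{i+1}$ and the inductive hypothesis (so $\al^{(i-1)}\subseteq D_{i-1}$ and $\br^{(i-1)}\subseteq U_i$), one checks that on $U_{i+1}$ the $\ell$-completion of the $\br$-side reproduces \Cref{map:zeroG} run inside $G|_{U_{i+1}}$, and on $D_i=D_{i-1}\cup\{g_i\}$ the $\ell'$-completion of the $\al$-side reproduces \Cref{map:oneG} run inside $G|_{D_i}$; that nothing leaks across the cut in the other direction follows from the same edge count and the tightness of the two smaller tops. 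By \Cref{prop:flips_work} the output lies in $\trip(G)$, so by the uniqueness above it is the claimed pair. Taking $i=|G|$ gives $D_{|G|}=G$ and $U_{|G|+1}=\emptyset$, so $(\al^{(|G|)},\br^{(|G|)})$ is the maximum of $\trip(G)$; since each step is an upward cover relation, the displayed sequence is a maximal chain, necessarily of length $|G|$.

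The hard part will be the decoupling in the third paragraph---verifying that the greedy loops of \Cref{map:flip} neither add an element of $U_{i+1}$ to the $\al$-side nor an element of $D_i$ to the $\br$-side. I would isolate this as a small lemma: if $D\sqcup U$ partitions $G$ into an ideal and a filter with no edge from $D$ to $U$, then any tight orthogonal pair $(\al,\br)$ with $\al\subseteq D$ and $\br\subseteq U$ must be the pair gluing the maximum top of $\trip(G|_D)$ to the minimum top of $\trip(G|_U)$, and conversely this glued pair is tight---both directions reducing, thanks to the missing $D\to U$ edges, to tightness in the two pieces.
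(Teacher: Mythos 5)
Your proof is correct and follows the same overall inductive scheme as the paper's: walk along the chain, maintain an invariant pegged to the ideal/filter split $D_i=\{g_1,\dots,g_i\}$ versus $U_{i+1}=\{g_{i+1},\dots,g_{|G|}\}$, and argue that $g_{i+1}$ lies in the current $\br$-component so that the next flip is an upward cover. The genuine difference is in the strength of the invariant. The paper records only the containments $\al_i\subseteq D_i$ and $\br_i\subseteq U_{i+1}$, deduces $g_{i+1}\in\br_i$ directly from the fact that the $\br$-side of \Cref{map:flip} is rebuilt greedily in $\ell$-order, and at $i=|G|$ concludes from $\br_{|G|}\subseteq U_{|G|+1}=\emptyset$ that the endpoint is $\hat1$. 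You strengthen the invariant to an exact identification of the two restrictions with $\hat1$ of $\trip(G|_{D_i})$ and $\hat0$ of $\trip(G|_{U_{i+1}})$, which is why you need the decoupling lemma you isolate at the end. That lemma is true: the glued pair is tight because, in $\hat1(\trip(G|_D))$, every vertex of $D$ outside the $\al$-antichain receives an arrow \emph{from} that antichain (and dually in $U$ every vertex outside the $\br$-antichain has an arrow \emph{to} it), which combined with the absence of $D\to U$ edges blocks every cross-cut increase, decrease, or insertion; and uniqueness of a top with $\al\subseteq D$, $\br\subseteq U$ follows by induction on $|G|$ applying \Cref{thm:decomposition} at a minimal vertex of $G$, which necessarily lies in $D$ since $D$ is an ideal. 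So your route does noticeably more work than the paper's weaker containment invariant, which already suffices; what it buys is that the uniqueness bookkeeping via \Cref{thm:trip_ind} is cleaner and the terminal step ($U_{|G|+1}=\emptyset$ forces $\hat1$) is immediate rather than a small separate observation.
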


\begin{proof}
Write $(\al_i,\br_i)$ for the $i$th element of the sequence.  By~\Cref{lem:flip_eq_cov}, this sequence is unrefinable.  By induction, after the $i$th step all elements of $\br_i$ lie above $\{g_1,\ldots,g_i\}$ and all elements of $\al_i$ are contained in $\{g_1,\ldots,g_i\}$.  Furthermore, since each $\br_i$ is computed greedily in linear extension order, $g_{i+1} \in \br_i$.  The sequence must end with $\hat{1}$, because the only way for all elements of $\br_{|G|}$ to lie above $\{g_1,\ldots,g_{|G|}\}$ is for $\br_{|G|}$ to be empty.
\end{proof}

\begin{lemma}
Fix a linear extension $\ell$ of $G$-order.  For any element $(\al,\br) \in \trip(G)$, there is a unique chain
 \[\hat{0} \lessdot \tog_{g_1}(\hat{0}) \lessdot (\tog_{g_2}\circ \tog_{g_1})(\hat{0})\lessdot \ldots \lessdot (\tog_{g_{k}}\circ\cdots\circ \tog_{g_1})(\hat{0})=(\al,\br)\] such that $g_1 <_{\ell} g_2 <_\ell \cdots <_\ell g_k$.
\label{lem:tree_structure}
\end{lemma}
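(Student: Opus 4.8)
The plan is to prove existence and uniqueness separately, both by induction on $k$, the length of the chain, equivalently on the position of $(\al,\br)$ in the poset, and to extract the relevant $g_i$ greedily from the top $(\al,\br)$ itself. The governing idea, made available by \Cref{lem:max_chain} and its proof, is that if $\hat 0 = (\emptyset, \br_0) \lessdot \cdots \lessdot (\al_k,\br_k) = (\al,\br)$ is any such chain with $g_1 <_\ell \cdots <_\ell g_k$, then each upward flip at $g_{i+1}$ inserts $g_{i+1}$ into the $\al$-component, and — because the $g_i$ are flipped in increasing $\ell$-order and never subsequently disturbed (an upward flip at $g_{i+1}$ only deletes elements of $\al$ below $g_{i+1}$, and all previously inserted $g_j$ satisfy $g_j <_\ell g_{i+1}$, hence $g_j \not> g_{i+1}$) — we get $\al = \{g_1, \dots, g_k\}$. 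So the multiset $\{g_1,\dots,g_k\}$ is forced to be exactly $\al$, and the ordering among them is forced to be the $\ell$-order; this already pins down uniqueness of the \emph{sequence of elements flipped}, and since flips determine the intermediate tops, uniqueness of the whole chain follows. What remains for uniqueness is to check the subtle point that such a chain, once we know $\al = \{g_1 <_\ell \cdots <_\ell g_k\}$, really is uniquely determined: one shows by downward induction that the last flip must be at $g_k$ (the $\ell$-largest element of $\al$), because $g_k$ is in the $\al$-component of $(\al,\br)$ and is $\not<$ any other element of $\al$, so undoing the top flip recovers a top whose $\al$-component is $\{g_1,\dots,g_{k-1}\}$, and we iterate.

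For existence, I would run the process in the forward direction. Given $(\al,\br)$, let $g_1 <_\ell \cdots <_\ell g_k$ enumerate $\al$ in $\ell$-order. I claim $(\tog_{g_k} \circ \cdots \circ \tog_{g_1})(\hat 0) = (\al,\br)$ and that each partial composite is a genuine upward cover of the previous one. The cover property is the content of \Cref{lem:flip_eq_cov} once we know each flip is an \emph{upward} flip, i.e. that $g_{i+1}$ lies in the $\br$-component of $(\tog_{g_i} \circ \cdots \circ \tog_{g_1})(\hat 0)$; this in turn follows exactly as in the proof of \Cref{lem:max_chain}: after $i$ steps the $\al$-component is $\{g_1,\dots,g_i\}$, the $\br$-component is computed greedily in $\ell$-order by \Cref{map:oneG}/\Cref{map:zeroG}, and since $g_{i+1}$ is the $\ell$-least element of $G \setminus \{g_1,\dots,g_i\}$ that lies in $\al$, it is not excluded by any already-placed element, so greedy completion places it in $\br$. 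Finally, that the process terminates at $(\al,\br)$ and not merely at some top with $\al$-component $\al$: by \Cref{thm:trip_ind} a top is determined by its $\al$-component, so it suffices to note that the composite has $\al$-component $\{g_1,\dots,g_k\} = \al$, which is the invariant just described.

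The main obstacle, and the step I would treat most carefully, is the bookkeeping that upward flips at the $g_i$, taken in increasing $\ell$-order, never destroy a previously inserted element of $\al$ and always succeed in inserting the next one — in other words, that the $\al$-component after $i$ steps is \emph{exactly} $\{g_1,\dots,g_i\}$, no more and no less. The "no more" direction uses that an upward flip at $g_{i+1}$ preserves precisely the elements of $\al$ that are $\not< g_{i+1}$ and then greedily refills below $g_{i+1}$; one must rule out that the greedy refill introduces some new vertex $h <_\ell g_{i+1}$, and here the comparison between $\ell$-order and $G$-order, together with the fact that the target $\br$-component already blocks such $h$, does the work. This is essentially a re-examination of the argument in \Cref{lem:max_chain}, localized to the sub-chain, and once it is in hand both existence and uniqueness drop out.
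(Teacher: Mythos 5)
There is a genuine gap, located exactly at the step you flag as delicate, and it undermines both your existence and your uniqueness arguments. Your proof rests on the claim that the elements flipped along the chain coincide with $\al$, i.e.\ that the $\al$-component after $i$ upward flips is exactly $\{g_1,\dots,g_i\}$ and in particular $\al=\{g_1,\dots,g_k\}$. This is false. Take $G$ to be the linearly oriented path $4\to3\to2\to1$ (so $\ell=(1,2,3,4)$) and the top $(\al,\br)=(\{1,3\},\{4\})$, one of the tops from \Cref{fig:first_examples}. Then $\hat 0=(\emptyset,\{1,3\})$, and a direct computation gives $\tog_1(\hat 0)=(\{1\},\{2,4\})$, $\tog_2\tog_1(\hat 0)=(\{2\},\{3\})$, and $\tog_3\tog_2\tog_1(\hat 0)=(\{1,3\},\{4\})$: the unique $\ell$-increasing flip sequence reaching $(\{1,3\},\{4\})$ is $1,2,3$, so $\{g_1,g_2,g_3\}=\{1,2,3\}\neq\{1,3\}=\al$. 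Your proposed sequence $g_1=1$, $g_2=3$ fails at once: after $\tog_1$ the $\br$-component is $\{2,4\}$, which does not contain $3$, so $\tog_3$ is the identity.

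The flaw is a confusion between $G$-order and $\ell$-order in the wrong direction. You argue that a previously inserted $g_j$ survives the flip at $g_{i+1}$ because $g_j<_\ell g_{i+1}$, hence $g_j\not>_G g_{i+1}$. But the flip at $g_{i+1}$ deletes exactly the elements of $\al$ that \emph{are} below $g_{i+1}$ in $G$-order; for $g_j$ to survive you would need $g_j\not<_G g_{i+1}$, which does not follow from $g_j<_\ell g_{i+1}$ (in the example $1<_G 2$, so the flip at $2$ ejects $1$ from $\al$). Likewise, the greedy refill inside a flip can introduce new elements into $\al$ that your bookkeeping does not record (in the example the flip at $3$ ejects $2$ and reintroduces $1$), and the assertion that ``the target $\br$-component already blocks such $h$'' is unjustified since the greedy refill sees only the intermediate $\br_i$, not the target $\br$. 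The paper instead works top-down: starting from $(\al,\br)$, repeatedly flip out the $\ell$-largest remaining element of the current $\al$-component and observe that such a down-flip never introduces an $\ell$-larger element into $\al$, so the process is well-founded and terminates at $\hat 0$; reversing produces the chain. This greedy rule pins the flip sequence down without ever asserting that it enumerates $\al$, and uniqueness follows because flipping out a non-$\ell$-maximal element of the current $\al$ first would strand an element that no later (necessarily $\ell$-smaller) flip could remove.
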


\begin{proof}
Starting with $(\al,\br)$, flip  elements $h$ out of $\al$ in reverse linear extension order.  This does not add elements larger than $h$ into $\al$.  The process must therefore terminate with $\hat{0}$.  Uniqueness follows from the fact that if we ever flip a lower element of $\al$ than prescribed above, we will never be able to remove from $\al$ the elements that we skipped over (without violating the constraint on the order of the flips).
\end{proof}

{
\renewcommand{\thetheorem}{\ref{thm:tree_structure}}
\begin{theorem}
Fix a linear extension $\ell$ of $G$-order.  Flipping only in increasing order of $\ell$ gives a tree structure on the independent sets of $G$.
\end{theorem}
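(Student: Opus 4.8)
The plan is to deduce the theorem quickly from Lemma~\ref{lem:tree_structure}, which already does the essential work; the new content is mostly bookkeeping. First I would pin down what ``flipping only in increasing order of $\ell$'' means: one considers sequences of flips $\hat{0}=y_0,\ y_1=\tog_{h_1}(y_0),\ \ldots,\ y_m=\tog_{h_m}(y_{m-1})$ with $h_1<_\ell h_2<_\ell\cdots<_\ell h_m$ and each step nontrivial, and the claim is that the rule $y_m\mapsto y_{m-1}$ (``undo the last flip'') makes $\trip(G)$ into a tree rooted at $\hat 0$. By~\Cref{thm:trip_ind} the tops are in bijection with the independent sets of $G$ (via $(\al,\br)\mapsto\al$, say), so such a tree transports to a tree structure on independent sets.

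The first step is to observe that \emph{every} flip occurring in an increasing sequence that starts at $\hat 0$ is an \emph{upward} flip. The proof of Lemma~\ref{lem:tree_structure} shows that the canonical increasing chain reaching a top $(\al,\br)$ is obtained by repeatedly undoing the flip that removes the $\ell$-largest element of the current first coordinate; in particular its last flip is at $\max_\ell(\al)$, and each such undoing strictly decreases $\max_\ell$ of the first coordinate while introducing nothing $\ell$-larger. Now suppose, for contradiction, that $y_i=\tog_{h_i}(y_{i-1})$ is the first downward flip in our sequence. Then $y_0\lessdot y_1\lessdot\cdots\lessdot y_{i-1}$ is a chain of upward flips, hence by the uniqueness in Lemma~\ref{lem:tree_structure} it is the canonical chain of $y_{i-1}=(\al_{i-1},\br_{i-1})$, so $h_{i-1}=\max_\ell(\al_{i-1})$; but $h_i\in\al_{i-1}$ forces $h_i\le_\ell\max_\ell(\al_{i-1})=h_{i-1}$, contradicting $h_{i-1}<_\ell h_i$. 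Thus increasing flip sequences from $\hat 0$ are exactly the initial segments of canonical chains, and by Lemma~\ref{lem:tree_structure} each top is the endpoint of precisely one of them.

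With this in place, define the parent of a top $T=(\al,\br)\ne\hat 0$ to be $\pi(T):=\tog_{\max_\ell(\al)}(T)\in\trip(G)$ (which deletes $\max_\ell(\al)$ from $\al$); since $\max_\ell(\al)\in\al$ this is a downward flip, so $\pi(T)<T$ in $\trip(G)$, and $\hat 0$ is the unique top with empty first coordinate (again by~\Cref{thm:trip_ind}), so it is the unique top with no parent. I would then record the consistency statement that the canonical chain of $\pi(T)$ is the canonical chain of $T$ with its last step deleted — immediate from the uniqueness clause of Lemma~\ref{lem:tree_structure} — so that the path from $\hat 0$ to $T$ along parent edges is exactly $T$'s canonical chain, whose flip locations increase along $\ell$ as required. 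Finally, since $\pi$ strictly decreases the finite poset $\trip(G)$, iterating $\pi$ from any top terminates at $\hat 0$; hence the graph on $\trip(G)$ with edges $\{T,\pi(T)\}$ is connected, and it has $|\trip(G)|-1$ distinct edges on $|\trip(G)|$ vertices, so it is a tree. Transporting along $(\al,\br)\mapsto\al$ gives the asserted tree structure on the independent sets of $G$.

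The only point that needs genuine care is the claim that undoing a flip at the $\ell$-largest element of $\al$ strictly decreases $\max_\ell(\al)$ and never reintroduces an $\ell$-larger element — but this is exactly the observation ``this does not add elements larger than $h$ into $\al$'' established inside the proof of Lemma~\ref{lem:tree_structure}, so the present argument is essentially a repackaging of that lemma and I do not anticipate a new obstacle.
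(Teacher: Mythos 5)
Your proof is correct and follows essentially the same route as the paper, which simply points to Lemma~\ref{lem:tree_structure}: you identify the parent of each top with the penultimate element of its canonical increasing chain, and the uniqueness clause of that lemma is exactly what guarantees the parent function is well-defined and yields a spanning tree. The extra work you do (showing that increasing sequences from $\hat 0$ consist only of upward flips, and verifying consistency of the parent map via the last flip being at $\max_\ell(\al)$) is a faithful unpacking of what the paper leaves implicit rather than a different argument.
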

\addtocounter{theorem}{-1}
}

\begin{proof}
\Cref{lem:tree_structure} shows that by only permitting flips in the order of some fixed linear extension $\ell$ of $G$-order, we obtain a spanning tree of $\trip(G)$.
\end{proof}

This tree structure is illustrated in~\Cref{fig:tamari} for the Tamari lattice on 14 elements.

\begin{proposition}
For $G$ a directed acyclic graph, let $G^*$ be the graph obtained by reversing all the edges in $G$.  Then \begin{align*}\trip(G) &\simeq \trip(G^*)^* \\ (\al,\br) &\mapsto (\br,\al), \end{align*} where $\trip(G^*)^*$ is the poset dual of $\trip(G^*)$.
\label{prop:dual}
\end{proposition}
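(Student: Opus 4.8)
The plan is to check that the bijection $(\al,\br) \mapsto (\br,\al)$ on sets of independent-set pairs restricts to a bijection $\trip(G) \to \trip(G^*)$, and then that it reverses the independence order. First I would verify the bijection on tops. By definition, $(\al,\br)$ is an orthogonal pair in $G$ exactly when there is no edge from $\al$ to $\br$ in $G$; reversing all edges, this says there is no edge from $\br$ to $\al$ in $G^*$, i.e. $(\br,\al)$ is an orthogonal pair in $G^*$. For tightness, note that $G^*$-order is the opposite of $G$-order, so ``increasing an element of $\al$ with respect to $G$-order'' becomes ``decreasing an element of $\al$ with respect to $G^*$-order'', and symmetrically for $\br$; adding a new element to either set is self-dual. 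Hence the four forbidden moves in \Cref{def:top} for $(\al,\br)$ in $G$ match exactly the four forbidden moves for $(\br,\al)$ in $G^*$, so $(\al,\br)\in\trip(G)$ iff $(\br,\al)\in\trip(G^*)$. The map is clearly an involution between these two sets (applying it again and using $(G^*)^*=G$), hence a bijection.

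Next I would check that the map reverses cover relations. By \Cref{lem:flip_eq_cov}, the covers of $\trip(G)$ are precisely the upward flips: $(\al,\br)\lessdot(\al',\br')$ iff $(\al',\br')=\tog_g(\al,\br)$ for some $g\in\br$. So it suffices to show that the flip operation is itself compatible with edge-reversal: $\tog_g$ on $\trip(G)$ corresponds, under $(\al,\br)\mapsto(\br,\al)$, to $\tog_g$ on $\trip(G^*)$, and that an \emph{upward} flip in $G$ (at $g\in\br$) becomes a \emph{downward} flip in $G^*$ (at $g$ in the first coordinate $\br$, which plays the role of $\al$ for $G^*$). For the first point I would inspect \Cref{def:flip}: the flip at $g$ preserves the elements of $\al$ not less than $g$ and the elements of $\br$ not greater than $g$; under edge-reversal ``not less than'' and ``not greater than'' swap roles exactly as the two coordinates swap, and the greedy completions in the orders $\ell'$ and $\ell$ swap because a linear extension of $G$-order is a reverse linear extension of $G^*$-order and vice versa. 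So the construction is literally symmetric under the correspondence. The ``upward becomes downward'' statement then follows from the convention in \Cref{sec:flip} that a flip at $g$ moves up when $g$ lies in the second coordinate ($\br$) and down when $g$ lies in the first coordinate ($\al$): since the correspondence swaps coordinates, it swaps up-flips and down-flips.

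Putting these together: $(\al,\br)\lessdot(\al',\br')$ in $\trip(G)$ iff $(\al',\br')=\tog_g(\al,\br)$ with $g\in\br$, iff $(\br',\al')=\tog_g(\br,\al)$ with $g$ in the first coordinate of $(\br,\al)$, iff $(\br',\al')\lessdot(\br,\al)$ in $\trip(G^*)$, i.e. $(\br,\al)\gtrdot(\br',\al')$ in $\trip(G^*)$. Taking reflexive-transitive closures, the bijection sends $x \le y$ in $\trip(G)$ to the reverse relation in $\trip(G^*)$, which is exactly an isomorphism $\trip(G)\simeq\trip(G^*)^*$. The only mildly delicate point — and the one I would write out carefully — is the claim that the flip construction of \Cref{def:flip}/\Cref{map:flip} is invariant under simultaneously reversing all edges of $G$ and swapping the two coordinates; everything else is bookkeeping about opposite orders. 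Since edge-reversal is self-inverse and $G^*$ is again acyclic, no extra hypotheses are needed.
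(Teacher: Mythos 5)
Your proof is correct; the paper simply dismisses the statement with ``Immediate,'' and what you have written is the natural verification that the paper is implicitly leaving to the reader. The key observation you make---that the construction in \Cref{def:flip}/\Cref{map:flip} is literally symmetric under simultaneously reversing all edges (so $G$-order becomes $G^*$-order, linear extensions become reverse linear extensions) and swapping the two coordinates of the pair---is exactly why the paper regards the result as immediate, and you spell it out correctly together with the attendant facts that upward flips become downward flips and that the poset is generated by the flip relations.
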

\begin{proof}
Immediate.
\end{proof}

\subsection{Tight orthogonal pair recursion}
\label{sec:recursion}
For any $g \in G$, since $\{g\}$ is an independent set of $G$, by~\Cref{thm:trip_ind} there is a unique  tight orthogonal pair $m_g$ of the form $(\al,\{g\})$, and a unique  tight orthogonal pair $j_g$ of the form $(\{g\},\br)$.  Write \begin{equation}\trip_g(G):=[\hat{0},m_g]  \text{ and } \trip^g(G):=[j_g,\hat{1}].\end{equation}  We say that $g \in G$ is \defn{extremal} if it is a minimal or maximal element of $G$-order.

\begin{lemma}
\label{lem:decomposition}
Let $G$ be an acyclic directed graph.  If $g$ is an extremal element of $G$, then $\trip(G) = \trip_g(G) \sqcup \trip^g(G)$ .  Furthermore,
\begin{itemize}
\item If $g$ is minimal, $(\al,\br) \in \trip_g(G)$ if and only if $g \in \br$, and \item If $g$ is maximal, $(\al,\br) \in \trip^g(G)$ if and only if $g \in \al$.
\end{itemize}
In particular, if $x \in \trip^g(G)$ and $y \in \trip_g(G)$, then $x \not \leq y$.
\end{lemma}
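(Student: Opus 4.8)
The plan is to reduce to the case $g$ maximal using \Cref{prop:dual}, and then to prove the decomposition by tracking what a flip does to the single vertex $g$.

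First I would invoke \Cref{prop:dual}: the map $(\al,\br)\mapsto(\br,\al)$ is an isomorphism $\trip(G)\xrightarrow{\ \sim\ }\trip(G^*)^*$. Being order-reversing, it carries the interval $\trip_g(G)=[\hat 0,m_g]$ to $[\,\phi(m_g),\phi(\hat 0)\,]=[j_g,\hat 1]$ in $\trip(G^*)$, i.e.\ to $\trip^g(G^*)$, and likewise $\trip^g(G)$ to $\trip_g(G^*)$; since a minimal vertex of $G$ is a maximal vertex of $G^*$, the statement for $g$ minimal in $G$ is exactly the statement for $g$ maximal in $G^*$. So assume $g$ is maximal. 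I will also use freely that the two components of any top are disjoint (by \Cref{thm:trip_ind} a top $(\al,\br)$ is the unique one with first component $\al$, and \Cref{map:zeroG} never puts an element of $\al$ into $\br$), so every top satisfies exactly one of $g\in\al$, $g\notin\al$; consequently the ``in particular'' clause is automatic once the decomposition is established, since $\trip^g(G)=[j_g,\hat 1]$ is an up-set disjoint from $\trip_g(G)$.

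The core is to show $\{(\al,\br):g\in\al\}=\trip^g(G)$ and $\{(\al,\br):g\notin\al\}=\trip_g(G)$. For the first: a flip at an element $h$ preserves precisely the elements of the $\al$-component that are not below $h$, and since $g$ is maximal it is below nothing, so $g$ is never deleted from the $\al$-component by any flip; moreover an upward flip is performed at an element of $\br$, which cannot equal $g$ when $g$ is already in $\al$. Hence (writing the order via flips, \Cref{lem:flip_eq_cov}) every element of $[j_g,\hat 1]$ still has $g$ in its $\al$-component, giving $\trip^g(G)\subseteq\{g\in\al\}$. Conversely, given a top with $g\in\al$ and $\al\ne\{g\}$, flipping down at any element of $\al\setminus\{g\}$ gives a strictly smaller top still containing $g$ in its $\al$-component; iterating and using finiteness of $\trip(G)$, this terminates at a top whose $\al$-component is $\{g\}$, which is $j_g$ by uniqueness in \Cref{thm:trip_ind}, so $(\al,\br)\ge j_g$.

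For $\{g\notin\al\}=\trip_g(G)$, one inclusion is formal: if $(\al,\br)\le m_g$ and $g\in\al$, then by the previous paragraph $j_g\le(\al,\br)\le m_g$, so $g$ lies in the $\al$-component of $m_g$, contradicting $m_g=(\cdot,\{g\})$ and disjointness. The reverse inclusion is the step I expect to be the main obstacle; I would prove it by flipping upward out of $\br$. Starting from a top with $g\notin\al$ and $\br\ne\{g\}$ (here $\br=\emptyset$ is impossible, as $(\al,\emptyset)=\hat 1$ must contain $g$ in its $\al$-component by tightness, $g$ being maximal), choose $h\in\br\setminus\{g\}$ and flip up at $h$; the result is strictly larger, and the delicate point is that it still has $g\notin\al$. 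The only way $g$ could re-enter is via the greedy completion of the $\al$-component, which would add $g$ only if $g\not\ge h$ and $g$ is not blocked by the surviving part of $\br$; but $g\not\ge h$ forces $h$ incomparable to $g$, and then either $g\in\br$ (so $g\not>h$, $g$ survives in $\br$ and blocks itself) or $g$ is in neither set (so tightness forces an out-neighbour $h_0$ of $g$ into $\br$, and $h_0<g$ with $h$ incomparable to $g$ forces $h\not<h_0$, so $h_0$ survives in $\br$ and the edge $g\to h_0$ blocks $g$). Thus $g\notin\al$ is preserved, and iterating must terminate at a top with $\br=\{g\}$, namely $m_g$, so $(\al,\br)\le m_g$. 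Combining the two core claims gives $\trip(G)=\trip_g(G)\sqcup\trip^g(G)$ together with both bulleted characterizations, and the ``in particular'' clause follows as noted.
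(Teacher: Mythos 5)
Your proof is correct and follows essentially the same approach as the paper's: walk to $j_g$ (respectively $m_g$) via flips while tracking membership of $g$ in the appropriate component, exploiting that an extremal $g$ is never dislodged by a flip at another vertex. The paper argues the case $g$ minimal directly (tracking $g\in\br$) and dualizes for $g$ maximal, whereas you argue $g$ maximal (tracking $g\in\al$) and invoke \Cref{prop:dual} for the minimal case; your case analysis for why $g$ cannot re-enter $\al$ during an upward flip is also spelled out in somewhat more detail than the paper's terse version, but the underlying idea is the same.
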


\begin{proof}
Suppose $g$ is a minimal element of $G$ and let $(\al,\br) \in \trip(G)$.

If $g \in \br$, then by successively flipping in any order all elements not equal to $g$ that only cause us to move up in $\trip(G)$, we must end with the element $m_g$ and so $(\al,\br) \in \trip_g(G)$.  For certainly $g \in \br$ since it started out in $\br$ and flipping at elements of $G$ not equal to $g$ does not remove $g$ from $\br$, since $g$ is minimal.  And $g$ is the only element of $\br$, or we would have flipped more elements out.

If $(\al,\br) \in \trip_g(G)$, then there is a sequence of flips that take us upwards to $m_g$.  This sequence cannot remove $g$ (since it could never be replaced), and so $g \in \br$.

If $g \in \al$, or if $g \not \in \al\cup \br$, then we wish to successively flip in any order all elements not equal to $g$ that only cause us to move down in $\trip(G)$.  We claim that this must end with the element $j_g$ so that $(\al,\br) \in \trip^g(G)$.  Since the poset is finite, the process ends.  Let $(\al',\br')$ be the end of this flip sequence.  Then we claim $g \in \al'$ at this point.  Note that $\al'$ cannot contain any element $h \in G$ adjacent to $g$ (or we would have flipped $h$ out), so that $g$ can be added to $\al'$---unless $g \in \br'$.  But $g$ cannot be in $\br'$, since it was not in $\br$, and only toggling at $g$ could add it in.

The dual argument applies when $g$ is maximal.
\end{proof}

\begin{lemma}
For any element $g \in G$, if $g \in \al$ then $(\al,\br) \in \trip^g(G)$; and if $g \in \br$ then $(\al,\br)\in \trip_g(G)$.
\label{lem:extremal_needs}
\end{lemma}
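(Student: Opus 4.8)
The plan is to prove both implications for every finite acyclic directed graph simultaneously, by induction on $|G|$, using \Cref{prop:dual} to pair them: under the isomorphism $\trip(G)\simeq\trip(G^*)^*$ the interval $\trip^g(G)=[j_g,\hat 1]$ corresponds to $\trip_g(G^*)=[\hat 0,m_g]$ and the condition $g\in\al$ corresponds to $g\in\br$, so the assertion ``$g\in\al\Rightarrow(\al,\br)\in\trip^g(G)$'' for all $G$ is equivalent to ``$g\in\br\Rightarrow(\al,\br)\in\trip_g(G)$'' for all $G$. Thus it is enough to prove the first, while in the inductive step both may be invoked for every graph on fewer vertices. The case $|G|=1$ is trivial, and if $g$ is extremal in $G$-order the conclusion is exactly \Cref{lem:decomposition}; so assume $g$ is non-extremal and fix a minimal vertex $p\neq g$ of $G$ (one exists since $g$ is not minimal). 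We reduce to $\trip$ of a smaller graph according to where $p$ sits. (A direct ``flip $(\al,\br)$ down while keeping $g\in\al$'' argument does not work: a downward flip can eject $g$ from $\al$, so $j_g$ need not lie on any such chain.)

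The mechanism for the manageable cases is the restriction isomorphism $\trip_p(G)\simeq\trip(G_p^\circ)$, $(\al,\br)\mapsto(\al,\br\smallsetminus\{p\})$: since $p$ is a sink, a top with $p\in\br$ meets no in-neighbour of $p$ in $\al$ (that would be an edge from $\al$ to $\br$) nor in $\br$ (independence), so the map is a bijection; it is an order isomorphism because the flips at vertices of $G_p^\circ$ are exactly the cover relations inside $[\hat 0,m_p]$, and it carries $j_g$ computed in $G_p^\circ$ to $j_g$ computed in $G$ (the inverse adjoins $p$ to $\br$, and $(\{g\},\br_g\cup\{p\})$ is the unique top of $G$ with first component $\{g\}$). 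If $p\in\br$, then $(\al,\br)\in\trip_p(G)$, its restriction is a top of $G_p^\circ$ whose first component still contains $g$ (as $g\in\al$ and $p\in\br$, $g$ is not an in-neighbour of $p$), the inductive hypothesis gives $j_g\le(\al,\br\smallsetminus\{p\})$ there, and transporting back gives $j_g\le(\al,\br)$. If $p\in\al$, then $\tog_p(\al,\br)$ is a downward flip, and as $p$ is minimal nothing in $\al$ lies below $p$, so $\al\smallsetminus\{p\}$ — hence $g$ — is preserved; the result $(\al^-,\br^-)$ has $p\in\br^-$, $g\in\al^-$, and $(\al^-,\br^-)<(\al,\br)$, so the previous case applied to $(\al^-,\br^-)$ gives $j_g\le(\al^-,\br^-)<(\al,\br)$.

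The remaining case $p\notin\al\cup\br$ is the crux and, I expect, the only step requiring real work, since $(\al,\br)$ then lies in $\trip^p(G)=[j_p,\hat 1]$, which the restriction isomorphism above does not describe. The idea is: as $p$ is a sink it is an interior vertex of no directed path, so $G_p$-order agrees with $G$-order on $G\smallsetminus\{p\}$, and passing from $G$ to $G_p$ creates no slack in $(\al,\br)$ — every move forbidden by tightness of $(\al,\br)$ in $G$ whose obstructing edge is incident to $p$ is a move that adjoins or substitutes $p$ itself, which is vacuous in $G_p$ — so $(\al,\br)$ is a top of $G_p$ with $g$ in its first component. The inductive hypothesis then gives $j_g\le(\al,\br)$ in $\trip(G_p)$, and what remains is to transfer this inequality to $\trip(G)$ via an order embedding $\trip(G_p)\hookrightarrow\trip(G)$ with image the filter $\trip^p(G)=[j_p,\hat 1]$ identifying the two copies of $j_g$; a cardinality count ($\lvert\trip^p(G)\rvert=\lvert\trip(G)\rvert-\lvert\trip(G_p^\circ)\rvert=\lvert\trip(G_p)\rvert$, the last equality by the restriction isomorphism) makes the isomorphism $\trip^p(G)\simeq\trip(G_p)$ plausible, and constructing it and checking its compatibility with the distinguished elements $j_g$ is the delicate point. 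Once it is in place $j_g\le(\al,\br)$ follows, the induction closes, and the dual implication is immediate from \Cref{prop:dual}.
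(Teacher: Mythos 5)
Your approach — induction on $|G|$ by deleting a minimal vertex $p$, splitting on where $p$ sits relative to $(\al,\br)$, and pushing through the restriction isomorphisms of \Cref{thm:decomposition} — is genuinely different from the paper's. The paper gives a short direct argument: starting from $(\al,\br)$, flip out every element strictly above $g$ in reverse linear extension order; the authors observe that $g$ may temporarily leave $\al$ during these flips, but must be back in $\al$ once everything above $g$ is gone (no element of $\br$ below $g$ is disturbed, and no element above $g$ remains in $\al$, so tightness forces $g\in\al$); then flip out the remaining elements of $\al$ other than $g$, which never reintroduces anything above $g$, landing at $j_g$. Your parenthetical dismissal of a ``flip down keeping $g\in\al$'' argument is aimed at a strawman: the paper's chain does not keep $g\in\al$ throughout, only guarantees it at the two checkpoints, and that is enough.

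As for your proposal: the cases $p\in\br$ and $p\in\al$ are fine, and the isomorphism $\trip^p(G)\simeq\trip(G_p)$ you want in the case $p\notin\al\cup\br$ is not something you need to construct — it is exactly \Cref{thm:decomposition}, whose proof depends only on \Cref{lem:decomposition} and not on the present lemma, so you may invoke it after reordering. The genuine gap is the one you flag: the claim that this isomorphism ``identifies the two copies of $j_g$.'' It does not in general. Under $(\al,\br)\mapsto(\al\setminus\{p\},\br)$ the element $j_g^G$ lands on $j_g^{G_p}$ only when $j_g^G\in\trip^p(G)$, i.e. only when $p\notin\br_g$, which (since $p$ is a sink) happens precisely when there is an edge $g\to p$. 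When there is no such edge, $j_g^G\in\trip_p(G)$ and the preimage of $j_g^{G_p}$ is instead the top with first component $\{g,p\}$, which one checks is $\tog_p(j_g^G)$ and hence covers $j_g^G$. So the induction still closes, but only after the extra observation $j_g^G\lessdot\tog_p(j_g^G)\le(\al,\br)$; without it the argument proves the wrong inequality. Note also that such tops genuinely occur — e.g.\ in $G$ with edges $2\to 1$, $4\to 1$, $3\to 2$, take $g=3$, $p=1$, $(\al,\br)=(\{3,4\},\emptyset)$: then $j_3=(\{3\},\{1\})$ has $1\in\br_3$ — so this subcase cannot be waved away.
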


\begin{proof}
If $g \in \al$, then we construct a path downwards in $\trip(G)$ from $(\al,\br)$ to $j_g$.  First, flip out all elements greater than $g$ in reverse linear extension order. This does not cause $g$ to enter into $\br$ (since all elements in $\br$ less than $h$ are fixed by flips).
Since $g$ can be added to $\al$ at the end of this sequence of flips, it is indeed in $\al$ (though it may not have stayed in it all the time during the sequence).  Now flip out all the remaining elements of $\al$ other than $g$. None of the remaining elements is above $g$, so this will neither remove $g$ nor introduce elements in $\al$ above $g$.
The process necessarily terminates with $j_g$.  Again, the statement for $\br$ follows dually.
\end{proof}

\begin{lemma}
Let $G$ be a directed acyclic graph.
\begin{itemize}
\item If $g$ is minimal and $(\al,\br) \in \trip_g(G)$, then $\tog_g(\al,\br)=(\al\cup \{g\},\br')$ for some $\br'$.
\item If $g$ is maximal and $(\al,\br) \in \trip^g(G)$, then $\tog_g(\al,\br)=(\al',\br\cup\{g\})$ for some $\al'$.
\end{itemize}
\label{lem:down}
\end{lemma}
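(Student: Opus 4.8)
The plan is to prove the first bullet (the minimal case) directly and to deduce the second (the maximal case) from \Cref{prop:dual}. So suppose $g$ is minimal and $(\al,\br)\in\trip_g(G)$. By \Cref{lem:decomposition} this is the same as saying $g\in\br$, so $\tog_g$ is an upward flip: following \Cref{def:flip}/\Cref{map:flip}, it keeps every element of $\al$ that is not less than $g$---which is all of $\al$, since $g$ is minimal---moves $g$ out of $\br$ and into $\al$, keeps every element of $\br$ that is not greater than $g$, and then greedily re-completes each side. Writing $\tog_g(\al,\br)=(\al^+,\br^+)$, we already have $\al^+\supseteq\al\cup\{g\}$, and any element of $\al^+\setminus(\al\cup\{g\})$ must be one introduced by the greedy step, hence (by the form of \Cref{map:flip}) an element $k$ with $k\not\ge g$ and $k\notin\al\cup\br$. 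Thus the entire content of the first bullet is the assertion that the greedy step introduces nothing, i.e. $\al^+=\al\cup\{g\}$.

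To prove this, I would argue by contradiction: take $k\in\al^+\setminus(\al\cup\{g\})$, so $k\notin\al\cup\br$ and $k\not\ge g$. Since $(\al,\br)$ is tight and $k\notin\al$, the pair $(\al\cup\{k\},\br)$ is not an orthogonal pair of independent sets; as $\al$ and $\br$ are independent and carry no edge from $\al$ to $\br$, this forces either an edge between $k$ and some $i\in\al$, or an edge $k\to j$ with $j\in\br$. The first alternative is impossible, because $i\in\al\subseteq\al^+$ and $k\in\al^+$, while $\al^+$ is independent by \Cref{prop:flips_work}. In the second alternative, the edge $k\to j$ gives $k>j$ in $G$-order; if $j=g$ then $k>g$, contradicting $k\not\ge g$; if $j>g$ then $k>j>g$, again contradicting $k\not\ge g$; and otherwise $j$ is neither equal to nor greater than $g$, so it is one of the preserved elements of $\br$ and hence $j\in\br^+$, whereupon the edge $k\to j$ with $k\in\al^+$ and $j\in\br^+$ contradicts the orthogonality of $\tog_g(\al,\br)$. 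Every case is impossible, so $\al^+=\al\cup\{g\}$.

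For the second bullet, let $g$ be maximal and $(\al,\br)\in\trip^g(G)$; then $g$ is minimal in the reversed graph $G^*$, and under the isomorphism $\trip(G)\simeq\trip(G^*)^*$ of \Cref{prop:dual}, which sends $(\al,\br)\mapsto(\br,\al)$, the interval $\trip^g(G)$ is carried to $\trip_g(G^*)$ and the flip $\tog_g$ on $\trip(G)$ is carried to $\tog_g$ on $\trip(G^*)$; applying the first bullet in $G^*$ gives $\tog_g(\br,\al)=(\br\cup\{g\},\al')$ there, which translates back to $\tog_g(\al,\br)=(\al',\br\cup\{g\})$ in $G$. The step I expect to need the most care is the opening bookkeeping of the first bullet: reading off from \Cref{def:flip}/\Cref{map:flip} exactly which elements of $\br$ survive into $\br^+$ and exactly which elements can be newly placed into $\al^+$ (precisely those $k$ with $k\not\ge g$ and $k\notin\al\cup\br$). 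Once that is pinned down, the rest is just the short observation that an edge from a freshly added $k$ into $\br$ would push $k$ above $g$, which the greedy step forbids.
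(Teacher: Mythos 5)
Your proof is correct and rests on the same core observation as the paper's: when $g$ is minimal, nothing in $\al$ is $\le g$, so the deletion step of the flip keeps all of $\al$, and $g$ is then moved into the first component. The paper's stated proof essentially stops there and says ``similarly'' for the maximal case; you go further and explicitly verify the reverse inclusion $\al^+\subseteq\al\cup\{g\}$ by showing that tightness of $(\al,\br)$, together with the greedy step's constraints $k\not\ge g$ and $k\notin\br$, makes it impossible for the completion to add any new element to the first component (the case $k\to j$ with $j\ge g$ would force $k>g$, and the case $j\not\ge g$ or $k$ adjacent to $\al$ would contradict orthogonality or independence of the output top). You also derive the maximal case from \Cref{prop:dual} rather than by a symmetric direct argument, which is a tidy alternative. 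So: same approach, but a more complete write-up than the paper's one-sentence proof.
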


\begin{proof}
This follows from the definition of flip; when $g$ is minimal, all of $\al$ is preserved since every element of $\al$ is not less than $g$.  Similarly, when $g$ is maximal, all of $\br$ is preserved, since every element of $\br$ is not greater than $g$.
\end{proof}

\begin{theorem}
\label{thm:decomposition}
Let $g$ be an extremal element of an acyclic directed graph $G$.  Then \begin{align*} (\al,\br)&\mapsto (\al ,\br \setminus \{g\}\} \text{ is a bijection } \begin{cases} \trip_g(G)\simeq\trip(G_g^\circ) & \text{if } g \text{ minimal} \\ \trip_g(G)\simeq\trip(G_g) & \text{if } g \text{ maximal} \end{cases} \\
  (\al,\br) &\mapsto (\al \setminus \{g\},\br\} \text{ is a bijection } \begin{cases} \trip^g(G)\simeq\trip(G_g) & \text{if } g \text{ minimal} \\ \trip^g(G)\simeq\trip(G_g^\circ) & \text{if } g \text{ maximal} \end{cases}\end{align*}
\end{theorem}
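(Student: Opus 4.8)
The plan is to prove each of the four bijection claims by combining the structural decomposition of \Cref{lem:decomposition} with the explicit description of which vertices can appear in the two components. I will treat the case $g$ minimal in detail; the case $g$ maximal follows by applying \Cref{prop:dual} (passing to $G^*$, under which minimal and maximal elements, and the roles of $\al$ and $\br$, are interchanged, and $G_g^\circ$, $G_g$ are each self-dual in the obvious sense). So fix $g$ minimal.

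\textbf{Step 1: the map $\trip_g(G) \to \trip(G_g^\circ)$.} By \Cref{lem:decomposition}, $(\al,\br) \in \trip_g(G)$ iff $g \in \br$, and then (again by that lemma, since moving up to $m_g$ cannot remove $g$ and removes everything else) $\br = \{g\}$; wait — more carefully, $\br$ need not be $\{g\}$ for a general element of the interval $[\hat0, m_g]$, only $g\in\br$. So the correct statement is just: $g\in\br$. Since $(\al,\br)$ is an orthogonal pair of independent sets and $g\in\br$, no element of $\al$ is adjacent to $g$ (orthogonality forbids edges from $\al$ to $\br\ni g$, and independence of $\br$ together with the adjacency would be needed — actually the key point is that in a \emph{tight} orthogonal pair with $g\in\br$ no vertex of $\al$ is adjacent to $g$: any in-neighbor of $g$ lying in $\al$ is forbidden by orthogonality, and any out-neighbor $h$ of $g$ in $\al$ would force, upon removing $g$... ) — the clean fact I will use is: since $g$ is minimal, $g$ has no out-neighbors, so adjacency to $g$ means having $g$ as an out-neighbor, and such an edge from $\al$ to $\br$ is forbidden by orthogonality. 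Hence $\al \subseteq G_g^\circ$. Also $\br \setminus \{g\}$ consists of vertices of $G_g^\circ$: any vertex adjacent to $g$ that lay in $\br$ would violate independence of $\br$. So $(\al, \br\setminus\{g\})$ is a pair of independent sets of $G_g^\circ$, clearly orthogonal, and I must check it is \emph{tight} in $G_g^\circ$.

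\textbf{Step 2: tightness is preserved, and the inverse.} For tightness, suppose $(\al,\br\setminus\{g\})$ fails to be tight in $G_g^\circ$: some vertex $h$ of $G_g^\circ$ can be added to, or some element increased/decreased within, $\al$ or $\br\setminus\{g\}$ while staying orthogonal in $G_g^\circ$. Since $h$ is non-adjacent to $g$ in $G$, the same move is legal in $G$ (the only new constraints in $G$ involve edges to $g$, which $h$ does not have, and we have not touched $g\in\br$), contradicting tightness of $(\al,\br)$ in $G$. Conversely, given a tight orthogonal pair $(\al,\cC)$ of $G_g^\circ$, I claim $(\al, \cC \cup \{g\})$ is tight orthogonal in $G$: it is orthogonal because $g$ minimal has no out-neighbors so no edge leaves into $g$ — wait, orthogonality needs no edge \emph{from} $\al$ \emph{to} $\br = \cC\cup\{g\}$; an edge from $\al$ to $g$ would be an out-edge of a vertex of $\al$ ending at $g$, i.e.\ $g$ an out-neighbor of that vertex, which is possible \emph{a priori}. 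Here I use that $\al \subseteq G_g^\circ$ has no vertex adjacent to $g$ at all. And $\cC\cup\{g\}$ is independent since $\cC\subseteq G_g^\circ$. Tightness of the larger pair: any move not involving $g$ is blocked by tightness of $(\al,\cC)$ in $G_g^\circ$ as in Step 1; adding $g$ to $\al$ is blocked because $g\in\br$; decreasing $g$ or adding some other element to $\br$ near $g$ is blocked because $g$ is minimal (nothing is smaller) and $g$'s neighbors are adjacent to $g$. Finally $g \in \br \cup \{g\}$, so $(\al,\cC\cup\{g\}) \in \trip_g(G)$ by \Cref{lem:decomposition}. These two constructions are mutually inverse, giving the bijection.

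\textbf{Step 3: the map $\trip^g(G) \to \trip(G_g)$.} By \Cref{lem:decomposition}, $(\al,\br)\in\trip^g(G)$ iff $g\notin\br$ — more precisely the lemma says for $g$ \emph{maximal} that $g\in\al$; for $g$ minimal, $\trip^g(G)$ is the complementary part of the decomposition, characterized by $g\notin\br$. I send $(\al,\br)\mapsto(\al\setminus\{g\},\br)$. Now $\br$ is an independent set of $G_g$ (deleting $g$ does not disturb it), $\al\setminus\{g\}$ is an independent set of $G_g$, and orthogonality is inherited (removing edges only helps). The delicate point, and what I expect to be the \textbf{main obstacle}, is tightness in $G_g$: when we pass from $G$ to $G_g$ we delete $g$ \emph{and all edges into $g$}, but keep out-edges of $g$ — however $g$ is minimal so it has no out-edges, so $G_g$ is simply $G$ with $g$ and its incident edges removed. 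Then a forbidden move in $G_g$ that would witness non-tightness must, when interpreted back in $G$, either be a move that was already available (contradiction) or involve putting a vertex where $g$'s edges previously blocked it; but the only such vertices are neighbors of $g$, and adding a neighbor of $g$ to $\al$ is legal in $G$ precisely when $g\notin\al$ — so I must separately track whether $g\in\al$. This is exactly parallel to the two sub-cases ``$g\in\al$'' versus ``$g\notin\al\cup\br$'' in the proof of \Cref{lem:decomposition}, and I will mirror that case analysis: if $g\in\al$ then after deleting $g$ a neighbor $h$ of $g$ might become addable to $\al$ in $G_g$, which would contradict the tightness we want; I rule this out using that $(\al,\br)$ tight in $G$ forced $g\in\al$ to be ``maximal possible'' there, so no $h$ can be simultaneously added — essentially $h$ and $g$ compete for the same slot. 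The inverse map $(\cA,\br)\mapsto(\cA\cup\{g\},\br)$ requires checking that adding $g$ back to $\al$ keeps independence (need: no element of $\cA$ adjacent to $g$ — true since such an element would already have been non-tight in $G_g$ as it could be increased past / would block, OR I build this into the argument by noting $g$ can always be adjoined because $g$ minimal and $g\notin\br$, using \Cref{lem:decomposition}'s argument verbatim), and tightness. The fourth bijection, $\trip^g(G)\simeq\trip(G_g^\circ)$ for $g$ maximal, is the dual of this one under \Cref{prop:dual}.

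In short: the skeleton is \Cref{lem:decomposition} (which component $g$ lands in, controlled by minimality/maximality) plus a routine but fiddly verification that tightness transfers both ways under deleting $g$, where the only real subtlety — the obstacle — is the interaction between $g$ and its neighbors in the ``$g$ in the deleted-from component'' case, which I handle by importing the competing-slot case analysis already used to prove \Cref{lem:decomposition}; duality via \Cref{prop:dual} then halves the work.
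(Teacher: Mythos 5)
Your overall structure matches the paper's proof: reduce to $g$ minimal, invoke \Cref{lem:decomposition} to identify the two components, and verify that the set-theoretic maps carry tops to tops. Step 1 and Step 2 are essentially the paper's argument for $\trip_g(G)\simeq\trip(G_g^\circ)$: since $g\in\br$, orthogonality rules out in-neighbors of $g$ in $\al$, independence rules out neighbors of $g$ in $\br$, and $g$ minimal has no out-neighbors, so the pair lives entirely in $G_g^\circ$ and tightness transfers both ways. Your ``competing slot'' instinct in Step 3 is also on the right track and can be made precise: if some $h$ adjacent to $g$ were addable to $\al\setminus\{g\}$ in $G_g$, then $h>g$ (as $g$ is minimal), and replacing $g$ by $h$ in $\al$ would produce an orthogonal pair of independent sets in $G$, contradicting tightness via the ``increase'' clause of \Cref{def:top}.

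There is, however, a genuine gap in Step 3: your proposed inverse $\trip(G_g)\to\trip^g(G)$, namely $(\mathcal{A},\br)\mapsto(\mathcal{A}\cup\{g\},\br)$, is simply wrong. By \Cref{lem:decomposition}, $\trip^g(G)$ consists of tops with $g\in\al$ \emph{or} $g\notin\al\cup\br$; in the latter case the forward map $(\al,\br)\mapsto(\al\setminus\{g\},\br)$ is the identity, so the inverse must also be the identity, not ``add $g$.'' For a concrete counterexample, let $G$ be the single edge $h\to g$ (so $g$ minimal). Then $(\{h\},\emptyset)\in\trip^g(G)$, it maps to $(\{h\},\emptyset)\in\trip(G_g)$, and your inverse would produce $(\{h,g\},\emptyset)$, which is not even an independent pair. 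Both parenthetical justifications you offer for ``no element of $\mathcal{A}$ adjacent to $g$'' are false for the same reason: the first is vacuous (adjacency to $g$ is invisible inside $G_g$, so it cannot witness non-tightness there), and the second (``$g$ can always be adjoined'') is refuted by the example. The correct inverse, which is what the paper's phrase ``can be uniquely extended'' is pointing at, adds $g$ to $\al$ exactly when $g$ has no neighbor in $\mathcal{A}\cup\br$, and otherwise leaves the pair unchanged; uniqueness follows because $g\in\al$ and $g\notin\al\cup\br$ are the only two possibilities in $\trip^g(G)$, and tightness of $(\mathcal{A},\br)$ in $G$ forces $g$ to be added whenever it can be.
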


\begin{proof}
We only prove the results for $g$ minimal, the case for $g$ maximal being analogous.  We first show $\trip_g(G)\simeq\trip(G_g^\circ)$; by~\Cref{lem:decomposition}, $(\al,\br) \in \trip_g(G)$ if and only if $g \in \br$.  But since $(\al,\br)$ is a tight orthogonal pair, no element of $\al$ or of $\br$ can be adjacent to $g$, from which we conclude the result by definition of $G_g^\circ$.  We now show $\trip^g(G)\simeq\trip(G_g)$; since $\trip(G) = \trip_g(G) \sqcup \trip^g(G)$, by~\Cref{lem:decomposition} elements of $\trip^g(G)$ consist of those  tight orthogonal pairs of $G$ with either $g \in \al$ or $g \not \in \br \cup \al$.  Each tight orthogonal pair of $G_g$ can be uniquely extended to such a tight orthogonal pair.
\end{proof}

\section{Trim Lattices and Maximal Orthogonal Pairs}
\label{sec:trim_lattices}
\subsection{Extremal lattices}
An \defn{extremal lattice} is a lattice whose longest chain is of length equal to the number of its join irreducible elements and to the number of its meet irreducible elements.  As motivation for our main result of this section, we have the following easy statement (we will refine it in~\Cref{thm:lattice_trim}).

\begin{lemma}
If $\trip(G)$ is a lattice, then it is an extremal lattice.
\label{lem:lattice_implies_extremal}
\end{lemma}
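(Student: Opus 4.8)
The plan is to exhibit a maximal chain of length $|G|$, then argue that $\trip(G)$ has exactly $|G|$ join irreducibles and exactly $|G|$ meet irreducibles, after which extremality follows by pinching: the longest chain is at least $|G|$, and in any lattice the longest chain is bounded above by both the number of join irreducibles and the number of meet irreducibles, so all three quantities coincide. The first ingredient is already in hand: \Cref{lem:max_chain} produces a maximal chain $\hat 0 \lessdot \cdots \lessdot \hat 1$ of length exactly $|G|$, obtained by flipping up along a linear extension of $G$-order.

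The heart of the argument is identifying the join irreducibles. I claim the map $g \mapsto j_g$ (the unique top of the form $(\{g\},\br)$, introduced in \Cref{sec:recursion}) is a bijection from the vertices of $G$ to the set of join irreducibles of $\trip(G)$. For surjectivity and the ``only if'' direction: suppose $(\al,\br)$ is join irreducible, so it has a unique element covered by it, say $(\al,\br) \gtrdot (\al'',\br'')$ via an upward flip $\tog_g$ with $g \in \br''$ — equivalently a downward flip from $(\al,\br)$ at some $g \in \al$. By \Cref{lem:extremal_needs}, $g \in \al$ forces $(\al,\br) \in \trip^g(G) = [j_g,\hat 1]$, so $j_g \leq (\al,\br)$. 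On the other hand, \Cref{lem:tree_structure} (the tree structure) gives a canonical chain from $\hat 0$ up to $(\al,\br)$ via flips in increasing linear-extension order $g_1 <_\ell \cdots <_\ell g_k$; since $(\al,\br)$ has a unique lower cover, this chain must pass through that cover, and iterating shows the whole chain below $(\al,\br)$ is forced. One then checks that the penultimate flip is at $g_k$, and removing it shows $(\al, \br)$ covers exactly one element precisely when $\al = \{g_k\}$, i.e. $(\al,\br) = j_{g_k}$. Conversely each $j_g$ is join irreducible because, by \Cref{lem:extremal_needs} applied downward, any top strictly below $j_g$ lies outside $\trip^g(G)$, and one shows $\trip^g(G) \cap \{x : x < j_g\} = \emptyset$ so $j_g$'s down-set minus $j_g$ has a maximum (namely the unique element obtained by the downward flip at $g$). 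Injectivity of $g \mapsto j_g$ is clear since the $\al$-component of $j_g$ is $\{g\}$. The meet-irreducible count is handled dually using $m_g$ and \Cref{prop:dual}, which swaps the roles of $\al$ and $\br$ and dualizes the poset, turning join irreducibles of $\trip(G^*)$ into meet irreducibles of $\trip(G)$; since $G^*$ has the same vertex set, there are again exactly $|G|$ of them.

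The main obstacle I anticipate is the precise bookkeeping in the ``join irreducible $\Rightarrow$ of the form $j_g$'' direction: one must rule out the possibility that a top with a unique lower cover still has that cover sitting several flips below it in the tree-structured chain, which requires combining the uniqueness-of-chain statement of \Cref{lem:tree_structure} with the cover-equals-flip identification of \Cref{lem:flip_eq_cov} carefully. Everything else — the existence of the length-$|G|$ chain, injectivity of $g\mapsto j_g$, and the dualization for meet irreducibles — is routine given the lemmas already proved. Once the count ``$\#\{\text{join irred}\} = \#\{\text{meet irred}\} = |G| = $ length of longest chain'' is established, extremality is immediate from the definition.
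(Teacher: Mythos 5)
Your strategy matches the paper's exactly: exhibit a chain of length $|G|$ via \Cref{lem:max_chain}, establish that there are exactly $|G|$ join-irreducibles and $|G|$ meet-irreducibles, and conclude extremality by pinching (the paper states the irreducible count without elaboration, so your instinct to justify it is reasonable).

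However, your argument for ``join-irreducible $\Rightarrow$ equals some $j_g$'' goes astray. The claim that ``iterating shows the whole chain below $(\al,\br)$ is forced'' does not follow: uniqueness of the lower cover of $(\al,\br)$ says nothing about uniqueness of lower covers further down the chain, and the subsequent bookkeeping with the $\ell$-ordered tree chain from \Cref{lem:tree_structure} is both murky and unnecessary. The clean route is much shorter. By \Cref{lem:flip_eq_cov}, the lower covers of $(\al,\br)$ are precisely the tops $\tog_g(\al,\br)$ for $g\in\al$, and these are pairwise distinct: for $g_1\neq g_2$ in $\al$, at least one of $g_1\not<g_2$, $g_2\not<g_1$ holds in $G$-order; say $g_1\not<g_2$. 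Then the flip at $g_2$ preserves $g_1$ in the $\al$-component (being not less than $g_2$), whereas the flip at $g_1$ moves $g_1$ into the $\br$-component, so the two resulting tops differ. Hence $(\al,\br)$ has exactly $|\al|$ lower covers, so it is join-irreducible (has exactly one lower cover and is not $\hat 0$) iff $|\al|=1$, iff it equals some $j_g$. The dual count for meet-irreducibles follows via \Cref{prop:dual} as you say, and the pinching argument then closes the proof.
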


\begin{proof}
Note that a lattice with a chain of length $n$ (i.e. with $n+1$ elements) must have at least $n$ join-irreducible elements and at least $n$ meet-irreducible elements (since each element of the chain is the join of the join-irreducibles beneath it and the meet of the meet-irreducibles above it).  Suppose $\trip(G)$ is a lattice; since it only has $|G|$ join and $|G|$ meet irreducible elements, and since it has a chain of length $|G|$ by~\Cref{lem:max_chain}, it is extremal.
\end{proof}

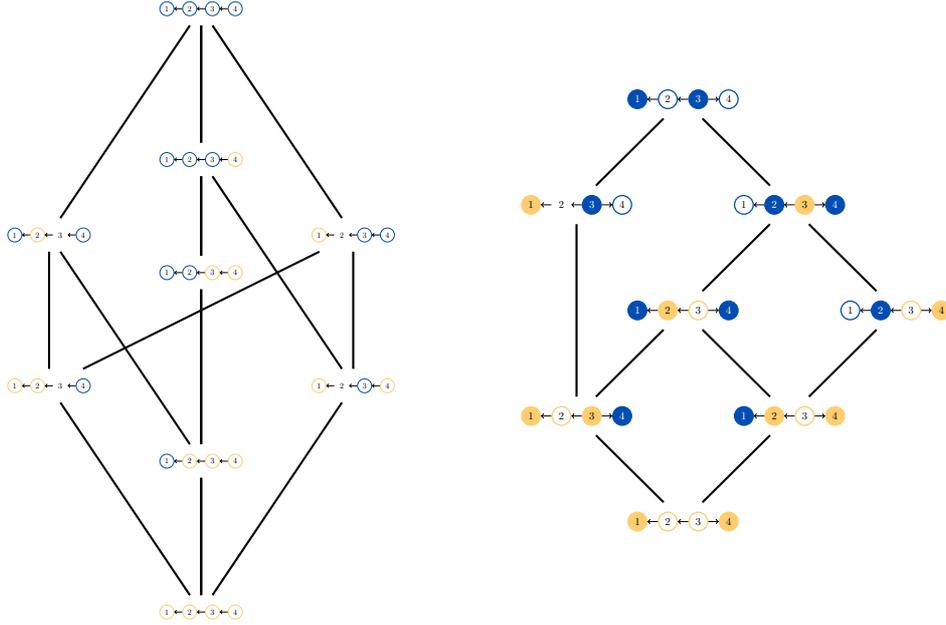
\begin{figure}[htbp]
\raisebox{-0.5\height}{\begin{tikzpicture}[scale=2]
\node (a) [align=center] at (0,0) {\scalebox{0.3}{\begin{tikzpicture}
\node (1) [circle,very thick,draw,color=meet,text=black] at (0,0) {1};
\node (2) [circle,very thick,draw,color=meet,text=black] at (1,0) {2};
\node (3) [circle,very thick,draw,color=meet,text=black] at (2,0) {3};
\node (4) [circle,very thick,draw,color=meet,text=black] at (3,0) {4};
\draw[->,thick] (2) to (1);
\draw[->,thick] (3) to (2);
\draw[->,thick] (4) to (3);
\end{tikzpicture}}};
\node (b) [align=center] at (-1,1.5) {\scalebox{0.3}{\begin{tikzpicture}
\node (1) [circle,very thick,draw,color=meet,text=black] at (0,0) {1};
\node (2) [circle,very thick,draw,color=meet,text=black] at (1,0) {2};
\node (3) [circle,very thick,draw,color=white,text=black] at (2,0) {3};
\node (4) [circle,very thick,draw,color=join,text=black] at (3,0) {4};
\draw[->,thick] (2) to (1);
\draw[->,thick] (3) to (2);
\draw[->,thick] (4) to (3);
\end{tikzpicture}}};
\node (c) [align=center] at (0,1) {\scalebox{0.3}{\begin{tikzpicture}
\node (1) [circle,very thick,draw,color=join,text=black] at (0,0) {1};
\node (2) [circle,very thick,draw,color=meet,text=black] at (1,0) {2};
\node (3) [circle,very thick,draw,color=meet,text=black] at (2,0) {3};
\node (4) [circle,very thick,draw,color=meet,text=black] at (3,0) {4};
\draw[->,thick] (2) to (1);
\draw[->,thick] (3) to (2);
\draw[->,thick] (4) to (3);
\end{tikzpicture}}};
\node (d) [align=center] at (1,1.5) {\scalebox{0.3}{\begin{tikzpicture}
\node (1) [circle,very thick,draw,color=meet,text=black] at (0,0) {1};
\node (2) [circle,very thick,draw,color=white,text=black] at (1,0) {2};
\node (3) [circle,very thick,draw,color=join,text=black] at (2,0) {3};
\node (4) [circle,very thick,draw,color=meet,text=black] at (3,0) {4};
\draw[->,thick] (2) to (1);
\draw[->,thick] (3) to (2);
\draw[->,thick] (4) to (3);
\end{tikzpicture}}};
\node (e) [align=center] at (-1,2.5) {\scalebox{0.3}{\begin{tikzpicture}
\node (1) [circle,very thick,draw,color=join,text=black] at (0,0) {1};
\node (2) [circle,very thick,draw,color=meet,text=black] at (1,0) {2};
\node (3) [circle,very thick,draw,color=white,text=black] at (2,0) {3};
\node (4) [circle,very thick,draw,color=join,text=black] at (3,0) {4};
\draw[->,thick] (2) to (1);
\draw[->,thick] (3) to (2);
\draw[->,thick] (4) to (3);
\end{tikzpicture}}};
\node (f) [align=center] at (0,2.25) {\scalebox{0.3}{\begin{tikzpicture}
\node (1) [circle,very thick,draw,color=join,text=black] at (0,0) {1};
\node (2) [circle,very thick,draw,color=join,text=black] at (1,0) {2};
\node (3) [circle,very thick,draw,color=meet,text=black] at (2,0) {3};
\node (4) [circle,very thick,draw,color=meet,text=black] at (3,0) {4};
\draw[->,thick] (2) to (1);
\draw[->,thick] (3) to (2);
\draw[->,thick] (4) to (3);
\end{tikzpicture}}};
\node (g) [align=center] at (1,2.5) {\scalebox{0.3}{\begin{tikzpicture}
\node (1) [circle,very thick,draw,color=meet,text=black] at (0,0) {1};
\node (2) [circle,very thick,draw,color=white,text=black] at (1,0) {2};
\node (3) [circle,very thick,draw,color=join,text=black] at (2,0) {3};
\node (4) [circle,very thick,draw,color=join,text=black] at (3,0) {4};
\draw[->,thick] (2) to (1);
\draw[->,thick] (3) to (2);
\draw[->,thick] (4) to (3);
\end{tikzpicture}}};
\node (h) [align=center] at (0,3) {\scalebox{0.3}{\begin{tikzpicture}
\node (1) [circle,very thick,draw,color=join,text=black] at (0,0) {1};
\node (2) [circle,very thick,draw,color=join,text=black] at (1,0) {2};
\node (3) [circle,very thick,draw,color=join,text=black] at (2,0) {3};
\node (4) [circle,very thick,draw,color=meet,text=black] at (3,0) {4};
\draw[->,thick] (2) to (1);
\draw[->,thick] (3) to (2);
\draw[->,thick] (4) to (3);
\end{tikzpicture}}};
\node (i) [align=center] at (0,4) {\scalebox{0.3}{\begin{tikzpicture}
\node (1) [circle,very thick,draw,color=join,text=black] at (0,0) {1};
\node (2) [circle,very thick,draw,color=join,text=black] at (1,0) {2};
\node (3) [circle,very thick,draw,color=join,text=black] at (2,0) {3};
\node (4) [circle,very thick,draw,color=join,text=black] at (3,0) {4};
\draw[->,thick] (2) to (1);
\draw[->,thick] (3) to (2);
\draw[->,thick] (4) to (3);
\end{tikzpicture}}};
\begin{pgfonlayer}{background}
\draw[-,thick] (a) to  (b) to  (e) to  (i);
\draw[-,thick] (a) to  (c) to  (e);
\draw[-,thick] (b) to (g) to (i);
\draw[-,thick] (a) to (d) to (h);
\draw[-,thick] (c) to (f) to (h) to (i);
\draw[-,thick] (d) to  (g);
\draw[-,thick] (a) to (c) to  (f) to (h) to (i);
\end{pgfonlayer}
\end{tikzpicture}} \hfill
\raisebox{-0.5\height}{\begin{tikzpicture}[scale=1.4]
\node (a) [align=center] at (0,0) {\scalebox{0.4}{\begin{tikzpicture}
\node (1) [circle,very thick,draw,color=meet,text=black,fill=meet] at (0,0) {1};
\node (2) [circle,very thick,draw,color=meet,text=black] at (1,0) {2};
\node (3) [circle,very thick,draw,color=meet,text=black] at (2,0) {3};
\node (4) [circle,very thick,draw,color=meet,text=black,fill=meet] at (3,0) {4};
\draw[->,thick]  (2) to (1);
\draw[->,thick] (3) to (2);
\draw[->,thick] (3) to (4);
\end{tikzpicture}}};
\node (b) [align=center] at (-1,1) {\scalebox{0.4}{\begin{tikzpicture}
\node (1) [circle,very thick,draw,color=meet,text=black,fill=meet] at (0,0) {1};
\node (2) [circle,very thick,draw,color=meet,text=black] at (1,0) {2};
\node (3) [circle,very thick,draw,color=meet,text=black,fill=meet] at (2,0) {3};
\node (4) [circle,very thick,draw,color=join,text=white,fill=join] at (3,0) {4};
\draw[->,thick]  (2) to (1);
\draw[->,thick] (3) to (2);
\draw[->,thick] (3) to (4);
\end{tikzpicture}}};
\node (c) [align=center] at (2,2) {\scalebox{0.4}{\begin{tikzpicture}
\node (1) [circle,very thick,draw,color=join,text=black] at (0,0) {1};
\node (2) [circle,very thick,draw,color=join,text=white,fill=join] at (1,0) {2};
\node (3) [circle,very thick,draw,color=meet,text=black] at (2,0) {3};
\node (4) [circle,very thick,draw,color=meet,text=black,fill=meet] at (3,0) {4};
\draw[->,thick]  (2) to (1);
\draw[->,thick] (3) to (2);
\draw[->,thick] (3) to (4);
\end{tikzpicture}}};
\node (f) [align=center] at (0,2) {\scalebox{0.4}{\begin{tikzpicture}
\node (1) [circle,very thick,draw,color=join,text=white,fill=join] at (0,0) {1};
\node (2) [circle,very thick,draw,color=meet,text=black,fill=meet] at (1,0) {2};
\node (3) [circle,very thick,draw,color=meet,text=black] at (2,0) {3};
\node (4) [circle,very thick,draw,color=join,text=white,fill=join] at (3,0) {4};
\draw[->,thick]  (2) to (1);
\draw[->,thick] (3) to (2);
\draw[->,thick] (3) to (4);
\end{tikzpicture}}};
\node (d) [align=center] at (1,1) {\scalebox{0.4}{\begin{tikzpicture}
\node (1) [circle,very thick,draw,color=join,text=white,fill=join] at (0,0) {1};
\node (2) [circle,very thick,draw,color=meet,text=black,fill=meet] at (1,0) {2};
\node (3) [circle,very thick,draw,color=meet,text=black] at (2,0) {3};
\node (4) [circle,very thick,draw,color=meet,text=black,fill=meet] at (3,0) {4};
\draw[->,thick]  (2) to (1);
\draw[->,thick] (3) to (2);
\draw[->,thick] (3) to (4);
\end{tikzpicture}}};
\node (e) [align=center] at (1,3) {\scalebox{0.4}{\begin{tikzpicture}
\node (1) [circle,very thick,draw,color=join,text=black] at (0,0) {1};
\node (2) [circle,very thick,draw,color=join,text=white,fill=join] at (1,0) {2};
\node (3) [circle,very thick,draw,color=meet,text=black,fill=meet] at (2,0) {3};
\node (4) [circle,very thick,draw,color=join,text=white,fill=join] at (3,0) {4};
\draw[->,thick]  (2) to (1);
\draw[->,thick] (3) to (2);
\draw[->,thick] (3) to (4);
\end{tikzpicture}}};
\node (g) [align=center] at (-1,3) {\scalebox{0.4}{\begin{tikzpicture}
\node (1) [circle,very thick,draw,color=meet,text=black,fill=meet] at (0,0) {1};
\node (2) [circle,very thick,draw,color=white,text=black] at (1,0) {2};
\node (3) [circle,very thick,draw,color=join,text=white,fill=join] at (2,0) {3};
\node (4) [circle,very thick,draw,color=join,text=black] at (3,0) {4};
\draw[->,thick]  (2) to (1);
\draw[->,thick] (3) to (2);
\draw[->,thick] (3) to (4);
\end{tikzpicture}}};
\node (i) [align=center] at (0,4) {\scalebox{0.4}{\begin{tikzpicture}
\node (1) [circle,very thick,draw,color=join,text=white,fill=join] at (0,0) {1};
\node (2) [circle,very thick,draw,color=join,text=black] at (1,0) {2};
\node (3) [circle,very thick,draw,color=join,text=white,fill=join] at (2,0) {3};
\node (4) [circle,very thick,draw,color=join,text=black] at (3,0) {4};
\draw[->,thick] (2) to (1);
\draw[->,thick] (3) to (2);
\draw[->,thick] (3) to (4);
\end{tikzpicture}}};
\draw[-,thick] (a) to  (b);
\draw[-,thick] (f) to (e) to  (i);
\draw[-,thick] (b) to (g);
\draw[-,thick] (b) to (f);
\draw[-,thick] (d) to (f);
\draw[-,thick] (d) to (c) to (e);
\draw[-,thick] (g) to (i);
\draw[-,thick] (a) to (d);
\end{tikzpicture}}
\caption{On the left is the extremal lattice of maximal orthogonal sets of a directed path of length 4, ordered by inclusion of first component; the sets $x_\JJ$ are indicated by a blue border, while the sets $x_\MM$ are drawn with a yellow border.  Because it is not trim, this extremal lattice is not isomorphic to the top left example in~\Cref{fig:examples} (which is not even a lattice).  On the right is the (trim) extremal lattice for a second orientation, which does coincide with the top right example in~\Cref{fig:examples} by~\Cref{thm:trim_is_ind}; maximal orthogonal pairs are indicated by the color of the border, while tight orthogonal pairs are indicated by the color of the filling.}
\label{fig:max_orth}
\end{figure}

In~\cite{TW}, we represented extremal lattices in the following way, following a construction of Markowsky~\cite{markowsky1992primes}.
Any acyclic directed graph $G$ gives rise to an extremal lattice $\LL(G)$, as follows: for $X,Y \subseteq G$ with $X \cap Y = \emptyset$, we say $(X,Y)$ is an \defn{orthogonal pair} if there is no edge from any $i\in X$ to any $k\in Y$, and we say it is a \defn{maximal orthogonal pair} if $X$ and $Y$ are maximal with that property.  Clearly, to each $Y\subseteq G$, there is at most one $X$ such that $(X,Y)$ is a maximal orthogonal pair (and dually).  Then the extremal lattice $\LL(G)$ is equivalently given by \emph{either} of
\begin{align*}
(X,Y) \leq (X',Y') &\text{ if and only if } X \subseteq X', \text{ or} \\
(X,Y) \leq (X',Y') &\text{ if and only if } Y' \subseteq Y.
\end{align*}
Furthermore, the join is computed by intersecting the second terms, while meet is given by the intersection of the first terms.  If $\x$ is an element of an extremal lattice $\LL(G)$ with corresponding maximal orthogonal pair $(X,Y)$, we write $x_\JJ=X$ and $x_\MM=Y$---that is, $x_\JJ$ corresponds to the join-irreducible elements below $x$, while $x_\MM$ corresponds to the meet-irreducible elements above $x$.  Two examples are given in~\Cref{fig:max_orth}.

Conversely, we can associate an acyclic directed grath $G(\LL)$ to any extremal lattice called its \defn{Galois graph} with the property that $\LL(G(\LL))\simeq \LL$.  We refer to \cite{TW} for further details, including Markowsky's generalization of Birkhoff's fundamental theorem of distributive lattices to extremal lattices.

\subsection{Trim lattices}
An element $x$ of a lattice $\LL$ is called \defn{left modular} if for any $y\leq z$
we have the equality
\[(y\vee x)\wedge z=y\vee(x\wedge z).\]
A lattice is called \defn{left modular} if it has a maximal chain of left modular elements.

A \defn{trim lattice} is an extremal left-modular lattice.  We have already shown that if an independence poset is a lattice, then it is extremal.  Our goal is to prove that it is actually trim.

We say that a relation $y<z$ in an extremal lattice $\LL(G)$ is \defn{overlapping} if \[y_\MM\cap z_\JJ \neq \emptyset.\]

\begin{theorem}[{\cite[Theorem 3.4]{TW}}]
\label{thm:overlapping}
An extremal lattice $\LL(G)$ is trim if and only if every relation is overlapping if and only if every cover relation is overlapping.
\end{theorem}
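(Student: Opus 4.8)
Since the result is attributed to \cite{TW}, here is the argument one would give. Throughout, fix the canonical longest chain $\hat{0}=c_0\lessdot c_1\lessdot\cdots\lessdot c_n=\hat{1}$ of $\LL(G)$ (with $n=|G|$) and label the vertices of $G$ so that $(c_i)_\JJ=\{1,\ldots,i\}$ and $(c_i)_\MM=\{i+1,\ldots,n\}$; write $j_g$, $m_g$ for the join- and meet-irreducible indexed by $g$, so that $g\in x_\JJ\iff j_g\le x$ and $g\in x_\MM\iff m_g\ge x$, and $x_\JJ\cap x_\MM=\emptyset$ because $(x_\JJ,x_\MM)$ is an orthogonal pair. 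We also use that $x\le y\iff x_\JJ\subseteq y_\JJ\iff y_\MM\subseteq x_\MM$, that $(x\vee y)_\MM=x_\MM\cap y_\MM$, and that $(x\wedge y)_\JJ=x_\JJ\cap y_\JJ$. The equivalence of the two overlapping conditions is formal: every-relation-overlapping trivially implies every-cover-overlapping, and conversely, given $y<z$ one picks a cover $y\lessdot y'$ with $y'\le z$ (possible since $[y,z]$ is a nontrivial interval of a finite lattice), whence $y'_\JJ\subseteq z_\JJ$ and $\emptyset\ne y_\MM\cap y'_\JJ\subseteq y_\MM\cap z_\JJ$. So it remains to prove: $\LL(G)$ is trim if and only if every cover relation is overlapping.

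For the forward direction, assume $\LL(G)$ is trim. By \cite{thomas2006analogue} a trim lattice has a left-modular maximal chain which is a longest chain, so we may take the canonical chain $c_0\lessdot\cdots\lessdot c_n$ to be left modular. Let $x\lessdot y$ be a cover and set $i:=\min\{k:c_k\vee x\ge y\}$. From $c_i\vee x\ge y$ we get $\{i+1,\ldots,n\}\cap x_\MM=(c_i\vee x)_\MM\subseteq y_\MM$, i.e.\ $x_\MM\setminus y_\MM\subseteq\{1,\ldots,i\}$, and from $c_{i-1}\vee x\not\ge y$ we get $(c_{i-1}\vee x)_\MM=\{i,\ldots,n\}\cap x_\MM\not\subseteq y_\MM$; together these force $i\in x_\MM\setminus y_\MM$. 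Because the chain is left modular, its label is equivalently described by $c_i\wedge y\not\le x$ and $c_{i-1}\wedge y\le x$ (standard for left-modular chains, cf.\ \cite{thomas2006analogue}), and the dual computation on $\JJ$-components gives $i\in y_\JJ\setminus x_\JJ$. Hence $i\in x_\MM\cap y_\JJ$ and $x\lessdot y$ is overlapping.

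For the converse, assume every cover of $\LL(G)$ is overlapping; we must exhibit a left-modular maximal chain, and the natural candidate is the canonical chain $\mathbf{c}$. I would prove $\mathbf{c}$ is left modular either by showing its induced edge-labeling $\lambda(x\lessdot y)=\min\{k:c_k\vee x\ge y\}$ is an EL-labeling---which for a fixed maximal chain is equivalent to that chain being left modular---or by verifying the identity $(y\vee c_i)\wedge z=y\vee(c_i\wedge z)$ for $y\le z$ outright. The input is that overlapping of a cover $x\lessdot y$ furnishes a vertex $g\in(y_\JJ\setminus x_\JJ)\cap(x_\MM\setminus y_\MM)$, so a single vertex simultaneously records the incoming join-irreducible and the outgoing meet-irreducible of that cover; one leverages this coherence to translate the left-modularity identity into an inclusion of Galois-closed subsets of $G$ and checks it via the edge-encoding $x_\MM=\{k:\text{no edge }g\to k\text{ with }g\in x_\JJ\}$, presumably by induction on a saturated chain from $y$ to $z$, stripping off one overlapping cover at a time.

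The main obstacle is exactly this converse: extracting left-modularity of $\mathbf{c}$ from the purely local overlapping hypothesis. Left-modularity constrains meets and joins of arbitrary pairs $y\le z$, whereas overlapping is a statement about covers, so bridging them requires both the precise dictionary between the lattice operations on $\LL(G)$ and the Galois-closure operator on subsets of $G$ (from \cite{TW} and \cite{markowsky1992primes}) and the bookkeeping needed to propagate the per-cover coherence up an interval. Everything else---the equivalence of the two overlapping statements, and the implication trim $\Rightarrow$ overlapping---reduces to short manipulations once Markowsky's description of $\LL(G)$ is in hand.
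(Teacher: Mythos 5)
Your argument for ``trim $\Rightarrow$ every cover overlapping'' and for the equivalence of ``every relation overlapping'' with ``every cover overlapping'' is sound: the join/meet coincidence $\min\{k : c_k\vee x\geq y\}=\min\{k : c_k\wedge y\not\leq x\}$ along a left-modular maximal chain is exactly the standard Blass--Sagan/Liu--Sagan fact, and your use of $(c_i)_\JJ=\{1,\dots,i\}$, $(c_i)_\MM=\{i+1,\dots,n\}$ along the canonical longest chain is correct for an extremal lattice. The step from ``every cover overlapping'' to ``every relation overlapping'' (pick $y\lessdot y'\leq z$ and use $y_\MM\cap y'_\JJ\subseteq y_\MM\cap z_\JJ$) is also fine.

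The gap is the third implication, ``every cover overlapping $\Rightarrow$ trim,'' and you yourself flag it: ``I would prove\ldots presumably by induction\ldots'', ``the main obstacle is exactly this converse.'' That is not a proof; it is an acknowledgment that the proof is missing. And it is not a corner case---it is the substantive content of the theorem, since it is the direction in which the overlapping hypothesis actually has to do work. Showing that the canonical chain $\mathbf{c}$ is left modular requires establishing, for all $y\leq z$, the inclusion $(y\vee c_i)\wedge z\leq y\vee(c_i\wedge z)$ (the reverse inclusion is automatic in any lattice). In $\LL(G)$ the operations in the ``easy'' direction (meets on $\JJ$-components, joins on $\MM$-components) are plain intersections, but the remaining two operations pass through a Galois closure, so the inequality one must prove becomes a genuinely nontrivial containment between a closure applied to $(y_\MM\cap\{i+1,\dots,n\})\cup z_\MM$ and an intersection $y_\MM\cap(\text{closure of }\{i+1,\dots,n\}\cup z_\MM)$; the only available lever is the per-cover overlapping datum, propagated along a saturated chain in $[y,z]$. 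None of that bookkeeping is carried out. Until it is, you have shown trim $\Rightarrow$ overlapping and the formal upgrade from covers to all relations, but not the converse, so the stated biconditional remains open.
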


If a cover relation is overlapping, then it overlaps in a unique element.  We may define the \defn{downward} and \defn{upward labels} of $y \in \LL(G)$ as
\begin{align*}   \down(y)&:= \{\text{the unique element of } x_\MM \cap y_\JJ : \text{ all } x \text{ such that } x \lessdot y\} \text{ and}\\
\up(y)&:= \{\text{the unique element of } y_\MM \cap z_\JJ : \text{ all } z \text{ such that } y \lessdot z\}.
\end{align*}

 In a trim lattice $\LL(G)$, there is a unique meet-irreducible element $m_g$ with $\up(m_g)=\{g\}$, and a unique join-irreducible element $j_g$ with $\down(j_g)=\{g\}$.  We proved the following recursive properties of trim lattices in \cite{TW}, which are exactly analogous to~\Cref{lem:decomposition} and~\Cref{thm:decomposition} and will allow us to relate maximal orthogonal pairs and tight orthogonal pairs.
\begin{theorem}[{\cite[Lemma 3.10, Proposition 3.11, Proposition 3.12]{TW}}]
Let $g$ be minimal in an acyclic directed graph $G$, and write $\LL_g(G) := [\hat{0},m_g]$ and  $\LL^g(G):=[j_g,\hat{1}].$  Then
\begin{enumerate}
\item $\LL(G) = \LL_g(G) \sqcup \LL^g(G)$,
\item $\LL^g(G)\simeq \LL(G_g)$,
\item $\LL_g(G)\simeq \LL(G_g^\circ)$, and
\item an element $\x \in \LL_g(G)$ if and only if $g \in \up(\x)$.
\end{enumerate}
\label{thm:trim_recurrence}
\end{theorem}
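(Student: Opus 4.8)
Since the final statement is \cite[Lemma 3.10, Propositions 3.11--3.12]{TW}, my plan would be to follow that route and argue entirely inside the maximal-orthogonal-pair model $\LL(G)$, here with $\LL(G)$ trim (so that the labels $\up,\down$, and hence $m_g,j_g$, make sense) and $g$ minimal in $G$-order, i.e.\ a sink of $G$.

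\emph{Step 1 (the set partition, yielding (1) and most of (4)).} Because $g$ is a sink, for any maximal orthogonal pair $(X,Y)$ the set $X\cup\{g\}$ is still orthogonal to $Y$ as soon as $g\notin Y$; maximality of $X$ then forces $g\in X$, and with $X\cap Y=\emptyset$ this shows that $g$ lies in exactly one of the two components of every element of $\LL(G)$. The elements with $g$ in the second component form a down-set (the order on $\LL(G)$ is reverse inclusion on second components), and those with $g$ in the first component form the complementary up-set; write $m$ for the top of the down-set and $j$ for the bottom of the up-set, so the down-set is $[\hat{0},m]$ and the up-set is $[j,\hat{1}]$. I would then check that $m$ is meet-irreducible with unique upward cover labelled $g$, whence $\up(m)=\{g\}$ and $m=m_g$ by uniqueness (dually $j=j_g$), using the Galois-graph dictionary of \cite{markowsky1992primes,TW} that turns edges of $G$ into incomparabilities among the $j_h,m_h$, together with \Cref{thm:overlapping} to know cover relations carry labels. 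Part (4) then reads $g\in x_\MM\iff g\in\up(x)$: the reverse direction is immediate since $\up(x)\subseteq x_\MM$, and the forward direction, that every $x<m_g$ admits an upward cover labelled $g$, is where trimness of $\LL(G)$ is used.

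\emph{Step 2 (the isomorphisms (2) and (3)).} For $\LL^g(G)\simeq\LL(G_g)$, send $(X,Y)\mapsto(X\setminus\{g\},Y)$: since $G_g$ deletes $g$ together with the edges into $g$ and $g$ is a sink, adjoining $g$ back to the first component of a maximal orthogonal pair of $G_g$ recovers a maximal orthogonal pair of $G$ with $g\in X$, and the map is order preserving both ways. For $\LL_g(G)\simeq\LL(G_g^\circ)$, observe that an element with $g\in Y$ has first component disjoint from $g$ and from every in-neighbour of $g$, so it already lives on the vertex set of $G_g^\circ$; the remaining work is to show that restricting the second component to the vertices of $G_g^\circ$ gives a maximal orthogonal pair of $G_g^\circ$ and that this is a bijection. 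Trimness of $\LL(G)$ is what makes this restriction behave: without it an in-neighbour of $g$ can sit ``freely'' inside an element of $\LL_g(G)$ and the naive restriction is not a bijection — this is exactly why the non-trim $\LL$ of a directed path in \Cref{fig:max_orth} fails to be an independence lattice.

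The main obstacle is the isomorphism $\LL_g(G)\simeq\LL(G_g^\circ)$ in Step 2, namely verifying, via trimness, that the in-neighbours of $g$ are pinned down inside every element of $\LL_g(G)$ so that passing to $G_g^\circ$ loses no information; once that is secured the rest is routine, and the whole theorem is the maximal-orthogonal-pair mirror of \Cref{lem:decomposition} and \Cref{thm:decomposition}.
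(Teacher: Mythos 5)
The paper does not actually prove this statement---it is quoted verbatim from \cite[Lemma~3.10, Propositions~3.11--3.12]{TW}---so there is no in-paper argument to compare your sketch against. That said, your plan to work entirely in the mop model, with $g$ a sink, is a sensible reconstruction, and you correctly flag $\LL_g(G)\simeq\LL(G_g^\circ)$ as the place where the real work hides.

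Two remarks on the sketch itself. Step 1 can be shortened and in fact needs no trimness at all. You already note that each vertex of $G$ lies in exactly one component of each mop (because $g$ is a sink and $(X,Y)$ is maximal), and that $D=\{x : g\in x_\MM\}$ is a down-set. Since the join of two mops intersects second components, $D$ is closed under join, hence is a principal ideal $[\hat0,m]$. Now $g\in m_\MM$ tautologically (any $x\in D$ has $g\in x_\MM$), and by definition of $x_\MM$ this says $m\le m_g$; conversely $g\in (m_g)_\MM$ tautologically, which puts $m_g\in D$, so $m_g\le m$. Thus $m=m_g$ with no detour through meet-irreducibility or $\up(m)$, and dually $j=j_g$. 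Trimness is then reserved for the forward direction of (4) and for the bijections, which is cleaner bookkeeping than what you wrote.

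For Step 2, your bijection for (2) is correct and routine, but for (3) the map $(X,Y)\mapsto (X,Y\cap V(G_g^\circ))$ is not obviously a mop of $G_g^\circ$: even granting $X\subseteq V(G_g^\circ)$, a vertex $h\in V(G_g^\circ)\setminus X$ might be addable to $X$ inside $G_g^\circ$ yet blocked in $G$ by an edge $h\to y$ into an in-neighbour $y$ of $g$ that lies in $Y$. So ``trimness makes the restriction behave'' is a slogan for a genuine lemma (that the in-neighbours of $g$ sitting in $Y$ are rigid enough not to spoil maximality after deletion), and that lemma is exactly the content of \cite[Prop.~3.12]{TW}. Your proposal leaves this as an acknowledged gap; to actually close it you would need to prove that missing lemma or reduce it to \cite[Theorem~3.4]{TW}, rather than merely invoke trimness.
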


We recall that the downward and upward labels actually associate two independent sets to each element of $\LL$.

\begin{theorem}[{\cite[Corollary 5.6]{TW}}]
For $\LL$ a trim lattice, $\down$ and $\up$ are both bijections from $\LL$ to the set of independent sets of $G(\LL)$.
\label{thm:down_ind}
\end{theorem}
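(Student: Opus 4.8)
The plan is to establish the claim for $\up$ and then deduce it for $\down$ by lattice duality. For the duality reduction: if $\LL$ is trim then so is its opposite $\LL^{*}$, since both extremality and left-modularity are self-dual; passing to $\LL^{*}$ interchanges join- and meet-irreducibles, and it is straightforward from the definition of the Galois graph that $G(\LL^{*})$ is $G(\LL)$ with all edges reversed; and under this identification $\down_\LL$ is carried to $\up_{\LL^{*}}$, because the roles of $x_\JJ$ and $x_\MM$ are exchanged. Since an undirected graph and its edge-reversal have the same independent sets, once $\up$ is known to be a bijection onto the independent sets for every trim lattice, the same follows for $\down$.

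To prove $\up\colon\LL\to\{\text{independent sets of }G(\LL)\}$ is a bijection, I would induct on the number of join-irreducibles of $\LL$, equivalently on $|G|$ where $G:=G(\LL)$; the cases $|G|\le 1$ are immediate. For $|G|\ge 2$, pick $g$ minimal in $G$ and invoke \Cref{thm:trim_recurrence}: $\LL=\LL_g(G)\sqcup\LL^g(G)$, with $\LL_g(G)=[\hat0,m_g]\simeq\LL(G_g^\circ)$ and $\LL^g(G)=[j_g,\hat1]\simeq\LL(G_g)$, and with $\x\in\LL_g(G)$ precisely when $g\in\up(\x)$. These two intervals are trim (intervals of a trim lattice are trim) and have strictly fewer join-irreducibles, so the inductive hypothesis applies to each. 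On the target side, the independent sets of $G$ split into those avoiding $g$, which are exactly the independent sets of $G_g$, and those containing $g$, which correspond to the independent sets of $G_g^\circ$ by deleting $g$ (no neighbour of $g$ can lie in an independent set containing $g$).

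It remains to match $\up_\LL$, block by block, with the intrinsic $\up$ map of the relevant smaller trim lattice. For $\x\in\LL^g(G)$: since $\LL^g(G)$ is an up-set, every cover of $\x$ in $\LL$ stays in $\LL^g(G)$, and one checks the $\up$-label of each such cover computed in $\LL$ equals the label computed in the sublattice $\LL^g(G)\simeq\LL(G_g)$; granting this, $\up_\LL$ restricts on $\LL^g(G)$ to $\up_{\LL(G_g)}$, a bijection onto the independent sets of $G_g$ by induction. For $\x\in\LL_g(G)$: here $g\in\up_\LL(\x)$, and a cover $\x\lessdot\y$ in $\LL$ leaves $\LL_g(G)$ exactly when its label is $g$ — indeed $\y\notin[\hat0,m_g]$ forces $\y\in\LL^g(G)$, hence $j_g\le\y$, so $g\in\y_\JJ\setminus\x_\JJ$, and in a trim lattice this single new join-irreducible is the cover label; conversely a label-$g$ cover has $g\in\y_\JJ$, so $\y\in\LL^g(G)$. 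Since distinct covers of $\x$ carry distinct labels, the covers of $\x$ remaining inside $\LL_g(G)$ carry exactly the labels in $\up_\LL(\x)\setminus\{g\}$; matching these against the intrinsic labels of $\LL_g(G)\simeq\LL(G_g^\circ)$ shows that $\up_\LL$ restricts on $\LL_g(G)$ to $\x\mapsto\{g\}\sqcup\up_{\LL(G_g^\circ)}(\x)$, a bijection onto the independent sets of $G$ containing $g$. Gluing the two blocks yields the bijection $\up_\LL\colon\LL\to\{\text{independent sets of }G\}$; in particular each value is automatically an independent set, being either an independent set of the subgraph $G_g$, or $\{g\}$ together with an independent set of $G_g^\circ$.

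I expect the genuine work to be the label-compatibility claims: that the $\up$-label attached to a cover relation of $\LL$ agrees, under the isomorphisms of \Cref{thm:trim_recurrence}, with the intrinsic $\up$-label of that cover in the interval $\LL_g(G)$ or $\LL^g(G)$ (after discarding $g$ in the first case). This amounts to controlling how the join- and meet-irreducibles of $\LL$ restrict to those of an interval — precisely the bookkeeping underlying the recursion of \Cref{thm:trim_recurrence} in \cite{TW} — and once it is in place the rest of the argument is formal.
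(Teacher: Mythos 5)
The paper does not actually give a proof of \Cref{thm:down_ind}: it is quoted verbatim from \cite[Corollary~5.6]{TW}. The machinery you invoke — the decomposition $\LL=\LL_g(G)\sqcup\LL^g(G)$ and its compatibility with $G\mapsto G_g, G_g^\circ$, i.e.\ \Cref{thm:trim_recurrence} — is itself imported from \cite{TW}, so the overall road you are taking is the one the source takes, and the structure of the argument (duality to reduce to $\up$, induction via the extremal vertex $g$, gluing the two halves of the bijection) is sound.

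There is, however, a genuine slip in your justification that the cover $\x\lessdot\y$ leaving $\LL_g(G)$ is labelled by $g$. You argue that $g\in\y_\JJ\setminus\x_\JJ$ and then assert that ``in a trim lattice this single new join-irreducible is the cover label.'' That premise is false in general: a cover relation in a trim lattice can add more than one join-irreducible. In the Tamari lattice $T_3$ (the pentagon), which is trim, the short chain reaches $\hat1$ by a single cover $a\lessdot\hat1$ with $a_\JJ$ a singleton and $\hat1_\JJ$ the full three-element set, so the cover adds two join-irreducibles. The correct reason $g$ is the label is different and uses both halves of the setup: since $\x\in\LL_g(G)=[\hat0,m_g]$ we have $\x\le m_g$, hence $g\in\x_\MM$; since $\y\in\LL^g(G)$ we have $j_g\le\y$, hence $g\in\y_\JJ$; therefore $g$ lies in the overlap $\x_\MM\cap\y_\JJ$, which for a cover in a trim lattice is a \emph{singleton} by \Cref{thm:overlapping}, and that singleton is the label. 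With that repair, your block-by-block description of which covers stay in $\LL_g(G)$ and which leave is correct.

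The remaining item you flag — that the $\up$-label of a cover inside $\LL_g(G)$ or $\LL^g(G)$, computed in $\LL$, agrees with the intrinsic label under the isomorphisms of \Cref{thm:trim_recurrence} — is indeed where the real content sits; you are right to call it the ``genuine work,'' and it is exactly the bookkeeping carried out in \cite{TW}. As a blind reconstruction the proposal is on target modulo the singleton-overlap fix above and filling in that compatibility lemma.
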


We will improve this in \Cref{sec:bijections}---taking both the downward and upward labels together give a tight orthogonal pair.

\subsection{Trim Lattices to Independence Posets}

The next subsections relate trim lattices and independence posets, simultaneously generalizing the bijections between order ideals and antichains, and between Coxeter-sortable elements in a finite Coxeter group and the corresponding noncrossing partitions.

We show that $\trip(G)\simeq \LL(G)$ under certain conditions on $\trip(G)$ or on $\LL(G)$.  If $\LL(G)$ is a lattice, then $\trip(G)\simeq \LL(G)$ (\Cref{thm:trim_eq}). Similarly, if $\trip(G)$ is a lattice, then also $\trip(G)\simeq \LL(G)$ (\Cref{cor:trip_a_lattice}).  We also show that if $\trip(G)$ is a lattice, then it is a trim lattice (\Cref{thm:lattice_trim}).

{
\renewcommand{\thetheorem}{\ref{thm:trim_eq}}
\begin{theorem}
If $\LL(G)$ is trim, then $\LL(G) \simeq \trip(G)$.
\end{theorem}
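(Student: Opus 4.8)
The natural approach is induction on the number of vertices $|G|$. If $|G|\le 1$ both posets are single points, so assume $|G|\ge 2$ and that $\LL(G)$ is trim, and fix a minimal vertex $g$ of $G$. Both posets split along $g$ in parallel ways. On the tight-orthogonal-pair side, \Cref{lem:decomposition} gives $\trip(G)=\trip_g(G)\sqcup\trip^g(G)$, where $\trip_g(G)=[\hat 0,m_g]$ is an order ideal and $\trip^g(G)=[j_g,\hat 1]$ is its complementary filter, and \Cref{thm:decomposition} gives isomorphisms $\trip_g(G)\simeq\trip(G_g^\circ)$ and $\trip^g(G)\simeq\trip(G_g)$, each effected by deleting $g$ from the appropriate component. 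On the lattice side, since $\LL(G)$ is trim, \Cref{thm:trim_recurrence} gives the parallel decomposition $\LL(G)=\LL_g(G)\sqcup\LL^g(G)=[\hat 0,m_g]\sqcup[j_g,\hat 1]$ together with $\LL_g(G)\simeq\LL(G_g^\circ)$ and $\LL^g(G)\simeq\LL(G_g)$; as intervals of a trim lattice, $\LL_g(G)$ and $\LL^g(G)$ are themselves trim \cite{thomas2006analogue}, so $\LL(G_g^\circ)$ and $\LL(G_g)$ are trim and the inductive hypothesis applies to give $\trip(G_g^\circ)\simeq\LL(G_g^\circ)$ and $\trip(G_g)\simeq\LL(G_g)$. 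Composing produces poset isomorphisms $\phi_{\mathrm{lo}}\colon\trip_g(G)\to\LL_g(G)$ and $\phi_{\mathrm{hi}}\colon\trip^g(G)\to\LL^g(G)$, each matching minimum to minimum and maximum to maximum.

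Let $\phi=\phi_{\mathrm{lo}}\sqcup\phi_{\mathrm{hi}}\colon\trip(G)\to\LL(G)$; this is a bijection, and I would check it is a poset isomorphism by comparing elements block by block. If $a$ and $b$ lie in the same block there is nothing to do, since the blocks are intervals on both sides and $\phi_{\mathrm{lo}},\phi_{\mathrm{hi}}$ are isomorphisms. If $a\in\trip^g(G)$ and $b\in\trip_g(G)$, then $a\not\le b$ by \Cref{lem:decomposition}; dually, no element of $\LL^g(G)$ lies below an element of $\LL_g(G)$, since otherwise $j_g\le m_g$ would put $j_g\in[\hat 0,m_g]=\LL_g(G)$, contradicting $\LL_g(G)\cap\LL^g(G)=\emptyset$.

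The remaining case, $a\in\trip_g(G)$ and $b\in\trip^g(G)$, is the heart of the matter. Since $\trip_g(G)$ is an order ideal and $\trip^g(G)$ its complementary filter, any saturated chain from $a$ up to $b$ contains a unique cover that crosses the two blocks; by \Cref{lem:flip_eq_cov} this cover is an upward flip at an element of the $\br$-component, and because $g$ is minimal, flipping at anything other than $g$ keeps $g$ in $\br$ and so stays inside $\trip_g(G)$ by \Cref{lem:decomposition}. Hence the crossing cover is $\tog_g$, and (using \Cref{lem:down}, which says flipping at the minimal $g$ preserves $\al$) one obtains: $a\le b$ if and only if $\tog_g(a')\le b$ for some $a'\ge a$ in $\trip_g(G)$ --- a statement involving only the two blocks and the map $\tog_g\colon\trip_g(G)\to\trip^g(G)$. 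A mirror analysis on the lattice side (the unique cover crossing from $\LL_g(G)$ into $\LL^g(G)$ carries the overlap label $g$, which one reads off from \Cref{thm:trim_recurrence}(4) and the propagation of upward labels along covers) yields the same description of $\phi(a)\le\phi(b)$ in terms of $\LL_g(G)$, $\LL^g(G)$, and the corresponding lattice crossing map. So the whole problem reduces to verifying that $\phi_{\mathrm{hi}}$ and $\phi_{\mathrm{lo}}$ intertwine these two crossing maps. To do this I would transport both maps through the explicit deletion isomorphisms of \Cref{thm:decomposition,thm:trim_recurrence}: the flip $\tog_g$ becomes the map $\trip(G_g^\circ)\to\trip(G_g)$ sending a tight orthogonal pair to the unique tight orthogonal pair of $G_g$ with the same first component, and the lattice crossing map should become the analogous operation on maximal orthogonal pairs; these are then identified by the inductive isomorphisms $\trip(G_g^\circ)\simeq\LL(G_g^\circ)$ and $\trip(G_g)\simeq\LL(G_g)$.

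I expect this last intertwining step to be the main obstacle: the blockwise isomorphisms are produced abstractly by the induction, so to guarantee they respect the gluing one will likely need to strengthen the statement being inducted on --- recording, for instance, that the isomorphism is compatible with the relevant deletion maps and with flips at extremal vertices --- so that the needed compatibility is carried along the recursion rather than re-established at the end. An alternative that sidesteps this bookkeeping is to define the comparison directly by $x\mapsto(\down(x),\up(x))$: by \Cref{thm:down_ind} and \Cref{thm:trip_ind} this is a bijection once one knows $(\down(x),\up(x))$ is a tight orthogonal pair, and one then checks that a lattice cover with overlap label $g$ corresponds, under this identification, to an upward flip at $g$, by matching the recursive behaviour of $\down,\up$ from \cite{TW} against \Cref{def:flip}; in that route the obstacles become proving tightness of $(\down(x),\up(x))$ and the cover-versus-flip identification --- essentially the same difficulty in a different guise.
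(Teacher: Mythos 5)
Your proposal follows essentially the same recursive strategy as the paper: decompose at a minimal vertex $g$ using \Cref{thm:decomposition} and \Cref{thm:trim_recurrence}, apply induction to the two blocks, and glue along the covers crossing between them. You correctly pinpoint the intertwining of the crossing maps as the delicate point, and your proposed remedy --- strengthening the inductive statement so that compatibility with edge labels and deletion maps is carried through the recursion rather than re-established at the end --- is exactly what the paper does: its induction records that the isomorphism respects the cover labelling (overlap element for mops, flipped vertex for tops), so the crossing pairing is pinned down on the lattice side by \cite[Lemma 3.15]{TW} as the unique $x'\in\LL^g(G)$ with $\down(x')=\down(x)\cup\{g\}$, and the same rule visibly describes $\tog_g$ on the tops side. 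Your alternative route via $x\mapsto(\down(x),\up(x))$ is also viable and is pursued separately in the paper as \Cref{thm:trim_is_ind}, where the work lies precisely in the tightness of $(\down(x),\up(x))$ and the cover/flip matching you anticipate.
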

\addtocounter{theorem}{-1}
}

\begin{proof}
We argue by induction on $|G|$.  Let $g$ be minimal in $G$.  By \Cref{thm:trim_recurrence}, since $\LL(G)$ is trim, $\LL(G)=\LL_g(G)\sqcup \LL^g(G)$.  Similarly, $\trip(G)=\trip_g(G) \sqcup \trip^g(G)$ by \Cref{thm:decomposition}.  By induction, we conclude the isomorphism on each of $G_g^\circ$ and $G_g$: $\LL_g(G)\simeq \LL(G_g^\circ) \simeq \trip(G_g^\circ) \simeq \trip_g(G)$ and $\LL^g(G)\simeq \LL(G_g) \simeq \trip(G_g) \simeq \trip^g(G)$.  Moreover, this induction respects the labelling of cover relations by the overlapping element (for the mops) and the element flipped (for the tops).

We have only to show now that the cover relations between $\LL_g(G)$ and $\LL^g(G)$ are the same as those for $\trip_g(G)$ and $\trip^g(G)$.  But each element in $\LL_g(G)$ and $\trip_g(G)$ has an edge up by~\Cref{thm:trim_recurrence} (4) and \Cref{lem:down} (labelled by $g$).  The element $x \in \LL_g(G)$ is paired with the unique element $x' \in \LL^g(G)$ satisfying $\down(x')=\down(x)\cup \{g\}$ by \cite[Lemma 3.15]{TW}.  It is evident from the definition of flip that the same rule describes how to pair elements in $\trip_g(G)$ with elements of $\trip^g(G)$.
\end{proof}

We now analyze what can be deduced from the fact that $\trip(G)$ is a 
lattice. To begin with, we show that, if it is a lattice, it is necessarily trim.

{
\renewcommand{\thetheorem}{\ref{thm:lattice_trim}}
\begin{theorem}
If $\trip(G)$ is a lattice then it is a trim lattice.
\end{theorem}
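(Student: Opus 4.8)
The plan is to represent $\trip(G)$ as an extremal lattice of maximal orthogonal pairs, show by a direct computation that every cover relation in it is overlapping, and then invoke \Cref{thm:overlapping}. By \Cref{lem:lattice_implies_extremal}, if $\trip(G)$ is a lattice it is extremal, so $\trip(G)\simeq\LL(H)$ for $H$ its Galois graph; for $x\in\trip(G)$ write $(x_\JJ,x_\MM)$ for the associated maximal orthogonal pair.

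First I would pin down the irreducibles of $\trip(G)$ and match them with $G$. By \Cref{lem:flip_eq_cov}, the elements covered by $x=(\al_x,\br_x)$ are exactly the $\tog_h(\al_x,\br_x)$ for $h\in\al_x$, and these are pairwise distinct: a downward flip at $h$ moves $h$ into the second component but keeps in the first component every element of $\al_x$ that is not below $h$, so the flip at $h$ and the flip at $h'$ disagree on where $h$ or $h'$ lands. Hence $x$ is join-irreducible exactly when $|\al_x|=1$, i.e.\ exactly when $x=j_g$ for some $g\in G$; dually $x$ is meet-irreducible exactly when $|\br_x|=1$, i.e.\ exactly when $x=m_g$. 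Since the $j_g$ (resp.\ $m_g$) are pairwise distinct, $\trip(G)$ has precisely $|G|$ join-irreducibles and $|G|$ meet-irreducibles, and the identification $V(H)\leftrightarrow G$ sends the vertex $g$ to the join-irreducible $j_g$ and the meet-irreducible $m_g$; in particular $g\in x_\JJ$ if and only if $j_g\leq x$, and $g\in x_\MM$ if and only if $x\leq m_g$.

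The second step is to observe that the tight orthogonal pair of $x$ is contained componentwise in its maximal orthogonal pair. For any $g\in G$ and any $x=(\al_x,\br_x)\in\trip(G)$: if $g\in\al_x$ then \Cref{lem:extremal_needs} gives $x\in\trip^g(G)=[j_g,\hat 1]$, hence $j_g\leq x$, i.e.\ $g\in x_\JJ$; so $\al_x\subseteq x_\JJ$, and dually (using $\trip_g(G)=[\hat 0,m_g]$) $\br_x\subseteq x_\MM$. Now take any cover $x\lessdot y$ of $\trip(G)$. By \Cref{lem:flip_eq_cov} and \Cref{def:flip}, $y=\tog_g(x)$ for a (unique) $g\in\br_x$, and since this is an upward flip the element $g$ is moved into the first component of $y$, so $g\in\al_y$. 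Therefore $g\in\br_x\subseteq x_\MM$ and $g\in\al_y\subseteq y_\JJ$, so $x_\MM\cap y_\JJ\ni g$ is nonempty and the cover $x\lessdot y$ is overlapping. As every cover relation of $\trip(G)\simeq\LL(H)$ is overlapping, \Cref{thm:overlapping} yields that $\trip(G)$ is trim.

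I do not expect a serious obstacle here: the only genuine checks are the distinctness of the covers of a fixed element (routine from \Cref{def:flip}) and the two containments $\al_x\subseteq x_\JJ$ and $\br_x\subseteq x_\MM$, both of which come straight from \Cref{lem:extremal_needs} and the definitions of $\trip_g(G)$ and $\trip^g(G)$. The conceptual point worth emphasizing is that the vertex $g$ labelling a cover $x\lessdot y$ by a flip is simultaneously the element witnessing that the cover is overlapping, so the flip-labelling and the overlapping-labelling of cover relations agree.
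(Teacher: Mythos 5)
Your proof is correct and follows essentially the same route as the paper's: invoke \Cref{lem:lattice_implies_extremal} to get extremality, use \Cref{lem:extremal_needs} to show $\al_x\subseteq x_\JJ$ and $\br_x\subseteq x_\MM$, observe that a cover $x\lessdot y$ given by flipping $g\in\br_x$ puts $g\in\al_y$ so that $g\in x_\MM\cap y_\JJ$, and then conclude via \Cref{thm:overlapping}. The only difference is that you spell out the identification of join/meet-irreducibles with the $j_g$ and $m_g$ more explicitly than the paper does, which is a harmless (and arguably helpful) elaboration of the same argument.
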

\addtocounter{theorem}{-1}
}
\begin{proof}
Suppose that $\trip(G)$ is a lattice.  By~\Cref{lem:lattice_implies_extremal}, it is therefore an extremal lattice.  We wish to show that every cover relation is overlapping, so that $\trip(G)$ is trim by~\Cref{thm:overlapping}.  Consider the representation of $\trip(G)$ as maximal orthogonal sets, which gives the correspondence $(\al,\br)$ with $(X,Y)$, where $X$ is the set of join-irreducible elements below $(\al,\br)$ and $Y$ is the set of meet-irreducible elements above $(\al,\br)$.  By~\Cref{lem:extremal_needs}, we therefore have that $\al \subseteq X$ and $\br \subseteq Y$.  But if $g \in \br$ and $\tog_g(\al,\br) = (\al',\br')$ is a cover with corresponding maximal orthogonal sets $(X,Y)\lessdot (X',Y')$, then $\al' \cap \br = \{g\} \subseteq X' \cap Y$ and hence the cover relation is overlapping.
\end{proof}

 The following lemma describes a situation in which it is possible to start from an orthogonal pair of independent sets and produce a tight orthogonal pair by adding elements to the two sets. It is needed for the proof of the next theorem.

\begin{lemma}\label{expand} Let $(\al',\br')$ be an orthogonal pair of independent sets, and suppose that no element of $\al'$ is below an element of $\br'$ in $G$-order. Then there exists a tight orthogonal pair of independent sets $(\al,\br)$ such that $\al\supseteq \al'$ and $\br\supseteq \br'$.\end{lemma}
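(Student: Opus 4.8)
The plan is to argue by induction on $|G|$, the case $|G|=0$ being trivial. For the inductive step the idea is to peel off a carefully chosen extremal vertex $g$, using the decomposition $\trip(G)=\trip_g(G)\sqcup\trip^g(G)$ of \Cref{lem:decomposition} together with the recursive isomorphisms of \Cref{thm:decomposition}. The first task is to locate a usable vertex: I claim the hypothesis forces either some minimal element of $G$ to lie outside $\al'$, or some maximal element to lie outside $\br'$. Indeed, if every minimal element were in $\al'$ and every maximal element in $\br'$, then taking any minimal $g$ and a maximal $g'\geq g$ would exhibit $g\in\al'$ below $g'\in\br'$ (with the degenerate case $g=g'$ putting the isolated vertex $g$ into $\al'\cap\br'$), contradicting the hypothesis. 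By \Cref{prop:dual} --- which swaps $G$ with $G^*$ and the two components of a top, and under which both the orthogonality condition and the ``no element of $\al'$ below an element of $\br'$'' condition are self-dual --- I may assume the usable vertex is a minimal element $g$ of $G$ with $g\notin\al'$; since $g$ is minimal it is a sink.

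The inductive step now splits according to whether $g\in\br'$. If $g\notin\al'\cup\br'$, then $(\al',\br')$ is still an orthogonal pair of independent sets of $G_g$ satisfying the hypothesis (the $G_g$-order agrees with the $G$-order on $G\setminus\{g\}$, since $g$ is a sink), so the inductive hypothesis supplies a top $(\bar\al,\bar\br)\in\trip(G_g)$ containing it; the preimage of $(\bar\al,\bar\br)$ under the bijection $\trip^g(G)\simeq\trip(G_g)$ of \Cref{thm:decomposition} is then a top of $G$ of the form $(\bar\al\cup S,\bar\br)$ with $S\subseteq\{g\}$, and it contains $(\al',\br')$. If instead $g\in\br'$, then no in-neighbour $h$ of $g$ can lie in $\al'\cup\br'$ --- the edge $h\to g$ would otherwise run from $\al'$ into $\br'$, violating orthogonality, or lie inside $\br'$, violating independence --- so $(\al',\br'\setminus\{g\})$ is an orthogonal pair of independent sets of $G_g^\circ$ satisfying the hypothesis (its order is a subrelation of the $G$-order), and the inductive hypothesis supplies a top $(\bar\al,\bar\br)\in\trip(G_g^\circ)$ containing it. Since every element of $\trip_g(G)$ has $g$ in its second component (\Cref{lem:decomposition}), the preimage of $(\bar\al,\bar\br)$ under $\trip_g(G)\simeq\trip(G_g^\circ)$ is the top $(\bar\al,\bar\br\cup\{g\})$ of $G$, which contains $(\al',\br')$. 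This closes the induction.

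The one genuinely delicate point is the case of a minimal $g$ that lies in $\al'$ (equivalently, by duality, a maximal $g$ in $\br'$): deleting such a $g$ loses sight of its in-neighbours, which must be kept out of $\al$ for tightness, so the inductively produced extension need not contain $\al'$. The purpose of the opening observation is exactly to show that the hypothesis always lets us avoid this case, by a suitable choice of which extremal vertex to strip off (or by passing to $G^*$). Everything else should be routine bookkeeping: checking that the deleted-vertex orders inherit the hypothesis, that $(\al',\br')$ restricts to an honest orthogonal pair on $G_g$ or $G_g^\circ$, and that the isomorphisms of \Cref{thm:decomposition} carry the inductively produced tops to tops of $G$ containing the required sets.
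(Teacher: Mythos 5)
Your proof is correct, but it takes a genuinely different route from the paper's. The paper argues directly and non-inductively: greedily extend $\br'$ to a set $\br$ maximal subject to remaining orthogonal to $\al'$ (scanning in linear extension order), then complete $\br$ to a top $(\al,\br)$ by the greedy algorithm, and finally verify $\al\supseteq\al'$ by a short contradiction --- if some $x\in\al'$ were blocked, the blocking vertex $y$ with $y\to x$ would have been eligible for insertion into $\br$ in the first step (it has no edge to $\br$, and the hypothesis on the relative $G$-order of $\al'$ and $\br'$ forbids an edge from $\br'$ to $y$), contradicting maximality of $\br$. Your argument instead runs an induction on $|G|$ through the recursive decomposition $\trip(G)=\trip_g(G)\sqcup\trip^g(G)$ of \Cref{lem:decomposition} and the isomorphisms of \Cref{thm:decomposition}, first establishing (using the hypothesis and the self-duality of \Cref{prop:dual}) that one can always find an extremal vertex safe to strip off. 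Both proofs are sound. The paper's is shorter and closer to the constructive spirit of \Cref{map:oneG,map:zeroG}; yours is longer but structurally illuminating, showing that the lemma propagates cleanly through the same recursion used to prove \Cref{thm:trim_eq,thm:trim_is_ind}. The one delicate point you flag --- that a minimal $g\in\al'$ cannot be safely deleted --- is correctly identified and correctly sidestepped by the opening observation, and the degenerate case $g=g'$ (an isolated vertex lying in $\al'\cap\br'$) does indeed contradict the hypothesis under either reading of ``below.''
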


\begin{proof} In a linear extension $\ell$ of $G$-order, greedily add any element to $\br'$ which can be added, subject to the condition that the set remain orthogonal to $\al'$.
Let $\br$ be the resulting set. Define $\al$ to be the independent set given by \Cref{map:zeroG} such that $(\al,\br)$ is tight orthogonal.

All that is necessary to show is that $\al$ contains $\al'$. Suppose that, as we are constructing $\al$ using \Cref{map:zeroG}, that there is some element $x$ of $\al'$ which we do not add. The reason we do not add it must be because there is some $y \to x$ such that we do add $y$ to $\al$. This element $y$ necessarily has no edge to any element of $\br$.
Since, by the hypothesis on the relative order of $\al'$ and $\br'$, there is also no edge from any element of $\br'$ to $y$, we would in fact have added $y$ into $\br$, which is a contradiction. Therefore $\al$ contains $\al'$ as desired.
\end{proof}

Note that without the assumption on the relative order of $\al'$ and $\br'$, the conclusion of this lemma is false, as demonstrated by the case of the linearly oriented path \raisebox{-0.3\height}{\scalebox{0.6}{$\begin{tikzpicture}
\node (1) [circle,thick,draw,fill=white] at (0,0) {1};
\node (2) [circle,thick,draw,fill=white] at (1,0) {2};
\node (3) [circle,thick,draw,fill=white] at (2,0) {3};
\node (4) [circle,thick,draw,fill=white] at (3,0) {4};
\draw[->,thick] (2) to (1);
\draw[->,thick] (3) to (2);
\draw[->,thick] (4) to (3);
\end{tikzpicture}$}} on four vertices: there is no tight orthogonal pair with $\al\supseteq \{2\}$ and $\br\supseteq\{4\}$ (see the top left of \Cref{fig:examples}).

Assuming that $\trip(G)$ is a lattice, and thus trim by \Cref{thm:lattice_trim}, it is of the form $\LL(H)$ for some $H$. We show that in this case $G\simeq H$.

\begin{theorem}
If $\trip(G) \simeq \LL(H)$, then $G \simeq H$.
\label{thm:galois_eq}
\end{theorem}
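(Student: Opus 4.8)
The plan is to reconstruct $G$ from the order structure of $\trip(G)$ by induction on $|G|$, matching the parallel recursive decompositions of $\trip(G)$ (\Cref{lem:decomposition}, \Cref{thm:decomposition}) with those of a trim lattice. First I would dispose of the hypothesis about $\LL(H)$: since $\trip(G)\simeq\LL(H)$ is a lattice, \Cref{thm:lattice_trim} makes $\trip(G)$ trim, so $\LL(H)$ is trim, and \Cref{thm:trim_eq} yields $\LL(H)\simeq\trip(H)$. Thus it suffices to show that an isomorphism of posets $\Phi\colon\trip(G)\to\trip(H)$ between independence posets that are (trim) lattices forces $G\simeq H$. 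By \Cref{lem:lattice_implies_extremal} and its proof, $\trip(G)$ is extremal with exactly $|G|$ join-irreducibles, and likewise for $H$; hence $|G|=|H|$. Write $n$ for this common value and induct on $n$, the cases $n\le 1$ being immediate.

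For the inductive step, pick a minimal vertex $g$ of $G$. By \Cref{lem:decomposition} and \Cref{thm:decomposition}, $\trip(G)=[\hat 0,m_g]\sqcup[j_g,\hat 1]$ with $[\hat 0,m_g]\simeq\trip(G_g^\circ)$ and $[j_g,\hat 1]\simeq\trip(G_g)$, and by \Cref{lem:down} every $x\in[\hat 0,m_g]$ has a cover $\tog_g(x)\in[j_g,\hat 1]$ obtained by moving $g$ from its $\br$-component into its $\al$-component. Applying $\Phi$ carries this to a decomposition $\trip(H)=[\hat 0,\Phi(m_g)]\sqcup[\Phi(j_g),\hat 1]$ with the same features, so the crucial point is to show that $\Phi(m_g)=m_h$ and $\Phi(j_g)=j_h$ for some minimal vertex $h$ of $H$. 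I expect this to be the main obstacle: it calls for a purely order-theoretic characterization of exactly which pairs $(j,m)$ of a join-irreducible and a meet-irreducible of a trim independence lattice arise from a minimal vertex --- the natural candidate being that $[\hat 0,m]$ and $[j,\hat 1]$ partition the lattice and every element of $[\hat 0,m]$ has a cover lying in $[j,\hat 1]$ --- together with a verification that this condition is sufficient. Granting it, $\Phi$ restricts to isomorphisms $\trip(G_g^\circ)\simeq\trip(H_h^\circ)$ and $\trip(G_g)\simeq\trip(H_h)$, and the inductive hypothesis gives $G_g^\circ\simeq H_h^\circ$ and $G_g\simeq H_h$.

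It remains to glue these into an isomorphism $G\simeq H$. A minimal vertex $g$ is a sink of $G$, and $G$ is recovered from $G_g$ by adjoining $g$ with one in-edge from each vertex of $V(G_g)\setminus V(G_g^\circ)$ (the neighbours of $g$); similarly for $H$. So it is enough that the isomorphism $G_g\simeq H_h$ carry the subgraph $G_g^\circ$ onto $H_h^\circ$. To secure this compatibility I would strengthen the inductive statement so that the isomorphisms it produces are \emph{label-preserving}, where each cover relation of an independence lattice is labelled by the flipped vertex --- which, by the computation in the proof of \Cref{thm:lattice_trim}, coincides with the overlapping element of the corresponding cover of maximal orthogonal pairs and is hence an intrinsic lattice invariant --- exactly as the proof of \Cref{thm:trim_eq} tracks labels through its induction. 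The covers of $\trip(G)$ crossing from $[\hat 0,m_g]$ to $[j_g,\hat 1]$ are precisely those labelled $g$, while covers inside the two intervals are labelled by the vertices of $G_g^\circ$ and of $G_g$; so a label-preserving isomorphism automatically carries the copy of $G_g^\circ$ inside $G_g$ onto the copy of $H_h^\circ$ inside $H_h$, and the two pieces glue. With the labelling built into the induction, the obstacle of the previous paragraph becomes the statement that an arbitrary isomorphism of these lattices already respects this canonical cover-labelling, which is where the real work lies.
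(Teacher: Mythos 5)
Your proposal takes a genuinely different route from the paper's, but you have explicitly left the central step unproven, so the argument as written is a plan rather than a proof. You hand off the real content to the claim that ``an arbitrary isomorphism of these lattices already respects this canonical cover-labelling, which is where the real work lies'' --- and indeed that is where the work lies, and you don't do it. Two sub-claims are needed that you assert but never establish: (a) your order-theoretic characterization of which pairs $(j,m)$ of a join-irreducible and meet-irreducible come from a minimal vertex (you propose the condition that $[\hat 0,m]\sqcup[j,\hat 1]$ partitions the lattice and every element of $[\hat 0,m]$ has a cover in $[j,\hat 1]$, but you do not verify sufficiency, nor rule out that some non-extremal configuration produces such a partition); and (b) that a lattice isomorphism $\Phi\colon\trip(G)\to\trip(H)$ carries the flipped-vertex labelling of $\trip(G)$ to that of $\trip(H)$ via a vertex bijection that is moreover a graph isomorphism on the pieces. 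Point (b) is subtle because the flipped-vertex labelling is \emph{a priori} by vertices of $G$, while the intrinsic overlapping-element labelling is by vertices of the Galois graph of $\trip(G)$ qua extremal lattice; that these coincide and that the resulting vertex bijection respects \emph{edges} is essentially equivalent to the theorem you are trying to prove, so you need an independent combinatorial input to break the circle.

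The paper's proof supplies exactly such an input (\Cref{expand}) and avoids your induction entirely. It proceeds in four short steps: identify the vertices of $G$ and $H$ with the canonically matched join-/meet-irreducibles; use the spine of a trim lattice, together with \Cref{lem:max_chain}, to show $G$-order refines $H$-order; use \Cref{expand} to show that if $g_1\to g_2$ is an edge of $H$ but not of $G$, then $(\{g_1\},\{g_2\})$ extends to a tight orthogonal pair, forcing $j_{g_1}\le m_{g_2}$ and contradicting the edge in $H$; and finally observe that if the edges of $H$ were a strict subset of those of $G$ then $G$ would have more independent sets, so $\trip(G)$ would be bigger than $\LL(H)\simeq\trip(H)$, a contradiction. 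This last counting step is what lets the paper conclude cheaply instead of having to characterize minimal-vertex decompositions intrinsically. Your inductive framework could plausibly be made to work, and it is closer in spirit to the recursive proofs of \Cref{thm:trim_eq} and \Cref{thm:trim_is_ind}, but you would still need something playing the role of \Cref{expand} to pin down the edge set, and you would need to actually prove (a) and (b) rather than flag them.
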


\begin{proof}
The join-irreducibles and meet-irreducibles of the extremal lattice $\LL(H)$ are canonically identified, by \cite[Proposition 2.7]{TW}, and are also identified with the vertices of $H$. 
The vertices of $G$ are likewise canonically identified with the join-irreducibles and the meet-irreducibles of 
$\trip(G)$; and the identification of the join-irreducibles and meet-irreducibles defined in this way is the same as the identification coming from $\LL(H)$. The vertices of $G$ and of $H$ are therefore naturally identified.

Recall from~\cite{TW} that the \defn{spine} of an extremal lattice $\LL(H)$ is the collection of elements lying on maximal-length chains; the spine is a distributive sublattice, isomorphic to the distributive lattice $J(P)$ where $P$ is the poset corresponding to $H$-order.   In $\trip(G)$ the maximal chains of the spine correspond to sequences of flips in which every vertex is flipped once. By \Cref{lem:max_chain} this can certainly be done in any linear extension of $G$-order; in principle, other orders could also be possible. This shows that $G$-order is at least as strong as $H$-order. 

If $g_1 \to g_2$ is an edge of $H$, then $g_1$ is above $g_2$ in $H$-order, so $g_1$ is above $g_2$ in $G$-order, and in particular, $g_1$ is not below $g_2$ in $G$-order.
Suppose, seeking a contradiction, that there were no edge $g_1\to g_2$ in $G$. In this case $(\{g_1\},\{g_2\})$ is an orthogonal pair, and, by \Cref{expand}, there is a top $(\al,\br)$ with $g_1 \in \al$ and $g_2 \in \br$, so $j_{g_1} \leq m_{g_2}$, contradicting the fact that there is an edge $g_1 \to g_2$ in $H$.

It follows that the edges of $H$ are a subset of the edges of $G$. Since $\LL(H)$ is trim, by \Cref{thm:trim_eq}, $\LL(H)\simeq \trip(H)$. If the edges of $H$ were a strict subset of the edges of $G$, $G$ would have more independent subsets than $H$, so $\trip(G)$ would have more vertices than $\LL(H)$, which is a contradiction. Thus $G\simeq H$.
\end{proof}

\Cref{thm:lattice_trim} and \Cref{thm:galois_eq} together imply an isomorphism of tops and mops when $\trip(G)$ is a lattice.

\begin{corollary}
If $\trip(G)$ is a lattice, then $\trip(G) \simeq \LL(G)$.
\label{cor:trip_a_lattice}
\end{corollary}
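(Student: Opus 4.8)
The plan is simply to chain the two preceding results. Suppose $\trip(G)$ is a lattice. By \Cref{thm:lattice_trim} it is then a trim lattice, and in particular an extremal lattice. By Markowsky's representation of extremal lattices (recalled in \Cref{sec:trim_lattices}), every extremal lattice $\LL$ is isomorphic to $\LL(G(\LL))$ for its Galois graph $G(\LL)$, an acyclic directed graph that is unique up to isomorphism. Applying this to $\LL=\trip(G)$ gives an acyclic directed graph $H:=G(\trip(G))$ with $\trip(G)\simeq \LL(H)$.

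Next I would invoke \Cref{thm:galois_eq} with this particular $H$: since $\trip(G)\simeq \LL(H)$, that theorem yields $G\simeq H$. Feeding this back into the previous isomorphism, and using that the construction $\LL(-)$ sends isomorphic graphs to isomorphic lattices (immediate from the definition of $\LL$ via maximal orthogonal pairs), we obtain $\trip(G)\simeq \LL(H)\simeq \LL(G)$, which is the assertion.

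I do not anticipate any genuine obstacle here: the corollary is a formal consequence of \Cref{thm:lattice_trim} (lattice $\Rightarrow$ trim), the already-cited Galois-graph presentation of extremal lattices, and \Cref{thm:galois_eq} (the Galois graph recovers $G$). The one point requiring a moment's attention is bookkeeping—making sure that the graph $H$ produced by the extremal-lattice representation is exactly the graph to which \Cref{thm:galois_eq} is applied, so that $G\simeq H$ can be transported along $\LL(-)$ to conclude $\LL(H)\simeq \LL(G)$.
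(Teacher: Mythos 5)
Your proposal is correct and takes essentially the same route as the paper: invoke \Cref{thm:lattice_trim} to get trimness, use Markowsky's extremal-lattice representation to write $\trip(G)\simeq\LL(H)$ for the Galois graph $H$, and then apply \Cref{thm:galois_eq} to identify $H$ with $G$. The extra remark that $\LL(-)$ respects graph isomorphism is a harmless bit of explicit bookkeeping that the paper leaves implicit.
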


\begin{proof}
Since $\trip(G)$ is a lattice, it is a trim lattice, and hence is $\LL(H)$ for some directed acyclic graph $H$.  But now $H \simeq G$ by~\Cref{thm:galois_eq}.
\end{proof}

\subsection{Bijections}
\label{sec:bijections}

\begin{theorem}
If $\LL(G)$ is a trim lattice and $x \in \LL(G)$, then
$\phi(x) = (\down(x),\up(x))$ is a tight orthogonal pair.
\end{theorem}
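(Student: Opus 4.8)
The plan is to show that $\phi$ is nothing but the isomorphism $\LL(G)\simeq\trip(G)$ of \Cref{thm:trim_eq}. Recall that the proof of \Cref{thm:trim_eq} constructs an explicit isomorphism $\psi\colon\LL(G)\to\trip(G)$ which respects cover labels: if $x\lessdot y$ in $\LL(G)$ and $g$ is the (necessarily unique) overlapping element, i.e. $x_\MM\cap y_\JJ=\{g\}$, then $\psi(x)\lessdot\psi(y)$ in $\trip(G)$ is the upward flip $\tog_g$. Since $\down(x)$ and $\up(x)$ are defined purely from the overlapping elements of the covers immediately below, respectively above, $x$, applying $\psi$ shows that, writing $\psi(x)=(\al,\br)$, the set $\down(x)$ is exactly the set of $g\in G$ that label a cover into $(\al,\br)$ in $\trip(G)$, and $\up(x)$ is exactly the set of $g\in G$ that label a cover out of $(\al,\br)$.

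It therefore suffices to prove the following purely combinatorial fact about $\trip(G)$: for $(\al,\br)\in\trip(G)$, an element $g\in G$ labels a cover into $(\al,\br)$ if and only if $g\in\al$, and labels a cover out of $(\al,\br)$ if and only if $g\in\br$. I would prove the first equivalence using only that flips are involutions (\Cref{lem:flips_invs}) and that flips coincide with covers (\Cref{lem:flip_eq_cov}). For ``only if'': a cover $(\al'',\br'')\lessdot(\al,\br)$ is, by \Cref{lem:flip_eq_cov}, an upward flip $\tog_g$ performed at some $g\in\br''$; applying the involution $\tog_g$ gives $\tog_g(\al,\br)=(\al'',\br'')<(\al,\br)$, which is a downward flip from $(\al,\br)$, and downward flips are performed only at elements of the first component, so $g\in\al$. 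For ``if'': if $g\in\al$ then $\tog_g(\al,\br)<(\al,\br)$ is a cover by \Cref{lem:flip_eq_cov}, and since this flip moves $g$ out of the first component and into the second, the reverse (upward) flip recovering $(\al,\br)$ is performed at $g$, so $g$ labels this cover. The statement for $\br$ and covers out of $(\al,\br)$ is dual (or follows by applying \Cref{prop:dual}).

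Combining the two paragraphs: for $x\in\LL(G)$ with $\psi(x)=(\al,\br)$, we get $\down(x)=\al$ and $\up(x)=\br$, hence $\phi(x)=(\down(x),\up(x))=\psi(x)$, which is by definition an element of $\trip(G)$, i.e. a tight orthogonal pair.

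I expect the main obstacle to be the second paragraph's bookkeeping: in particular, verifying the ``if'' direction (that \emph{every} $g\in\al$ actually occurs as a cover label into $(\al,\br)$, not merely that the labels lie in $\al$), and keeping straight that the isomorphism of \Cref{thm:trim_eq} sends the \emph{downward} label to the first component $\al$ and the \emph{upward} label to the second component $\br$, rather than the reverse. An alternative, slightly more self-contained route avoids $\psi$ entirely and argues directly by induction on $|G|$: take $g$ minimal in $G$, split $\LL(G)=\LL_g(G)\sqcup\LL^g(G)$ via \Cref{thm:trim_recurrence}, match this against the splitting $\trip(G)=\trip_g(G)\sqcup\trip^g(G)$ of \Cref{thm:decomposition}, and use \Cref{lem:down} to track the new label $g$ appearing on the bridging cover relations; but this essentially re-derives a labelled refinement of \Cref{thm:trim_eq}, so invoking \Cref{thm:trim_eq} directly is the more economical choice.
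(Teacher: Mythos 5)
Your proof is correct, but it takes a genuinely different route from the paper's. The paper argues directly and lattice-theoretically: independence of $\down(x)$ and $\up(x)$ comes from \Cref{thm:down_ind}; orthogonality follows by combining \cite[Proposition 4.1]{TW} (which expresses $x$ both as $\bigvee_{g\in\down(x)}j_g$ and as $\bigwedge_{g\in\up(x)}m_g$) with the observation that there is no edge from $g$ to $g'$ in $G$ exactly when $j_g\leq m_{g'}$; and tightness is checked by noting (again via \Cref{thm:down_ind}) that perturbing $\down(x)$ within the independent sets yields a $\down(x')$ for some strictly larger $x'$, which can no longer sit below all $m_g$ with $g\in\up(x)$. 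Your proof instead identifies $\phi$ with the isomorphism $\psi$ of \Cref{thm:trim_eq} by exploiting the sentence in that proof asserting $\psi$ respects cover labels, and then proving the purely combinatorial fact that at a top $(\al,\br)$ the set of elements flipped on covers into (resp.\ out of) $(\al,\br)$ is exactly $\al$ (resp.\ $\br$); that combinatorial fact is correctly deduced from \Cref{lem:flip_eq_cov} together with the involution property \Cref{lem:flips_invs}. Your route is more economical in one sense --- it automatically yields $\phi=\psi$ and therefore \Cref{thm:trim_is_ind} for free, whereas the paper proves that theorem by a separate induction --- but it leans harder on the label-preservation claim inside the proof of \Cref{thm:trim_eq}, which the paper states only in one sentence; the paper's own argument has the advantage of being self-contained in the lattice $\LL(G)$ and not reopening that induction. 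One small thing worth making explicit in your write-up: the tightness of a top $(\al,\br)$ is a four-part condition (no increase in $\al$, no decrease in $\br$, no addition to $\al$, no addition to $\br$), and your proof handles all four at once simply by landing in $\trip(G)$, whereas the paper's proof spells out only the ``increase $\down(x)$'' case and leaves the others as similar; this is a genuine (if modest) tidiness advantage of your approach.
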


\begin{proof}
 For $g \in G$, write $j_g \in \LL(G)$ for the unique join irreducible with $\down(j_g)=\{g\}$ and $m_g \in \LL(G)$ for the unique meet irreducible element with $\up(m_g)=\{g\}$.  Given a collection of join-irreducible elements $J=\{j_{g_1},\ldots,j_{g_r}\}$ and meet-irreducible elements $M=\{m_{g'_1},\ldots,m_{g'_s}\}$, then there is no edge from an element of the set $\{g_1,\ldots,g_r\}$ to an element of the set $\{g'_1,\ldots,g'_s\}$ if and only if every element of $J$ is below every element of $M$ if and only if the join of $J$ is below the meet of $M$. 

By \Cref{thm:down_ind}, every $x \in \LL(G)$ is associated to a pair of independent sets $(\down(x),\up(x))$.  By \cite[Proposition 4.1]{TW}, \[\bigvee_{g \in \down(x)} j_g = x = \bigwedge_{g \in \up(x)} m_g.\]   By the previous paragraph, there are no edges from any element of $\down(x)$ to any element of $\up(x)$, so the pair $(\down(x),\up(x))$ is orthogonal.

We now argue that $(\down(x),\up(x))$ is tight.   If we increased any element of $\down(x)$ while staying independent, this new set would correspond to $\down(x')$ of some $x' \in \LL(G)$ (by \Cref{thm:down_ind}).  This $x'$ would be strictly greater than $x$---in particular, it would no longer be below every $m_g$ for $g \in \up(x)$, and so $(\down(x),\up(x))$ would no longer be orthogonal.
\end{proof}

\begin{theorem}
If $\LL(G)$ is trim---or, equivalently, if $\trip(G)$ is a lattice---then $\phi$ is an isomorphism $\LL(G) \simeq \trip(G)$.
\label{thm:trim_is_ind}
\end{theorem}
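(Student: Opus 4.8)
The plan is to verify directly that the bijection $\phi$ from the previous theorem preserves and reflects cover relations; since $\LL(G)$ and $\trip(G)$ are finite posets, a cover-preserving, cover-reflecting bijection is automatically a poset isomorphism. (Recall that in the present setting $G$ is the Galois graph of $\LL(G)$, so ``independent sets of $G$'' and the domain of \Cref{thm:down_ind} agree.)

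First I would check that $\phi$ is a bijection. Composing $\phi$ with the projection onto the first component gives the map $x \mapsto \down(x)$, which by \Cref{thm:down_ind} is a bijection from $\LL(G)$ onto the independent sets of $G$; hence $\phi$ is injective. By \Cref{thm:trip_ind} each top is determined by its first component, so $\trip(G)$ is likewise in bijection with the independent sets of $G$; thus $|\trip(G)| = |\LL(G)|$ and $\phi$ is onto. The previous theorem already shows that $\phi(x)$ lies in $\trip(G)$.

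Next I would show $\phi$ matches cover relations, by induction on $|G|$, following the decomposition used in the proof of \Cref{thm:trim_eq}. Let $g$ be minimal in $G$; then $\LL(G) = \LL_g(G) \sqcup \LL^g(G)$ by \Cref{thm:trim_recurrence} and $\trip(G) = \trip_g(G) \sqcup \trip^g(G)$ by \Cref{lem:decomposition}. For $x \in \LL_g(G)$ we have $g \in \up(x)$ by \Cref{thm:trim_recurrence}(4), so $g$ lies in the second component of $\phi(x)$ and $\phi(x) \in \trip_g(G)$ by \Cref{lem:extremal_needs}; dually $\phi(\LL^g(G)) \subseteq \trip^g(G)$, and since the block sizes on the two sides agree (by the inductive hypothesis applied to $G_g$ and $G_g^\circ$, via \Cref{thm:trim_recurrence} and \Cref{thm:decomposition}) these inclusions are equalities. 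On $\LL_g(G)$, the interval isomorphism $\LL_g(G) \simeq \LL(G_g^\circ)$ of \Cref{thm:trim_recurrence}(3) carries $\down$ to $\down$ and $\up$ to $\up$ together with the extra label $g$ (which comes from the single cover up into $\LL^g(G)$), while the $\trip$-side isomorphism of \Cref{thm:decomposition} is the explicit $(\al,\br) \mapsto (\al, \br \setminus \{g\})$; hence $\phi$ restricted to $\LL_g(G)$ corresponds to $\phi$ for $G_g^\circ$ and is a cover-isomorphism by induction. The same argument, using $G_g$ and the labels $\down \setminus \{g\}, \up$, handles $\LL^g(G)$.

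Finally I would treat the cover relations running between the two blocks. By \Cref{thm:trim_recurrence}(4), \Cref{lem:down}, and \cite[Lemma 3.15]{TW}, every $x \in \LL_g(G)$ has exactly one cover $x \lessdot x'$ into $\LL^g(G)$, characterized by $\down(x') = \down(x) \cup \{g\}$. On the other side, since $g \in \up(x)$ the flip $\tog_g$ moves $\phi(x)$ upward out of $\trip_g(G)$ into $\trip^g(G)$, and by \Cref{lem:down} its first component equals $\down(x) \cup \{g\}$; by \Cref{thm:trip_ind} this first component determines the top, so $\tog_g(\phi(x)) = \phi(x')$ precisely when $\down(x') = \down(x) \cup \{g\}$. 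Thus the cross covers on the two sides correspond, which completes the induction and shows $\phi$ is an isomorphism. The step I expect to require the most care is the middle one: confirming that deleting $g$ (or $g$ together with its neighbours) perturbs the downward and upward labels only in the predictable way, so that $\phi$ on each block is genuinely identified with $\phi$ for the smaller graph and the inductive hypothesis applies. This is the content of the [TW] recursion \Cref{thm:trim_recurrence} together with the label-compatibility of the $\trip$-side bijections already noted in the proof of \Cref{thm:trim_eq}, but it should be spelled out explicitly on both sides before the induction closes.
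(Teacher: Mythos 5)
Your proof is correct and follows essentially the same route as the paper's: induction on $|G|$ via the decomposition at a minimal vertex $g$, matching each block using \Cref{thm:trim_recurrence} against \Cref{lem:decomposition}/\Cref{thm:decomposition}, and then matching the cross-covers by the same argument as in \Cref{thm:trim_eq}. You spell out more of the bookkeeping (including the bijectivity via \Cref{thm:down_ind} and \Cref{thm:trip_ind}, and the label-compatibility of the two recursions) and correctly flag that this label-compatibility is the step deserving the most care, but there is no real divergence from the paper's argument.
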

\begin{proof}
We show $\phi$ is an isomorphism by induction on $|G|$: to this end, let $g$ be minimal in $G$.  Recall that $\LL(G)=\LL_g(G)\sqcup \LL^g(G)$ and $\trip(G)=\trip_g(G) \sqcup \trip^g(G)$.  Since $\LL_g(G)\simeq \LL(G_g^\circ)$ and $\trip_g(G) \simeq \trip(G_g^\circ)$ and $x\in \LL_g(G)$ if and only if $g \in \up(x)$ and $(\al,\br)\in\trip_g(G)$ if and only if $g \in \br$, we conclude that $\phi$ is an isomorphism from $\LL_g(G) \simeq \trip_g(G)$.  Similarly, we conclude that $\phi$ is an isomorphism from $\LL^g(G) \simeq \trip^g(G)$, since $\LL^g(G)\simeq \LL(G_g)$ and $\trip^g(G)=\trip(G_g)$.

We have only to show now that the cover relations between $\LL_g(G)$ and $\LL^g(G)$ are the same as those for $\trip_g(G)$ and $\trip^g(G)$.  But this now follows from the same argument as in~\Cref{thm:trim_eq}.
\end{proof}

The recurrence used in~\Cref{thm:trim_is_ind} can be used to show that $\phi$ has the following alternative description.  Given $(X,Y) \in \LL(G)$, construct $(\al,\br) \in \trip(G)$ as follows: $\al$ is obtained from $X$ by greedily choosing elements from $X$ in a reverse linear extension order of $G$-order so that $\al$ remains an independent set; $\br$ is obtained from $Y$ by greedily choosing elements from $Y$ in a linear extension order so that $\br$ remains an independent set.

The inverse of the map $\phi$ is constructed in the following theorem.

\begin{theorem}
 Given $(\al,\br) \in \trip(G)$, set $X=\al$, $Y=\br$, and define
 \begin{enumerate}
\item[$\theta_1$:] add to $Y$ all elements of $G \setminus X$ with no arrow \emph{from} an element of $X$.
\item[$\theta_2$:] add to $X$ all elements of $G \setminus Y$ with no arrow \emph{to} an element of $Y$.
\end{enumerate}
Then both $\theta_1 \circ \theta_2$ and $\theta_2 \circ \theta_1$ are injections $\trip(G) \hookrightarrow \LL(G)$.  
Moreover, if $\trip(G)$ is a lattice, then they coincide and are inverses to $\phi$.
\end{theorem}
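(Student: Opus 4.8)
I would prove the theorem by recognizing $\theta_1$ and $\theta_2$ as the two polarity operations attached to the orthogonality relation on $G$, and by recognizing \Cref{map:zeroG} and \Cref{map:oneG} as one-sided inverses of the resulting compositions. For $S \subseteq G$, write $S^{\uparrow}$ for the set of $g \in G\setminus S$ receiving no arrow from $S$, and $S^{\downarrow}$ for the set of $g \in G\setminus S$ sending no arrow to $S$. These are mutually order-reversing, and a routine Galois-connection computation (equivalently, the content of the maximal-orthogonal-pair construction recalled in~\Cref{sec:trim_lattices}) gives $((S^{\uparrow})^{\downarrow})^{\uparrow} = S^{\uparrow}$ and $((S^{\downarrow})^{\uparrow})^{\downarrow} = S^{\downarrow}$, and shows that $(X,Y)$ is a maximal orthogonal pair precisely when $X = Y^{\downarrow}$ and $Y = X^{\uparrow}$. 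Since a top $(\al,\br)$ is orthogonal with $\al\cap\br = \emptyset$, we have $\br \subseteq \al^{\uparrow}$ and $\al \subseteq \br^{\downarrow}$, so $\theta_1(\al,\br) = (\al,\al^{\uparrow})$ and $\theta_2(\al,\br) = (\br^{\downarrow},\br)$, whence
\[
\theta_2\circ\theta_1(\al,\br) = \big((\al^{\uparrow})^{\downarrow},\, \al^{\uparrow}\big)
\quad\text{and}\quad
\theta_1\circ\theta_2(\al,\br) = \big(\br^{\downarrow},\, (\br^{\downarrow})^{\uparrow}\big).
\]
The two polarity identities say exactly that these pairs meet the conditions $X = Y^{\downarrow}$, $Y = X^{\uparrow}$, so both compositions land in $\LL(G)$.

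For injectivity I would build a retraction. The key observation is that running \Cref{map:zeroG} on the input independent set $\mathcal{I}=\al$ produces precisely the greedy independent subset of $\al^{\uparrow}$ chosen in linear-extension order: the first two tests in \Cref{map:zeroG} are equivalent to $k \in \al^{\uparrow}$, and since $\ell$ is a linear extension, the only arrows possible between $k$ and an already-selected element point out of $k$, so the third test is just ``the set stays independent''; moreover, skipping the indices $k\notin\al^{\uparrow}$ makes no difference. Since \Cref{map:zeroG} on $\mathcal{I}=\al$ returns the unique $\br$ with $(\al,\br)\in\trip(G)$, it follows that the greedy independent subset of $\al^{\uparrow}$ (in $\ell$) equals $\br$. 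Hence from the second component $\al^{\uparrow}$ of $\theta_2\circ\theta_1(\al,\br)$ we recover $\br$, and then $\al$ by~\Cref{thm:trip_ind}; so $\theta_2\circ\theta_1$ is injective. The mirror statement---\Cref{map:oneG} on $\mathcal{I}=\br$ is the greedy independent subset of $\br^{\downarrow}$ in reverse linear-extension order and returns $\al$---handles $\theta_1\circ\theta_2$ via its first component; alternatively, the $\theta_1\circ\theta_2$ statements follow from the $\theta_2\circ\theta_1$ statements using the duality of~\Cref{prop:dual}.

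Now assume $\trip(G)$ is a lattice. By~\Cref{thm:lattice_trim} it is trim, so by~\Cref{thm:trim_is_ind} the map $\phi$ is an isomorphism $\LL(G)\simeq\trip(G)$; in particular $|\trip(G)| = |\LL(G)|$, so the injections $\theta_2\circ\theta_1$ and $\theta_1\circ\theta_2$ are bijections onto $\LL(G)$. To identify $\theta_2\circ\theta_1$ with $\phi^{-1}$, I would use the alternative description of $\phi$ recorded after~\Cref{thm:trim_is_ind}: the second component of $\phi(X,Y)$ is the greedy independent subset of $Y$ in linear-extension order. Taking $(X,Y) = \theta_2\circ\theta_1(\al,\br)$, so $Y = \al^{\uparrow}$, the preceding paragraph identifies this second component as $\br$; since a top is determined by its second component (\Cref{thm:trip_ind}), $\phi\circ\theta_2\circ\theta_1 = \mathrm{id}$, so $\theta_2\circ\theta_1 = \phi^{-1}$. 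The dual argument, comparing first components, gives $\theta_1\circ\theta_2 = \phi^{-1}$ as well; thus the two compositions coincide and are inverse to $\phi$.

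I expect the main obstacle to be the verification in the injectivity step---namely, that the one-directional arrow tests in \Cref{map:zeroG} and \Cref{map:oneG} coincide with the test ``stays independent'' (which rests on the linear-extension property) and that iterating these algorithms over all of $G$ under the membership constraint agrees with iterating greedily over $\al^{\uparrow}$ (resp.\ $\br^{\downarrow}$). The polarity identities of the first step are standard once the operators are set up, and the lattice case is then a formal consequence.
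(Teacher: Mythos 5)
Your proof is correct and follows essentially the same plan as the paper's: show the image is a maximal orthogonal pair via the polarity identities (which the paper states informally as an obvious maximality check), prove injectivity by exhibiting a greedy retraction, and handle the lattice case via the alternative description of $\phi$ recorded after \Cref{thm:trim_is_ind}. The one presentational difference is that for $\theta_2\circ\theta_1$ you recover $\br$ from the second component $\al^{\uparrow}$ (a single polarity away from the top, which makes the link to \Cref{map:zeroG} transparent), whereas the paper recovers $\al$ from the first component $(\al^{\uparrow})^{\downarrow}$; both routes are valid, and yours is if anything a bit cleaner to verify.
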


\begin{proof}
It is clear that both $\theta_1\circ \theta_2$ and $\theta_2 \circ \theta_1$ have images in $\LL(G)$: $\theta_2 \circ \theta_1$, we cannot add anything to $X$, but we also cannot add anything to $Y$ because it was chosen to be maximal with respect to a subset of $X$---and similarly for $\theta_1 \circ \theta_2$.

For $\theta_2 \circ \theta_1$, we first reconstruct $\al$ from $X$ by greedily choosing elements in $X$ in a reverse linear extension order---$Y$ contains those elements that don't have an arrow from an element of $\al$, so as we scan through $X$, if we see an element without an arrow from our reconstruction of $\al$, we must add it to $\al$ since otherwise it would be in $Y$.  The set $\br$ is now determined by~\Cref{thm:trip_ind}.  A similar argument works for $\theta_1 \circ \theta_2$---we first reconstruct $\br$ from $Y$ by greedily choosing elements in $Y$ in a linear extension order, and then $\al$ is determined.

Finally, when $\trip(G)$ is a lattice, these coincide with the alternative description of $\phi$ given after \Cref{thm:trim_is_ind}.
\end{proof}

In particular, the number of tight orthogonal pairs is always less than or equal to the number of maximal orthogonal pairs.  
\Cref{thm:trim_is_ind} is illustrated in~\Cref{fig:max_orth}.

\subsection{Conditions on independence lattices}
Overlapping relations in a trim lattice allow us to give a graph-theoretic condition on $G$ for when $\trip(G)$ is a lattice.

\begin{theorem}
\label{thm:five_sets}
For $G$ an acyclic directed graph, $\trip(G)$ is a lattice if and only if $G$ has no partition $G=X_1\sqcup X_2\sqcup X_3\sqcup X_4\sqcup Z$ such that
\begin{enumerate}[(i)]
\item all the $X_i$ are non-empty (but $Z$ may be empty)
	\item every element of $X_3$ has an edge from $X_4$ and to $X_2$
    \item every element of $X_2$ has an edge to $X_1$ and from $X_3$
	\item there are no edges from $X_4$ to $X_2$, from $X_4$ to $X_1$, or from $X_3$ to $X_1$
	\item every element of $Z$ has an edge from $X_4$ and an edge to $X_1$.
    \end{enumerate}
\end{theorem}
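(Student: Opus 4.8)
The plan is to reduce to a statement about maximal orthogonal pairs, and then to read the forbidden partition off from the data of a ``bad'' pair of mops. By \Cref{thm:lattice_trim}, \Cref{cor:trip_a_lattice} and \Cref{thm:trim_eq}, the poset $\trip(G)$ is a lattice if and only if $\LL(G)$ is trim; and by \Cref{thm:overlapping}, $\LL(G)$ fails to be trim precisely when it has a non-overlapping relation $y<z$. Writing the mops of $y$ and $z$ as $(A,B):=(y_\JJ,y_\MM)$ and $(C,D):=(z_\JJ,z_\MM)$, the relation $y<z$ says $A\subsetneq C$ (equivalently $D\subsetneq B$), and non-overlapping says $B\cap C=\emptyset$. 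So it suffices to prove: there are mops $(A,B),(C,D)$ of $G$ with $A\subsetneq C$ and $B\cap C=\emptyset$ if and only if $G$ admits a partition of the stated kind.

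From such a pair of mops I would build the partition by setting $X_4:=A$, $X_3:=C\setminus A$, $X_2:=B\setminus D$, $X_1:=D$ and $Z:=G\setminus(B\cup C)$; since $A\subseteq C$, $D\subseteq B$ and $B\cap C=\emptyset$, these five sets partition $G$. Here $X_3,X_2$ are nonempty because $A\subsetneq C$, $D\subsetneq B$, and $X_4,X_1$ are nonempty because $y=\hat 0$ would force $B=G$ and hence $C=B\cap C=\emptyset$, a contradiction, and the dual argument shows $z\neq\hat 1$. Condition (iv) is immediate: its three families of non-edges are instances of the orthogonality of $(A,B)$ (note $X_1\cup X_2=B$) and of $(C,D)$ (note $X_3\subseteq C$). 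For (v): if $v\in Z$, then $v\notin B$, so maximality of $B$ relative to $A$ yields an edge from $X_4=A$ into $v$, and $v\notin C$, so maximality of $C$ relative to $D$ yields an edge from $v$ into $X_1=D$. For (ii): if $v\in X_3=C\setminus A$, then $v\notin A$, so maximality of $A$ relative to $B$ gives an edge from $v$ into $B$, which, since $(C,D)$ is orthogonal and $v\in C$, must land in $B\setminus D=X_2$; and $v\notin B$ (as $v\in C$), so maximality of $B$ relative to $A$ gives an edge from $X_4$ into $v$. Condition (iii) follows by the symmetric argument with the roles of $(A,B)$ and $(C,D)$ reversed.

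For the converse, given a partition satisfying (i)--(v), I would verify directly that $(X_4,\,X_1\cup X_2)$ and $(X_4\cup X_3,\,X_1)$ are maximal orthogonal pairs of $G$. Each consists of two disjoint sets, and ``no edge from the first set to the second'' is exactly part (iv), so both are orthogonal. The vertices outside the first pair are $X_3\cup Z$: by (ii) each vertex of $X_3$ has an edge into $X_2$ and an edge from $X_4$, and by (v) each vertex of $Z$ has an edge into $X_1$ and an edge from $X_4$, so neither component of $(X_4,X_1\cup X_2)$ can be enlarged; the same check using (iii) and (v) on the vertices $X_2\cup Z$ shows $(X_4\cup X_3,X_1)$ is a mop. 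Since $X_3\neq\emptyset$ we have $X_4\subsetneq X_4\cup X_3$, and $(X_1\cup X_2)\cap(X_4\cup X_3)=\emptyset$, so these two mops witness a non-overlapping relation in $\LL(G)$; by \Cref{thm:overlapping}, $\LL(G)$ is not trim, and therefore $\trip(G)$ is not a lattice.

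I expect the only genuinely delicate point to be getting the dictionary right---matching $X_4=y_\JJ$, $X_3=z_\JJ\setminus y_\JJ$, $X_2=y_\MM\setminus z_\MM$, $X_1=z_\MM$, and realizing that the leftover vertices $Z=G\setminus(y_\MM\cup z_\JJ)$ are exactly the ones constrained by (v)---together with the small but necessary observation that $X_4$ and $X_1$ are nonempty, which is not forced by the partition conditions in isolation but comes from the fact that a non-overlapping relation cannot involve $\hat 0$ or $\hat 1$. Once that is in place, all the edge conditions are routine consequences of the orthogonality-plus-maximality definition of a mop: for a mop $(X,Y)$ one has $X=\{v:v\text{ has no edge into }Y\}\setminus Y$ and $Y=\{v:v\text{ has no edge from }X\}\setminus X$.
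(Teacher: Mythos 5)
Your proposal is correct, and the forward direction is essentially the paper's argument: from a non-overlapping relation $(A,B)<(C,D)$ in $\LL(G)$, set $X_4=A$, $X_3=C\setminus A$, $X_2=B\setminus D$, $X_1=D$, $Z=G\setminus(B\cup C)$, and read off (i)--(v) from the maximality and orthogonality of the two mops exactly as you do. (The paper starts from a non-overlapping \emph{cover}; you start from an arbitrary non-overlapping relation, which by \Cref{thm:overlapping} is an equivalent hypothesis, so this is only a cosmetic difference. Your remark that $X_4,X_1\neq\emptyset$ because a non-overlapping relation cannot involve $\hat 0$ or $\hat 1$ is a worthwhile check that the paper leaves implicit.)

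The converse, however, is argued along a genuinely different and cleaner route. The paper's converse deletes the edges from $X_3$ to $X_2$, extends $(X_4\cup X_3, X_2\cup X_1)$ to a mop in the modified graph, restores the edges, and then extracts two mops of $G$ of the form $(X\setminus X_3, Y')$ and $(X', Y\setminus X_2)$; the existence and comparability of these, and the description of their overlap, are asserted rather than checked, and the argument is aimed at showing (v) fails for a partition satisfying only (i)--(iv). You instead take a partition satisfying all of (i)--(v) and simply write down the two pairs $(X_4, X_1\cup X_2)$ and $(X_4\cup X_3, X_1)$, verifying directly from (ii)--(v) that each is a maximal orthogonal pair, and from (iv) and $X_3\neq\emptyset$ that they form a non-overlapping strict relation. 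This is shorter, avoids any detour through a modified graph, and makes every maximality check explicit; the trade-off is that the paper's version more transparently isolates which of the five conditions is doing what (it shows (i)--(iv) already force the overlap to lie in the set of vertices that (v) constrains), whereas yours treats (i)--(v) as a package. Both are valid; your converse would arguably be the easier one for a reader to verify line by line.
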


\begin{proof}
Suppose $\LL(G)$ is not trim, so that we have a cover relation $(X,Y)\lessdot (X',Y')$ with $Y \cap X' = \emptyset$.  Let $X_4 = X$, $X_3 = X' \setminus X$, $X_2 = Y \setminus Y'$, $X_1 = Y'$, and let $Z$ contain the remaining elements of $G$.  Every element of $X_3$ has an edge from some element of $X_4$ (since otherwise $Y$ should be bigger) and an edge to some element of $X_2$ (since otherwise $X$ should be bigger).  Similarly, every element of $X_2$ has an edge to some element of $X_1$ and an edge from some element of $X_3$.  There are no edges from $X_4$ to $X_2$ or to $X_1$ (this would contradict orthogonality of $(X, Y)$) or from $X_3$ to $X_1$ (this would contradict orthogonality of $(X', Y')$). Every element of $Z$, however, has a edge from $X_4$ and a edge to $X_1$.

Conversely, suppose $\LL(G)$ is trim and consider any partition of $G$ into five sets $X_4, X_3, X_2, X_1,Z$ satisfying conditions (i), (ii), (iii) and (iv).  We will show that condition (v) is violated.  Remove all edges from $X_3$ to $X_2$, so that $(X_4 \cup X_3, X_2 \cup X_1)$ is an orthogonal pair, and can be extended to a maximal orthogonal pair $(X,Y)$.  Adding the edges from $X_3$ to $X_2$ back in, we may find maximal orthogonal pairs of the form $(X\setminus X_3,Y')$ and $(X',Y \setminus X_2)$.  These pairs are comparable, and their overlap $X'\cap Y'$ consists only of vertices of $G$ with an edge from $X_3$, an edge to $X_2$, and no edge from $X_4$ or to $X_1$.
\end{proof}

\begin{figure}[htbp]
\scalebox{0.8}{
\begin{tikzpicture}[scale=2.1]
\node (a) at (0,-1) {\scalebox{0.5}{\begin{tikzpicture}[scale=1.4]
\node (1) [circle,thick,draw,color=meet,text=black,fill=meet] at (.5,-.866) {1};
\node (2) [circle,thick,draw,color=meet,text=black,fill=white]  at (-.25,-.433013) {2};
\node (3) [circle,thick,draw,color=meet,text=black,fill=meet] at (-1,0) {3};
\node (4) [circle,thick,draw,color=meet,text=black,fill=white] at (.5,0) {4};
\node (5) [circle,thick,draw,color=meet,text=black,fill=white] at (-.25,.433013) {5};
\node (6) [circle,thick,draw,color=meet,text=black,fill=meet] at (.5,.866) {6};
\draw[->,thick] (5) to (4);
\draw[->,thick] (5) to (2);
\draw[->,thick] (6) to (4);
\draw[->,thick] (4) to (1);
\draw[->,thick] (2) to (1);
\draw[->,thick] (3) to (2);
\draw[->,thick] (4) to (2);
\draw[->,thick] (5) to (3);
\draw[->,thick] (6) to (5);
\end{tikzpicture}}};
\node (b) at (-1,1) {\scalebox{0.5}{\begin{tikzpicture}[scale=1.4]
\node (1) [circle,thick,draw,color=meet,text=black,fill=meet] at (.5,-.866) {1};
\node (2) [circle,thick,draw,color=meet,text=black,fill=white]  at (-.25,-.433013) {2};
\node (3) [circle,thick,draw,color=meet,text=black,fill=meet] at (-1,0) {3};
\node (4) [circle,thick,draw,color=white,text=black] at (.5,0) {4};
\node (5) [circle,thick,draw,color=white,text=black] at (-.25,.433013) {5};
\node (6) [circle,thick,draw,color=join,fill=join,text=white] at (.5,.866) {6};
\draw[->,thick] (5) to (4);
\draw[->,thick] (5) to (2);
\draw[->,thick] (6) to (4);
\draw[->,thick] (4) to (1);
\draw[->,thick] (2) to (1);
\draw[->,thick] (3) to (2);
\draw[->,thick] (4) to (2);
\draw[->,thick] (5) to (3);
\draw[->,thick] (6) to (5);
\end{tikzpicture}}};
\node (c) at (0,1) {\scalebox{0.5}{\begin{tikzpicture}[scale=1.4]
\node (1) [circle,thick,draw,color=join,fill=join,text=white] at (.5,-.866) {1};
\node (2) [circle,thick,draw,color=meet,text=black,fill=meet]  at (-.25,-.433013) {2};
\node (3) [circle,thick,draw,color=meet,text=black,fill=white] at (-1,0) {3};
\node (4) [circle,thick,draw,color=meet,text=black,fill=white] at (.5,0) {4};
\node (5) [circle,thick,draw,color=meet,text=black,fill=white] at (-.25,.433013) {5};
\node (6) [circle,thick,draw,color=meet,text=black,fill=meet] at (.5,.866) {6};
\draw[->,thick] (5) to (4);
\draw[->,thick] (5) to (2);
\draw[->,thick] (6) to (4);
\draw[->,thick] (4) to (1);
\draw[->,thick] (2) to (1);
\draw[->,thick] (3) to (2);
\draw[->,thick] (4) to (2);
\draw[->,thick] (5) to (3);
\draw[->,thick] (6) to (5);
\end{tikzpicture}}};
\node (d) at (1,2) {\scalebox{0.5}{\begin{tikzpicture}[scale=1.4]
\node (1) [circle,thick,draw,color=join,text=black,fill=white] at (.5,-.866) {1};
\node (2) [circle,thick,draw,color=join,fill=join,text=white]  at (-.25,-.433013) {2};
\node (3) [circle,thick,draw,color=meet,text=black,fill=meet] at (-1,0) {3};
\node (4) [circle,thick,draw,color=meet,text=black,fill=meet] at (.5,0) {4};
\node (5) [circle,thick,draw,color=meet,text=black,fill=white] at (-.25,.433013) {5};
\node (6) [circle,thick,draw,color=meet,text=black,fill=white] at (.5,.866) {6};
\draw[->,thick] (5) to (4);
\draw[->,thick] (5) to (2);
\draw[->,thick] (6) to (4);
\draw[->,thick] (4) to (1);
\draw[->,thick] (2) to (1);
\draw[->,thick] (3) to (2);
\draw[->,thick] (4) to (2);
\draw[->,thick] (5) to (3);
\draw[->,thick] (6) to (5);
\end{tikzpicture}}};
\node (e) at (3,2) {\scalebox{0.5}{\begin{tikzpicture}[scale=1.4]
\node (1) [circle,thick,draw,color=meet,text=black,fill=meet] at (.5,-.866) {1};
\node (2) [circle,thick,draw,color=white,text=black]  at (-.25,-.433013) {2};
\node (3) [circle,thick,draw,color=join,fill=join,text=white] at (-1,0) {3};
\node (4) [circle,thick,draw,color=meet,text=black,fill=white] at (.5,0) {4};
\node (5) [circle,thick,draw,color=meet,text=black,fill=meet] at (-.25,.433013) {5};
\node (6) [circle,thick,draw,color=meet,text=black,fill=white] at (.5,.866) {6};
\draw[->,thick] (5) to (4);
\draw[->,thick] (5) to (2);
\draw[->,thick] (6) to (4);
\draw[->,thick] (4) to (1);
\draw[->,thick] (2) to (1);
\draw[->,thick] (3) to (2);
\draw[->,thick] (4) to (2);
\draw[->,thick] (5) to (3);
\draw[->,thick] (6) to (5);
\end{tikzpicture}}};
\node (f) at (-1,2) {\scalebox{0.5}{\begin{tikzpicture}[scale=1.4]
\node (1) [circle,thick,draw,color=join,fill=join,text=white] at (.5,-.866) {1};
\node (2) [circle,thick,draw,color=meet,text=black,fill=meet]  at (-.25,-.433013) {2};
\node (3) [circle,thick,draw,color=meet,text=black,fill=white] at (-1,0) {3};
\node (4) [circle,thick,draw,color=white,text=black] at (.5,0) {4};
\node (5) [circle,thick,draw,color=white,text=black] at (-.25,.433013) {5};
\node (6) [circle,thick,draw,color=join,fill=join,text=white] at (.5,.866) {6};
\draw[->,thick] (5) to (4);
\draw[->,thick] (5) to (2);
\draw[->,thick] (6) to (4);
\draw[->,thick] (4) to (1);
\draw[->,thick] (2) to (1);
\draw[->,thick] (3) to (2);
\draw[->,thick] (4) to (2);
\draw[->,thick] (5) to (3);
\draw[->,thick] (6) to (5);
\end{tikzpicture}}};
\node (g) at (2,3) {\scalebox{0.5}{\begin{tikzpicture}[scale=1.4]
\node (1) [circle,thick,draw,color=join,fill=join,text=white] at (.5,-.866) {1};
\node (2) [circle,thick,draw,color=join,text=black,fill=white]  at (-.25,-.433013) {2};
\node (3) [circle,thick,draw,color=join,fill=join,text=white] at (-1,0) {3};
\node (4) [circle,thick,draw,color=meet,text=black,fill=meet] at (.5,0) {4};
\node (5) [circle,thick,draw,color=meet,text=black,fill=white] at (-.25,.433013) {5};
\node (6) [circle,thick,draw,color=meet,text=black,fill=white] at (.5,.866) {6};
\draw[->,thick] (5) to (4);
\draw[->,thick] (5) to (2);
\draw[->,thick] (6) to (4);
\draw[->,thick] (4) to (1);
\draw[->,thick] (2) to (1);
\draw[->,thick] (3) to (2);
\draw[->,thick] (4) to (2);
\draw[->,thick] (5) to (3);
\draw[->,thick] (6) to (5);
\end{tikzpicture}}};
\node (h) at (0,3) {\scalebox{0.5}{\begin{tikzpicture}[scale=1.4]
\node (1) [circle,thick,draw,color=join,text=black,fill=white] at (.5,-.866) {1};
\node (2) [circle,thick,draw,color=join,text=black,fill=white]  at (-.25,-.433013) {2};
\node (3) [circle,thick,draw,color=meet,text=black,fill=meet] at (-1,0) {3};
\node (4) [circle,thick,draw,color=join,fill=join,text=white] at (.5,0) {4};
\node (5) [circle,thick,draw,color=meet,text=black,fill=white] at (-.25,.433013) {5};
\node (6) [circle,thick,draw,color=meet,text=black,fill=meet] at (.5,.866) {6};
\draw[->,thick] (5) to (4);
\draw[->,thick] (5) to (2);
\draw[->,thick] (6) to (4);
\draw[->,thick] (4) to (1);
\draw[->,thick] (2) to (1);
\draw[->,thick] (3) to (2);
\draw[->,thick] (4) to (2);
\draw[->,thick] (5) to (3);
\draw[->,thick] (6) to (5);
\end{tikzpicture}}};
\node (i) at (-1,4) {\scalebox{0.5}{\begin{tikzpicture}[scale=1.4]
\node (1) [circle,thick,draw,color=join,text=black,fill=white] at (.5,-.866) {1};
\node (2) [circle,thick,draw,color=join,fill=join,text=white]  at (-.25,-.433013) {2};
\node (3) [circle,thick,draw,color=meet,text=black,fill=meet] at (-1,0) {3};
\node (4) [circle,thick,draw,color=join,text=black,fill=white] at (.5,0) {4};
\node (5) [circle,thick,draw,color=white,text=black] at (-.25,.433013) {5};
\node (6) [circle,thick,draw,color=join,fill=join,text=white] at (.5,.866) {6};
\draw[->,thick] (5) to (4);
\draw[->,thick] (5) to (2);
\draw[->,thick] (6) to (4);
\draw[->,thick] (4) to (1);
\draw[->,thick] (2) to (1);
\draw[->,thick] (3) to (2);
\draw[->,thick] (4) to (2);
\draw[->,thick] (5) to (3);
\draw[->,thick] (6) to (5);
\end{tikzpicture}}};
\node (j) at (1,4) {\scalebox{0.5}{\begin{tikzpicture}[scale=1.4]
\node (1) [circle,thick,draw,color=join,text=black,fill=white] at (.5,-.866) {1};
\node (2) [circle,thick,draw,color=join,text=black,fill=white]  at (-.25,-.433013) {2};
\node (3) [circle,thick,draw,color=join,fill=join,text=white] at (-1,0) {3};
\node (4) [circle,thick,draw,color=join,fill=join,text=white] at (.5,0) {4};
\node (5) [circle,thick,draw,color=meet,text=black,fill=meet] at (-.25,.433013) {5};
\node (6) [circle,thick,draw,color=meet,text=black,fill=white] at (.5,.866) {6};
\draw[->,thick] (5) to (4);
\draw[->,thick] (5) to (2);
\draw[->,thick] (6) to (4);
\draw[->,thick] (4) to (1);
\draw[->,thick] (2) to (1);
\draw[->,thick] (3) to (2);
\draw[->,thick] (4) to (2);
\draw[->,thick] (5) to (3);
\draw[->,thick] (6) to (5);
\end{tikzpicture}}};
\node (k) at (3,4) {\scalebox{0.5}{\begin{tikzpicture}[scale=1.4]
\node (1) [circle,thick,draw,color=meet,text=black,fill=meet] at (.5,-.866) {1};
\node (2) [circle,thick,draw,color=white,text=black]  at (-.25,-.433013) {2};
\node (3) [circle,thick,draw,color=join,text=black,fill=white] at (-1,0) {3};
\node (4) [circle,thick,draw,color=white,text=black] at (.5,0) {4};
\node (5) [circle,thick,draw,color=join,fill=join,text=white] at (-.25,.433013) {5};
\node (6) [circle,thick,draw,color=meet,text=black,fill=meet] at (.5,.866) {6};
\draw[->,thick] (5) to (4);
\draw[->,thick] (5) to (2);
\draw[->,thick] (6) to (4);
\draw[->,thick] (4) to (1);
\draw[->,thick] (2) to (1);
\draw[->,thick] (3) to (2);
\draw[->,thick] (4) to (2);
\draw[->,thick] (5) to (3);
\draw[->,thick] (6) to (5);
\end{tikzpicture}}};
\node (l) at (2,5) {\scalebox{0.5}{\begin{tikzpicture}[scale=1.4]
\node (1) [circle,thick,draw,color=join,fill=join,text=white] at (.5,-.866) {1};
\node (2) [circle,thick,draw,color=join,text=black,fill=white]  at (-.25,-.433013) {2};
\node (3) [circle,thick,draw,color=join,text=black,fill=white] at (-1,0) {3};
\node (4) [circle,thick,draw,color=join,text=black,fill=white] at (.5,0) {4};
\node (5) [circle,thick,draw,color=join,fill=join,text=white] at (-.25,.433013) {5};
\node (6) [circle,thick,draw,color=meet,text=black,fill=meet] at (.5,.866) {6};
\draw[->,thick] (5) to (4);
\draw[->,thick] (5) to (2);
\draw[->,thick] (6) to (4);
\draw[->,thick] (4) to (1);
\draw[->,thick] (2) to (1);
\draw[->,thick] (3) to (2);
\draw[->,thick] (4) to (2);
\draw[->,thick] (5) to (3);
\draw[->,thick] (6) to (5);
\end{tikzpicture}}};
\node (m) at (3,5) {\scalebox{0.5}{\begin{tikzpicture}[scale=1.4]
\node (1) [circle,thick,draw,color=meet,text=black,fill=meet] at (.5,-.866) {1};
\node (2) [circle,thick,draw,color=white,text=black]  at (-.25,-.433013) {2};
\node (3) [circle,thick,draw,color=join,fill=join,text=white] at (-1,0) {3};
\node (4) [circle,thick,draw,color=white,text=black] at (.5,0) {4};
\node (5) [circle,thick,draw,color=join,text=black,fill=white] at (-.25,.433013) {5};
\node (6) [circle,thick,draw,color=join,fill=join,text=white] at (.5,.866) {6};
\draw[->,thick] (5) to (4);
\draw[->,thick] (5) to (2);
\draw[->,thick] (6) to (4);
\draw[->,thick] (4) to (1);
\draw[->,thick] (2) to (1);
\draw[->,thick] (3) to (2);
\draw[->,thick] (4) to (2);
\draw[->,thick] (5) to (3);
\draw[->,thick] (6) to (5);
\end{tikzpicture}}};
\node (n) at (2,7) {\scalebox{0.5}{\begin{tikzpicture}[scale=1.4]
\node (1) [circle,thick,draw,color=join,fill=join,text=white] at (.5,-.866) {1};
\node (2) [circle,thick,draw,color=join,fill=white,text=black]  at (-.25,-.433013) {2};
\node (3) [circle,thick,draw,color=join,fill=join,text=white] at (-1,0) {3};
\node (4) [circle,thick,draw,color=join,text=black,fill=white] at (.5,0) {4};
\node (5) [circle,thick,draw,color=join,text=black,fill=white] at (-.25,.433013) {5};
\node (6) [circle,thick,draw,color=join,fill=join,text=white] at (.5,.866) {6};
\draw[->,thick] (5) to (4);
\draw[->,thick] (5) to (2);
\draw[->,thick] (6) to (4);
\draw[->,thick] (4) to (1);
\draw[->,thick] (2) to (1);
\draw[->,thick] (3) to (2);
\draw[->,thick] (4) to (2);
\draw[->,thick] (5) to (3);
\draw[->,thick] (6) to (5);
\end{tikzpicture}}};
\draw[-,thick,color=join,line width=2pt] (a) to node[midway, left] {$6$} (b);
\draw[-,thick,color=join,line width=2pt] (a) to node[midway, right] {$1$} (c) to node[midway, left] {$6$} (f);
\draw[-,thick,color=join,line width=2pt] (a) to node[midway, right] {$3$} (e) to node[midway, right] {$5$} (k) to node[midway, right] {$6$} (m);
\draw[-,thick,color=join,line width=2pt] (c) to node[midway, right] {$2$} (d) to node[midway, left] {$4$} (h) to node[midway, left] {$6$} (i);
\draw[-,thick,color=join,line width=2pt] (d) to node[midway, right] {$3$} (g) to node[midway, right] {$4$} (j) to node[midway, left] {$5$} (l) to node[midway, left] {$6$} (n);
\draw[-,thick] (b) to node[midway, left] {$1$} (f) to node[midway, left] {$2$} (i) to node[midway, left] {$3$} (n);
\draw[-,thick] (e) to node[midway, right] {$1$} (g);
\draw[-,thick] (h) to node[midway, left] {$3$} (j);
\draw[-,thick] (k) to node[midway, left] {$1$} (l);
\draw[-,thick] (b) to node[midway, left] {$3$} (m);
\draw[-,thick] (m) to node[midway, right] {$1$} (n);
\end{tikzpicture}}
\caption{The Tamari lattice with 14 elements, realized as an independence poset.  The thick blue edges indicate the tree structure provided by the natural labelling, giving an efficient method to generate all independent sets of the underlying graph.  The filling of the vertices of the graph specify the tight orthogonal pairs, while the color of the boundaries specify the maximal orthogonal pair.}
\label{fig:tamari}
\end{figure}
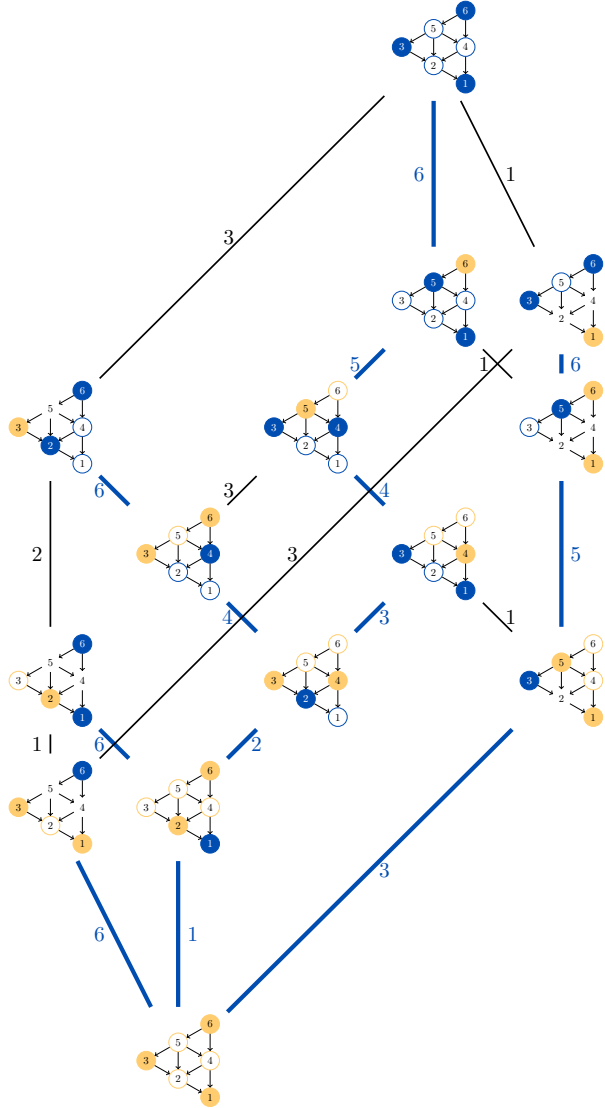

\subsection{Independence posets that are not lattices}
\label{sec:nonlattices}
In this section we highlight several differences between the behavior of independence posets that are (trim) lattices, and independence posets that are not lattices.

 Independence posets break the rigidity of the spine of a trim lattice (which we recall consists of those elements lying on maximal-length chains): although linear extensions of $G$-order still index \emph{certain} maximal chains of $\trip(G)$, there can now be other maximal chains.  In fact, the length of a maximal length chain in $\trip(G)$ can be strictly greater than $|G|$---that is, the same element can be flipped from $\br$ to $\al$ multiple times in the same chain.

\begin{example}
We leave it to the reader to check that the length of the longest chain in independence poset associated to a directed path on five vertices is not five, but six.\end{example}

Furthermore, although an independent poset that is a lattice arises from a unique Galois graph---since an extremal lattice determines its Galois graph, up to isomorphism---\Cref{prop:more_than_one} shows that uniqueness does not hold for general independence posets.

\begin{proposition}
Nonisomorphic directed acyclic graphs can give isomorphic independence posets.
\label{prop:more_than_one}
\end{proposition}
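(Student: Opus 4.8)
The statement is purely an existence claim, so the plan is to exhibit an explicit pair of non-isomorphic acyclic directed graphs $G_1 \not\simeq G_2$ together with a poset isomorphism $\trip(G_1) \simeq \trip(G_2)$, and to certify both facts by finite inspection. Before hunting for such a pair it is worth pinning down where it must live. If $\trip(G)$ is a lattice, then by \Cref{cor:trip_a_lattice} it is isomorphic to $\LL(G)$, and by \Cref{thm:galois_eq} the graph $G$ is recovered up to isomorphism as the Galois graph of $\LL(G)$; hence a \emph{lattice} independence poset determines its graph. Any witness to \Cref{prop:more_than_one} must therefore have a non-lattice independence poset, so it cannot be found among the very smallest graphs, and the natural place to look is among orientations of small paths (or small trees), where non-lattice behavior has already been seen --- compare the linearly oriented path in \Cref{fig:examples} and the five-vertex path in the Example above.

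A second reason to look at two orientations of a fixed underlying graph is bookkeeping: by \Cref{thm:trip_ind} the cardinality of $\trip(G)$ equals the number of independent sets of the underlying \emph{undirected} graph, so any two orientations of the same graph automatically have equinumerous independence posets; one only has to match the order relations. Concretely, I would take $G_1$ and $G_2$ to be two specific orientations of a small path that are visibly non-isomorphic as directed graphs (for instance distinguished by their in- and out-degree sequences, so that no relabelling can identify them), and whose common undirected graph is large enough that its independence poset is not a lattice.

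To verify $\trip(G_1)\simeq \trip(G_2)$ I would make the finite check completely explicit using the canonical spanning tree of \Cref{thm:tree_structure}: fix a linear extension of each $G_i$-order, list every top of $G_i$ together with the unique increasing flip-word reaching it from $\hat 0$, and read off all cover relations from \Cref{lem:flip_eq_cov}; then write down a bijection between the two vertex sets and check it preserves covers in both directions. Since each poset here has only a handful of elements this is routine, and in fact the flip-word labelling makes the bijection essentially self-evident. The main obstacle is not conceptual but curatorial: one must (i) produce a pair for which this actually works, and (ii) double-check the two sanity conditions --- that $G_1$ and $G_2$ are genuinely non-isomorphic, and that the common poset is genuinely \emph{not} a lattice, since otherwise the purported example would contradict the uniqueness of the Galois graph recalled above. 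As an optional device for generating candidates, note that \Cref{prop:dual} gives $\trip(G)\simeq \trip(G^\ast)^\ast$, so any $G$ with $G\not\simeq G^\ast$ whose independence poset happens to be self-dual yields such a pair with $G_2=G^\ast$; but the direct enumeration is what would actually constitute the proof.
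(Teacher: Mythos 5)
Your strategy is sound in broad outline, and you even land on the exact mechanism the paper uses---take a $G$ with $G\not\simeq G^*$ such that $\trip(G)$ is self-dual, so that $\Cref{prop:dual}$ gives $\trip(G)\simeq\trip(G)^*\simeq\trip(G^*)$. The paper's proof is precisely this: it exhibits a specific five-vertex directed graph $G$ (Figure~\ref{fig:non_unique}, two triangles sharing an edge plus a pendant vertex), observes that $\trip(G)\simeq\trip(G)^*$ while $G\not\simeq G^*$, and invokes $\Cref{prop:dual}$. You correctly note the necessary preconditions (the poset must fail to be a lattice, by $\Cref{cor:trip_a_lattice}$ and $\Cref{thm:galois_eq}$, or else the Galois graph would be recovered uniquely), and your ``bookkeeping'' observation about equinumerosity of tops across orientations via $\Cref{thm:trip_ind}$ is a useful sanity check.

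The gap, however, is real: the statement is a pure existence claim, so the proof consists essentially of producing and certifying a witness, and your proposal never does this. You write ``I would take $G_1$ and $G_2$ to be two specific orientations of a small path'' without naming the path or the orientations, and you describe the verification as ``routine'' without carrying it out or even asserting that some particular candidate passes. Worse, you demote the $G$-versus-$G^*$ trick---which is the actual content of the paper's proof, because it reduces the verification to checking one self-duality rather than constructing an ad hoc isomorphism between two unrelated posets---to ``an optional device for generating candidates,'' and insist that ``the direct enumeration is what would actually constitute the proof.'' As written, there is no enumeration and no candidate, so there is no proof. Note also that the paper's witness is not an orientation of a path; whether a small path orientation works at all is something you would need to check, and you haven't. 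To repair the proposal you need to commit to an explicit $G$, verify $G\not\simeq G^*$ (say by degree sequences), verify $\trip(G)\simeq\trip(G)^*$ by listing the tops and their covers, and then cite $\Cref{prop:dual}$---at which point you have reproduced the paper's argument.
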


\begin{proof}
Let $G$ be as in~\Cref{fig:non_unique}---observe that $\trip(G)\simeq\trip(G)^*$, but that $G^* \not\simeq G$.  By~\Cref{prop:dual}, this gives the desired example.
\end{proof}

As a referee observed, although trim lattices are always EL-shellable with M\"obius function taking only the values $-1$, $0$, or $1$, neither of these properties are true for independence posets.  For example, the M\"obius function on the independence poset built from a linearly ordered path with six vertices attains a maximum value of 4; one can check that the independence poset built from a linearly order path with four vertices is already not shellable. 

\begin{figure}[htbp]
\raisebox{-0.5\height}{\scalebox{.9}{\begin{tikzpicture}[scale=2]
\node (a) at (0,0) {\scalebox{.5}{
\begin{tikzpicture}[scale=1]
\node (1) [circle,thick,draw,fill=meet] at (1,1) {1};
\node (2) [circle,thick,draw,fill=white]  at (0,0) {2};
\node (3) [circle,thick,draw,fill=white] at (0,2) {3};
\node (4) [circle,thick,draw,fill=meet] at (-1,1) {4};
\node (5) [circle,thick,draw,fill=white]  at (-2,1) {5};
\draw[->,thick] (3) to (1);
\draw[->,thick] (3) to (2);
\draw[->,thick] (4) to (2);
\draw[->,thick] (2) to (1);
\draw[->,thick] (4) to (3);
\draw[->,thick] (5) to (4);
\end{tikzpicture}}};
\node (b) at (-1,1) {\scalebox{.5}{
\begin{tikzpicture}[scale=1]
\node (1) [circle,thick,draw,fill=meet] at (1,1) {1};
\node (2) [circle,thick,draw,fill=white]  at (0,0) {2};
\node (3) [circle,thick,draw,fill=white] at (0,2) {3};
\node (4) [circle,thick,draw,fill=join,text=white] at (-1,1) {4};
\node (5) [circle,thick,draw,fill=meet]  at (-2,1) {5};
\draw[->,thick] (3) to (1);
\draw[->,thick] (3) to (2);
\draw[->,thick] (4) to (2);
\draw[->,thick] (2) to (1);
\draw[->,thick] (4) to (3);
\draw[->,thick] (5) to (4);
\end{tikzpicture}}};
\node (c) at (-1,2) {\scalebox{.5}{
\begin{tikzpicture}[scale=1]
\node (1) [circle,thick,draw,fill=meet] at (1,1) {1};
\node (2) [circle,thick,draw,fill=white]  at (0,0) {2};
\node (3) [circle,thick,draw,fill=white] at (0,2) {3};
\node (4) [circle,thick,draw,fill=white] at (-1,1) {4};
\node (5) [circle,thick,draw,fill=join,text=white]  at (-2,1) {5};
\draw[->,thick] (3) to (1);
\draw[->,thick] (3) to (2);
\draw[->,thick] (4) to (2);
\draw[->,thick] (2) to (1);
\draw[->,thick] (4) to (3);
\draw[->,thick] (5) to (4);
\end{tikzpicture}}};
\node (d) at (1,1) {\scalebox{.5}{
\begin{tikzpicture}[scale=1]
\node (1) [circle,thick,draw,fill=join,text=white] at (1,1) {1};
\node (2) [circle,thick,draw,fill=meet]  at (0,0) {2};
\node (3) [circle,thick,draw,fill=white] at (0,2) {3};
\node (4) [circle,thick,draw,fill=white] at (-1,1) {4};
\node (5) [circle,thick,draw,fill=meet]  at (-2,1) {5};
\draw[->,thick] (3) to (1);
\draw[->,thick] (3) to (2);
\draw[->,thick] (4) to (2);
\draw[->,thick] (2) to (1);
\draw[->,thick] (4) to (3);
\draw[->,thick] (5) to (4);
\end{tikzpicture}}};
\node (e) at (0,3) {\scalebox{.5}{
\begin{tikzpicture}[scale=1]
\node (1) [circle,thick,draw,fill=join,text=white] at (1,1) {1};
\node (2) [circle,thick,draw,fill=meet]  at (0,0) {2};
\node (3) [circle,thick,draw,fill=white] at (0,2) {3};
\node (4) [circle,thick,draw,fill=white] at (-1,1) {4};
\node (5) [circle,thick,draw,fill=join,text=white]  at (-2,1) {5};
\draw[->,thick] (3) to (1);
\draw[->,thick] (3) to (2);
\draw[->,thick] (4) to (2);
\draw[->,thick] (2) to (1);
\draw[->,thick] (4) to (3);
\draw[->,thick] (5) to (4);
\end{tikzpicture}}};
\node (f) at (2,2) {\scalebox{.5}{
\begin{tikzpicture}[scale=1]
\node (1) [circle,thick,draw,fill=white] at (1,1) {1};
\node (2) [circle,thick,draw,fill=join,text=white]  at (0,0) {2};
\node (3) [circle,thick,draw,fill=meet] at (0,2) {3};
\node (4) [circle,thick,draw,fill=white] at (-1,1) {4};
\node (5) [circle,thick,draw,fill=meet]  at (-2,1) {5};
\draw[->,thick] (3) to (1);
\draw[->,thick] (3) to (2);
\draw[->,thick] (4) to (2);
\draw[->,thick] (2) to (1);
\draw[->,thick] (4) to (3);
\draw[->,thick] (5) to (4);
\end{tikzpicture}}};
\node (g) at (3,3) {\scalebox{.5}{
\begin{tikzpicture}[scale=1]
\node (1) [circle,thick,draw,fill=white] at (1,1) {1};
\node (2) [circle,thick,draw,fill=white]  at (0,0) {2};
\node (3) [circle,thick,draw,fill=join,text=white] at (0,2) {3};
\node (4) [circle,thick,draw,fill=meet] at (-1,1) {4};
\node (5) [circle,thick,draw,fill=white]  at (-2,1) {5};
\draw[->,thick] (3) to (1);
\draw[->,thick] (3) to (2);
\draw[->,thick] (4) to (2);
\draw[->,thick] (2) to (1);
\draw[->,thick] (4) to (3);
\draw[->,thick] (5) to (4);
\end{tikzpicture}}};
\node (h) at (3,4) {\scalebox{.5}{
\begin{tikzpicture}[scale=1]
\node (1) [circle,thick,draw,fill=join,text=white] at (1,1) {1};
\node (2) [circle,thick,draw,fill=white]  at (0,0) {2};
\node (3) [circle,thick,draw,fill=white] at (0,2) {3};
\node (4) [circle,thick,draw,fill=join,text=white] at (-1,1) {4};
\node (5) [circle,thick,draw,fill=meet]  at (-2,1) {5};
\draw[->,thick] (3) to (1);
\draw[->,thick] (3) to (2);
\draw[->,thick] (4) to (2);
\draw[->,thick] (2) to (1);
\draw[->,thick] (4) to (3);
\draw[->,thick] (5) to (4);
\end{tikzpicture}}};
\node (i) at (1,4) {\scalebox{.5}{
\begin{tikzpicture}[scale=1]
\node (1) [circle,thick,draw,fill=white] at (1,1) {1};
\node (2) [circle,thick,draw,fill=join,text=white]  at (0,0) {2};
\node (3) [circle,thick,draw,fill=meet] at (0,2) {3};
\node (4) [circle,thick,draw,fill=white] at (-1,1) {4};
\node (5) [circle,thick,draw,fill=join,text=white]  at (-2,1) {5};
\draw[->,thick] (3) to (1);
\draw[->,thick] (3) to (2);
\draw[->,thick] (4) to (2);
\draw[->,thick] (2) to (1);
\draw[->,thick] (4) to (3);
\draw[->,thick] (5) to (4);
\end{tikzpicture}}};
\node (j) at (2,5) {\scalebox{.5}{
\begin{tikzpicture}[scale=1]
\node (1) [circle,thick,draw,fill=white] at (1,1) {1};
\node (2) [circle,thick,draw,fill=white]  at (0,0) {2};
\node (3) [circle,thick,draw,fill=join,text=white] at (0,2) {3};
\node (4) [circle,thick,draw,fill=white] at (-1,1) {4};
\node (5) [circle,thick,draw,fill=join,text=white]  at (-2,1) {5};
\draw[->,thick] (3) to (1);
\draw[->,thick] (3) to (2);
\draw[->,thick] (4) to (2);
\draw[->,thick] (2) to (1);
\draw[->,thick] (4) to (3);
\draw[->,thick] (5) to (4);
\end{tikzpicture}}};
\draw[-,thick] (a) to (d) to (f) to (g) to (h) to (j) to (i) to (e) to (c) to (b) to (a);
\draw[-,thick] (b) to (h);
\draw[-,thick] (d) to (e);
\draw[-,thick] (f) to (i);
\end{tikzpicture}}}
\caption{An independence poset $\trip(G)$ with the property that $\trip(G)\simeq\trip(G)^*$ but $G \not \simeq G^*$.}
\label{fig:non_unique}
\end{figure}

\section{Toggles}
\label{sec:toggles}
Fix an undirected graph $G$ and any element $g \in G$.  In~\cite[Section 3.6]{striker2016rowmotion} and in~\cite{joseph2017toggling}, a \defn{toggle} of an independent set $\mathcal{I}$ of graph $G$ is defined by~\Cref{eq:mut2}
\begin{equation}
\mut_g(\mathcal{I}) = \begin{cases} \mathcal{I} \cup \{g\} & \text{if } g \not \in \mathcal{I} \text{ and } \mathcal{I} \cup \{g\} \text{ is an independent set}, \\ \mathcal{I} \setminus \{g\} & \text{if } g \in \mathcal{I}, \\ \mathcal{I} & \text{ otherwise.} \end{cases}
\label{eq:mut2}
\end{equation}

Toggles appear naturally in independence posets through an operation similar to quiver mutation on the underlying directed graph $G$, but are defined in our context only when $g$ is an extremal element of $G$.  For $g$ an extremal element of $G$, the \defn{toggle} of the graph $G$ at $g$ is the acyclic directed graph $\mut_g(G)$ obtained by reversing all edges incident to $g$.

We have chosen the term ``toggle'' for consistency with~\cite{striker2012promotion,striker2016rowmotion}.  Although \cite{striker2016rowmotion,joseph2017toggling} define toggles as bijections on a fixed set of independent sets, our operation comes from changing the orientation of  the underlying directed graph $G$---and hence the underlying set of tops is not fixed.  We show below that we can restrict to one component of the tops to recover their bijections on independent sets.

It is clear that $\mut_g^2(G)=G$.  When $g$ is extremal, by~\Cref{lem:decomposition} no element of $\trip^g(G)$ lies below an element of $\trip_g(G)$ in $\trip(G)$.  The effect of toggling at $g$ is to reverse the roles of $\trip_g(G)$ and $\trip^g(G)$---roughly, transporting $\trip_g(G)$ above $\trip^g(G)$.  This relationship between $\trip(G)$ and $\trip(\mut_g(G))$ is summarized in~\Cref{thm:mutation_bijection}, and is illustrated in~\Cref{fig:toggle}.

\begin{theorem}
Let $g$ be a minimal element of an acyclic directed graph $G$.  Then
 \begin{itemize} \item $(\al,\br)\mapsto (\al \cup \{g\} ,\br \setminus \{g\})$ is a bijection $\trip_g(G) \simeq \trip^g(\mut_g(G))$,
 \item $(\al,\br)\mapsto (\al \setminus \{g\} ,\br')$ is a bijection $\trip^g(G) \simeq \trip_g(\mut_g(G))$.
 \end{itemize}
Let $g$ be a maximal element of an acyclic directed graph $G$.  Then
 \begin{itemize} \item $(\al,\br)\mapsto (\al',\br \setminus \{g\})$ is a bijection $\trip_g(G) \simeq \trip^g(\mut_g(G))$,
 \item $(\al,\br)\mapsto (\al \setminus \{g\} ,\br \cup \{g\})$ is a bijection $\trip^g(G) \simeq \trip_g(\mut_g(G))$.
 \end{itemize}
\label{thm:mutation_bijection}
\end{theorem}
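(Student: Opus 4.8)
\section*{Proof sketch}

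The plan is to deduce all four bijections from \Cref{thm:decomposition}, applied once to $G$ and once to $\mut_g(G)$, glued together by two elementary observations about the toggle operation. First, $\mut_g(G)$ differs from $G$ only in that the edges incident to $g$ are reversed; since those are exactly the edges removed in forming $G_g$ and the neighbours of $g$ are the same in $G$ and $\mut_g(G)$, we get $G_g=(\mut_g(G))_g$ and $G_g^\circ=(\mut_g(G))_g^\circ$. Second, if $g$ is minimal in $G$ then every edge at $g$ points into $g$, so after reversing them $g$ has no in-edges in $\mut_g(G)$, i.e.\ $g$ is maximal in $\mut_g(G)$; dually for $g$ maximal. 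Thus when $g$ is minimal, \Cref{lem:decomposition} and \Cref{thm:decomposition} apply to $G$ with $g$ minimal and simultaneously to $\mut_g(G)$ with $g$ maximal, and $\trip(G)=\trip_g(G)\sqcup\trip^g(G)$, $\trip(\mut_g(G))=\trip_g(\mut_g(G))\sqcup\trip^g(\mut_g(G))$.

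Concretely, take $g$ minimal in $G$. By \Cref{thm:decomposition}, $(\al,\br)\mapsto(\al,\br\setminus\{g\})$ is a bijection $\trip_g(G)\simeq\trip(G_g^\circ)$, and (applying the theorem to $\mut_g(G)$, where $g$ is maximal) $(\al,\br)\mapsto(\al\setminus\{g\},\br)$ is a bijection $\trip^g(\mut_g(G))\simeq\trip((\mut_g(G))_g^\circ)=\trip(G_g^\circ)$. Composing the first with the inverse of the second gives a bijection $\trip_g(G)\simeq\trip^g(\mut_g(G))$. To identify it, note that if $(\al_0,\br_0)\in\trip^g(\mut_g(G))$ then $g\in\al_0$ by \Cref{lem:decomposition} ($g$ maximal), and $\br_0$ can contain no neighbour of $g$: every edge at $g$ now leaves $g$, so a neighbour $h\in\br_0$ would give an edge from $\al_0$ to $\br_0$. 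Hence restriction to $G_g^\circ$ just deletes $g$ from the first component, so the inverse of $(\al,\br)\mapsto(\al\setminus\{g\},\br)$ is $(\al_1,\br_1)\mapsto(\al_1\cup\{g\},\br_1)$, and the composite is $(\al,\br)\mapsto(\al\cup\{g\},\br\setminus\{g\})$, as claimed. For the second bijection with $g$ minimal, \Cref{thm:decomposition} gives $\trip^g(G)\simeq\trip(G_g)$ via $(\al,\br)\mapsto(\al\setminus\{g\},\br)$ and $\trip_g(\mut_g(G))\simeq\trip((\mut_g(G))_g)=\trip(G_g)$ via $(\al,\br)\mapsto(\al,\br\setminus\{g\})$; composing, $(\al,\br)$ is sent to the element of $\trip_g(\mut_g(G))$ whose restriction to $G_g$ is $(\al\setminus\{g\},\br)$, so its first component is $\al\setminus\{g\}$ and its second component is the set $\br'$ obtained when extending back to a top of $\mut_g(G)$, matching the statement.

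The two bijections for $g$ maximal follow by duality. If $g$ is maximal in $G$ it is minimal in $G^\ast$, one checks $\mut_g(G^\ast)=(\mut_g(G))^\ast$, and under the isomorphism $\trip(H)\simeq\trip(H^\ast)^\ast$, $(\al,\br)\leftrightarrow(\br,\al)$, of \Cref{prop:dual} one has $\trip_g(H)\leftrightarrow\trip^g(H^\ast)$ and $\trip^g(H)\leftrightarrow\trip_g(H^\ast)$ (taking $m_g$ to $j_g$ and $\hat1$ to $\hat0$). Transporting the bijection $\trip^g(G^\ast)\simeq\trip_g(\mut_g(G^\ast))$, $(\al,\br)\mapsto(\al\setminus\{g\},\br')$, through $\trip_g(G)\leftrightarrow\trip^g(G^\ast)$ and $\trip_g((\mut_g G)^\ast)\leftrightarrow\trip^g(\mut_g G)$ produces exactly $(\al,\br)\mapsto(\al',\br\setminus\{g\})$ on $\trip_g(G)\simeq\trip^g(\mut_g(G))$; transporting the bijection $\trip_g(G^\ast)\simeq\trip^g(\mut_g(G^\ast))$, $(\al,\br)\mapsto(\al\cup\{g\},\br\setminus\{g\})$, similarly produces $(\al,\br)\mapsto(\al\setminus\{g\},\br\cup\{g\})$ on $\trip^g(G)\simeq\trip_g(\mut_g(G))$.

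The edge bookkeeping (the identifications of $G_g$, $G_g^\circ$ and of the min/max status of $g$) is routine. The only point needing real care is tracking which of the two components the toggled vertex $g$ lands in after each composition: this is forced in two of the four cases (as above, using that a top of $\mut_g(G)$ in $\trip^g$ must contain $g$ in its first component), but in the other two cases $g$'s fate depends on which neighbours of $g$ are already present in the restricted top, which is precisely why the statement records only that the second component is ``some $\br'$'' (resp.\ the first is ``some $\al'$'').
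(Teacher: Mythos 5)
Your proposal is correct and takes the same route as the paper: the paper's own proof simply declares that the theorem ``follows immediately from'' \Cref{thm:decomposition}, and what you have done is to make that immediacy explicit --- observing that $G_g=(\mut_g(G))_g$, $G_g^\circ=(\mut_g(G))_g^\circ$, and that $g$ switches between minimal and maximal under $\mut_g$, then composing the two bijections supplied by \Cref{thm:decomposition} and tracking the vertex $g$ using \Cref{lem:decomposition}. The duality reduction via \Cref{prop:dual} for the $g$-maximal case is a harmless stylistic choice; a symmetric direct argument would work equally well.
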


\begin{proof}
This follows immediately from~\Cref{thm:decomposition}.
\end{proof}

For $g\in G$ extremal, we also write $\mut_g$ for the bijection of~\Cref{thm:mutation_bijection} from $\trip(G) \to \trip(\mut_g(G))$, and call it a \defn{toggle}. By~\Cref{thm:mutation_bijection}, we have that $\mut_g^2(\al,\br)=(\al,\br)$.

Thus, if we limit ourselves to toggling only at minimal elements of $G$ and keeping track of the first components $\al$, then~\Cref{eq:mut2} allows us to compute $\mut_g$ on independent sets.  \Cref{eq:mut2} also applies if we choose only maximal elements of $G$ and look at only the second components $\br$ in~\Cref{thm:mutation_bijection}.

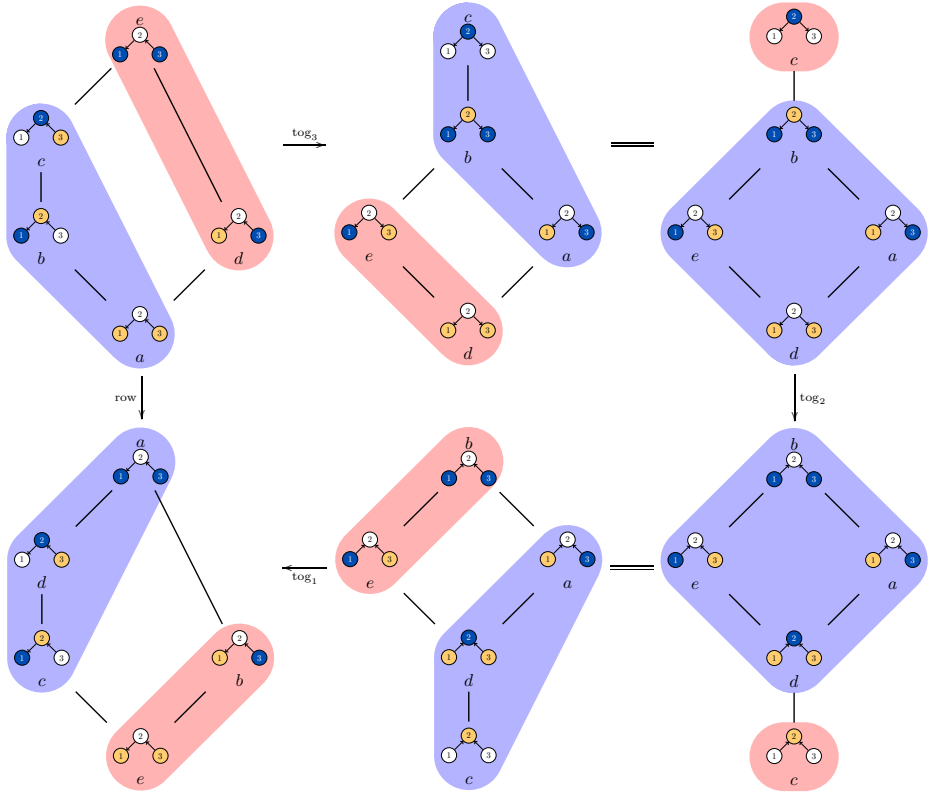
\begin{figure}[htbp]

\scalebox{.65}{
\xymatrix{
\raisebox{-0.6\height}{\begin{tikzpicture}[scale=2]
\node (x0) [align=center] at (0,0) {\scalebox{.5}{\begin{tikzpicture}[scale=.8]
\node (a) [draw=black,circle,thick,fill=meet] at (0,0) {$1$};
\node (b) [draw=black,circle,thick,fill=meet] at (2,0) {$3$};
\node (c) [draw=black,circle,thick,fill=white] at (1,1) {$2$};
\draw[<-,thick] (a) to (c); \draw[->,thick] (b) to (c); \end{tikzpicture}}\\$a$};
\node (x1) [align=center]at (-1,1) {\scalebox{.5}{\begin{tikzpicture}[scale=.8]
\node (a) [draw=black,circle,thick,fill=join,text=white] at (0,0) {$1$};
\node (b) [draw=black,circle,thick,fill=white] at (2,0) {$3$};
\node (c) [draw=black,circle,thick,fill=meet] at (1,1) {$2$};
\draw[<-,thick] (a) to (c); \draw[->,thick] (b) to (c); \end{tikzpicture}}\\$b$};
\node (j2) [align=center] at (1,1) {\scalebox{.5}{\begin{tikzpicture}[scale=.8]
\node (a) [draw=black,circle,thick,fill=meet] at (0,0) {$1$}; \node (b) [draw=black,circle,thick,fill=join,text=white] at (2,0) {$3$}; \node (c) [draw=black,circle,thick,fill=white] at (1,1) {$2$};
\draw[<-,thick] (a) to (c); \draw[->,thick] (b) to (c); \end{tikzpicture}}\\$d$};
\node (x2) [align=center] at (-1,2) {\scalebox{.5}{\begin{tikzpicture}[scale=.8]
\node (a) [draw=black,circle,thick,fill=white] at (0,0) {$1$};
\node (b) [draw=black,circle,thick,fill=meet] at (2,0) {$3$};
\node (c) [draw=black,circle,thick,fill=join,text=white] at (1,1) {$2$};
\draw[<-,thick] (a) to (c); \draw[->,thick] (b) to (c); \end{tikzpicture}} \\$c$};
\node (x3) [align=center] at (0,3) {$e$ \\\scalebox{.5}{\begin{tikzpicture}[scale=.8]
\node (a) [draw=black,circle,thick,fill=join,text=white] at (0,0) {$1$};
\node (b) [draw=black,circle,thick,fill=join,text=white] at (2,0) {$3$};
\node (c) [draw=black,circle,thick,fill=white] at (1,1) {$2$};
\draw[<-,thick] (a) to (c); \draw[->,thick] (b) to (c); \end{tikzpicture}}};
\draw[-,thick] (x0) to (x1) to (x2) to (x3);
\draw[-,thick] (x0) to (j2) to (x3);
\begin{pgfonlayer}{background}
\fill[red,opacity=0.3] \convexpath{x3,j2}{10pt};
\fill[blue,opacity=0.3] \convexpath{x2,x0,x1,x2}{10pt};
\end{pgfonlayer}
\end{tikzpicture}} \ar[r]^{\mut_3} \ar[d]_(.6){\row}
&
\raisebox{-0.6\height}{\begin{tikzpicture}[scale=2]
\node (x0) [align=center] at (0,0) {\scalebox{.5}{\begin{tikzpicture}[scale=.8]
\node (a) [draw=black,circle,thick,fill=meet] at (0,0) {$1$};
\node (b) [draw=black,circle,thick,fill=meet] at (2,0) {$3$};
\node (c) [draw=black,circle,thick,fill=white] at (1,1) {$2$};
\draw[<-,thick] (a) to (c); \draw[<-,thick] (b) to (c); \end{tikzpicture}} \\$d$};
\node (x1) [align=center] at (-1,1) {\scalebox{.5}{\begin{tikzpicture}[scale=.8]
\node (a) [draw=black,circle,thick,fill=join,text=white] at (0,0) {$1$};
\node (b) [draw=black,circle,thick,fill=meet] at (2,0) {$3$};
\node (c) [draw=black,circle,thick,fill=white] at (1,1) {$2$};
\draw[<-,thick] (a) to (c); \draw[<-,thick] (b) to (c); \end{tikzpicture}} \\$e$};
\node (j2) [align=center] at (1,1) {\scalebox{.5}{\begin{tikzpicture}[scale=.8]
\node (a) [draw=black,circle,thick,fill=meet] at (0,0) {$1$};
\node (b) [draw=black,circle,thick,fill=join,text=white] at (2,0) {$3$};
\node (c) [draw=black,circle,thick,fill=white] at (1,1) {$2$};
\draw[<-,thick] (a) to (c); \draw[<-,thick] (b) to (c); \end{tikzpicture}}\\$a$};
\node (x2) [align=center] at (0,2) {\scalebox{.5}{\begin{tikzpicture}[scale=.8]
\node (a) [draw=black,circle,thick,fill=join,text=white] at (0,0) {$1$};
\node (b) [draw=black,circle,thick,fill=join,text=white] at (2,0) {$3$};
\node (c) [draw=black,circle,thick,fill=meet] at (1,1) {$2$};
\draw[<-,thick] (a) to (c); \draw[<-,thick] (b) to (c); \end{tikzpicture}}\\$b$};
\node (x3) [align=center] at (0,3) {$c$ \\ \scalebox{.5}{\begin{tikzpicture}[scale=.8] \node (a) [draw=black,circle,thick,fill=white] at (0,0) {$1$}; \node (b) [draw=black,circle,thick,fill=white] at (2,0) {$3$}; \node (c) [draw=black,circle,thick,fill=join,text=white] at (1,1) {$2$}; \draw[<-,thick] (a) to (c); \draw[<-,thick] (b) to (c); \end{tikzpicture}}};
\draw[-,thick] (x0) to (x1) to (x2) to (x3);
\draw[-,thick] (x0) to (j2) to (x2);
\begin{pgfonlayer}{background}
\fill[red,opacity=0.3] \convexpath{x1,x0}{10pt};
\fill[blue,opacity=0.3] \convexpath{x3,j2,x2,x3}{10pt};
\end{pgfonlayer}
\end{tikzpicture}} \ar@{=}[r] &
\raisebox{-0.6\height}{\begin{tikzpicture}[scale=2]
\node (x0) [align=center] at (0,0) {\scalebox{.5}{\begin{tikzpicture}[scale=.8]
\node (a) [draw=black,circle,thick,fill=meet] at (0,0) {$1$};
\node (b) [draw=black,circle,thick,fill=meet] at (2,0) {$3$};
\node (c) [draw=black,circle,thick,fill=white] at (1,1) {$2$};
\draw[<-,thick] (a) to (c); \draw[<-,thick] (b) to (c); \end{tikzpicture}} \\$d$};
\node (x1) [align=center] at (-1,1) {\scalebox{.5}{\begin{tikzpicture}[scale=.8]
\node (a) [draw=black,circle,thick,fill=join,text=white] at (0,0) {$1$};
\node (b) [draw=black,circle,thick,fill=meet] at (2,0) {$3$};
\node (c) [draw=black,circle,thick,fill=white] at (1,1) {$2$};
\draw[<-,thick] (a) to (c); \draw[<-,thick] (b) to (c); \end{tikzpicture}} \\$e$};
\node (j2) [align=center] at (1,1) {\scalebox{.5}{\begin{tikzpicture}[scale=.8]
\node (a) [draw=black,circle,thick,fill=meet] at (0,0) {$1$};
\node (b) [draw=black,circle,thick,fill=join,text=white] at (2,0) {$3$};
\node (c) [draw=black,circle,thick,fill=white] at (1,1) {$2$};
\draw[<-,thick] (a) to (c); \draw[<-,thick] (b) to (c); \end{tikzpicture}}\\$a$};
\node (x2) [align=center] at (0,2) {\scalebox{.5}{\begin{tikzpicture}[scale=.8]
\node (a) [draw=black,circle,thick,fill=join,text=white] at (0,0) {$1$};
\node (b) [draw=black,circle,thick,fill=join,text=white] at (2,0) {$3$};
\node (c) [draw=black,circle,thick,fill=meet] at (1,1) {$2$};
\draw[<-,thick] (a) to (c); \draw[<-,thick] (b) to (c); \end{tikzpicture}}\\$b$};
\node (x3) [align=center] at (0,3) {\scalebox{.5}{\begin{tikzpicture}[scale=.8] \node (a) [draw=black,circle,thick,fill=white] at (0,0) {$1$}; \node (b) [draw=black,circle,thick,fill=white] at (2,0) {$3$}; \node (c) [draw=black,circle,thick,fill=join,text=white] at (1,1) {$2$}; \draw[<-,thick] (a) to (c); \draw[<-,thick] (b) to (c); \end{tikzpicture}}\\$c$};
\draw[-,thick] (x0) to  (x1) to (x2) to (x3);
\draw[-,thick] (x0) to (j2) to (x2);
\node (cd) at (-.1,3) {};
\node (cc) at (.1,3) {};
\begin{pgfonlayer}{background}
\fill[red,opacity=0.3] \convexpath{cd,cc}{10pt};
\fill[blue,opacity=0.3] \convexpath{x0,x1,x2,j2,x0}{10pt};
\end{pgfonlayer}
\end{tikzpicture}} \ar[d]^(.6){\mut_2}
\\
\raisebox{-0.6\height}{\begin{tikzpicture}[scale=2]
\node (x0) [align=center] at (0,0) {\scalebox{.5}{\begin{tikzpicture}[scale=.8]
\node (a) [draw=black,circle,thick,fill=meet] at (0,0) {$1$};
\node (b) [draw=black,circle,thick,fill=meet] at (2,0) {$3$};
\node (c) [draw=black,circle,thick,fill=white] at (1,1) {$2$};
\draw[<-,thick] (a) to (c); \draw[->,thick] (b) to (c); \end{tikzpicture}}\\$e$};
\node (x1) [align=center]at (-1,1) {\scalebox{.5}{\begin{tikzpicture}[scale=.8]
\node (a) [draw=black,circle,thick,fill=join,text=white] at (0,0) {$1$};
\node (b) [draw=black,circle,thick,fill=white] at (2,0) {$3$};
\node (c) [draw=black,circle,thick,fill=meet] at (1,1) {$2$};
\draw[<-,thick] (a) to (c); \draw[->,thick] (b) to (c); \end{tikzpicture}}\\$c$};
\node (j2) [align=center] at (1,1) {\scalebox{.5}{\begin{tikzpicture}[scale=.8]
\node (a) [draw=black,circle,thick,fill=meet] at (0,0) {$1$}; \node (b) [draw=black,circle,thick,fill=join,text=white] at (2,0) {$3$}; \node (c) [draw=black,circle,thick,fill=white] at (1,1) {$2$};
\draw[<-,thick] (a) to (c); \draw[->,thick] (b) to (c); \end{tikzpicture}}\\$b$};
\node (x2) [align=center] at (-1,2) {\scalebox{.5}{\begin{tikzpicture}[scale=.8]
\node (a) [draw=black,circle,thick,fill=white] at (0,0) {$1$};
\node (b) [draw=black,circle,thick,fill=meet] at (2,0) {$3$};
\node (c) [draw=black,circle,thick,fill=join,text=white] at (1,1) {$2$};
\draw[<-,thick] (a) to (c); \draw[->,thick] (b) to (c); \end{tikzpicture}}\\$d$ };
\node (x3) [align=center] at (0,3) {$a$\\ \scalebox{.5}{\begin{tikzpicture}[scale=.8]
\node (a) [draw=black,circle,thick,fill=join,text=white] at (0,0) {$1$};
\node (b) [draw=black,circle,thick,fill=join,text=white] at (2,0) {$3$};
\node (c) [draw=black,circle,thick,fill=white] at (1,1) {$2$};
\draw[<-,thick] (a) to (c); \draw[->,thick] (b) to (c); \end{tikzpicture}}};
\draw[-,thick] (x0) to (x1) to (x2) to (x3);
\draw[-,thick] (x0) to (j2) to (x3);
\begin{pgfonlayer}{background}
\fill[blue,opacity=0.3] \convexpath{x1,x2,x3}{10pt};
\fill[red,opacity=0.3] \convexpath{x0,j2}{10pt};
\end{pgfonlayer}
\end{tikzpicture}}
&
\raisebox{-0.6\height}{\begin{tikzpicture}[scale=2]
\node (x0) [align=center] at (0,0) {\scalebox{.5}{\begin{tikzpicture}[scale=.8]
\node (a) [draw=black,circle,thick,fill=meet] at (0,0) {$1$};
\node (b) [draw=black,circle,thick,fill=meet] at (2,0) {$3$};
\node (c) [draw=black,circle,thick,fill=join,text=white] at (1,1) {$2$};
\draw[<-,thick] (c) to (a); \draw[<-,thick] (c) to (b); \end{tikzpicture}}\\$d$};
\node (x1) [align=center] at (-1,1) {\scalebox{.5}{\begin{tikzpicture}[scale=.8]
\node (a) [draw=black,circle,thick,fill=join,text=white] at (0,0) {$1$};
\node (b) [draw=black,circle,thick,fill=meet] at (2,0) {$3$};
\node (c) [draw=black,circle,thick,fill=white] at (1,1) {$2$};
\draw[<-,thick] (c) to (a); \draw[<-,thick] (c) to (b); \end{tikzpicture}}\\$e$};
\node (j2) [align=center] at (1,1) {\scalebox{.5}{\begin{tikzpicture}[scale=.8]
\node (a) [draw=black,circle,thick,fill=meet] at (0,0) {$1$};
\node (b) [draw=black,circle,thick,fill=join,text=white] at (2,0) {$3$};
\node (c) [draw=black,circle,thick,fill=white] at (1,1) {$2$};
\draw[<-,thick] (c) to (a); \draw[<-,thick] (c) to (b); \end{tikzpicture}}\\$a$};
\node (x2) [align=center] at (0,2) {$b$ \\ \scalebox{.5}{\begin{tikzpicture}[scale=.8]
\node (a) [draw=black,circle,thick,fill=join,text=white] at (0,0) {$1$};
\node (b) [draw=black,circle,thick,fill=join,text=white] at (2,0) {$3$};
\node (c) [draw=black,circle,thick,fill=white] at (1,1) {$2$};
\draw[<-,thick] (c) to (a); \draw[<-,thick] (c) to (b); \end{tikzpicture}}};
\node (x3) [align=center] at (0,-1) {\scalebox{.5}{\begin{tikzpicture}[scale=.8]
\node (a) [draw=black,circle,thick,fill=white] at (0,0) {$1$};
\node (b) [draw=black,circle,thick,fill=white] at (2,0) {$3$};
\node (c) [draw=black,circle,thick,fill=meet] at (1,1) {$2$};
\draw[<-,thick] (c) to (a); \draw[<-,thick] (c) to (b); \end{tikzpicture}}\\$c$};
\draw[-,thick] (x0) to (x1) to  (x2);
\draw[-,thick] (x0) to (j2) to (x2);
\draw[-,thick] (x3) to (x0);
\begin{pgfonlayer}{background}
\fill[blue,opacity=0.3] \convexpath{x3,x0,j2}{10pt};
\fill[red,opacity=0.3] \convexpath{x1,x2}{10pt};
\end{pgfonlayer}
\end{tikzpicture}} \ar[l]^{\mut_1} \ar@{=}[r]
& \raisebox{-0.6\height}{\begin{tikzpicture}[scale=2]
\node (x0) [align=center] at (0,0) {\scalebox{.5}{\begin{tikzpicture}[scale=.8]
\node (a) [draw=black,circle,thick,fill=meet] at (0,0) {$1$};
\node (b) [draw=black,circle,thick,fill=meet] at (2,0) {$3$};
\node (c) [draw=black,circle,thick,fill=join,text=white] at (1,1) {$2$};
\draw[<-,thick] (c) to (a); \draw[<-,thick] (c) to (b); \end{tikzpicture}}\\$d$};
\node (x1) [align=center] at (-1,1) {\scalebox{.5}{\begin{tikzpicture}[scale=.8]
\node (a) [draw=black,circle,thick,fill=join,text=white] at (0,0) {$1$};
\node (b) [draw=black,circle,thick,fill=meet] at (2,0) {$3$};
\node (c) [draw=black,circle,thick,fill=white] at (1,1) {$2$};
\draw[<-,thick] (c) to (a); \draw[<-,thick] (c) to (b); \end{tikzpicture}}\\$e$};
\node (j2) [align=center] at (1,1) {\scalebox{.5}{\begin{tikzpicture}[scale=.8]
\node (a) [draw=black,circle,thick,fill=meet] at (0,0) {$1$};
\node (b) [draw=black,circle,thick,fill=join,text=white] at (2,0) {$3$};
\node (c) [draw=black,circle,thick,fill=white] at (1,1) {$2$};
\draw[<-,thick] (c) to (a); \draw[<-,thick] (c) to (b); \end{tikzpicture}}\\$a$};
\node (x2) [align=center] at (0,2) {$b$ \\ \scalebox{.5}{\begin{tikzpicture}[scale=.8]
\node (a) [draw=black,circle,thick,fill=join,text=white] at (0,0) {$1$};
\node (b) [draw=black,circle,thick,fill=join,text=white] at (2,0) {$3$};
\node (c) [draw=black,circle,thick,fill=white] at (1,1) {$2$};
\draw[<-,thick] (c) to (a); \draw[<-,thick] (c) to (b); \end{tikzpicture}}};
\node (x3) [align=center] at (0,-1) {\scalebox{.5}{\begin{tikzpicture}[scale=.8]
\node (a) [draw=black,circle,thick,fill=white] at (0,0) {$1$};
\node (b) [draw=black,circle,thick,fill=white] at (2,0) {$3$};
\node (c) [draw=black,circle,thick,fill=meet] at (1,1) {$2$};
\draw[<-,thick] (c) to (a); \draw[<-,thick] (c) to (b); \end{tikzpicture}}\\$c$};
\draw[-,thick] (x0) to (x1) to  (x2);
\draw[-,thick] (x0) to (j2) to (x2);
\draw[-,thick] (x3) to (x0);
\node (cd) at (-.1,-1) {};
\node (cc) at (.1,-1) {};
\begin{pgfonlayer}{background}
\fill[red,opacity=0.3] \convexpath{cd,cc}{10pt};
\fill[blue,opacity=0.3] \convexpath{x0,x1,x2,j2,x0}{10pt};
\end{pgfonlayer}
\end{tikzpicture}}
}}
\caption{Each poset is an independence poset on an orientation of a path of length $3$.  Taken together, these posets represent a sequence of toggles.  Each toggle induces a bijection between $\trip(G)$ and $\trip(\mut_g(G))$, which we keep track of by assigning letters to the elements of the posets.  Toggling each element of the directed graph underlying the independence poset at the top left (in reverse linear extension order) recovers the same indpendence poset on the bottom left, permuting its elements as $(a,b,c,d,e) \xmapsto{\mut_{321}}(e,d,b,c,a)$.  This coincides with rowmotion.}
\label{fig:toggle}
\end{figure}

\section{Rowmotion on Independence Posets}
\label{sec:rowmotion}
By~\Cref{thm:trip_ind}, any independent set $\mathcal{I}$ can be completed to a tight orthogonal pair in exactly two ways---the first as $(\mathcal{I},\br)$ and the second as $(\al,\mathcal{I})$.  \defn{Rowmotion} sends the first of these to the second (see \Cref{eq:global_row}).  The purpose of this section is to give two additional ways to compute it: one using flips, and one using toggles.

\begin{definition}
For $G$ an acyclic directed graph, we say that rowmotion on $\trip(G)$ can be computed
\begin{itemize}
\item \defn{in slow motion} if $\row = \prod\limits_{g \in \ell} \tog_{g}$, and
\item \defn{by deformotion} if $\row = \prod\limits_{g \in \ell'} \mut_{g}$,
\end{itemize}
where $\ell$ is any linear extension of $G$-order and  $\ell'$ is any reverse linear extension of $G$-order.
\end{definition}

{
\renewcommand{\thetheorem}{\ref{thm:main_thm}(i)}
\begin{theorem} For $G$ an acyclic directed graph, rowmotion on $\trip(G)$ can be computed in slow motion.
\end{theorem}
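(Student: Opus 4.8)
The plan is to prove $\row=\prod_{g\in\ell}\tog_g$ by induction on $|G|$, peeling off the first flip. Fix any linear extension $\ell=(g_1,\dots,g_{|G|})$; then $g:=g_1$ is automatically a minimal element of $G$-order, so $\trip(G)=\trip_g(G)\sqcup\trip^g(G)$ by \Cref{lem:decomposition}, with $(\al,\br)\in\trip_g(G)$ exactly when $g\in\br$. Moreover $\ell_0:=(g_2,\dots,g_{|G|})$ restricts to a linear extension of $G_g$-order and of $G_g^\circ$-order. Writing $\prod_{g'\in\ell}\tog_{g'}=R\circ\tog_g$ with $R:=\prod_{g'\in\ell_0}\tog_{g'}$, the goal becomes $R\circ\tog_g=\row$. (In passing, \Cref{lem:flips_invs} together with the fact that any two linear extensions are joined by transpositions of adjacent incomparable elements shows $\prod_{g'\in\ell}\tog_{g'}$ does not depend on $\ell$ at all, so the choice above loses nothing.)

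The technical core is a ``flip version'' of \Cref{thm:decomposition}, which I would isolate as a lemma: \emph{for $g$ minimal in $G$ and $g_i\neq g$, the flip $\tog_{g_i}$ preserves each of $\trip_g(G)$ and $\trip^g(G)$; it acts as the identity on $\trip_g(G)$ when $g_i$ is a neighbour of $g$; and otherwise it is intertwined, via the isomorphisms of \Cref{thm:decomposition}, with the flip $\tog_{g_i}$ computed in $G_g$ on $\trip^g(G)$ and in $G_g^\circ$ on $\trip_g(G)$.} The preservation statements use minimality of $g$: since $g$ has no out-edges and nothing is $\le g$, a flip at $g_i\neq g$ never drops $g$ from $\br$ (the ``keep'' step of \Cref{map:flip} only removes $k$ with $k\ge g_i$) and never inserts $g$ into $\br$ (in \Cref{map:flip}, $g$ is scanned last in the $\al'$-refill, and the adjacency test that would block adding $g$ to $\al'$ is exactly the one blocking $g$ from $\br'$). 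Triviality at neighbours of $g$ holds because, when $g\in\br$, a neighbour of $g$ lying in $\al$ would contradict orthogonality (all edges at $g$ point into $g$) and lying in $\br$ would contradict independence. The intertwining itself is a direct comparison of \Cref{map:flip} run in $G$ with \Cref{map:flip} run in $G_g$ or $G_g^\circ$. Granting this, the inductive hypothesis shows that $R$ restricted to $\trip^g(G)$ agrees, under $\trip^g(G)\simeq\trip(G_g)$, with $\row_{G_g}$, and $R$ restricted to $\trip_g(G)$ agrees, under $\trip_g(G)\simeq\trip(G_g^\circ)$, with $\row_{G_g^\circ}$ (the neighbour-flips being trivial there).

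The induction itself is a short case check on the location of $g$ in $(\al,\br)$, recalling that tops have disjoint components by construction in \Cref{map:oneG}. If $g\in\br$, then $\tog_g$ moves up to $(\al\cup\{g\},\br^{*})\in\trip^g(G)$ by \Cref{lem:down} and \Cref{lem:extremal_needs}; passing to $\trip(G_g)$ and applying $R$ yields the unique top of $G_g$ with second component $\al$, whose lift to $\trip^g(G)$ is the unique top of $G$ with second component $\al$ — that is, $\row(\al,\br)$, by \Cref{thm:trip_ind}. If $g\in\al$, then $(\al,\br)\in\trip^g(G)$ and, since $\tog_g$ is an involution, \Cref{lem:down} forces $\tog_g(\al,\br)=(\al\setminus\{g\},\br^{*})\in\trip_g(G)$; the symmetric argument through $\trip(G_g^\circ)$ again identifies $R\circ\tog_g(\al,\br)$ with the unique top of $G$ with second component $\al$, which (as $g\in\al$) lies in $\trip_g(G)$. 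If $g$ lies in neither set, $\tog_g$ does nothing, $(\al,\br)\in\trip^g(G)$, and $R$ alone carries its image in $\trip(G_g)$ to the unique top of $G_g$ with second component $\al$; its lift is the unique top of $G$ with second component $\al$, which (as $g\notin\al$) indeed lies in $\trip^g(G)$. The base case $|G|\le 1$ is immediate.

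I expect the flip-compatibility lemma to be the only real obstacle. It demands a careful reading of \Cref{map:flip} to verify that deleting $g$ — and, for $\trip_g(G)$, deleting the whole neighbourhood of $g$ — commutes with both greedy completions, and that a flip at another vertex never disturbs $g$'s set membership; the asymmetry between $\trip_g(G)$ (where the ``missing'' neighbour-vertices of $G_g^\circ$ are harmless because their flips are trivial) and $\trip^g(G)$ (where all vertices of $G_g$ survive) is the subtle point to get right. Once that lemma is in hand, everything reduces to bookkeeping with \Cref{thm:trip_ind}.
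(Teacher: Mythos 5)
Your proposal is correct and follows essentially the same strategy as the paper's own proof: induction on $|G|$, peeling off the first flip at a minimal vertex $g$, using the decomposition $\trip(G)=\trip_g(G)\sqcup\trip^g(G)$, and splitting into the same three cases according to whether $g\in\br$, $g\in\al$, or neither, then identifying the result via the unique completion of $\al$ to a top. Your explicit ``flip compatibility lemma'' --- in particular the observation that flips at neighbours of $g$ act trivially on $\trip_g(G)$ --- just makes precise what the paper treats in a parenthetical aside (``the only flips leaving $\trip_g(G)$ / $\trip^g(G)$ are $\tog_g$'') when it invokes the isomorphisms $\trip^g(G)\simeq\trip(G_g)$ and $\trip_g(G)\simeq\trip(G_g^\circ)$ to apply the inductive hypothesis.
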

\addtocounter{theorem}{-1}
}

\begin{proof}
We follow the same proof as~\cite[Theorem 1.1]{TW}.  Let $g$ be minimal in $G$.

    \emph{Case I: $(\al,\br) \in \trip_g(G)$.} At the first step when calculating $\prod_{h \in \ell} \tog_{h}$, we walk to $(\al',\br')=\tog_g(\al,\br)$.  So $\al'=\al \cup \{g\}$ by~\Cref{lem:down} and $(\al',\br') \in \trip^g(G)$ by~\Cref{lem:decomposition}.  Applying the rest of the flips to $(\al',\br')$ has the effect of applying $\prod_{\substack{h \in \ell\\ h\neq g}} \tog_{h}$ in $\trip(G_g)$.  (Note that the only flips in $\trip(G)$ leaving $\trip^g(G)$ are those of the form $\tog_g$, so they will never be taken.)  By induction, we obtain an element $(\al'',\br'') \in \trip(G_g)$ such that $\br''=\al'\setminus \{g\}$.  Passing back to $\trip^g(G)$ does not change $\br''$, and we conclude the result.

    \emph{Case II:  $(\al,\br) \in \trip^g(G)$ and $g \in \al$.}   At the first step when calculating $\prod_{h \in \ell} \tog_{h}$, we walk to $(\al',\br')=\tog_g(\al,\br)$.  Since $g \in \br'$, $(\al',\br')\in \trip_g(G)$ by~\Cref{lem:decomposition} and $\al' \cup \{g\}=\al$ by~\Cref{lem:down}.  Applying the rest of the flips to $(\al',\br')$ has the effect of applying  $\prod_{\substack{h \in \ell\\ h\neq g}} \tog_{h}$ in $\trip_g(G)$.  (Note that the only flips in $\trip(G)$ leaving $\trip_g(G)$ are those of the form $\tog_g$, so they will never be taken.)  By induction, we obtain an element $(\al'',\br'') \in \trip_g(G)$ such that $\br''= \al' \cup\{g\}$.  But $\al'\cup\{g\}=\al$.

\emph{Case III: $(\al,\br) \in \trip^g(G)$ and $g \not \in \al$.}
By assumption, flipping at $g$ has no effect.  As in Case I, the rest of the flips have the effect of applying $\prod_{\substack{h \in \ell\\ h\neq g}} \tog_{h}$ in $\trip^g(G)$.  By induction, we obtain an element $(\al',\br') \in \trip^g(G)$ such that $\br'=\al$.
\end{proof}

{
\renewcommand{\thetheorem}{\ref{thm:main_thm}(ii)}
\begin{theorem}
For $G$ an acyclic directed graph, rowmotion on $\trip(G)$ can be computed by deformotion.
\label{thm:row_mutation}
\end{theorem}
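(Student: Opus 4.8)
The plan is to mirror the proof of \Cref{thm:main_thm}(i), with toggles in place of flips, by induction on $|G|$. Since $\ell'$ is a reverse linear extension, its first vertex $g$ is maximal in $G$; write $\ell'=(g,\ell'')$, so that $\ell''$ is a reverse linear extension of the order on $G\setminus\{g\}$. Toggling at an extremal vertex preserves acyclicity, and each successive vertex of $\ell'$ is extremal in the graph obtained so far (its incoming edges all come from earlier vertices of $\ell'$, which have already been toggled), so the word $\prod_{h\in\ell'}\mut_h=\bigl(\prod_{h\in\ell''}\mut_h\bigr)\circ\mut_g$ is a well-defined self-map of $\trip(G)$: every edge incident to $g$ is reversed exactly twice, once at $g$ and once at its other endpoint, so the graph returns to $G$. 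By \Cref{thm:trip_ind} it is enough to show that $\prod_{h\in\ell'}\mut_h(\al,\br)$ has second component $\al$, since by definition $\row(\al,\br)$ is the unique top whose second component is $\al$; equivalently, tracking only second components (which evolve by \Cref{eq:mut2} under toggles at maximal vertices), we must show the word carries $\br$ to $\al$.

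The structural fact that makes the induction go through is that, $g$ being maximal, no edge of $G$ points into $g$. Hence when one runs \Cref{map:zeroG} (or \Cref{map:oneG}) in $G$, whether or not $g$ lies in the independent set being completed never influences whether any other vertex is added; in particular the first component $\al$ completing $\br$ in $G$ coincides with the first component completing $\br\setminus\{g\}$ in $G_g$. This is what reconciles the first component that $\mut_g$ recomputes with the original $\al$.

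Next I would split into the three cases of \Cref{lem:decomposition} and \Cref{lem:extremal_needs}: (I) $g\in\al$, so $(\al,\br)\in\trip^g(G)$; (II) $g\in\br$, so $(\al,\br)\in\trip_g(G)$; (III) $g\notin\al\cup\br$, so $(\al,\br)\in\trip_g(G)$. In each case \Cref{thm:mutation_bijection} writes $\mut_g(\al,\br)$ explicitly inside $\trip(\mut_g(G))$, where $g$ is now minimal, and \Cref{thm:decomposition} identifies the relevant half of $\trip(\mut_g(G))$ with $\trip(G_g^\circ)$ in Case I (here $(\mut_g(G))_g^\circ=G_g^\circ$, and $\al$, $\br$ avoid the neighbors of $g$) and with $\trip(G_g)$ in Cases II and III. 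In Cases II and III the leftover word $\ell''$ is a full deformotion of $G_g$, so the inductive hypothesis applies directly, and the observation of the previous paragraph identifies the second component it produces with $\al$ (in Case III one also uses that the restriction of a top of $G$ at a maximal vertex outside $\al\cup\br$ is a top of $G_g$, which is again the ``no edges into $g$'' observation). In Case I, $\ell''$ is a deformotion of $G_g^\circ$ interleaved with toggles at the neighbors of $g$; since $g$ enters the $\br$-component under $\mut_g$ and no subsequent toggle can remove it, those extra toggles act trivially on $\br$ (no neighbor of $g$ can be toggled into a set already containing $g$) and leave the induced subgraph on the vertices of $G_g^\circ$ unchanged, while the $G_g^\circ$-toggles evolve the remainder of $\br$ exactly as in a deformotion of $G_g^\circ$; the inductive hypothesis then gives second component $\al$ once more.

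I expect the main obstacle to be exactly this bookkeeping of the two-component structure across the change of orientation: unlike a flip, a single toggle both permutes which half of the poset one is in and recomputes one of the two independent sets. The two delicate points are (a) in Case I, checking that the toggles at neighbors of $g$---which are invisible in the smaller graph $G_g^\circ$---have no net effect on the component being tracked, and (b) in Cases II and III, matching the first component recomputed by $\mut_g$ against the original $\al$, which is precisely where the absence of edges into a maximal vertex is used.
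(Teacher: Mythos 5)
Your plan is correct, and your two flagged delicate points do resolve in the way you expect---but the approach is genuinely different from the paper's. The paper does not induct on $|G|$ for this half of \Cref{thm:main_thm}: it simply tracks the second component as an independent set evolving under \Cref{eq:mut2} (using the observation, made just before this theorem, that when one toggles at a maximal vertex the $\br$-component of a top evolves by \Cref{eq:mut2}). Each vertex is toggled exactly once, and a toggle at $g_i$ changes \emph{both} components of the top only at the vertex $g_i$ itself (for the $\br$-component this is in the statement of \Cref{thm:mutation_bijection}; for the $\al$-component it follows by composing the two bijections in \Cref{thm:decomposition}); hence $g_i$ lies in the final $\br$ if and only if $g_i\in\al$ just before step $i$, if and only if $g_i\in\al$ originally. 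There is no case split and no recursion. Your proof instead imports the recursive strategy of \Cref{thm:main_thm}(i) wholesale, which has the aesthetic advantage of treating the two halves of the theorem uniformly, at the cost of the extra bookkeeping you identify. Both delicate points work out exactly as you anticipate: for (a), no later toggle at $h\ne g$ changes the $\br$-component at $g$, and for $h$ a neighbor of $g$ the set $\br\cup\{h\}$ is not independent, so by \Cref{eq:mut2} $\br$ is untouched, while $G_g^\circ$ is an induced subgraph whose edges are unaffected by reversals at $g$ or its neighbors; for (b), the preservation of the first component on deleting the maximal vertex $g$ when $g\notin\al$ is precisely the content of the bijection $\trip_g(G)\simeq\trip(G_g)$, $(\al,\br)\mapsto(\al,\br\setminus\{g\})$, from \Cref{thm:decomposition}.
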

\addtocounter{theorem}{-1}
}

\begin{proof}
Since every element of $G$ is toggled, every edge is flipped twice, and so $\prod_{g \in \ell'}\mut_g(G)=G$. By definition, $\mut_g$ takes all elements with $g \in \al$ and converts them to elements with $g \in \br$.  Since we are toggling in $\ell'=g_1,\ldots,g_{|G|}$ order, at any step we have moved all $\{g_1,\ldots,g_i\}\cap \al$ from $\al$ to $\br$.  Although this introduces some new elements into $\al$, these only involve $\{g_1,\ldots,g_i\}$ and so are never moved to $\br$ in a subsequent step.  Thus, each element has its \emph{original} set $\al$ converted to its new $\br$, which is the definition of rowmotion given in~\Cref{eq:global_row}.
\end{proof}

\section{Representation Theory}
\label{sec:rep_theory}

In this section, we show that the combinatorics of independence lattices arises naturally in representation theory.  In short, the torsion/torsion-free pairs of certain acyclic finite-dimensional algebras correspond to maximal orthogonal pairs, while their 2-term simpleminded collections recover tight orthogonal pairs.   The setting in which we work, while special from the point of
view of representation theory, includes many interesting examples, such as all
quotients of Dynkin path algebras.

\subsection{Representation-finite directed algebras}
Let $k$ be a field, and $A$ a finite-dimensional $k$-algebra.  Suppose further
that the module category $\mod A$ is directed, i.e., there is no sequence of pairwise non-isomorphic indecomposable modules $M_1,\dots , M_r$ for $r>1$ with non-zero morphisms from $M_i$ to $M_{i+1}$ and from $M_r$ to $M_1$.  Under this hypothesis, all indecomposable modules have some properties which
usually only belong to a subset of indecomposables.

A module is called a
\defn{brick} if its endomorphism ring is a division algebra.  Note that a brick is
necessarily indecomposable.  A module $M$ is called \defn{$\tau$-rigid} if
\[\Hom(M,\tau M)=0,\] where $\tau$ is the Auslander-Reiten translation.

\begin{proposition}\label{rigid-brick}
Let $A$ be a finite-dimensional $k$-algebra such that $\mod A$ has no cycles.  Then all indecomposable $A$-modules are $\tau$-rigid bricks.
\end{proposition}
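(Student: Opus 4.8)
The plan is to recognize that the hypothesis that $\mod A$ has no cycles says exactly that every indecomposable $A$-module is a \emph{directing module} in the classical sense, and then to run the two standard arguments from the theory of directing modules: such a module is a brick, and it is $\tau$-rigid. I would give the brick statement in full, as it is short, and would rely on Auslander--Reiten theory for the $\tau$-rigidity, which is the delicate point.

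First I would show that every indecomposable $M$ is a brick. Let $f\in\End(M)$ be nonzero; since $M$ has finite length and is indecomposable, $\End(M)$ is local, so it suffices to rule out $f$ being a non-isomorphism. Suppose $f$ is not invertible. A surjective endomorphism of a finite-length module is an isomorphism, so $\dim_k\operatorname{im}(f)<\dim_k M$. Factor $f$ through its image as $M\xrightarrow{p}\operatorname{im}(f)\hookrightarrow M$, with $p$ surjective and the second map the inclusion, and decompose $\operatorname{im}(f)=\bigoplus_i N_i$ into indecomposables; writing $p_i\colon M\to N_i$ and $\iota_i\colon N_i\to M$ for the components gives $f=\sum_i\iota_i p_i$. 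Every $N_i$ satisfies $\dim_k N_i\le\dim_k\operatorname{im}(f)<\dim_k M$, so no $N_i$ is isomorphic to $M$, each surjection $p_i$ is a non-isomorphism, and each injection $\iota_i$ is a non-isomorphism. Choosing $i_0$ with $\iota_{i_0}p_{i_0}\neq 0$, the maps $M\xrightarrow{p_{i_0}}N_{i_0}\xrightarrow{\iota_{i_0}}M$ are nonzero non-isomorphisms between non-isomorphic indecomposables, i.e.\ a cycle with $r=2$, contradicting the hypothesis. Hence every nonzero endomorphism of $M$ is invertible, so $M$ is a brick.

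Finally I would show $\Hom(M,\tau M)=0$, so that $M$ is $\tau$-rigid. This is automatic when $M$ is projective (then $\tau M=0$), so suppose $M$ is indecomposable and non-projective and let $0\to\tau M\xrightarrow{g}E\xrightarrow{h}M\to 0$ be the Auslander--Reiten sequence ending at $M$. Assuming for contradiction that some $\varphi\colon M\to\tau M$ is nonzero, the idea is to splice $\varphi$ together with the maps of this sequence---and, if necessary, the Auslander--Reiten sequences ending at the indecomposable summands of $E$ and at $\tau M$---into a cyclic path through indecomposables: since $g$ and $h$ are not split, every indecomposable component of $g$ and of $h$ is a non-isomorphism, and following $\varphi$ by a suitable such component back to $M$ yields a cycle. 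The main obstacle is exactly that $E$ need not be indecomposable, so that one must select a single indecomposable summand $E_j$ of $E$ admitting both a nonzero map into it along a path from $\tau M$ and a nonzero map out of it to $M$; this is the content of Ringel's analysis of directing modules in \emph{Tame Algebras and Integral Quadratic Forms}, which I would cite rather than reprove. Equivalently, one can run the argument through the Auslander--Reiten formula $\Ext^1_A(M,M)\cong D\,\overline{\Hom}_A(M,\tau M)$, where $D$ is the $k$-linear duality and $\overline{\Hom}$ is $\Hom$ modulo maps factoring through an injective, combined with the fact that for a directing module no nonzero map $M\to\tau M$ factors through an injective. Either way one gets $\Hom(M,\tau M)=0$, which completes the proof.
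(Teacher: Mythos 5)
Your proof is correct and follows the same route as the paper: a nonzero non-invertible endomorphism forces a two-cycle through an indecomposable summand of its image, and a nonzero map $M\to\tau M$ spliced with the almost split sequence ending at $M$ forces a cycle. Two small remarks: your brick argument is in fact a touch more careful than the paper's (the paper leaves implicit the decomposition of $\operatorname{im}\phi$ into indecomposables, which is what the ``no cycles'' hypothesis actually requires), while for $\tau$-rigidity you overestimate the difficulty of the decomposable middle term---every indecomposable summand $E_j$ of $E$ automatically admits nonzero irreducible maps $\tau M\to E_j$ and $E_j\to M$ by the minimality of the almost split sequence, so no appeal to Ringel's directing-module analysis is needed.
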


\begin{proof}
Suppose that
$M$ is an indecomposable module and that $\phi$ is a non-invertible endomorphism
of $M$.  Let $N$ be the image of $\phi$, which is necessarily a proper submodule
of $M$ and a proper quotient of $M$.  This implies that there are
non-zero morphisms in both directions between $M$ and $N$, contrary to our
assumption.  Therefore all indecomposable modules are bricks.

If $\tau M$ is non-zero, there is a
short exact sequence \[0\rightarrow \tau M \rightarrow E \rightarrow M
\rightarrow 0.\] Thus, if $\Hom(M,\tau M)\ne 0$, then there is a cycle in $\mod A$.
Thus, all indecomposable objects are also $\tau$-rigid.
\end{proof}

We specialize further by imposing a finiteness assumption.
We assume that $A$ is representation-finite, i.e., it has only finitely
many indecomposable representations up to isomorphism.  By~\Cref{rigid-brick}, we could just as well have assumed only that the number of
$\tau$-rigid indecomposable modules is finite.

\subsection{Torsion classes}
A \defn{torsion class} in $\mod A$ is a full additive
subcategory of $\mod A$ closed under
extensions and quotients.  We write $\tors A$ for the torsion classes in $\mod A$.  Torsion classes naturally form a lattice, since the intersection of two torsion classes is again a torsion class.  Since $A$ is representation-finite, there are only
finitely many different torsion classes, so $\tors A$ is a finite lattice.
We already showed in \cite{TW} that this lattice is trim.

There is a dual notion to torsion classes, the torsion-free classes.  A
\defn{torsion-free class} is a full additive subcategory closed under extensions
and submodules.  There is a natural inclusion-reversing correspondence between
torsion classes and torsion-free classes: if $\mathcal T$ is a torsion class,
then
\[\mathcal F=\mathcal T^\perp=\{Y \mid \Hom(X,Y)=0 \textrm{ for all } X\in \mathcal{T}\}\]
is the corresponding torsion-free class.
The set of indecomposable objects of $\mathcal F$ is maximal with
respect to the property of having no morphisms from a module in
$\mathcal T$ (or equivalently, from an indecomposable module in
$\mathcal T$).  The dual statement is also true: the set of
indecomposable modules in $\mathcal T$ is maximal
with respect to the property of having no nonzero morphisms into a module
in $\mathcal F$ (or equivalently, into an indecomposable module in $\mathcal F$).  Further, every pair $(\mathcal T,\mathcal F)$ which is
maximal in both these respects is automatically a \defn{torsion pair}---a torsion class and its
corresponding torsion-free class.  See \cite[Section VI.1]{ASS}.

In this way, we see the connection with maximal orthogonal pairs.  Consider the graph $G$ whose vertices
are indexed by the indecomposable $A$-modules, with an arrow from $M$ to
$N$ if and only if $\Hom(M,N)\ne 0$.  By our assumption that there are no cycles in
$\mod A$, the graph $G$ is acyclic.  It is now clear, as we observed in~\cite{TW},
that maximal orthogonal pairs of $G$ correspond to torsion pairs.

\begin{theorem}[{\cite[Corollary 1.5]{TW}}]
If $A$ is representation finite and $\mod A$ has no cycles, then the maximal orthogonal pairs in the trim lattice $\LL(G)$ are naturally the torsion pairs of $A$, ordered with respect to inclusion of torsion classes (or reverse inclusion of torsion-free classes).
\end{theorem}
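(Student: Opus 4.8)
The plan is to construct an order-preserving bijection between $\LL(G)$ and the poset of torsion pairs of $A$, using the torsion-pair formalism recalled above (following \cite[Section VI.1]{ASS}, as in \cite{TW}). For the forward direction, I would send a torsion pair $(\mathcal{T},\mathcal{F})$ with $\mathcal{F}=\mathcal{T}^\perp$ to the pair $(X,Y)$, where $X$ (resp.\ $Y$) is the set of vertices of $G$---that is, indecomposable $A$-modules---lying in $\mathcal{T}$ (resp.\ $\mathcal{F}$). Since $\Hom(M,N)=0$ whenever $M\in\mathcal{T}$ and $N\in\mathcal{F}$, there is no arrow of $G$ from $X$ to $Y$, so $(X,Y)$ is an orthogonal pair; and the two maximality statements recalled just before the theorem assert precisely that $X$ and $Y$ are each maximal with respect to this property, so $(X,Y)\in\LL(G)$. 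Because $A$ is representation-finite, every torsion class and every torsion-free class equals the additive closure of its set of indecomposables, so $(\mathcal{T},\mathcal{F})$ can be recovered from $(X,Y)$ and the assignment is injective, and clearly $\mathcal{T}\subseteq\mathcal{T}'$ implies $X\subseteq X'$.

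For the inverse, given $(X,Y)\in\LL(G)$ I would set $\mathcal{T}:={}^\perp(\mathrm{add}\,Y)=\{M\in\mod A:\Hom(M,N)=0\text{ for all }N\in Y\}$. Using the left-exactness of $\Hom(-,N)$ one checks in a line that $\mathcal{T}$ is closed under quotients and extensions, hence is a torsion class, so $(\mathcal{T},\mathcal{T}^\perp)$ is a torsion pair. The crux is identifying the indecomposables: by representation-finiteness the set of indecomposable objects of $\mathcal{T}$ is $\{M\in G:\Hom(M,N)=0\text{ for all }N\in Y\}$, which is exactly $X$ because, in a maximal orthogonal pair, $X$ is the unique maximal set with no arrow to $Y$; then the set of indecomposable objects of $\mathcal{T}^\perp$ is $\{N\in G:\Hom(M,N)=0\text{ for all }M\in X\}$, which is exactly $Y$ because $Y$ is the unique maximal set receiving no arrow from $X$. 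Hence $(X,Y)\mapsto(\mathcal{T},\mathcal{T}^\perp)$ is well defined and is a two-sided inverse of the forward map, using $\mathcal{T}={}^\perp(\mathcal{T}^\perp)$ for a torsion pair on one side and the identification of indecomposables just made on the other. Order-preservation is then immediate: $\mathcal{T}\subseteq\mathcal{T}'$ if and only if $X\subseteq X'$ (the order on $\LL(G)$ by inclusion of first components), if and only if $Y\supseteq Y'$ (reverse inclusion of torsion-free classes).

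The step I expect to require the most care is this identification of the indecomposables of $\mathcal{T}$ and of $\mathcal{T}^\perp$ in the inverse construction: one genuinely needs $(X,Y)$ to be maximal in \emph{both} coordinates, since orthogonality alone only gives a containment $X\subseteq\mathrm{ind}\,\mathcal{T}$ and does not pin down the indecomposables of $\mathcal{T}^\perp$. Everything else is bookkeeping together with the cited structural facts about torsion pairs; the representation-finiteness hypothesis is used precisely to pass freely between subcategories and their sets of indecomposables, while the no-cycles hypothesis is what guarantees that $G$ is acyclic, so that $\LL(G)$ is defined in the first place.
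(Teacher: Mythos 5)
Your proof is correct and follows essentially the route the paper sketches in the discussion preceding the theorem statement: both directions of the correspondence pass between torsion pairs and maximal orthogonal pairs by taking indecomposables in one direction and additive closure in the other, using the maximality properties of torsion pairs recalled from \cite[Section VI.1]{ASS} and representation-finiteness to pass freely between a subcategory and its set of indecomposables. (The paper itself only cites \cite[Corollary 1.5]{TW} rather than reproving the result.) The one minor packaging difference is in the inverse: the paper's surrounding text invokes the ASS fact that a \emph{mutually maximal} orthogonal pair of full additive subcategories is automatically a torsion pair, whereas you construct $\mathcal{T}$ directly as ${}^\perp(\mathrm{add}\,Y)$, verify closure under quotients and extensions by hand, and then identify $\mathrm{ind}\,\mathcal{T}=X$ and $\mathrm{ind}\,\mathcal{T}^\perp=Y$ via maximality of the mop. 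This is a slightly more self-contained route to the same place; both hinge on the same structural facts, and you correctly flag the key point, namely that maximality of \emph{both} coordinates of $(X,Y)$ is needed to pin down the indecomposables on each side.
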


\subsection{Semibricks, flips, and the edge-labelling of the Hasse diagram of the lattice of torsion classes}

There is a natural labelling of each edge of the Hasse diagram of the lattice of torsion classes of $\mod A$ by an indecomposable module---if $\mathcal U\lessdot \mathcal V$ is a
cover relation in the lattice of torsion classes, then its label is the unique
brick in $\mathcal V\cap \mathcal U^\perp$~\cite[Proposition 1.17]{asai2016semibricks}.  Comparing with our~\Cref{thm:overlapping}, we conclude that the labelling of cover relations by bricks coincides with the labelling coming from the overlapping cover relations of mops.

A collection of bricks is called a \defn{semibrick} if there are no morphisms between two
non-isomorphic bricks in the collection.  Asai showed that there is a bijection
between torsion classes and semibricks~\cite[Theorem 1.3]{asai2016semibricks}.  The semibrick $s(\mathcal T)$ corresponding to
a torsion class $\mathcal T$ is the collection of labels on edges down from
$\mathcal T$ \cite[Lemma 1.16, Proposition 1.17]{asai2016semibricks}.  It can also be described as the unique semibrick such that $\mathcal T$
consists of modules filtered by quotients of elements of $s(T)$~\cite[Lemma 1.5]{asai2016semibricks}.  Dually, there is a semibrick corresponding to each torsion free class: the
semibrick $s(\mathcal T^\perp)$ corresponding to the torsion free class $\mathcal T^\perp$ is
the collection of labels on the edges up from $\mathcal T$.  This semibrick is
the unique semibrick such that $\mathcal T^\perp$ is filtered by submodules of
the elements of $s(\mathcal T^\perp)$.

An analogue of~\Cref{thm:trim_is_ind} allows us to directly compute the semibrick corresponding to a given torsion class.

\begin{proposition}\label{greedy} The semibrick associated to a torsion class $\mathcal T$ can
be obtained by greedily building a semibrick by adding modules from $\mathcal T$ if possible in any linear extension of the $G$-order. \end{proposition}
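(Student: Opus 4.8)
The plan is to deduce the proposition from the explicit description of the isomorphism $\phi\colon\LL(G)\simeq\trip(G)$ recorded after \Cref{thm:trim_is_ind}. Recall the representation-theoretic dictionary: $G$ has the indecomposable $A$-modules as its vertices, with an arrow $M\to N$ exactly when $\Hom(M,N)\neq 0$; the torsion pair $(\mathcal{T},\mathcal{T}^\perp)$ corresponds to the maximal orthogonal pair $(X,Y)$ in which $X$ is the set of indecomposables lying in $\mathcal{T}$ and $Y$ the set of indecomposables lying in $\mathcal{T}^\perp$; and, since $\mod A$ has no cycles and $A$ is representation-finite, $\tors A\simeq\LL(G)$ is trim, so $\trip(G)$ is a lattice. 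The first observation, which is purely formal, is that an independent set of $G$ is precisely a semibrick: two distinct vertices $M,N$ of $G$ are non-adjacent exactly when $\Hom(M,N)=\Hom(N,M)=0$, and every indecomposable $A$-module is a brick by \Cref{rigid-brick}. Thus ``greedily building a semibrick by adding modules of $\mathcal{T}$'' is literally ``greedily building an independent set of $G$ by adding vertices of $X$''.

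The next step is to identify $s(\mathcal{T})$ with the first component of $\phi$ evaluated at the corresponding maximal orthogonal pair. By Asai's description of $s(\mathcal{T})$ recalled above, $s(\mathcal{T})$ is the set of brick labels on the cover edges pointing down from $\mathcal{T}$ in $\tors A$; and, as also recalled, under $\tors A\simeq\LL(G)$ this edge-labelling coincides with the labelling of cover relations of maximal orthogonal pairs by their overlapping element. Hence $s(\mathcal{T})=\down(x)$, where $x\in\LL(G)$ is the element with $x_\JJ=X$ and $x_\MM=Y$. Since $\phi(x)=(\down(x),\up(x))$, the semibrick $s(\mathcal{T})$ is the first component $\al$ of the tight orthogonal pair $\phi(x)$.

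Finally I would apply the alternative description of $\phi$ recorded after \Cref{thm:trim_is_ind}: when $\trip(G)$ is a lattice, the first component $\al$ of $\phi(x)$ is obtained from $X$ by running through the vertices of $X$ in a reverse linear extension of $G$-order and greedily keeping each one that preserves independence of the set built so far. Translating through the dictionary of the first paragraph, this says that $s(\mathcal{T})$ is produced by scanning the indecomposable modules of $\mathcal{T}$ in that order and keeping a module whenever the current collection remains a semibrick, which is the content of the proposition.

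The step I expect to require the most care is the bookkeeping linking the three pictures---torsion classes, maximal orthogonal pairs, and tight orthogonal pairs. One must pin down that the \emph{down} edge-labelling of $\tors A$ corresponds to $\down$ (rather than $\up$) under $\tors A\simeq\LL(G)$, and reconcile the scanning order in the alternative description of $\phi$ (which, as stated, builds $\al$ from $X$ and $\br$ from $Y$ using dual orders) with the ``any linear extension'' appearing in the statement; in particular one should confirm that the greedy output does not depend on which (reverse) linear extension of $G$-order is chosen, as is implicit in that description. Once these identifications are in place, the remainder is a substitution of definitions.
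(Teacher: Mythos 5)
Your argument is correct but takes a genuinely different route from the paper's. The paper proves \Cref{greedy} directly inside $\mod A$, using Asai's characterization that $\mathcal T$ is exactly the class of modules filtered by quotients of $s(\mathcal T)$: scanning so that a source of an arrow is processed before its target, any $Y\in\mathcal T$ not receiving a morphism from an already-chosen brick also cannot receive one from a later-processed brick, and hence must itself lie in $s(\mathcal T)$. You instead deduce the statement from the combinatorics already developed in \Cref{sec:trim_lattices}: you identify $s(\mathcal T)$ with $\down(x)$ via the matching of Asai's brick labelling of cover edges with the overlapping labels on mops, recognize $\down(x)$ as the first component of $\phi(x)$, and read off the claim from the alternative greedy description of $\phi$ given after \Cref{thm:trim_is_ind}. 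Your route is more economical once the machinery of $\phi$ is in hand; the paper's direct argument is self-contained in the module category (using only Asai's results), which is presumably why it is given instead.

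The concern you flag about the scanning order is a real one, and it is worth resolving rather than leaving open. Your derivation produces a scan of the indecomposables of $\mathcal T$ in \emph{reverse} linear extension order of $G$-order (larger modules, i.e.\ sources of arrows, first), matching \Cref{map:oneG} and the alternative description of $\phi$. The statement of \Cref{greedy} literally says ``any linear extension of the $G$-order,'' which by the convention declared in the introduction processes targets before sources. These are not equivalent, and the reverse order is the correct one: the step in the paper's own proof asserting that $Y$ will not admit a morphism from any \emph{subsequent} module requires precisely that all later-processed modules fail to lie above $Y$ in $G$-order. A quick check for $A=k(1\to 2)$: with $\mathcal T=\operatorname{add}(P_1\oplus S_1)$, the semibrick is $\{P_1\}$; since $\Hom(P_1,S_1)\neq 0$ forces $S_1$ to precede $P_1$ in a linear extension, the greedy scan in a linear extension returns $\{S_1\}$, while the reverse scan returns $\{P_1\}$. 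So the wording of the statement should be ``reverse linear extension,'' and your derivation (like the paper's actual inductive argument) lands on the correct order.
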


\begin{proof}  Let $\mathcal D=\{X_1,\dots,X_r\}$ be the semibrick associated to
$\mathcal T$.  As we consider modules from $\mathcal T$ in some linear order compatible with $G$, suppose that we have already
added $\{X_1,\dots,X_i\}$ and we encounter some module $Y$ which does not have any
morphisms from $X_1,\dots,X_i$.  Since $Y$ is in $\mathcal T$, it is filtered
by quotients of modules in $\mathcal D$.  Now $Y$ does not have any morphisms from
the already-chosen elements of $\mathcal D$, but $Y$ will not admit any
morphisms from any subsequent module.  Thus $Y$ must be an element of
$\mathcal D$ as well.  On the other hand, if $Y$ admits a
morphism from some module which we have already added to $\mathcal D$, then obviously we must not add it into $\mathcal D$, and following our procedure, we do not.
We will therefore successively add all the elements of $\mathcal D$ by following this procedure.
\end{proof}

Using semibricks, we can provide a representation-theoretic justification for our definition of flips on tight orthogonal pairs in~\Cref{map:flip}.

\begin{theorem} Let $(\mathcal D,\mathcal U)$ be the tight orthogonal
pair associated to the torsion class $\mathcal T$.  Let $X\in \mathcal U$.  Let $(\mathcal D',\mathcal U')$ be the
tight orthogonal pair associated to the torsion class  $\mathcal T'$
which covers $\mathcal T$ along an edge of the Hasse diagram labelled by $X$.  Then~\Cref{map:flip} successfully reconstructs
$(\mathcal D', \mathcal U')$.
\end{theorem}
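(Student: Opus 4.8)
The plan is to deduce the statement by stitching together the correspondences already in place among torsion classes, maximal orthogonal pairs, and tight orthogonal pairs, and then to read off that the combinatorial flip $\tog_X$ realises the passage from $\mathcal T$ to $\mathcal T'$.

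Recall that, via the Hom-quiver $G$ of $\mod A$, the lattice $\tors A$ is isomorphic to $\LL(G)$ (torsion classes ordered by inclusion corresponding to mops ordered by inclusion of first components), and that $\tors A$ is trim; hence $\LL(G)$ is trim and $\phi\colon\LL(G)\to\trip(G)$, $x\mapsto(\down(x),\up(x))$, is an isomorphism by \Cref{thm:trim_is_ind}. By definition the tight orthogonal pair associated to a torsion class is the $\phi$-image of the corresponding mop; since $\mathcal D$ is the set of brick labels on the edges down from $\mathcal T$ and $\mathcal U$ the set of brick labels on the edges up from $\mathcal T$, we have $(\mathcal D,\mathcal U)=\phi(x)$ with $x$ the mop of $\mathcal T$, and likewise $(\mathcal D',\mathcal U')=\phi(x')$ with $x'$ the mop of $\mathcal T'$. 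As $\mathcal T\lessdot\mathcal T'$ with Hasse label $X$, we obtain a cover $x\lessdot x'$ in $\LL(G)$; and by the identification, established in the paragraph preceding \Cref{greedy} and resting on \Cref{thm:overlapping}, of Asai's brick labelling with the overlapping-element labelling of mop covers, the overlapping element $x_\MM\cap x'_\JJ$ of this cover is exactly the vertex of $G$ indexed by $X$.

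Next I would transport this cover relation to $\trip(G)$. By \Cref{thm:trim_is_ind}, together with the remark inside the proof of \Cref{thm:trim_eq} that the recursion ``respects the labelling of cover relations by the overlapping element (for the mops) and the element flipped (for the tops)'', and using \Cref{lem:flip_eq_cov} (flips are precisely the covers of $\trip(G)$) together with the computation in the proof of \Cref{thm:lattice_trim} that the overlap of a cover $(\al,\br)\lessdot\tog_g(\al,\br)$ is exactly $\{g\}$, the map $\phi$ is an isomorphism of edge-labelled posets carrying a cover of $\LL(G)$ with overlapping element $g$ to the upward flip $\tog_g$ of $\trip(G)$. Applying this with $g=X$ yields $\tog_X(\mathcal D,\mathcal U)=\phi(x')=(\mathcal D',\mathcal U')$. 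Since \Cref{map:flip} is, by \Cref{def:flip}, an implementation of $\tog_X$ and returns an element of $\trip(G)$ by \Cref{prop:flips_work}, it follows that \Cref{map:flip} applied to $(\mathcal D,\mathcal U)$ at $X$ reconstructs $(\mathcal D',\mathcal U')$, as claimed.

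The main obstacle is the hinge used in the second paragraph: that the brick $X$ labelling the Hasse edge $\mathcal T\lessdot\mathcal T'$ is literally the same datum as the overlapping vertex of the corresponding mop cover. One could instead argue this directly and representation-theoretically, bypassing $\LL(G)$: using \Cref{greedy} and its torsion-free dual to describe $\mathcal D'=s(\mathcal T')$ and $\mathcal U'=s(\mathcal T'^\perp)$ by the greedy rule, one would check that $X\in s(\mathcal T')$ (being the unique brick in $\mathcal T'\cap\mathcal T^\perp$), that $X\notin s(\mathcal T'^\perp)$ (since $X\in\mathcal T'$), that every brick of $s(\mathcal T)$ admitting no nonzero map into $X$ remains in $s(\mathcal T')$ and every brick of $s(\mathcal T^\perp)$ admitting no nonzero map from $X$ remains in $s(\mathcal T'^\perp)$, and that the remaining entries are forced greedily; this bookkeeping is essentially the recursion of \Cref{thm:decomposition} transported through $\tors A$ and cut at the vertex $X$.
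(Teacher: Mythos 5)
Your main argument is correct but takes a genuinely different route from the paper. You deduce the claim at one remove, by composing isomorphisms: you identify $\tors A$ with the trim lattice $\LL(G)$, invoke $\phi\colon\LL(G)\xrightarrow{\sim}\trip(G)$ from \Cref{thm:trim_is_ind}, and then argue that $\phi$ carries the brick/overlapping edge-label $X$ on the cover $\mathcal T\lessdot\mathcal T'$ to the flip label $X$ on the corresponding cover in $\trip(G)$, using \Cref{lem:flip_eq_cov} and the overlap computation inside the proof of \Cref{thm:lattice_trim}. This is an economical argument and it is sound (and non-circular, since the combinatorial isomorphism results in \Cref{sec:trim_lattices} are established independently of the representation theory). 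The paper, however, does exactly what you sketch only in your final paragraph: it argues directly and representation-theoretically, bypassing $\LL(G)$ entirely. It characterizes $\mathcal T'$ as the smallest torsion class containing $\mathcal T$ and $X$, observes $\mathcal T'|_{\not\geq X}=\mathcal T|_{\not\geq X}$ and $\mathcal F'=\{Y\in\mathcal F\mid\Hom(X,Y)=0\}$, and then uses \Cref{greedy} (and its dual) to show step by step that the greedy construction of $\mathcal D'=s(\mathcal T')$ and $\mathcal U'=s(\mathcal T'^\perp)$ coincides with the flip algorithm: the parts not $\geq X$ (respectively not $\leq X$) are preserved, $X$ is moved across, and the remainder is filled in greedily by the same rule \Cref{map:flip} uses. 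The payoff of the paper's route is precisely the one stated in its preamble --- a representation-theoretic \emph{justification} of the definition of flip, showing how the algorithm's ingredients (the preserved halves, the greedy completion) arise naturally from torsion theory --- whereas your main route only certifies the equality after the fact. Your last paragraph is therefore closer in spirit to what the paper intends, and would be worth expanding into the primary argument rather than leaving it as an alternative.
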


\begin{proof}
The torsion class $\mathcal T'$ is the minimal torsion
class containing $X$ and $\mathcal T$.  Clearly it contains all modules
which are filtered by quotients of $X$ and elements of $\mathcal T$,
and since that category is extension closed and quotient closed, it must be
$\mathcal T'$.  In particular, we observe that when restricted to
modules that do not follow $X$ in $G$-order, the
two classes $\mathcal T'$ and $\mathcal T$ coincide, which we write
as $\mathcal T'|_{\not\geq X}=\mathcal T|_{\not\geq X}$.  Therefore,
by \Cref{greedy}, $\mathcal D|_{\not \geq X}=\mathcal D'|_{\not\geq X}$. Now
\[\mathcal F'=\mathcal T'^\perp=\{Y\in \mathcal F\mid \Hom(X,Y)=0\}.\]
Note that $\mathcal U|_{\not\leq X} \subseteq \mathcal F'$, since if we reconstruct $\mathcal U$ using (the dual version of) \Cref{greedy}, we must add in $X$,
and therefore we must not have added any module which admits a map
from $X$ previously.  Now, since $\mathcal U|_{\not\leq X}$ is contained in $\mathcal F'\subset \mathcal F$, we see that $\mathcal U'|_{\not\leq X}=\mathcal U|_{\not\leq X}$.

The remaining elements of $\mathcal D'$, i.e., those which follow
$X$ in $G$-order, can then be reconstructed
by \Cref{greedy}.  We first observe that $X$ itself is in $\mathcal D'$,
since it is in $\mathcal T'$, and since $X\in \mathcal U$, there are
no morphisms from $\mathcal D'|_{\not\leq X}$ to $X$.  To use
\Cref{greedy} to determine the further elements of $\mathcal D'$,
we first need to know the
elements of $\mathcal T'|_{>X}$. These are the modules strictly following
$X$ which do not admit a morphism into $\mathcal F'$.  We can replace
$\mathcal F'$ by $\mathcal U'$, and then we can replace $\mathcal U'$
by $\mathcal U'|_{>X}$, since modules following $X$ only have morphisms
into other modules following $X$.  But we have already observed that
$\mathcal U'|_{>X}=\mathcal U|_{> X}$.

We therefore see that the modules of $\mathcal D'$ which
strictly follow $X$ are constructed greedily in $G$-order
maintaining the properties of having no morphisms from any of the other modules in $\mathcal D'$, and having no morphisms to
$\mathcal U$.

This construction of $\mathcal D'$ is manifestly the same as the
construction given in \Cref{map:flip}, as desired.  The argument
for the construction of $\mathcal U'$ is essentially dual.
\end{proof}

\subsection{2-simple-minded collections}\label{sec:2simple}
A \defn{simple-minded collection} for $A$
is a collection of objects $X_1,\dots, X_r$
in the derived category of $D^b(A)$ such that:
\begin{enumerate}
\item $\Hom(X_i,X_j[m])=0$ for $m<0$,
\item $\End(X_i)$ is a division algebra and $\Hom(X_i,X_j)=0$ unless
  $i=j$,
\item $X_1,\dots, X_r$ generate $D^b(A)$ in the sense that the smallest thick subcategory containing all of them is
$D^b(A)$ itself,
\end{enumerate}
See \cite{KY} for more on simple-minded collections.  A \defn{2-simple-minded
collection} has the additional property that for each $X_i$, we have
$H^j(X_i)=0$ for $j\ne 0, -1$.  It turns out that the elements of a
2-simple-minded collection are all contained in $\mod A \cup \mod A[1]$,
see \cite[Remark 4.11]{BY}.

Asai showed that there is a bijection from torsion classes to 2-simple-minded
collections \cite[Theorem 2.3]{asai2016semibricks}, which sends
$\mathcal T$ to $s(\mathcal T) \cup s(\mathcal T^\perp)[1]$.

It follows from our bijection between torsion classes and mops, and the labelling of the cover relations by bricks and~\Cref{thm:overlapping}, that tight
orthogonal pairs are in bijection with 2-simple-minded collections.

\begin{theorem} Let $A$ be a representation finite $k$-algebra with no cycles
  in $\mod A$.  There is a bijection from tight orthogonal pairs to
  2-simple-minded collections, sending $(\mathcal D,\mathcal U)$ to
  $\mathcal D \cup \mathcal U[1]$.
\label{thm:asai}
\end{theorem}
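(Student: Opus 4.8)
The plan is to assemble this bijection from three ingredients already established in the paper: the identification of $\tors A$ with $\trip(G)$, Asai's bijection between torsion classes and $2$-simple-minded collections, and the fact that the brick edge-labelling of the Hasse diagram of $\tors A$ realizes the maps $\down$ and $\up$ on $\LL(G)$. First I would recall that, since $A$ is representation finite with $\mod A$ acyclic, $\tors A$ is a trim lattice which by \cite[Corollary 1.5]{TW} is isomorphic to $\LL(G)$ for the graph $G$ whose vertices are the indecomposable $A$-modules and with an arrow $M\to N$ whenever $\Hom(M,N)\ne 0$; under this isomorphism a torsion class $\mathcal T$ corresponds to the element $x\in\LL(G)$ whose maximal orthogonal pair is $(x_\JJ,x_\MM)$, recording the join-irreducibles below $x$ and the meet-irreducibles above $x$. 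Since $\LL(G)$ is trim, \Cref{thm:trim_is_ind} supplies the isomorphism $\phi\colon\LL(G)\xrightarrow{\sim}\trip(G)$, $x\mapsto(\down(x),\up(x))$.

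The key bookkeeping step is to identify $\down(x)$ and $\up(x)$ representation-theoretically. By \Cref{thm:overlapping} every cover relation of $\LL(G)$ overlaps in a unique vertex of $G$, and as observed earlier this overlapping vertex is exactly the brick labelling that cover in the sense of \cite[Proposition 1.17]{asai2016semibricks}: for a cover $\mathcal U\lessdot\mathcal V$ of $\tors A$, the label is simultaneously the brick in $\mathcal V\cap\mathcal U^\perp$ and the element of $G$ in which the corresponding mop cover overlaps. Hence, for the torsion class $\mathcal T\leftrightarrow x$, the set $\down(x)$ of labels on edges down from $x$ is the semibrick $s(\mathcal T)$, and the set $\up(x)$ of labels on edges up from $x$ is $s(\mathcal T^\perp)$ (each regarded as a set of vertices of $G$, i.e.\ of indecomposable modules). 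Therefore the composite $\mathcal T\mapsto x\mapsto\phi(x)=(\down(x),\up(x))=(s(\mathcal T),s(\mathcal T^\perp))$ is a bijection from torsion classes to tight orthogonal pairs.

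To finish, invoke Asai's bijection \cite[Theorem 2.3]{asai2016semibricks}, which sends a torsion class $\mathcal T$ to the $2$-simple-minded collection $s(\mathcal T)\cup s(\mathcal T^\perp)[1]$. Composing the inverse of the bijection from the previous paragraph with Asai's bijection yields a bijection from tight orthogonal pairs to $2$-simple-minded collections: it sends the pair $(\mathcal D,\mathcal U)=(s(\mathcal T),s(\mathcal T^\perp))$ to $s(\mathcal T)\cup s(\mathcal T^\perp)[1]=\mathcal D\cup\mathcal U[1]$, which is precisely the assertion of \Cref{thm:asai}. I expect the only genuine obstacle to be the careful verification in the middle paragraph that the three a priori different gadgets attached to a cover relation — Asai's brick edge-label, the overlapping vertex of the mop cover from \Cref{thm:overlapping}, and the element recorded by $\down$ / $\up$ — coincide as elements of $G$, and consequently that $s(\mathcal T)=\down(x)$ and $s(\mathcal T^\perp)=\up(x)$ as sets; everything else is a formal composition of bijections that are already in place.
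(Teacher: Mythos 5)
Your proposal is correct and follows the same route the paper takes: it assembles the bijection from the identification of $\tors A$ with $\LL(G)$, the isomorphism $\phi\colon\LL(G)\to\trip(G)$ via $(\down,\up)$, the observation that the brick label on a cover coincides with the overlapping vertex of the mop cover (so $\down(x)=s(\mathcal T)$ and $\up(x)=s(\mathcal T^\perp)$), and Asai's bijection to 2-simple-minded collections. The paper's own proof is essentially this same chain, stated in a single terse sentence, and then supplemented by a separate direct verification that $\mathcal D\cup\mathcal U[1]$ being a 2-simple-minded collection forces $(\mathcal D,\mathcal U)$ to be a top; your write-up expands the terse sentence without diverging from it.
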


Although~\Cref{thm:asai} follows from our results
together with those of \cite{asai2016semibricks}, we find it
instructive to give a
direct proof of the following proposition in order to show how the tightness condition  on tops naturally arises from 2-simple-minded collections.

\begin{proposition} If $\mathcal D \cup \mathcal U[1]$ is a
2-simple-minded collection, then $(\mathcal D, \mathcal U)$ is a tight orthogonal pair.
\end{proposition}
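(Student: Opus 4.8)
The plan is to reduce the statement to a torsion pair and then verify the three defining properties of a tight orthogonal pair essentially by hand, with the tightness condition being the point of interest. Since the objects of a 2-simple-minded collection all lie in $\mod A\cup\mod A[1]$ (as recalled above), I would write the given collection as $\mathcal D\cup\mathcal U[1]$ with $\mathcal D,\mathcal U\subseteq\mod A$; each object of $\mathcal D$ or $\mathcal U$ has division endomorphism ring, so is an indecomposable $A$-module, i.e.\ a vertex of $G$. Condition (2) of a simple-minded collection gives $\Hom(D,D')=\Hom(D',D)=0$ for distinct $D,D'\in\mathcal D$, and the same for $\mathcal U$ after shifting, so $\mathcal D$ and $\mathcal U$ are independent sets of $G$; condition (1) applied to $D$ and $U[1]$ in degree $-1$ gives $\Hom(D,U)=0$ for $D\in\mathcal D$, $U\in\mathcal U$, so $(\mathcal D,\mathcal U)$ is orthogonal.

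Next I would bring in the bijection of \cite{asai2016semibricks}: the module part $\mathcal D$ of the collection is $s(\mathcal T)$ and $\mathcal U$ is $s(\mathcal T^{\perp})$ for a torsion class $\mathcal T$; set $\mathcal F=\mathcal T^{\perp}$. Using that $\mathcal T$ consists of the modules filtered by quotients of elements of $\mathcal D$ and $\mathcal F$ of the modules filtered by submodules of elements of $\mathcal U$, a short d\'evissage upgrades this to the two Hom-characterizations I will use repeatedly: $M\in\mathcal T\iff\Hom(M,U)=0$ for all $U\in\mathcal U$, and $M\in\mathcal F\iff\Hom(D,M)=0$ for all $D\in\mathcal D$. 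In particular $\mathcal D\subseteq\mathcal T$, $\mathcal U\subseteq\mathcal F$, and $\mathcal T\cap\mathcal F=0$ since any $W$ in the intersection has $\mathrm{id}_W=0$.

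For tightness, suppose toward a contradiction that one of the four permitted modifications of $(\mathcal D,\mathcal U)$ again produces an orthogonal pair of independent sets; in each case I would exhibit a vertex $W$ of $G$ lying in $\mathcal T\cap\mathcal F$, which is absurd as $W\neq 0$. If $W$ is newly added to $\mathcal D$: orthogonality of the modified pair forbids an edge $W\to U$, so $\Hom(W,\mathcal U)=0$ and $W\in\mathcal T$; independence of $\mathcal D\cup\{W\}$ forbids an edge $D\to W$, so $\Hom(\mathcal D,W)=0$ and $W\in\mathcal F$. Adding $W$ to $\mathcal U$ is symmetric. If instead some $D\in\mathcal D$ is replaced by a strictly $G$-larger $W$: again $\Hom(W,\mathcal U)=0$ gives $W\in\mathcal T$; independence of the new set gives $\Hom(D',W)=0$ for the unchanged $D'\in\mathcal D$, while $\Hom(D,W)=0$ because an edge $D\to W$ together with a directed path $W\to\cdots\to D$ (which exists since $W>D$ in $G$-order) would be a cycle in the acyclic graph $G$; hence $\Hom(\mathcal D,W)=0$ and $W\in\mathcal F$. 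Replacing some $U\in\mathcal U$ by a strictly $G$-smaller module is dual.

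The main obstacle I anticipate is making the appeal to \cite{asai2016semibricks} airtight: identifying the module part $\mathcal D$ of the given collection with $s(\mathcal T)$ (and $\mathcal U$ with $s(\mathcal T^{\perp})$) for the matched torsion class, and then deducing the two Hom-characterizations of $\mathcal T$ and $\mathcal F$ from the filtration descriptions. Once these are in place the independence and orthogonality claims are immediate, and the tightness argument is short; the one delicate point there is the use of acyclicity of $G$ to rule out a morphism from a replaced module into its replacement.
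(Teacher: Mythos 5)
Your proof is correct and follows essentially the same route as the paper's: both identify $\mathcal D$ and $\mathcal U$ with the semibricks $s(\mathcal T)$ and $s(\mathcal T^\perp)$ of a torsion pair via Asai's bijection, exploit the filtration descriptions of $\mathcal T$ and $\mathcal F$ to produce the needed nonzero Hom's, and invoke acyclicity of $G$ to rule out a backward morphism in the replacement case. Your reorganization of the tightness argument around the two Hom-characterizations of $\mathcal T$ and $\mathcal F$ and the single contradiction $\mathcal T\cap\mathcal F=0$ is a clean unification of the paper's four-way case analysis, but the underlying content is the same.
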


\begin{proof}
Let $\mathcal C=\{X_1,\dots,X_r,Y_1[1],\dots,Y_s[1]\}$ be a
  2-term simple minded collection.  From the condition (2), we see that
  $\mathcal D=\{X_1,\dots,X_r\}$ and
  $\mathcal U=\{Y_1,\dots,Y_s\}$ are independent sets.  The
    fact that $\Ext^{-1}(X_i,Y_i[1])=0$ implies that $\Hom(X_i,Y_j)=0$, i.e., that
  $(\mathcal D,\mathcal U)$ is orthogonal.

    The proof of tightness is somewhat more involved.  Write $\mathcal T$
    for the torsion class associated to $\mathcal D$, and $\mathcal T^\perp$ for
    the torsion-free class associated to $\mathcal U$.  If $Z$ is a
    module in
    $\mathcal T$, then it cannot be added into $\mathcal D$ because such a
    module is
    filtered by quotients of objects from $\mathcal D$, and thus admits a
    morphism from one of them, which is forbidden.
    If $Z$ is a module not in $\mathcal T$,
    then it admits a map to a non-zero object in $\mathcal T^\perp$, and
    thus to an object in $\mathcal U$; this is also forbidden.

    It is also impossible to replace $X_i$ by $X_i'$ which is above
    $X_i$.  If $X_i'$ is not in $\mathcal T$, then this is impossible for
    the reason given above that we cannot add $X_i'$ into $\mathcal D$.

    On the other hand, if $X_i'$ is $\mathcal T$, then it is filtered by
    quotients of elements of $\mathcal D$, so it admits a morphism from some
    element of $\mathcal D$, and since $X_i'$ is strictly above $X_i$,
 this element cannot be $X_i$.
    Thus $X_i$ cannot be replaced by $X_i'$ in this case
    either.

    Dual considerations explain why no element can be added to $\mathcal U$
    and no element of $\mathcal U$ can be lowered.
    \end{proof}

Putting together the previous results, we conclude the following theorem.
{
\renewcommand{\thetheorem}{\ref{thm:reptheory}}
\begin{theorem}
If $A$ is representation finite and $\mod A$ has no cycles, then
maximal orthogonal pairs correspond to torsion pairs, independent sets correspond to semibricks, and tight orthogonal pairs correspond to 2-simple-minded collections.
\end{theorem}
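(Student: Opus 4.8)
The plan is to assemble the theorem from the pieces already in place. Since $\mod A$ has no cycles, the directed graph $G$ with an arrow $M\to N$ whenever $\Hom(M,N)\neq 0$ is acyclic, and since $A$ is representation finite, $G$ is finite; by \Cref{rigid-brick} every vertex of $G$ is a brick. The first assertion---that the maximal orthogonal pairs of $G$ are exactly the torsion pairs of $A$, ordered by inclusion of torsion classes---is precisely \cite[Corollary 1.5]{TW}, quoted above. In particular $\LL(G)\simeq\tors A$ as lattices, and this lattice is trim (also shown in \cite{TW}), so \Cref{thm:trim_is_ind} gives an isomorphism $\LL(G)\simeq\trip(G)$ via $\phi(x)=(\down(x),\up(x))$.

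For the second assertion I would combine \Cref{thm:down_ind}---which says $\down$ is a bijection from $\LL(G)$ to the independent sets of $G$---with Asai's bijection \cite[Theorem 1.3]{asai2016semibricks} sending a torsion class $\mathcal T$ to the semibrick $s(\mathcal T)$. The real content is that these two bijections agree: $s(\mathcal T)$ is the set of brick labels on the Hasse-diagram edges going down from $\mathcal T$, and as noted after \Cref{thm:overlapping} this brick labelling coincides with the labelling of cover relations of mops by their overlapping element, which by definition collects to $\down(\mathcal T)$. (Alternatively, \Cref{greedy} produces $s(\mathcal T)$ by the same greedy construction in a linear extension of $G$-order that describes $\down$.) Dually, $\up$ is the bijection to independent sets whose value at $\mathcal T$ is the semibrick $s(\mathcal T^\perp)$ of the corresponding torsion-free class.

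For the third assertion I would then compose. Under $\LL(G)\simeq\tors A$, the isomorphism $\phi$ of \Cref{thm:trim_is_ind} sends $\mathcal T$ to the tight orthogonal pair $(s(\mathcal T),s(\mathcal T^\perp))$, using the identifications just established. Asai's \cite[Theorem 2.3]{asai2016semibricks} states that $\mathcal T\mapsto s(\mathcal T)\cup s(\mathcal T^\perp)[1]$ is a bijection from torsion classes to $2$-simple-minded collections. Composing, tight orthogonal pairs $(\mathcal D,\mathcal U)$ correspond bijectively to $2$-simple-minded collections $\mathcal D\cup\mathcal U[1]$---which is exactly \Cref{thm:asai}, already established (and for which one direction was given a direct proof above).

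The only genuine obstacle is the bookkeeping in the middle step: verifying that the combinatorially defined labels $\down$ and $\up$ on $\LL(G)$ literally coincide with Asai's representation-theoretic semibricks $s(\mathcal T)$ and $s(\mathcal T^\perp)$, and not merely that they are in abstract bijection. This is resolved by the compatibility of the two edge-labellings of the Hasse diagram of $\tors A$ (labels by bricks versus labels by overlapping mop elements) recorded after \Cref{thm:overlapping}, together with the fact that $\down$ (respectively $\up$) is precisely the collection of down-edge (respectively up-edge) labels at each element. Once that identification is in hand, every clause of the theorem is a restatement of a cited or previously proved result.
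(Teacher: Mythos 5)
Your proof is correct and follows essentially the same route as the paper: the paper's entire "proof" of this theorem is the one-line remark "Putting together the previous results, we conclude the following theorem," and you have correctly identified and assembled exactly those results (the mop/torsion-pair correspondence from \cite{TW}, the compatibility of the brick and overlapping-element edge labellings noted after \Cref{thm:overlapping}, and Asai's bijections), arriving at \Cref{thm:asai} in the same way the paper does.
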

\addtocounter{theorem}{-1}
}

\section*{Acknowledgements}

We thank two diligent anonymous referees for helping us improve our exposition.  H.T. was partially supported by the Canada Research Chairs program and an NSERC Discovery Grant.  N.W. was partially supported by a Simons Foundation Collaboration Grant.

\end{document}